\theoremstyle{plain}
\newtheorem{theorem}[]{Theorem}
\newtheorem{theorema}[]{Theorem}
\newtheorem{proposition}[theorem]{Proposition}
\newtheorem{lemma}[theorem]{Lemma}
\newtheorem{claim}[]{Claim}[section]
\newtheorem{corollary}[theorem]{Corollary}
\newtheorem{conjecturea}[]{Conjecture}
\theoremstyle{definition}
\newtheorem{definition}[]{Definition}
\newtheorem{hypothesisa}{Hypothesis}
\newtheorem{remark}[definition]{Remark}
\DeclareRobustCommand{\coprod}{\mathop{\text{\fakecoprod}}}
\newcommand{\fakecoprod}{%
  \sbox0{$\prod$}%
  \smash{\raisebox{\dimexpr.9625\depth-\dp0}{\scalebox{1}[-1]{$\prod$}}}%
  \vphantom{$\prod$}%
}
\subjclass[2020]{Primary: 14J33; Secondary: 14N35,14D07}
\numberwithin{theorem}{section}
\numberwithin{definition}{section}
\numberwithin{equation}{section}
\setlist[itemize]{leftmargin=3em}
\title[Mirror symmetry for singular double cover CY 
varieties]{Mirror symmetry for singular double cover Calabi--Yau varieties: quantum test}
\date{\today}
\author[Tsung-Ju Lee]{Tsung-Ju Lee}
\address{Tsung Ju Lee, Department of Mathematics, National Cheng Kung University, Tainan 70101, Taiwan.}
\email{tsungju@gs.ncku.edu.tw}
\author[Bong H.~Lian]{Bong H.~Lian}
\address{Bong H.~Lian, Shanghai Institute for Mathematics and Interdisciplinary Sciences, Yangpu District, Shanghai, 200433, China \& Center for Mathematics and Interdisciplinary Sciences,
Fudan University, Shanghai, 200433, China \& Department of Mathematics,
Brandeis University, Waltham, MA 02453, USA}
\email{lianbong@gmail.com}
\author[Shing-Tung~Yau]{Shing-Tung~Yau}
\address{Shing-Tung~Yau, Department of Mathematics, Harvard University, Cambridge MA 02138, U.S.A.~\& Yau Mathematical Sciences Center, Tsinghua University, Beijing 100084, China}
\email{yau@math.harvard.edu}
\begin{document}
\begin{abstract}
We continue our study on the pairs of singular Calabi--Yau 
varieties arising from double covers over semi-Fano toric manifolds. 
In this paper, we first investigate
singular CY double covers of \(\mathbb{P}^{3}\) branched along 
(1) a union of eight hyperplanes in general position, and (2)
a union of four hyperplanes and a quartic in generation. Our
previous construction produces hypothetical singular mirror partners.
We prove that
they are mirror pairs in the sense that the \(B\)-model of one (variation of Hodge
structure) is equivalent to the \(A\)-model of another (the untwisted part
of the genus zero orbifold Gromov--Witten invariants).
The technique can be generalized and applied to the case when
the nef-partition is trivial. As a byproduct, we also verify 
Morrison's conjecture in certain circumstances.

\end{abstract}
\maketitle 
\tableofcontents

\section{Introduction}
\subsection{Motivations}
Around 1990, inspired by mirror symmetry,
Candelas et al.~\cite{1991-Candelas-de-la-Ossa-Greene-Parkes-a-pair-of-calabi-yau-manifolds-as-an-exactly-soluable-superconformal-theory}
studied a pair of Calabi--Yau (CY) manifolds, 
the quintic and the (orbifold) Fermat quintic threefold found by Greene and Plesser 
\cite{1990-Greene-Plesser-duality-in-calabi-yau-moduli-space},
and predicted the numbers of rational
curves on quintic threefolds in \(\mathbb{P}^{4}\)
in a vicinity of the so-called
maximal unipotent monodromy point in the moduli of 
the (orbifold) Fermat quintic threefold. 
Since then,
mirror symmetry has drawn a lot of attention
and becomes one of the most active 
research areas in mathematics and physics.
In rough terms, a mirror pair is a pair of CY 
manifolds having the property that under an identification, called the \emph{mirror map},
the \(A\)-model correlation function of one is identical to the
\(B\)-model correlation function of another. Here,
the \(A\)-model is taken to be the genus zero Gromov--Witten theory, while
the \(B\)-model refers to variation of Hodge structure (VHS).

In recent work of Hosono, Lian, Takagi, and Yau, strong evidence showed that 
classical mirror symmetry can be extended to certain singular CY varieties.
In \cites{2020-Hosono-Lian-Takagi-Yau-k3-surfaces-from-configurations-of-six-lines-in-p2-and-mirror-symmetry-i,2019-Hosono-Lian-Yau-k3-surfaces-from-configurations-of-six-lines-in-p2-and-mirror-symmetry-ii-lambda-k3-functions}, they revisited
the family of \(K3\) surfaces arising from double covers branched along 
six lines in \(\mathbb{P}^{2}\) in general position, which
were studied by Matsumoto, Sasaki, and Yoshida 
\cites{1988-Matsumoto-Sasaki-Yoshida-the-period-map-of-a-4-parameter-family-of-k3-surfaces-and-the-aomoto-gelfand-hypergeometric-function-of-type-3-6,1992-Matsumoto-Sasaki-Yoshida-the-monodromy-of-the-period-map-of-a-4-parameter-family-of-k3-surfaces-and-the-hypergeometric-function-of-type-3-6} as a higher dimensional analogue of the Legendre family, 
and conjectured that the mirror family of the \(K3\) family is 
given by a certain family of double covers over a del Pezzo surface of degree \(6\),
which is a blow-up of three torus invariant points on \(\mathbb{P}^{2}\) 
(\cite{2019-Hosono-Lian-Yau-k3-surfaces-from-configurations-of-six-lines-in-p2-and-mirror-symmetry-ii-lambda-k3-functions}*{Conjecture 6.3}).
This conjecture has been subsequently investigated and 
tested by Hosono and the authors in many different approaches.
In \cite{2024-Hosono-Lee-Lian-Yau-mirror-symmetry-for-double-cover-calabi-yau-varieties},
they generalized the idea in \cites{2020-Hosono-Lian-Takagi-Yau-k3-surfaces-from-configurations-of-six-lines-in-p2-and-mirror-symmetry-i,2019-Hosono-Lian-Yau-k3-surfaces-from-configurations-of-six-lines-in-p2-and-mirror-symmetry-ii-lambda-k3-functions} and
studied CY varieties arising as double covers of
a semi-Fano projective toric manifold.
Given a semi-Fano projective toric manifold \(X\) together with a nef-parition on \(X\),
we consider a specfic type of families of double covers, called 
\emph{gauge fixed double cover branched along the nef-partition}, and 
proposed a mirror family. Loosely speaking, the mirror family is
also a certain family of gauge fixed double covers branched along a nef-partition, but
the base is now replaced by the Batyrev--Borisov's dual toric manifold
\(X^{\vee}\) and the nef-partition is taken to be the corresponding dual nef-partition.
In the \(K3\) case, the toric base is \(\mathbb{P}^{2}\) and the nef-partition
is taken to be \(-K_{\mathbb{P}^{2}}=h+h+h\), where \(h\) is the hyperplane class. 
In this situation, the Batyrev--Borisov's dual toric variety is exactly 
a blow-up of three torus invariant points on \(\mathbb{P}^{2}\).
It can be shown that when the dimension is less than
or equal to four the pairs constructed are topological mirror pairs, i.e.~
their Hodge diamonds are related by a 90 degree rotation \cite{2024-Hosono-Lee-Lian-Yau-mirror-symmetry-for-double-cover-calabi-yau-varieties}. 
We shall also emphasize that since our singular CY double covers are orbifolds,
the mixed Hodge structure on cohomology groups, say with \(\mathbf{Q}\)
coefficient, is indeed pure and hence the notion of
Hodge numbers is well-defined.

As the topological test is settled,
we now turn to the quantum test. One would like to carry
out the \(A\)-model and the \(B\)-model correlation functions 
and show that they are related under the mirror map. 
In the present circumstance, 
the \(B\) model is taken to be the variation of Hodge
structures for the equisingular family whereas the \(A\)-model 
turns out to be the untwisted part of the genus zero 
orbifold Gromov--Witten theory. 
On \(B\)-side, the period integrals for the equisingular family 
are governed by a GKZ \(A\)-hypergeometric system
with a fractional exponent. This type of GKZ \(A\)-hyergeometric systems has
also been studied by D.~Zhang and the first named author in
\cites{2023-Lee-a-note-on-periods-of-calabi-yau-fractional-complete-intersections,2025-Lee-Zhang-twisted-gkz-hypergeometric-functions-and-relative-homology},
and it turns out that the GKZ \(A\)-hypergeometric system is complete, namely all
the solutions are period integrals.
Mimicking the classical case, we found a close relationship between the principal parts
of the differential operators in the GKZ \(A\)-hypergeometric system and the
cohomology ring of the base of the conjectured mirror CY variety
\cite{2022-Lee-Lian-Yau-on-calabi-yau-fractional-complete-intersections};
this leads to a cohomology-valued \(B\)-series first introduced in 1994 by Hosono,
Lian, and Yau (a.k.a.~the \(I\)-function later) which plays a crucial role in mirror
symmetry. In order
to establish ``mirror theorem,'' we will have to compute
the untwisted part of the genus zero orbifold Gromov--Witten invariants
for singular double covers and compare them with the cohomology-valued \(B\)-series
from the mirror. 

The main purpose of this paper is providing 
further numerical evidence and proving a mirror theorem when the nef-partition
is a trivial partition. In summary, we will compute the 
untwisted part of the genus zero orbifold Gromov--Witten invariants
for singular double covers of \(X\) and show that it is equivalent to the \(B\)-series
from double covers of \(X^{\vee}\).
Besides, we will also investigate CY double covers of \(\mathbb{P}^{3}\) with various
branching locus, including non-trivial nef-partitions, and prove a mirror theorem in those cases.

We should also mention that 
in \cite{2023-Lee-Lian-Romo-non-commutative-resolutions-as-mirrors-of-singular-calabi-yau-varieties}, Romo, the first, and the second named 
authors proposed and investigated a categorical version of 
the mirror correspondence for singular CY double covers. Using gauged linear sigma model (GLSM),
they constructed a non-commutative resolution (NCR) as the \(A\)-side of
homological mirror symmetry. It was shown that the \(A\)-periods given by GLSM hemi-sphere 
partition functions of the NCR on one side agreed with \(B\)-periods (or VHS) of
the corresponding mirror partner family. The categorical
test was further extended in \cite{Lee:2025aa}
to include the classical mirror pairs, among other things,
again using GLSM machinery. 

\subsection{Statements of the main results}
We introduce some notation and then state our main results in this subsection.
Consider a nef-partition \((\Delta,\{\Delta_{i}\}_{i=1}^{r})\)
and its dual nef-partition \((\nabla,\{\nabla_{i}\}_{i=1}^{r})\) in the sense of
Batyrev and Borisov. 
%(the precise definitions of nef-partition and the dual nef-partition
%will be given in \S\ref{subsection:B-B-duality-construction}).
Let \(\mathbf{P}_{\Delta}\) and \(\mathbf{P}_{\nabla}\)
be the toric varieties defined by \(\Delta\) and \(\nabla\). Let
\(X\to\mathbf{P}_{\Delta}\) and \(X^{\vee}\to \mathbf{P}_{\nabla}\) be
maximal projective crepant partial desingularizations (MPCP desingularizations for short hereafter)
of \(\mathbf{P}_{\Delta}\) and \(\mathbf{P}_{\nabla}\).
The nef-partitions on \(\mathbf{P}_{\Delta}\) and \(\mathbf{P}_{\nabla}\)
determine nef-partitions on \(X\) and \(X^{\vee}\). Let 
\(E_{1},\ldots,E_{r}\) and \(F_{1},\ldots,F_{r}\) be the sum of
toric divisors representing nef-partitions on \(X\) and \(X^{\vee}\), respectively.
Throughout this paper, we tacitly make the following assumption.
\begin{hypothesisa}
\label{assumption}
\(X\) and \(X^{\vee}\) are both \emph{smooth}.
\end{hypothesisa}
That is, we assume that both \(\Delta\) and \(\nabla\) 
admit a \emph{regular triangulation}. By a regular triangulation
of \(\Delta\), we mean that a triangulation of \(\Delta\) such that 
each simplex is regular and contains \(\mathbf{0}\) as a vertex.

%Let \(s_{j}\in\mathrm{H}^{0}(X,2E_{j})\) be a smooth section and
%\(Y\) be the double cover over \(X\) branched along \(s_{1}\cdots s_{r}\).
%Deforming the sections \(s_{j}\) yields a family of Calabi--Yau double covers over 
%\(X\), which is parameterized by a suitable open set in 
%the product of \(\mathrm{H}^{0}(X,2E_{j})\). 
We now define a \emph{partial gauge fixing} for such a family
and construct a family of \emph{gauge fixed double covers branched along a nef-parition}. 
For each \(1\le j\le r\), we pick a section \(s_{j}\in \mathrm{H}^{0}(X,E_{j})\)
such that 
\begin{equation}
  D_{\infty}\cup\bigcup_{i=1}^{r}\mathrm{div}(s_{j})
\end{equation}
is a strict normal crossing divisor. Here, \(D_{\infty}\) is the 
union of all toric divisors on \(X\).
A double cover with \(D_{\infty}\cup\bigcup_{i=1}^{r}\mathrm{div}(s_{j})\) as the branching divisor is called a
\emph{gauge fixed double cover branched along a nef-parition}. 
By deforming \(s_{j}\), we obtain a family of double covers
parametrized by an open subset
\begin{equation}
V\subset \mathrm{H}^{0}(X,E_{1})\times
\cdots\times\mathrm{H}^{0}(X,E_{r}).
\end{equation}
A parallel construction can be applied on the \(X^{\vee}\) side.
Let \(\mathcal{Y}\to V\), \(\mathcal{Y}^{\vee}\to U\)
be the gauge fixed double cover families
and \(Y\), \(Y^{\vee}\) be the fibers of these families.
Note that both \(Y\) and \(Y^{\vee}\) are orbifolds. 
\begin{conjecturea}
\((Y,Y^{\vee})\) is a mirror pair.
\end{conjecturea}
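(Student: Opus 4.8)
The plan is to establish the mirror correspondence $(Y,Y^\vee)$ by verifying the ``quantum test'': showing that the $B$-model VHS of the equisingular family $\mathcal{Y}^\vee\to U$ (encoded in its period integrals) coincides, under the mirror map, with the $A$-model of $Y$ (the untwisted part of the genus zero orbifold Gromov--Witten theory of the gauge fixed double cover). Since the paper reduces attention to the case where the nef-partition is trivial and, separately, to explicit branching loci over $\mathbb{P}^3$ (eight hyperplanes; four hyperplanes plus a quartic), I would proceed case by case but with a uniform strategy: (i) identify the $B$-side period system; (ii) extract from it a cohomology-valued hypergeometric series; (iii) compute the $A$-side $I$-function directly; (iv) match the two after a mirror transformation.

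First, on the $B$-side, I would write down the period integrals for the equisingular family of double covers $Y^\vee$ and identify them with solutions of a GKZ $\mathcal{A}$-hypergeometric system with a fractional exponent (the $\tfrac12$ coming from the double-cover structure), invoking the completeness result of Lee--Zhang cited in the introduction so that \emph{every} solution is realized as a period. From the principal symbols of the toric (Picard--Fuchs) operators in this system, I would read off a presentation of a ``quantum-corrected'' cohomology ring of the base $X$ of the conjectured mirror, following \cite{2022-Lee-Lian-Yau-on-calabi-yau-fractional-complete-intersections}, and assemble the cohomology-valued $B$-series (the $I$-function of Hosono--Lian--Yau type), including its mirror map $q\mapsto q(z)$ obtained from the ratio of the distinguished logarithmic-free solution to the logarithmic ones. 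Second, on the $A$-side, I would compute the genus zero orbifold Gromov--Witten invariants of the double cover $Y\to X$ branched along $D_\infty\cup\bigcup\mathrm{div}(s_j)$. Because $Y$ is an orbifold with $\mathbf{Z}/2$ stabilizers along the (desingularized) branch strata, its inertia stack splits into an untwisted sector isomorphic to $X$-type data and a twisted sector; the claim isolates the \emph{untwisted} part. I would obtain its small $J$-function either via an orbifold quantum Lefschetz / Givental-style hyperplane theorem for the relevant twisted theory on $X$, or, in the trivial-nef-partition case, by relating the double cover directly to a hypersurface/complete-intersection picture where the $I=J$ mirror theorem is available; the half-integer twisting is what produces the fractional GKZ exponent on this side too.

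The matching step then amounts to proving that the $A$-side $J$-function (equivalently the Givental--Lian--Liu--Yau $I$-function after the mirror change of variables) equals the $B$-side cohomology-valued series term by term. I would do this by showing both satisfy the same GKZ-type $D$-module together with the same boundary/normalization conditions at the large-volume / maximal-unipotent point: concretely, both are annihilated by the toric operators $\Box_\ell$ and the same Euler (degree) operators, both have leading term $1$, and both have the prescribed logarithmic monodromy determined by $H^2$. Uniqueness of the solution with this leading behavior, together with completeness of the GKZ system from the $B$-side, forces the equality. For the explicit $\mathbb{P}^3$ cases I would additionally cross-check low-degree orbifold invariants against the expansion of the $B$-series, and, as a byproduct, read off Morrison's conjecture (integrality/structure of the instanton numbers) in those examples.

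The main obstacle I anticipate is the $A$-side computation of the \emph{untwisted} orbifold Gromov--Witten invariants: one must pin down the virtual class contributions of maps whose images meet the branch divisor, control the twisted sectors carefully enough to excise them, and verify that the resulting generating function genuinely solves the fractional GKZ system rather than a nearby one — the half-integer shift in the hypergeometric parameters means the usual smooth quantum Lefschetz does not apply verbatim, and the correction terms (mirror map, possible ``bar'' factor in the $I$-function) must be determined rather than guessed. Establishing the orbifold quantum Lefschetz statement with the correct fractional twist, or equivalently identifying the double cover with a geometry to which an existing mirror theorem applies, is the crux; once that is in hand, the $D$-module uniqueness argument makes the final identification essentially formal.
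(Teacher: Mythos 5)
You correctly identify the paper's architecture for the quantum test — compare a $B$-side cohomology-valued GKZ series for $Y^{\vee}$ against the $A$-side (untwisted) orbifold $J$-function of $Y$ via a mirror map — and you correctly locate the crux: one needs either an orbifold quantum Lefschetz with a fractional twist or a way to realize $Y$ inside a geometry to which an existing mirror theorem applies. But your proposal leaves that crux unresolved, and resolving it is precisely the paper's central technical move: the \emph{pre-quotient} construction. One builds a \emph{smooth} Calabi--Yau $Y'$ together with a finite abelian $2$-group $G'$ such that $Y\cong Y'/G'$ sits as a complete intersection or anticanonical hypersurface in a toric orbifold $[X'/G']$ — concretely $Y'=\mathbb{P}^{7}[2,2,2,2]$ with $|G'|=64$ in case (1), $Y'=\mathbb{P}(1,1,1,1,4)[8]$ with $|G'|=8$ in case (2), and for general $r=1$ an anticanonical hypersurface in a semi-Fano simplicial toric $X'$ obtained by contracting the divisor at infinity of $\mathbf{P}_{X}(\mathbb{L}\oplus\mathbb{C})$ and then partially resolving. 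Once this is in hand, Tseng's orbifold quantum Lefschetz for $[X'/G']$ applies off the shelf, and the untwisted $I$-function of $[Y'/G']$ differs from the ordinary $I$-function of $Y'$ only in the name of the hyperplane class (plus a Poincar\'e-pairing normalization), so nothing like ``excising twisted sectors'' or analyzing virtual contributions of maps meeting the branch divisor is required. Without the pre-quotient the ``fractional quantum Lefschetz'' you gesture at has no available form; this is a genuine gap.

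Your matching step — a $D$-module uniqueness argument asserting both sides solve the same fractional GKZ system with the same normalization — is also not what the paper does and is somewhat circular: that the $A$-side series solves that GKZ system is precisely what needs to be shown. The paper instead proves a concrete $\Gamma$-function identity (Theorem \ref{thm:same-hol-series}) showing that the holomorphic solution to $\mathcal{M}(A_{\mathrm{ext}},\beta)$ with exponent $-1/2$ equals, under the explicit substitution $w_0=-x_0^2/4x_{p+1}$, the holomorphic solution to the integer-exponent system $\mathcal{M}(\bar A,\bar\beta)$ of the pre-quotient hypersurface, and then promotes this scalar identity to a cohomology-valued one via the surjective pullback $g^{*}\colon\mathrm{H}^{\bullet}(X';\mathbb{C})\to\mathrm{H}^{\bullet}(X;\mathbb{C})$ coming from the graph embedding $g\colon X\to X'$. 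Finally, a small but real slip: Morrison's conjecture in this paper is the assertion that extremal transitions are reversed under mirror symmetry, verified in Theorem \ref{thm:morrison} by exhibiting the transition $\tilde{Y}^{\vee}\rightsquigarrow Y^{\vee}\leftarrow S^{\vee}$; it is not a statement about integrality or structure of the instanton numbers, so your proposed ``byproduct'' misreads what is being tested. You should also note that the statement you are proving is a \emph{conjecture} which the paper does not establish in general; what is actually proved are the mirror theorems for the two $\mathbb{P}^{3}$ cases and for trivial nef-partition $r=1$.
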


%The conjecture is served as an 
%extension of the known mirror correspondence
%for smooth Calabi--Yau manifolds to 
%\emph{singular Calabi--Yau varieties}.

In this paper, we study the conjecture by 
the quantum test; we will compute the
cohomology-valued \(B\)-series from \(Y^{\vee}\) and 
compare it with the
untwisted part of the genus zero orbifold
Gromov--Witten invariants of \(Y\).

In the first part of this paper, we examine two
explicit examples: double covers
of \(\mathbb{P}^{3}\) whose branching divisor is
\begin{itemize}
\item[(1)] a union 
of eight hyperplanes in general position, or
\item[(2)] a union of four hyperplanes and
a quartic in general position.
  
\end{itemize}
In both cases, we are able to establish the following ``mirror theorem.''

\begin{theorema}[=Theorem \ref{thm:main-theorem-hhhh} and
Theorem \ref{thm:main-theorem-4h}]
The \(A\)-model correlation functions of \(Y\)
are identical with \(B\)-model correlation functions of \(Y^{\vee}\)
under the mirror maps in both cases (1) and (2).
\end{theorema}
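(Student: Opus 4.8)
The plan is to reduce the identity of $A$-model and $B$-model correlation functions to an explicit comparison of two power series (or cohomology-valued series) and then to carry out that comparison directly in each of the two cases. On the $B$-side, the starting point is that the period integrals of the equisingular family of $Y^{\vee}$ satisfy a GKZ $\mathcal{A}$-hypergeometric system with a fractional exponent (coming from the double-cover structure), and — as recalled in the introduction — this system is \emph{complete}, so the period sheaf is spanned by the explicit hypergeometric series. From the principal symbols of the GKZ operators one extracts the cohomology ring of the base $X^{\vee}$ of the conjectured mirror, and this produces a cohomology-valued $B$-series (the Hosono–Lian–Yau $I$-function). The first step, then, is to write down this $B$-series explicitly in cases (1) and (2): in case (1) the base is an MPCP desingularization of $\mathbf{P}_{\nabla}$ for the trivial nef-partition $-K_{\mathbb{P}^{3}} = h + h + h + h + h + h + h + h$ wait — for eight hyperplanes the partition is $-K = h + \cdots + h$ ($8$ copies) with $\mathbf{P}_{\nabla}$ accordingly; in case (2) it is the nef-partition $4h = h + h + h + h$ together with the quartic, so $\nabla$ splits into five pieces and $X^{\vee}$ is the corresponding MPCP resolution. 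I would record the Mori cone generators and the linear/nonlinear relations in $\mathrm{H}^{*}(X^{\vee})$ so that the $B$-series is a concrete generating function in the Kähler parameters $q_{1},\dots,q_{k}$.

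On the $A$-side, the task is to compute the untwisted part of the genus-zero orbifold Gromov–Witten invariants of the singular double cover $Y$. Here the key structural input is that $Y \to X$ is a double cover branched along a strict normal crossing divisor $D_{\infty} \cup \bigcup_i \mathrm{div}(s_i)$, so $Y$ is a $\mathbb{Z}/2$-orbifold whose inertia stack has an untwisted sector (a copy of the coarse space, or rather of $X$ with its natural structure) and twisted sectors supported over the branch components. The untwisted orbifold GW invariants should be computable by realizing $Y$, or an appropriate toric/GLSM model of it, as a complete intersection-type object and applying an orbifold quantum Lefschetz / mirror-theorem machinery (Coates–Corti–Iritani–Tseng, Givental-style) to identify the small $J$-function. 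Concretely I would: (i) set up the ambient orbifold toric data for $Y$ (a double cover is governed by an extra coordinate with a $\tfrac12$-weight), (ii) write the $S$-extended $I$-function of the ambient orbifold via the mirror theorem, (iii) impose the quantum Lefschetz modification for the defining section(s), and (iv) project onto the untwisted sector. Because the branch divisor is the \emph{full} toric boundary $D_{\infty}$ plus the $s_i$, the fractional weights arrange themselves so that the untwisted part of the resulting $I$-function is again a hypergeometric series in the same number of parameters.

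The comparison step is then: show that the truncation/untwisted $I$-function of $Y$ equals the $B$-series of $Y^{\vee}$ after the mirror map. Classically this is done by checking that both are annihilated by the same Picard–Fuchs / GKZ operators and have matching leading asymptotics at the large-volume / maximal-unipotent point; I would follow the same template. The mirror map itself is read off from the ratio of the two distinguished solutions (the constant $1$ and the logarithmic solutions) of the GKZ system, exactly as in the quintic case, and then one normalizes the $I$-function to a $J$-function by dividing by its degree-zero component and reparametrizing — this is the standard $I \leadsto J$ procedure. Matching the normalized series term by term in $q$, or better, matching them as solutions of the same holonomic system with the same initial conditions, yields the equality of correlation functions.

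The hard part will be the $A$-side computation: justifying that the untwisted genus-zero orbifold GW theory of the singular double cover $Y$ is actually computed by the naive orbifold-quantum-Lefschetz $I$-function, i.e.\ that there is no unexpected contribution from the twisted sectors bleeding into the untwisted correlators, and that the $\mathbb{Z}/2$-gerbe structure along $D_\infty$ does not obstruct the convergence/regularity needed to run Givental's formalism. Controlling the inertia stack of $Y$ — in particular its components over the strata of the SNC divisor $D_\infty \cup \bigcup_i \mathrm{div}(s_i)$ and their contributions to the orbifold Poincaré pairing — and verifying that the ``untwisted part'' is closed under the quantum product is where the real work lies; once that is in place, the GKZ-completeness result quoted above makes the final matching essentially a bookkeeping exercise in each of the two cases.
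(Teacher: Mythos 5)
Your plan correctly sees the global shape of the argument — GKZ $B$-series on one side, orbifold $I$-function and quantum Lefschetz on the other, matched under the mirror map — but it misses the one construction that makes the $A$-side computation actually go through, and this is exactly the place you yourself flag as ``the hard part.'' The paper does \emph{not} try to present $Y$ directly as a hypersurface in an ambient orbifold carrying a half-weighted coordinate for the double cover. Instead it builds a \emph{pre-quotient space} $Y'$: a genuinely smooth Calabi--Yau complete intersection together with a finite abelian group $G'$ acting on it so that $Y \cong Y'/G'$. In case (1), $Y' = \mathbb{P}^{7}[2,2,2,2]$ and $G'$ is an index-two subgroup of $(\mathbb{Z}/2)^{8}/\text{diag}$, realized toric-geometrically via a sublattice $N''\subset N'\subset N$; in case (2), $Y' = \mathbb{P}(1,1,1,1,4)[8]$. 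Then $[Y'/G']$ is a complete intersection inside a genuine \emph{quotient stack} of a simplicial toric variety, and the CCIT mirror theorem for toric stacks plus Tseng's orbifold quantum Lefschetz apply off the shelf.

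This is what dissolves the twisted-sector contamination problem you identify but do not resolve. For the quotient stack $[\mathbb{P}^{7}/G']$ the paper checks concretely that $\overline{\mathrm{NE}}_{g}\neq\emptyset$ if and only if $g=e$, so the $I$-function has \emph{no} twisted-sector terms at all; the untwisted part is not merely a truncation, it is the whole thing. In case (2) the only nontrivial boxes come from the one singular cone of $\mathbb{P}(1,1,1,1,4)$ and are explicitly tabulated, after which restricting to $\mathbf{1}_{e}$ is an honest projection with no residual mixing. What remains is then almost no work: the restricted $I$-function of $[Y'/G']$ is \emph{literally} the $I$-function of the smooth pre-quotient $Y'$ with $h$ replaced by $H$, so the mirror maps coincide by inspection; the only discrepancy is a constant coming from the ratio of Poincar\'e pairings $\int_{|[Y'/G']|}H^{3}$ versus $\int_{Y'}h^{3}$ (a factor $1/8$ in case (1), equality in case (2)), matching the normalization the $B$-side $\Gamma$-factors force (the $256$ in the change of variables). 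There is no need for a term-by-term Picard--Fuchs matching.

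Two further corrections. The nef-partition in case (1) is $-K_{\mathbb{P}^{3}}=h+h+h+h$ with $r=4$, not eight copies of $h$; the eight hyperplanes arise because $D_{\infty}$ (the four coordinate hyperplanes) is always part of the branch divisor, on top of the four generic $\mathrm{div}(s_{j})$. And the half-weighted / $\mathbf{P}_{\mathbb{P}^{3}}(\mathbb{C}\oplus\mathbb{L})$-type approach you sketch for the $A$-side is precisely the route the paper takes up in its appendix and rejects for the main argument, because that ambient space has Picard rank two, so the $I$-function has an extra Novikov variable and one needs a nontrivial, not-systematically-controlled change of variables to kill it. Without the pre-quotient device, your program stalls at exactly the step you flagged.
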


The proof follows from a direct calculation. To compute the invariants
of the CY double cover \(Y\), the strategy is to embed the orbifold \(Y\) into another
toric orbifold, where the invariants can be computed
systematically, and then apply a version of quantum hyperplane
section theorem for orbifolds in 
\cite{2010-Tseng-orbifold-quantum-riemann-roch-lefschetz-and-serre}
to achieve our goal.
Thus the main task is to find an appropriate simplicial toric
variety in which the quantum hyperplane section theorem for \(Y\)
can be applied. In case (1), the ambient simplicial 
toric variety is a quotient of \(\mathbb{P}^{7}\) by 
a finite subgroup of its maximal torus and
\(Y\) is a quotient of \(\mathbb{P}^{7}[2,2,2,2]\)
(a smooth complete intersection of four quadrics)
by the same group, whereas in case (2)
the ambient simplicial 
toric variety is a quotient of \(\mathbb{P}(1,1,1,1,4)\) by 
a finite subgroup of its maximal torus and
\(Y\) is a quotient of \(\mathbb{P}(1,1,1,1,4)[8]\)
(a smooth degree 8 hypersurface) by the same group.

\begin{remark}
In case (1), the CY double cover \(Y\) of \(\mathbb{P}^{3}\)
is hence closely related to a CY complete intersection
\(\mathbb{P}^{7}[2,2,2,2]\), which
has been studied in \cites{2010-Caldararu-Distler-Hellerman-Pantev-Sharpe-non-birational-twisted-derived-equivalences-in-abelian-glsms,2013-Sharpe-predictions-for-gromov-witten-invariants-of-noncommutative-resolutions}. In fact,
through a gauged linear sigma model, Sharpe obtained
interesting predictions of ``Gromov--Witten invariants'' in 
\cite{2013-Sharpe-predictions-for-gromov-witten-invariants-of-noncommutative-resolutions}*{\S 4}. And now it is confirmed that these mysterious numbers are genus zero 
untwisted orbifold Gromov--Witten invariants of \(Y\),
the CY double cover of \(\mathbb{P}^{3}\) branched along eight hyperplanes
in general position.
\end{remark}

\begin{remark}
Our calculation for the mirror maps shows that
the \(\Gamma\) factors in the expression of holomorphic periods
are crucial. See also Remark \ref{rmk:gamma-factor}.
\end{remark}

\begin{remark}
In general, there are many embeddings one can potentially use. For instance,
in case (2), we can also embed our singular double cover \(Y\) into an orbifold - 
a quotient of \(\mathbb{P}_{\mathbb{P}^{3}}(\mathbb{C}\oplus\mathbb{L})\)
by a finite subgroup in its maximal torus. 
Here, \(\mathbb{L}\) is the total space of the anti-canonical bundle of 
\(\mathbb{P}^{3}\). The invariants can be
also obtained by manipulating Tseng's quantum hyperplane section theorem for orbifolds.
However, one then must take a non-trivial mirror map to 
obtain the correct series for invariants mainly because the Picard number is reduced
by one after taking the hyperplane sections; the original
two-variable series must be able to be transformed
into a one-variable series. Since we do not have a systematical way
to deal with the change of variable involved, we will not take this approach in this paper. The detail can be found in Appendix
\ref{app:comparison_of_computations}.
See also Remark \ref{rmk:not-p1-bundle}. 
\end{remark}

The construction for case (2) can be generalized
and we are able to prove a mirror theorem when
the nef-partition is a trivial partition, i.e.~\(r=1\). In which
case, we have \(E_{1}=-K_{X}\) and \(F_{1}=-K_{X^{\vee}}\).
We can summarize our second main result as follows.

\begin{theorema}[=Theorem \ref{thm:main}]
For \(r=1\), the cohomology-valued 
\(B\)-series constructed from the period integrals
of \(Y^{\vee}\) computes the genus zero untwisted orbifold 
Gromov--Witten invariants of \(Y\) 
with all insertions from the base \(X\) after a change of variables.
\end{theorema}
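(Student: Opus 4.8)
The plan is to run the ``embed--twist--compare'' strategy already used for case~(2), now over an arbitrary semi-Fano toric base \(X\) with the trivial nef-partition \(E_{1}=-K_{X}\). Write \(\pi\colon Y\to X\) for the gauge fixed double cover branched along \(B=D_{\infty}\cup\mathrm{div}(s_{1})\in|{-2K_{X}}|\), a strict normal crossing divisor on the smooth variety \(X\); locally over a stratum where \(k\) components of \(B\) meet, \(Y\) is the hypersurface \(\{t^{2}=x_{1}\cdots x_{k}\}\cong\mathbb{C}^{k}/(\mathbf{Z}/2)^{k-1}\), so \(Y\) is an honest orbifold whose twisted sectors are supported over the deeper strata of \(B\), while \(\pi^{*}\mathrm{H}^{*}(X)\) lands in the untwisted sector. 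It is exactly (and only) this ``base'' part of the genus-zero theory that the theorem addresses, and this restriction is what keeps both sides ``toric-hypergeometric.'' The first task is to realize \(Y\) as an anticanonical hypersurface in a toric orbifold \(\mathcal{X}\). The correct compactification is \emph{not} the \(\mathbb{P}^{1}\)-bundle \(\mathbb{P}_{X}(\mathcal{O}_{X}\oplus\mathcal{O}_{X}(-K_{X}))\), which raises the Picard rank by one and forces the unwieldy change of variables discussed in Remark~\ref{rmk:not-p1-bundle}, but the ``weighted'' compactification generalizing \(\mathbb{P}(1,1,1,1,4)\) over \(\mathbb{P}^{3}\): one adjoins to a lift of the stacky fan of \(X\) a single extra ray in the fibre direction, of the weight dictated by \(-K_{X}\), so that \(\rho(\mathcal{X})=\rho(X)\), the fibre coordinate \(t\) carries anticanonical weight, and \(Y=\{t^{2}=s_{B}\}\) is cut out by a section of \(\mathcal{L}=-K_{\mathcal{X}}\). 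One then checks, possibly after a finite abelian quotient as in case~(2), that this is an isomorphism of orbifolds matching the \((\mathbf{Z}/2)^{k-1}\) inertia on both sides; in particular the K\"ahler moduli of \(Y\) coincide with those of \(X\).

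Next I would compute the \(A\)-model of \(Y\). The toric Deligne--Mumford stack \(\mathcal{X}\) has an explicit \(I\)-function by the known mirror theorem for toric orbifolds, and feeding the bundle \(\mathcal{L}=-K_{\mathcal{X}}\) into Tseng's orbifold quantum Riemann--Roch and hyperplane section theorem~\cite{2010-Tseng-orbifold-quantum-riemann-roch-lefschetz-and-serre} produces a hypergeometric \(I\)-function \(I_{Y}\); the fractional shifts coming from the new ray of non-unit weight are precisely what generates the \(\Gamma\bigl(\tfrac12+\cdots\bigr)\)-type factors highlighted in Remark~\ref{rmk:gamma-factor}. Restricting the twisted classes to \(\pi^{*}\mathrm{H}^{*}(X)\) isolates the untwisted base part, and a Birkhoff factorization --- the mirror transformation --- rewrites \(I_{Y}\) as the small \(J\)-function \(J_{Y}\), whose coefficients are the genus-zero untwisted orbifold Gromov--Witten invariants of \(Y\) with insertions from the base \(X\); this transformation is the change of variables named in the theorem.

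Finally I would identify \(I_{Y}\) with the cohomology-valued \(B\)-series of \(Y^{\vee}\). Both are GKZ-type hypergeometric series with fractional exponents, and for the trivial nef-partition the Batyrev--Borisov duality between \(\Delta\) and \(\nabla\) matches the combinatorial data of \(\mathbf{P}_{\Delta}\) and \(\mathbf{P}_{\nabla}\) --- the exponents, the \(\Gamma\)-factors, and the cohomology ring in which the series takes values --- term by term, exactly as in the classical toric complete-intersection case, here invoking the identification of the GKZ principal symbols with the cohomology ring from \cite{2022-Lee-Lian-Yau-on-calabi-yau-fractional-complete-intersections}. Chaining the three steps, the \(B\)-series of \(Y^{\vee}\) equals \(I_{Y}\), which under the mirror transformation becomes \(J_{Y}\), whose coefficients are the genus-zero untwisted Gromov--Witten invariants of \(Y\) with base insertions --- which is the assertion.

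The principal obstacle, I expect, is the second step rather than the first. Orbifold quantum Lefschetz is genuinely delicate when the hypersurface meets the stacky locus of the ambient and when the twisting line bundle is only nef, so one must verify that Tseng's hypotheses apply in our situation and that the ``non-convex'' corrections either vanish on \(\pi^{*}\mathrm{H}^{*}(X)\) or are absorbed into the mirror transformation; one must also check that \(I_{Y}\) has the admissible shape for which the mirror transformation is a well-defined, invertible change of variables matching the Hodge-theoretic mirror map on the \(Y^{\vee}\) side. A secondary, bookkeeping-heavy point is confirming in the first step that the orbifold structure of \(Y\) produced by the normal-crossing branch divisor agrees on the nose --- along triple and higher intersections of the components of \(B\), and after the finite quotient --- with the stack structure of the weighted hypersurface in \(\mathcal{X}\).
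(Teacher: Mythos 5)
Your proposal is correct and follows essentially the same ``embed--twist--compare'' strategy as the paper, and the worry you flag about orbifold quantum Lefschetz meeting the stacky locus is exactly the issue the paper's construction is designed to sidestep. Where the paper is sharper is in how the embedding is organized: rather than working with the orbifold $\mathcal{X}$ directly, the paper first builds a \emph{smooth} semi-Fano toric ambient $X'$ (a simplicial partial crepant resolution of the contraction $Z''$ of $\mathbf{P}_X(\mathbb{L}\oplus\mathbb{C})$, with $\Sigma_{X'}(1)=\{\nu_1,\dots,\nu_p,(\mathbf{0},-1)\}$, so indeed $\rho(X')=\rho(X)$ as you predict), and then constructs a \emph{smooth} CY hypersurface $Y'\subset X'$ (Lemma \ref{lem:y'-is-smooth}) --- the pre-quotient --- so that $Y\cong Y'/G'$ for a concrete subgroup $G'\subset G$ of the Galois group of the ramified cover $\Phi\colon X'\to X'$. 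This lets the paper invoke the \emph{classical smooth} Batyrev--Borisov/GKZ mirror theory for $(Y',X')$ and transfer it to the orbifold via the finite quotient, rather than applying orbifold quantum Lefschetz directly to $Y\subset\mathcal{X}$; the heavy lifting then reduces to two crisp facts that your ``term by term'' gestures toward but does not pin down --- Theorem \ref{thm:same-hol-series}, which matches the GKZ holomorphic series $\Phi^{\gamma}$ and $\bar\Phi^{\bar\gamma}$ via the Legendre duplication identity $\Gamma(1-2\ell_0)/\Gamma(1-\ell_0)=2^{-2\ell_0}\Gamma(1/2-\ell_0)/\Gamma(1/2)$ together with the change of variable $w_0=-x_0^2/4x_{p+1}$, and the Lemma that $g^*\colon\mathrm{H}^\bullet(X')\to\mathrm{H}^\bullet(X)$ is surjective with $g^*B_{X'}^{\bar\gamma}=B_X^\gamma$. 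Your hedge ``possibly after a finite abelian quotient as in case (2)'' is the right instinct, but the pre-quotient $Y'$ is not optional here --- it is the device that makes the lattice modification (passing from $\overline{N}$ to $\overline{N}'$ and $\overline{N}''$) and the subsequent cohomology identification precise, and avoids the delicate non-convexity questions for orbifold Lefschetz that you correctly anticipate as the principal obstacle.
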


Our singular mirror proposal is also related to Morrison's conjecture
which states
that extremal transitions are reversed under mirror
symmetry \cite{1999-Morrison-through-the-looking-glass}. 
Here, an extremal transition 
is a birational contraction from
a smooth CY to a singular one and then followed by a complex
smoothing to another smooth CY.

By its nature, singular CY double cover \(Y\) of \(X\)
has a smoothing \(S\) by deforming the branching divisor.
In this way, \(S\) is a smooth double cover of \(X\)
and hence it is an anti-canonical hypersurface in a certain
semi-Fano toric manifold.
When \(r=1\), we can check that \(Y\)
admits a crepant resolution \(\tilde{Y}\to Y\)
and, more importantly, \(\tilde{Y}\) remains
an anti-canonical hypersurface in a certain toric orbifold
coming from a reflexive polytope. We thus have 
an extremal transition
\begin{equation}
  \begin{tikzcd}
    & &\tilde{Y}\ar[d]\\
    &S\ar[r,rightsquigarrow] &Y
  \end{tikzcd}
\end{equation}
This provides a nice place to test Morrison's conjecture
since mirrors of both \(S\) and \(\tilde{Y}\) are known
due to Batyrev. We will prove
\begin{theorema}[=Theorem \ref{thm:morrison}]
The mirrors \(\tilde{Y}^{\vee}\) and \(S^{\vee}\) 
are connected through an extremal transition. Indeed, we have
a mirror transition
\begin{equation}
  \begin{tikzcd}
    & &S^{\vee}\ar[d]\\
    &\tilde{Y}^{\vee}\ar[r,rightsquigarrow] &Y^{\vee}.
  \end{tikzcd}
\end{equation}
\end{theorema}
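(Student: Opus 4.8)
The plan is to run the Batyrev--Borisov mirror machinery on both ends of the known extremal transition $S \rightsquigarrow Y \leftarrow \tilde Y$ and show that the combinatorics of the polytopes involved dictate the reversed transition $\tilde Y^{\vee} \rightsquigarrow Y^{\vee} \leftarrow S^{\vee}$. Concretely, when $r=1$ we have $E_1 = -K_X$ and $F_1 = -K_{X^{\vee}}$, and the smoothing $S$ is a smooth anti-canonical hypersurface in a semi-Fano toric manifold $W$; by Batyrev's construction $S^{\vee}$ is the anti-canonical hypersurface (suitably MPCP-resolved) in the toric variety associated to the polar dual $\Delta_W^{\circ}$. On the other side, the crepant resolution $\tilde Y \to Y$ is itself an anti-canonical hypersurface in a toric orbifold $\mathbf{P}_{\Sigma}$ coming from a reflexive polytope $\Delta_{\Sigma}$, so its Batyrev mirror $\tilde Y^{\vee}$ lives in the toric variety of $\Delta_{\Sigma}^{\circ}$. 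The theorem then amounts to: (i) identifying $Y^{\vee}$ with the singular anti-canonical model that sits between $\tilde Y^{\vee}$ and $S^{\vee}$, and (ii) checking that the two polytope-level operations — the partial resolution/contraction $\mathbf{P}_{\Sigma} \dashrightarrow \mathbf{P}_{\Delta}$ underlying $\tilde Y \to Y$, and the degeneration $W \rightsquigarrow (\text{double-cover ambient})$ underlying $S \rightsquigarrow Y$ — are swapped by polar duality.

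\medskip

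\emph{Step 1: Pin down the four ambient spaces combinatorially.} First I would write the double-cover construction of $Y$ as an anti-canonical hypersurface in an explicit toric orbifold. For $r=1$, the gauge-fixed double cover of $X$ branched along $D_{\infty} \cup \mathrm{div}(s_1)$ with $s_1 \in \mathrm{H}^0(X,-K_X)$ is cut out inside (a $\mathbb{Z}/2$-quotient of) the $\mathbb{P}^1$-bundle $\mathbb{P}_X(\mathcal{O} \oplus \mathbb{L})$, where $\mathbb{L}$ is the total space of $-K_X$; the corresponding fan/polytope is the standard ``Cayley-type'' construction over $\Delta$. Call its reflexive polytope $\Pi$, so that $\tilde Y \subset \mathbf{P}_{\Pi}$ after an MPCP resolution. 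Simultaneously, the smoothing $S$ is the smooth double cover, an anti-canonical hypersurface in $\mathbf{P}_{\Pi'}$ for a closely related reflexive polytope $\Pi'$ (the difference between $\Pi$ and $\Pi'$ records exactly the lattice points that must be removed/blown down to pass from $\tilde Y$ to $Y$, i.e.\ the exceptional locus of $\tilde Y \to Y$).

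\medskip

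\emph{Step 2: Polar-dualize.} By Batyrev, $S^{\vee}$ and $\tilde Y^{\vee}$ are the (MPCP-resolved) anti-canonical hypersurfaces in $\mathbf{P}_{\Pi'^{\circ}}$ and $\mathbf{P}_{\Pi^{\circ}}$ respectively. The key combinatorial lemma I would prove is that $\Pi^{\circ} \subset \Pi'^{\circ}$ with the face/cone data arranged so that there is a projective birational morphism $\mathbf{P}_{\Pi'^{\circ}} \to \mathbf{P}_{\Pi^{\circ}}$ restricting to a crepant resolution on the anti-canonical hypersurfaces — this is the inclusion dual to $\Pi' \subset \Pi$ (or the appropriate refinement of fans). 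That gives the right-hand vertical arrow $S^{\vee} \to Y^{\vee}$, once we know the common target is the correct $Y^{\vee}$.

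\medskip

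\emph{Step 3: Identify the middle term with $Y^{\vee}$.} I would show that the singular anti-canonical model obtained by (a) contracting $S^{\vee}$ down to $\mathbf{P}_{\Pi^{\circ}}$'s singular anti-canonical hypersurface, and (b) the complex degeneration limit of the $\tilde Y^{\vee}$ family as one moves to the relevant boundary of its complex-structure moduli, both coincide with the gauge-fixed double cover $Y^{\vee}$ of $X^{\vee}$ branched along $D_{\infty}^{\vee} \cup \mathrm{div}(t_1)$, $t_1 \in \mathrm{H}^0(X^{\vee}, -K_{X^{\vee}})$. For (b) this is a Batyrev-style monomial--divisor / partial-smoothing computation: the branching divisor being a \emph{union} of a toric piece and a generic anti-canonical piece corresponds, on the mirror, to restricting the anti-canonical pencil to the sub-linear-system spanned by a distinguished set of monomials, which is exactly the degeneration direction that produces the double-cover singularities. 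This identification is where the quantum test of the earlier theorems is reassuring but not logically needed; here it is purely a matter of matching defining equations and quotient groups.

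\medskip

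\emph{Step 4: Assemble the transition.} With the middle term identified as $Y^{\vee}$, Step 2 gives $S^{\vee} \to Y^{\vee}$ as a crepant (birational, contracting) morphism, and Step 3(b) gives $\tilde Y^{\vee} \rightsquigarrow Y^{\vee}$ as a complex degeneration; together these are precisely an extremal transition in the sense recalled before the statement, read in the reversed direction relative to $S \rightsquigarrow Y \leftarrow \tilde Y$. Finally I would remark that the Hodge-number jump across each transition matches the mirror one by the $90^{\circ}$-rotation of Hodge diamonds established in \cite{2024-Hosono-Lee-Lian-Yau-mirror-symmetry-for-double-cover-calabi-yau-varieties}, giving an internal consistency check.

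\medskip

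\emph{Main obstacle.} The hard part is Step 3(b): making precise the sense in which $Y^{\vee}$ is a partial smoothing / degeneration limit of the Batyrev mirror $\tilde Y^{\vee}$. One must show that the sublinear system of $|-K_{\mathbf{P}_{\Pi^{\circ}}}|$ singled out by the double-cover structure is exactly the one whose generic member has the prescribed double-cover orbifold singularities, and that no extra monomial deformations sneak in — i.e.\ that the ``polynomial'' and ``toric'' parts of the deformation space behave as Batyrev--Borisov duality predicts even though $Y^{\vee}$ is singular. Controlling this requires a careful lattice-point bookkeeping on $\Pi^{\circ}$ relating interior points of codimension-$\le 2$ faces (which govern $h^{1,1}$ and the crepant resolution $\tilde Y^{\vee} \to$ its contraction) to the complement-of-monomials count governing the degeneration; the dual ambient spaces being only simplicial (not smooth) toric orbifolds means one cannot quote the smooth Batyrev statements verbatim and must instead work with the stacky fan combinatorics, as in the ambient-orbifold choices made for the quantum computations earlier in the paper.
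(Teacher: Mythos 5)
Your overall plan --- produce the transition $\tilde{Y} \to Y \rightsquigarrow S$ from a reflexive-polytope inclusion, then polar-dualize --- is the route the paper takes. But you declare Step~3(b) an unresolved obstacle, and the remedy you sketch (monomial--divisor / lattice-point bookkeeping on $\Pi^{\circ}$, stacky-fan accounting) is far heavier than needed. The observation you are missing is that the Cayley-type construction is \emph{self-dual in form}: writing $P_k$ for the convex hull of $\{(ku,1)\colon u\in\operatorname{Vert}(P)\}\cup\{(\mathbf{0},-1)\}$, one has the elementary identities $(\Delta_2)^{\vee}=\nabla_1$ and $(\Delta_1)^{\vee}=\nabla_2$. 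Hence the very construction that produces $\tilde{Y}\to Y\rightsquigarrow S$ from $(\Delta,\Delta_1,\Delta_2)$, when run verbatim on $(\nabla,\nabla_1,\nabla_2)$, produces $S^{\vee}\to Y^{\vee}\rightsquigarrow\tilde{Y}^{\vee}$; the identification of the middle term as the singular double cover $Y^{\vee}$ is then not a separate computation but what the construction yields by definition. Your ``main obstacle'' dissolves once this symmetry is noted, and no monomial--divisor or orbifold-fan argument is required.

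You also have a direction mix-up that would propagate were the argument carried out. You place $\tilde{Y}$ inside $\mathbf{P}_{\Pi}$ and $S$ inside $\mathbf{P}_{\Pi'}$ and assert $\Pi'\subset\Pi$. In fact the inclusion runs the other way: $\tilde{Y}$ is an anticanonical hypersurface in (an MPCP of) $\mathbf{P}_{\Delta_1}$, $S$ in (an MPCP of) $\mathbf{P}_{\Delta_2}$, with $\Delta_1\subset\Delta_2$ --- passing to the generic anticanonical in the \emph{larger} linear system is what smooths $Y$ to $S$. Relatedly, the anticanonical polytope of the bundle $\mathbf{P}_X(\mathbb{L}\oplus\mathbb{C})$ is $\Delta_2$, i.e.\ the $S$-side polytope, not the $\tilde{Y}$-side one as you wrote; $\tilde{Y}$'s ambient $\mathbf{P}_{\Delta_1}$ arises only after the crepant blow-ups resolving $Y$. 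Getting these orientations right is exactly what determines which dual arrow is a contraction and which a smoothing.
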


%\begin{remark}
%The quantum test, i.e., the correspondence between
%enumerative geometry (more precisely, the
%Chen--Ruan orbifold Gromov--Witten invariants) and complex geometry
%(deformation of complex structures), for the conjecture will be treated in
%our forthcoming paper. 
%\end{remark}

\quad \\
\noindent {\bf Acknowledgment}.~The authors would like to express 
our special thanks to our long-time collaborator Shinobu Hosono, 
who has played a critical role in initiating our program of studying 
CY mirror pairs as singular double covers. We also thank Mauricio Romo 
and Leonardo Santilli for their collaboration on studying these mirror 
pairs in the GLSM approach. Research of TJL is supported 
by NSTC 112-2115-M-006-016-MY3. 
Research of BHL is partially supported by SIMIS.

Note added: After the current paper has been completed, 
A.~Harder and S.~Lee \cite{Harder:2025aa} posted a proof of the 
topological mirror duality for double covers CYs in all dimensions, 
as was conjectured by Hosono and us, and 
proved in dimension 3 in \cite{2024-Hosono-Lee-Lian-Yau-mirror-symmetry-for-double-cover-calabi-yau-varieties}. 
The result has also been generalized to larger 
class of singular Galois covers. We also mention that
our result in Section 
\ref{sec:morrison_s_conjecture} has 
a significant 
overlap with results in \cite{Harder:2025aa}.

\section{Preliminaries}
%\subsection{Notation}
\label{subsection:notation-all}
We begin with some notation and terminologies.
\begin{itemize}
\item Let \(N=\mathbb{Z}^{n}\) be a rank \(n\) lattice and
\(M=\mathrm{Hom}_{\mathbb{Z}}(N,\mathbb{Z})\) be its dual lattice. 
Let \(N_{\mathbb{R}}:=N\otimes_{\mathbb{Z}}\mathbb{R}\) and 
\(M_{\mathbb{R}}:=M\otimes_{\mathbb{Z}}\mathbb{R}\).
\item Let \(\Sigma\) be a fan in \(N_{\mathbb{R}}\) and \(X_{\Sigma}\) be
the toric variety determined by \(\Sigma\). 
Let \(T\subset X_{\Sigma}\) be its maximal torus 
with coordinates \(t_{1},\ldots,t_{n}\).
\item We denote by \(\Sigma(k)\) the set of \(k\)-dimensional cones in \(\Sigma\).
In particular, \(\Sigma(1)\) is the set of \(1\)-cones in \(\Sigma\). 
Similarly, for a cone \(\sigma\in\Sigma\),
we denote by \(\sigma(1)\) the set of \(1\)-cones belonging to \(\sigma\).
By abuse of the notation, we also denote by \(\rho\) 
the primitive generator of the corresponding 
\(1\)-cone.
\item Each \(\rho\) determines a \(T\)-invariant Weil divisor on \(X_{\Sigma}\),
which is denoted by \(D_{\rho}\) hereafter.
Any \(T\)-invariant Weil divisor \(D\) is of the form
\(D=\sum_{\rho\in\Sigma(1)} a_{\rho}D_{\rho}\). The polyhedron of \(D\) is defined to be
\begin{equation*}
\Delta_{D}:=\left\{m\in M_{\mathbb{R}}\colon \langle m,\rho\rangle\ge -a_{\rho}~
\mbox{for all}~\rho\right\}.
\end{equation*}
The integral points \(M\cap\Delta_{D}\) gives rise to a canonical
basis of \(\mathrm{H}^{0}(X_{\Sigma},D)\).
\item A \emph{nef-partition} on \(X_{\Sigma}\) 
is a decomposition of \(\Sigma(1)=\sqcup_{k=1}^{r} I_{k}\)
such that \(E_{k}:=\sum_{\rho\in I_{k}} D_{\rho}\) is nef for each \(k\). Recall that 
a divisor \(D\) is called nef if \(D.C\ge 0\) for any irreducible
complete curve \(C\subset X_{\Sigma}\).
We also have \(E_{1}+\cdots+E_{r}=-K_{X_{\Sigma}}\).
\item A polytope in \(M_{\mathbb{R}}\) is called a \emph{lattice polytope}
if its vertices belong to \(M\). For a lattice polytope \(\Delta\)
in \(M_{\mathbb{R}}\), we denote by \(\Sigma_{\Delta}\) the normal fan of 
\(\Delta\). The toric variety determined by \(\Delta\) is denoted by \(\mathbf{P}_{\Delta}\),
i.e., \(\mathbf{P}_{\Delta}=X_{\Sigma_{\Delta}}\).
\item A \emph{reflexive polytope} \(\Delta\subset M_{\mathbb{R}}\) is a lattice polytope 
containing the origin \(\mathbf{0}\in M_{\mathbb{R}}\) in its 
interior and such that the polar dual 
\(\Delta^{\vee}\) is again a lattice polytope. If
\(\Delta\) is a reflexive polytope, then \(\Delta^{\vee}\) is also a lattice
polytope and satisfies \((\Delta^{\vee})^{\vee}=\Delta\). The normal fan of \(\Delta\)
is the face fan of \(\Delta^{\vee}\) and vice versa.
\end{itemize}

\subsection{The Batyrev--Borisov duality construction}
\label{subsection:b-b-construction}
We briefly recall the construction of the dual nef-partition. The standard references are
\cites{1994-Batyrev-dual-polyhedra-and-mirror-symmetry-for-calabi-yau-hypersurfaces-in-toric-varieties,1996-Batyrev-Borisov-on-calabi-yau-complete-intersections-in-toric-varieties}.
Let \(I_{1},\ldots,I_{r}\) be a nef-partition on \(\mathbf{P}_{\Delta}\).
This gives rise to a Minkowski sum decomposition
\(\Delta=\Delta_{1}+\cdots+\Delta_{r}\), where \(\Delta_{i}=\Delta_{E_{i}}\)
is the section polytope of \(E_{i}\).
Following Batyrev--Borisov, let \(\nabla_{k}\) be the 
convex hull of \(\{\mathbf{0}\}\cup I_{k}\) and
\(\nabla=\nabla_1+\cdots+\nabla_{r}\) be their Minkowski sum.
One can prove that \(\nabla\) is a reflexive polytope in \(N_{\mathbb{R}}\)
whose polar dual is \(\nabla^{\vee}=\mathrm{Conv}(\Delta_{1},\ldots,\Delta_{r})\)
and \(\nabla_1+\cdots+\nabla_{r}\) corresponds to a nef-partition on \(\mathbf{P}_{\nabla}\),
called the \emph{dual nef-partition}.
The corresponding nef toric divisors are denoted by \(F_{1},\ldots,F_{r}\).
Then the section polytope of \(F_{j}\) is \(\nabla_{j}\).

Let \(X\to \mathbf{P}_{\Delta}\) and \(X^{\vee}\to \mathbf{P}_{\nabla}\)
be maximal projective crepant partial (MPCP for short hereafter) resolutions
for \(\mathbf{P}_{\Delta}\) and \(\mathbf{P}_{\nabla}\).
Via pullback, the nef-partitions on \(\mathbf{P}_{\Delta}\) and \(\mathbf{P}_{\nabla}\)
determine nef-partitions on \(X\) and \(X^{\vee}\) and they determine 
the families of Calabi--Yau complete intersections in \(X\) and \(X^{\vee}\) respectively.

Recall that the section polytopes \(\Delta_{i}\) and \(\nabla_{j}\)
correspond to \(E_{i}\) on \(\mathbf{P}_{\Delta}\) and 
\(F_{j}\) on \(\mathbf{P}_{\nabla}\), respectively.
To save the notation, the corresponding nef-partitions and toric divisors 
on \(X\) and \(X^{\vee}\) will be still denoted by \(\Delta_{i}\), \(\nabla_{j}\) and
\(E_{i}\), \(F_{j}\) respectively.

\subsection{Calabi--Yau double covers} 
\label{subsection:cy-double-covers}
We briefly review the construction of Calabi--Yau double covers in
\cite{2024-Hosono-Lee-Lian-Yau-mirror-symmetry-for-double-cover-calabi-yau-varieties}.
Let \(\Delta=\Delta_{1}+\cdots+\Delta_{r}\) and \(\nabla=\nabla_{1}+\cdots+\nabla_{r}\)
be a dual pair of nef-partitions  
representing \(E_{1}+\cdots+E_{r}\) on \(-K_{\mathbf{P}_{\Delta}}\)
and \(F_{1}+\cdots+F_{r}\) on \(-K_{\mathbf{P}_{\nabla}}\) respectively.
Let \(X\) and \(X^{\vee}\) be the MPCP resolution of \(\mathbf{P}_{\Delta}\) and \(\mathbf{P}_{\nabla}\) respectively.
Hereafter, we will simply call the decomposition \(\Delta=\Delta_{1}+\cdots+\Delta_{r}\)
a nef-partition on \(X\) for short with understanding the nef-partition \(E_{1}+\cdots +E_{r}\)
and likewise for the decomposition \(\nabla=\nabla_{1}+\cdots+\nabla_{r}\).
Unless otherwise stated, we assume that 
%Let \(E_{1},\ldots,E_{r}\) and \(F_{1},\ldots,F_{r}\) be the toric divisors 
%on \(X\) and \(X^{\vee}\) determined by these nef-partitions.
%Throughout this section, we assume that

\begin{center}
{\it \(X\) and \(X^{\vee}\) are both smooth}.
\end{center}
Equivalently, we assume that both \(\Delta\) and \(\nabla\) admit regular triangulations\footnote{
By a regular triangulation, we mean a uni-modular triangulation such that each simplex contains 
the origion as a vertex. A regular triangulation is equivalent to 
a FRST (fine regular star triangulation).}.
From the duality, we have
\begin{equation*}
\mathrm{H}^{0}(X^{\vee},F_{i})\simeq \bigoplus_{\rho\in\nabla_{i}\cap N}\mathbb{C}\cdot t^{\rho}~\mbox{and}~
\mathrm{H}^{0}(X,E_{i})\simeq \bigoplus_{m\in\Delta_{i}\cap M}\mathbb{C}\cdot t^{m}.
\end{equation*}
Here we use the same notation \(t=(t_{1},\ldots,t_{n})\) to 
denote the coordinates on the maximal torus of \(X^{\vee}\) and \(X\).

A double cover \(Y^{\vee}\to X^{\vee}\) has trivial 
canonical bundle if and only if 
the branched locus is linearly equivalent to \(-2K_{X^{\vee}}\).
Let \(Y^{\vee}\to X^{\vee}\) be 
the double cover
constructed from the section \(s=s_{1}\cdots s_{r}\) with
\begin{equation*}
(s_{1},\ldots,s_{r})\in \mathrm{H}^{0}(X^{\vee},2F_{1})\times\cdots\times
\mathrm{H}^{0}(X^{\vee},2F_{r}).
\end{equation*}
%We call \(Y\) a \emph{double cover branched along
%the nef-partition over \(X\)}.

We assume that \(s_{i}\in\mathrm{H}^{0}(X^{\vee},2F_{i})\) is 
of the form \(s_{i}=s_{i,1}s_{i,2}\) with \(s_{i,1},s_{i,2}\in \mathrm{H}^{0}(X^{\vee},F_{i})\).
We further assume that \(s_{i,1}\) is the section corresponding to the lattice point
\(\mathbf{0}\in\nabla_{i}\cap N\), i.e., the scheme-theoretic zero of \(s_{i,1}\)
is \(F_{i}\), and that the scheme-theoretic zero of \(s_{i,2}\) is non-singular. 
Deforming \(s_{i,2}\), we obtain a subfamily of double covers branched along
the nef-partition over \(X^{\vee}\) parameterized by an open subset
\begin{equation*}
V\subset \mathrm{H}^{0}(X^{\vee},F_{1})
\times\cdots\times\mathrm{H}^{0}(X^{\vee},F_{r}).
\end{equation*}

\begin{definition}
\label{definition:gauged-fixed-double-cover-family}
Given a decomposition \(\nabla=\nabla_{1}+\cdots+\nabla_{r}\) representing
a nef-partition \(F_{1}+\cdots+F_{r}\) on \(X^{\vee}\),
the subfamily \(\mathcal{Y}^{\vee}\to V\) 
constructed above is called the \emph{gauge fixed double cover family branched along
the nef-partition over \(X^{\vee}\)} or simply the \emph{gauge fixed double cover family} if 
no confuse occurs.
\end{definition}

Given a decomposition \(\nabla=\nabla_{1}+\cdots+\nabla_{r}\) representing
a nef-partition \(F_{1}+\cdots+F_{r}\) on \(X^{\vee}\) as above,
we denote by \(\mathcal{Y}^{\vee}\to V\) the gauge fixed double cover family. 
A parallel construction is applied for the dual 
decomposition \(\Delta=\Delta_{1}+\cdots+\Delta_{r}\) representing 
the dual nef-partition \(E_{1}+\cdots+E_{r}\) 
over \(X\) and this yields
another family \(\mathcal{Y}\to U\),
where \(U\) is an open subset in 
\begin{equation*}
\mathrm{H}^{0}(X,E_{1})\times\cdots\times\mathrm{H}^{0}(X,E_{r}).
\end{equation*}

\subsection{Notation and conventions}
\label{subsection:notation}
Let us fix the notation and conventions we are going to use throughout this note.
We resume the situation and notation in \S\ref{subsection:b-b-construction}.
\begin{itemize}
\item Let \(X\to\mathbf{P}_{\Delta}\) be a MPCP resolution and 
\(\Sigma\) be the fan defining \(X\). We will assume throughout this note that
both \(X\) and \(X^{\vee}\) are \emph{smooth}.
\item Let \(I_{1},\ldots,I_{r}\) be the induced nef-partition on \(X\) as before. 
We label the elements in \(I_{k}\) by \(i_{k,1},\ldots,i_{k,n_k}\) where 
\(n_{k}=\#I_{k}\). We define \(p=n_{1}+\cdots+n_{r}\).
We will write
\begin{equation*}
\Sigma(1)=\left\{\rho_{i,j}\right\}_{1\le i\le r,~1\le j\le n_{i}}.
\end{equation*}
For convenience, we will also write \(D_{i,j}\) for the 
Weil divisor associated with \(\rho_{i,j}\).
\item Let \(\nu_{i,j}:=(\rho_{i,j},\delta_{1,i},\ldots,\delta_{r,i})
\in N\times\mathbb{Z}^{r}\) be the lifting of \(\rho_{i,j}\),
where \(\delta_{i,j}\) is the Kronecker delta.
We additionally put 
\(\nu_{i,0}:=(\mathbf{0},\delta_{1,i},\ldots,\delta_{r,i})
\in N\times\mathbb{Z}^{r}\) for \(1\le i\le r\).
\item We define an order on the set of double indexes 
by declaring \((i,j)\preceq (i',j')\)
if and only if \(i\le i'\) or \(i=i'\) and \(j\le j'\).
Recall that \(\#\{(i,j)\colon 1\le i\le r,~0\le j\le n_{i}\}=p+r\).
There are unique bijections
\begin{align*}
\begin{split}
 J:=\{(i,j)\colon 1\le i\le r,~0\le j\le n_{i}\} &\to \{1,\ldots,p+r\}
\subset (\mathbb{Z},\le ),\\
 I:=\{(i,j)\colon 1\le i\le r,~1\le j\le n_{i}\} &\to \{1,\ldots,p\}
\subset (\mathbb{Z},\le ),
\end{split}
\end{align*}
preserving the order.
\item For a positive integer \(s\) and a matrix 
\(A_{\mathrm{ext}}\in\mathrm{Mat}_{s\times (p+r)}(\mathbb{Z})\) (resp.~ 
\(A\in\mathrm{Mat}_{s\times p}(\mathbb{Z})\)),
we will label the columns of \(A_{\mathrm{ext}}\) by the ordered set \(J\) 
(resp.~the columns of \(A\) by \(I\))
and speak the \((k,l)\)\textsuperscript{th} column of \(A_{\mathrm{ext}}\)
instead of the \((\sum_{1\le i\le k-1}(n_{i}+1)+l+1)\)\textsuperscript{th} 
column of \(A_{\mathrm{ext}}\) 
(resp.~the \((k,l)\)\textsuperscript{th} column of \(A\) instead of the
\((\sum_{1\le i\le k-1}n_{i}+l)\)\textsuperscript{th} column of \(A\)). 
For instance, 
for \(A_{\mathrm{ext}}\in\mathrm{Mat}_{s\times (p+r)}(\mathbb{Z})\), 
the \((1,0)\)\textsuperscript{th} column of \(A_{\mathrm{ext}}\) is the 
\(1\)\textsuperscript{st} column of \(A_{\mathrm{ext}}\).
The \((r,n_{r})\)\textsuperscript{th} column of \(A_{\mathrm{ext}}\) is the 
last column of \(A_{\mathrm{ext}}\).
\item Define the matrices
\begin{align*}
&A:=
\begin{bmatrix}
\nu_{1,1}^{\intercal} & \cdots & \nu_{r,n_{r}}^{\intercal}
\end{bmatrix}\in\mathrm{Mat}_{(n+r)\times p}(\mathbb{Z}),\\
&A_{\mathrm{ext}}:=
\begin{bmatrix}
\nu_{1,0}^{\intercal} & \cdots & \nu_{r,n_{r}}^{\intercal}
\end{bmatrix}\in\mathrm{Mat}_{(n+r)\times(p+r)}(\mathbb{Z}).
\end{align*}
According to our convention, the columns of \(A\) are labeled by \( I\)
and the columns of \(A_{\mathrm{ext}}\) are labeled by \( J\).
We have the following commutative diagram
\begin{equation*}
\begin{tikzcd}
& \mathbb{Z}^{p+r}\ar[d]\ar[r,"A_{\mathrm{ext}}"] & \mathbb{Z}^{n+r}\ar[d]\\
& \mathbb{Z}^{p}\ar[r,"A"] & \mathbb{Z}^{n}.
\end{tikzcd}
\end{equation*}
The left vertical map is given by forgetting the \((i,0)\)\textsuperscript{th}
component for all \(1\le i\le r\). The right vertical map
is given by projecting to the first \(n\) coordinates.
By assumption, \(A_{\mathrm{ext}}\) and \(A\) are surjective.
Let \(L_{\mathrm{ext}}:=\mathrm{ker}(A_{\mathrm{ext}})\) and \(L=\mathrm{ker}(A)\).
We then have
\begin{equation*}
\begin{tikzcd}
&0\ar[r] &L_{\mathrm{ext}}\ar[r]\ar[d] 
& \mathbb{Z}^{p+r}\ar[d]\ar[r,"A_{\mathrm{ext}}"] & \mathbb{Z}^{n+r}\ar[d]\ar[r] &0\\
&0\ar[r] &L\ar[r]
& \mathbb{Z}^{p}\ar[r,"A"] & \mathbb{Z}^{n}\ar[r] &0
\end{tikzcd}
\end{equation*}
where the leftmost vertical arrow is an isomorphism.

\item Each element \(\ell\in \mathbb{Z}^{s}\) can be 
uniquely written as \(\ell^{+}-\ell^{-}\) where
\(\ell^{\pm}\in\mathbb{Z}^{s}_{\ge 0}\) whose supports are disjoint.
\end{itemize}
%We also consider the total space \(W\) of the vector bundle 
%\(\oplus_{i=1}^{r}\mathcal{O}_{X}(-E_{i})\to X\). The fan structure

\subsection{GKZ \texorpdfstring{\(A\)}{A}-hypergeometric systems}
\label{subsection:gkz}
We adopt the notation in \S\ref{subsection:notation}. 
For \(1\le i\le r\), let \(W_i = \mathbb{C}^{n_i+1}\). 
Let \(x_{i,0},\ldots, x_{i,n_i}\) be a fixed coordinate system on the
dual space \({W_i}^{\vee}\). Set
\(\partial_{i,j}=\partial/\partial x_{i,j}\).
Given the matrix \(A_{\mathrm{ext}}\) as above
and a parameter \(\beta \in \mathbb{C}^{n+r}\), 
the \(A\)-hypergeometric ideal \(I({A_{\mathrm{ext}}},\beta)\)
is the left ideal of the Weyl algebra \(\mathscr{D}=\mathbb{C}[x,\partial]\) on
the \emph{dual} vector space \(W^{\vee}:=W_{1}^{\vee}\times\cdots\times
W_{r}^{\vee}\) generated by the following two types of
operators
\begin{itemize}[leftmargin=3em]
\itemsep=3pt
\item The ``box operators'': \(\partial^{\ell^{+}} - \partial^{\ell^{-}}\),
where \(\ell^{\pm}\in \mathbb{Z}_{\geq 0}^{p+r}\) satisfy 
\(A_{\mathrm{ext}}\ell^{+}=A_{\mathrm{ext}}\ell^{-}\).
Here the multi-index convention is used.
\item The ``Euler operators'': \(\mathscr{E}_{k} - \beta_{k}\), where
\(\mathscr{E}_k=\sum_{(i,j)\in J}\langle \nu_{i,j},\mathrm{e}_k\rangle 
x_{i,j}\partial_{i,j}\).
Here \(\mathrm{e}_k=(\delta_{k,1},\ldots,\delta_{k,n+r})\in\mathbb{Z}^{n+r}\).
\end{itemize}
The \(A\)-hypergeometric system \(\mathcal{M}(A_{\mathrm{ext}},\beta)\) 
is the cyclic \(\mathscr{D}\)-module
\(\mathscr{D}\slash I({A_{\mathrm{ext}}},\beta)\).
As shown by
Gel'fand~et.~al.~\cite{1989-Gelfand-Kapranov-Zelevinski-hypergeometric-functions-and-toral-manifolds},
\(\mathcal{M}(A_{\mathrm{ext}},\beta)\) is a holonomic \(\mathscr{D}\)-module.

%In this paper, we mainly focus on the case
%\begin{eqnarray*}
%\beta = (\mathbf{0},-1/2,\ldots,-1/2)\in \mathbb{C}^{n+r}.
%\end{eqnarray*}
\begin{remark}
It is shown that the GKZ system \(\mathcal{M}(A_{\mathrm{ext}}, \beta)\) described 
above governs the periods associated with singular double cover 
Calabi--Yau varieties over \(X^{\vee}\)
branched along the \emph{dual} nef-partition \(F_{1}+\cdots+F_{r}\).
\end{remark}

\subsection{Stacky fans and Chen--Ruan cohomology}
The standard references for this subsection are
\cites{2005-Borisov-Chen-Smith-the-orbifold-chow-ring-of-toric-deligne-mumford-stacks,
2015-Coates-Corti-Iritani-Tseng-a-mirror-theorem-for-toric-stacks}.
A \emph{stacky fan} is a triple \(\boldsymbol{\Sigma}=
(N,\Sigma,\rho)\) where \(N\) is a finitely generated
abelian group, \(\Sigma\) is a simplicial fan in \(N_{\mathbb{Q}}:=
N\otimes_{\mathbb{Z}}\mathbb{Q}\) and \(\rho\colon \mathbb{Z}^{p}\to N\)
is a homomorphism. Let \(\{e_{1},\ldots,e_{p}\}\) be the standard basis 
for \(\mathbb{Z}^{p}\). We will denote by \(b\otimes 1\) the
image of \(b\in N\) under the canonical map \(N\to N_{\mathbb{Q}}\).
The data gives rise to an exact sequence
\begin{equation}
  0\to \mathfrak{L}\to \mathbb{Z}^{p}\to N.
\end{equation}
For every \(\sigma\in \Sigma\), we denote by \(\Lambda_{\sigma}\subset 
\mathfrak{L}\otimes_{\mathbb{Z}}\mathbb{Q}\) the elements of the form
\begin{equation}
  \lambda = \sum_{i=1}^{p}\lambda_{i} e_{i}~\mbox{with}~\lambda_{i}\in\mathbb{Z}~\mbox{for}~e_{i}\otimes 1\notin \sigma.
\end{equation}
For a stacky fan \(\boldsymbol{\Sigma}\), we define
\begin{equation}
    \mathrm{Box}(\boldsymbol{\Sigma}):=\bigcup_{\sigma\in\Sigma} \mathrm{Box}(\sigma)
\end{equation}
where for \(\sigma\in \Sigma\) we put
\begin{equation}
  \mathrm{Box}(\sigma)=\left\{b\otimes 1\in N_{\mathbb{Q}}\Bigm\vert
  b\otimes 1=\sum_{e_{i}\otimes 1\in\sigma} a_{i}(e_{i}\otimes 1)~\mbox{for
  some}~0\le a_{i}<1\right\}.
\end{equation}
Set \(\Lambda:=\bigcup_{\sigma\in\Sigma}\Lambda_{\sigma}\). Recall that
the \emph{reduction function} \(v\colon\Lambda\to 
\mathrm{Box}(\boldsymbol{\Sigma})\) is a function defined by
\begin{equation}
  \lambda\mapsto \sum_{i=1}^{p}\lceil \lambda_{i}\rceil\cdot\rho(e_{i})\in N.
\end{equation}
For \(b\in \mathrm{Box}(\boldsymbol{\Sigma})\), we define
\begin{equation}
  \Lambda_{b}:=\{\lambda\in\Lambda~\vert~v(\lambda)=b\}.
\end{equation}

We also review the definition of the Chen--Ruan cohomology 
for an orbifold \(\mathcal{X}=[X\slash G]\) when \(X\) is a smooth variety and \(G\)
is a finite group which is sufficient for our purpose in this note.
For an orfiold \(\mathcal{X}\), we denote by \(|\mathcal{X}|\)
the underlying coarse moduli space.

Recall that for a stack \(\mathcal{X}\), the inertia stack \(\mathcal{IX}\) is the 
fiber product (in the category of \(2\)-category of stacks)
\begin{equation*}
\begin{tikzcd}
\mathcal{IX}\ar[r]\ar[d] &\mathcal{X}\ar[d,"\Delta"]\\
\mathcal{X}\ar[r,"\Delta"] &\mathcal{X}\times\mathcal{X}\\
\end{tikzcd}
\end{equation*}
where \(\Delta\colon\mathcal{X}\to\mathcal{X}\times\mathcal{X}\) is the diagonal map.
For the quotient stack \(\mathcal{X}=[X\slash G]\), its inertia stack is of the form
\begin{equation*}
\mathcal{I}[X\slash G]=\coprod_{(g)\in \mathrm{C}(G)} [X^{g}\slash \mathrm{C}(g)]
\end{equation*}
where \(\mathrm{C}(G)\) is the set of conjugacy classes of \(G\),
\(\mathrm{C}(g)\) is the centralizer of an element \(g\), and
\(X^{g}\) is the fixed part of \(g\).
\begin{definition}
For an orbifold \(\mathcal{X}\), the Chen--Ruan cohomology is defined to be
\begin{equation*}
\mathrm{H}^{\bullet}_{\mathrm{CR}}(\mathcal{X};\mathbb{C}):=
%\mathrm{H}^{\bullet}_{\mathrm{dR}}(\mathcal{IX};\mathbb{C})=
\mathrm{H}^{\bullet}(|\mathcal{IX}|;\mathbb{C}).
\end{equation*}
Here %\(|\mathcal{IX}|\) is the orbit space of \(\mathcal{IX}\) and
the right hand side is the singular cohomology.
\end{definition}
When \(\mathcal{X}=[X\slash G]\), we have
\begin{equation}
\label{eq:cr-twisted}
\mathrm{H}^{\bullet}_{\mathrm{CR}}(\mathcal{X};\mathbb{C})=
\bigoplus_{(g)\in \mathrm{C}(G)} \mathrm{H}^{\bullet}
(\bigl\vert[X^{g}\slash \mathrm{C}(g)]\bigr\vert;\mathbb{C}).
\end{equation}
The components in \eqref{eq:cr-twisted} are referred 
to \emph{twisted sectors} whereas
the distinguished component corresponding to \(e\in G\) is called 
the \emph{untwisted sector}.

We will be focusing on the untwisted sector; it corresponds to
\(\mathbf{0}\in \mathrm{Box}(\boldsymbol{\Sigma})\).

\section{Double covers of \texorpdfstring{\(\mathbb{P}^{3}\)}{P3}
with the nef-partition \texorpdfstring{\(-K_{\mathbb{P}^{3}}=h+h+h+h\)}{-K=h+h+h+h}}
Let us briefly recall the results developed in 
\cite{2024-Hosono-Lee-Lian-Yau-mirror-symmetry-for-double-cover-calabi-yau-varieties}.
The pair of singular Calabi--Yau double covers \((Y,Y^{\vee})\) we have
constructed satisfies the equality \(\chi_{\mathrm{top}}(Y)=(-1)^{n}
\chi_{\mathrm{top}}(Y^{\vee})\), where \(n=\dim Y\). Moreover, when \(n=3\),
we proved that 
\begin{equation*}
h^{p,q}(Y) = h^{3-p,q}(Y^{\vee}),~\forall~0\le p,q\le 3.
\end{equation*}
In other words, \((Y,Y^{\vee})\) is a \emph{topological} mirror pair of Calabi--Yau spaces.

After the ``topological test,'' we now turn to the ``quantum test.''
We study the relationship between enumerative geometry 
(the \(A\)-model) and complex geometry (the \(B\)-model).
Notice that both \(Y\) and \(Y^{\vee}\) are singular; they are orbifolds.
The \(A\) model here is thus taken to be the \emph{genus zero orbifold Gromov--Witten theory}
whereas the \(B\) model is the \emph{equisingular complex deformation theory}.

In this section, we will conduct the ``quantum test'' for 
our gauged fixed double cover branched along the maximal 
nef-partition \(H+H+H+H=-K_{\mathbb{P}^{3}}\) over \(\mathbb{P}^{3}\).

Let \(\Delta\) be the convex hull of 
\begin{align*}
(3,-1,-1),~(-1,3,-1),~(-1,-1,3),~(-1,-1,-1).
\end{align*}
Put \(X=\mathbf{P}_{\Delta}=\mathbb{P}^{3}\) and denote by \(H\) the hyperplane class.
We have the following data.
\begin{itemize}
    \item \(\Delta=\Delta_{1}+\cdots+\Delta_{4}\) is the 
    Minskowski sum decomposition representing the nef-partition \(-K_{X}=H+H+H+H\).
    \item \(\nabla=\nabla_{1}+\cdots+\nabla_{4}\) is the Batyrev--Borisov dual nef-partition. 
\end{itemize}
Let \(X^{\vee}\to\mathbf{P}_{\nabla}\) be any MPCP desingularization.
Since \(\dim X=\dim X^{\vee}=3\), we infer that \(X^{\vee}\) is smooth.
Let \(\mathcal{Y}\to V\) and \(\mathcal{Y}^{\vee}\to U\) be the families of 
Calabi--Yau double covers over \(X\) and \(X^{\vee}\) constructed in 
\S\ref{subsection:cy-double-covers} respectively.
Let \(Y\) and \(Y^{\vee}\) be the fiber of 
\(\mathcal{Y}\to V\) and \(\mathcal{Y}^{\vee}\to U\).
Notice that we have \(h^{1,1}(Y)=h^{2,1}(Y^{\vee})=1\).

In the present case, on the \(X\) side, we have
\begin{eqnarray*}
\rho_{1,1}=(1,0,0),~\rho_{2,1}=(0,1,0),~\rho_{3,1}=(0,0,1)~\mbox{and}~\rho_{4,1}=(-1,-1,-1).
\end{eqnarray*}

\subsection{Picard--Fuchs equations for \texorpdfstring{\(\mathcal{Y}^{\vee}\to U\)}{}}
\label{subsection:pf-equations-cyclic-cy-3-fold}

From the construction, 
the integral points in the section polytopes of \(F_{k}\) 
correspond to the integral points in \(\mathrm{Conv}\{\mathbf{0},\rho_{k,1}\}\).
The GKZ hypergeometric system associated with \(\mathcal{Y}^{\vee}\to U\) is given by 
\begin{equation*}
A_{\mathrm{ext}}=\begin{bmatrix}
  1 & 1 & 0 & 0 & 0 & 0 & 0 &  0\\
  0 & 0 & 1 & 1 & 0 & 0 & 0 &  0\\
  0 & 0 & 0 & 0 & 1 & 1 & 0 &  0\\
  0 & 0 & 0 & 0 & 0 & 0 & 1 &  1\\
  1 & 0 & 0 & 0 & 0 & 0 &-1 &  0\\
  0 & 0 & 1 & 0 & 0 & 0 &-1 &  0\\
  0 & 0 & 0 & 0 & 1 & 0 &-1 &  0\\
\end{bmatrix}~\mbox{and}~
\beta=
\begin{bmatrix}
-1/2\\
-1/2\\
-1/2\\
-1/2\\
0\\
0\\
0\\
\end{bmatrix}
\end{equation*}
The lattice relation given by \( A \) is \(L_{\mathrm{ext}}=
\langle \ell\rangle_{\mathbb{Z}}\) with \(\ell:=(1,-1,1,-1,1,-1,1,-1)\).

From the lattice relation, the box operators is
\begin{equation*}
\label{equation:GKZ-polynomial-operators-P3-mirror}
\Box_{k\ell}=\partial_{x_{1,1}}^{k}\partial_{x_{2,1}}^{k}
\partial_{x_{3,1}}^{k}\partial_{x_{4,1}}^{k}-
\partial_{x_{1,0}}^{k}\partial_{x_{2,0}}^{k}\partial_{x_{3,0}}^{k}\partial_{x_{4,0}}^{k}
=\prod_{i=1}^{4} \partial^{k}_{x_{i,1}}-\prod_{i=1}^{4} \partial^{k}_{x_{i,0}},~
k\in\mathbb{Z}_{\ge0}
\end{equation*}
or with a minus sign if \(k<0\).
Let us consider the case \(k=1\). We have
\begin{align}
\begin{split}
&(\textstyle\prod_{i=1}^{4}x_{i,0})^{1/2}(\textstyle\prod_{i=1}^{4}x_{i,1})
\Box_{\ell}(\textstyle\prod_{i=1}^{4}x_{i,0})^{-1/2}\\
&=\textstyle\prod_{i=1}^{4}\theta_{x_{i,1}}
-z(\textstyle\prod_{i=1}^{4}x_{i,0})^{3/2}
(\textstyle\prod_{i=1}^{4}x_{i,0}\partial_{x_{i,0}})
(\textstyle\prod_{i=1}^{4}x_{i,0})^{-1/2}
\end{split}
\end{align}
where \(z=(x_{1,1}x_{2,1}x_{3,1}x_{4,1})\slash(x_{1,0}x_{2,0}x_{3,0}x_{4,0})\).
Furthermore, from the equality
\begin{equation*}
x_{i,0}^{3/2}\partial_{x_{i,0}}x_{i,0}^{-1/2}=(x_{i,0}\partial_{x_{i,0}}-1/2).
\end{equation*}
Then \eqref{equation:GKZ-polynomial-operators-P3-mirror} becomes
\begin{equation}
\textstyle\prod_{i=1}^{4}\theta_{x_{i,1}}-z
\textstyle\prod_{i=1}^{4}(\theta_{x_{i,0}}-1/2).
\end{equation}
Here \(\theta_{a}=a(\mathrm{d}\slash\mathrm{d}a)\) 
is the logarithmic derivative with respect to \(a\).
Substituting
\begin{equation*}
\theta_{x_{i,0}}=\theta_{z},~\theta_{x_{i,1}}=-\theta_{z}
\end{equation*}
we see that \eqref{equation:GKZ-polynomial-operators-P3-mirror} is transformed into
\begin{equation}
\label{equation:picard-fuchs-equation-for-mirror-p3-cover}
\theta_{z}^{4} - z (\theta_{z}+1/2)^{4}.
\end{equation}

The unique holomorphic series solution to 
\eqref{equation:picard-fuchs-equation-for-mirror-p3-cover} is of the form
\begin{equation}
\label{equation:picard-fuchs-equation-series-sol} 
\sum_{n\ge 0} \frac{\Gamma(n+1/2)^{4}}{\Gamma(1/2)^{4}\Gamma(n+1)^{4}}z^{n}.
\end{equation}

\begin{remark}
\label{remark:2222-in-p7-coodinate}
The equation \eqref{equation:picard-fuchs-equation-for-mirror-p3-cover} has 
been studied in the literature.
Introducing a change of variables \( w=z/256 \), we have \( \theta_{w}=\theta_{z} \) and 
\begin{equation}
\label{equation:picard-fuchs-equation-for-mirror-2-2-2-2-complete-intersection}
\eqref{equation:picard-fuchs-equation-for-mirror-p3-cover} = 
\theta_{w}^{4}-256w(\theta_{w}+1/2)^4,
\end{equation}
which is the Picard--Fuchs equation for the mirror of \( \mathbb{P}^{7}[2,2,2,2]\subset\mathbb{P}^{7}\).
\end{remark}

\subsection{An instanton prediction from mirror symmetry}
\label{subsection:inst-prediction-h-h-h-h}
In this paragraph,
we compute the \(B\) model correlation function (Yukawa coupling), 
the mirror map, and its instanton prediction for the 
one parameter family \(\mathcal{Y}^{\vee}\to U\). 
It follows the result in 
\cite{2013-Sheng-Xu-Zuo-maximal-families-of-calabi-yau-manifolds-with-minimal-length-yukawa-coupling}*{Corollary 2.6}
that \(Y^{\vee}\) admits a crepant resolution
\( \tilde{Y}^{\vee} \) and such a resolution is deformed in family. Let
\(\tilde{\mathcal{Y}}^{\vee}\to \mathcal{Y}^{\vee}\to U\) be the resulting family.
It is shown that
\( h^{2,1}(\tilde{Y}^{\vee})=h^{2,1}(Y^{\vee})=1 \). Let
\begin{equation}
\label{equation:yukawa-coupling}
\left\langle \theta_{z},\theta_{z},\theta_{z}\right\rangle^{\Omega}
:=\int_{\tilde{Y}^{\vee}} \Omega(z)\wedge \theta_{z}^{3}\Omega(z),
~\mbox{\(\Omega\colon\)a local section of 
\(\Omega^{3}_{\tilde{\mathcal{Y}}^{\vee}\slash U}\).
}
\end{equation}
\(\left\langle \theta_{z},\theta_{z},\theta_{z}\right\rangle^{\Omega}\)
is the \(B\) model correlation function,
where the notation \(\theta_{z}\Omega\) means 
differentiating \(\Omega\) with respect to \(\theta_{z}\) 
via Gauss--Manin connection.

%Strictly speaking, \(\left\langle \theta_{z},\theta_{z},\theta_{z}\right\rangle\)
%depends on the choice of the local section \(\Omega(z)\) on the moduli. 
%We have to normalize it by
%the unique holomorphic series solution 
%\begin{equation*}
%y_0(w):=\sum_{n\ge 0} \frac{\Gamma(2n+1)^{4}}{\Gamma(n+1)^{8}} w^{n}
%=\sum_{n\ge 0} \left(\frac{(2n)!}{n! n!}\right)^{4} w^{n}.
%\end{equation*}

By Griffiths transversality, 
\begin{equation*}
\int_{\tilde{Y}^{\vee}} \Omega(z)\wedge \theta_{z}^{2}\Omega_{z}=0.
\end{equation*}
Differentiating the displayed equation twice, we obtain
\begin{equation*}
\int_{\tilde{Y}^{\vee}} \theta_{z}\Omega(z)\wedge \theta_{z}^{3}\Omega(z)+
\theta_{z}\left\langle \theta_{z},\theta_{z},\theta_{z}\right\rangle^{\Omega}=0.
\end{equation*}
By chain rule, we then have
\begin{equation*}
\theta_{z}\left(\int_{\tilde{Y}^{\vee}} \Omega(z)\wedge \theta_{z}^{3}\Omega(z)\right)
-\int_{\tilde{Y}^{\vee}} \Omega(z)\wedge \theta_{z}^{4}\Omega(z)
+\theta_{z}\left\langle \theta_{z},\theta_{z},\theta_{z}\right\rangle^{\Omega}=0;
\end{equation*}
in other words,
\begin{equation*}
2\theta_{z}\left\langle \theta_{z},\theta_{z},\theta_{z}\right\rangle^{\Omega}
-\int_{\tilde{Y}^{\vee}} \Omega(z)\wedge \theta_{z}^{4}\Omega(z)=0.
\end{equation*}
Substituting the last term by the Picard--Fuchs equation 
\eqref{equation:picard-fuchs-equation-for-mirror-2-2-2-2-complete-intersection},
we have derived
\begin{equation*}
\theta_{z}\left\langle \theta_{z},\theta_{z},\theta_{z}\right\rangle^{\Omega}
=\frac{z}{1-z}\left\langle \theta_{z},\theta_{z},\theta_{z}\right\rangle^{\Omega}.
\end{equation*}
We can solve the above equation and get
\begin{equation*}
\left\langle \theta_{z},\theta_{z},\theta_{z}\right\rangle^{\Omega} = \frac{C}{1-z},
\end{equation*}
for some constant \(C\). One can check 
the \emph{normalized Yukawa coupling}
\begin{equation*}
\left\langle \theta_{z},\theta_{z},\theta_{z}\right\rangle
:=\int_{\tilde{Y}^{\vee}} \frac{\Omega(z)}{y_{0}(z)}\wedge \theta_{z}^{3}
\left(\frac{\Omega(z)}{y_{0}(z)}\right)
\end{equation*}
is given by
\begin{equation}
\label{equation:normal-yukawa-coupling}
\left\langle \theta_{z},\theta_{z},\theta_{z}\right\rangle = 
\frac{C}{(1-z)y_{0}(z)^{2}},
\end{equation}
where \(y_{0}(z)\) is the holomorphic series solution 
\eqref{equation:picard-fuchs-equation-series-sol}.

Now we compute the ``mirror map.''
Consider the deformed series
\begin{equation}
\label{eq:gamma-hol-series-deformed}
y_0(z;\rho):=\sum_{n\ge 0} 
\frac{\Gamma(n+\rho+1/2)^4}{\Gamma(1/2)^{4}\Gamma(n+\rho+1)^{4}} z^{n+\rho} 
\end{equation}
and its derivative with respect to \(\rho\)
\begin{equation*}
y_{1}(z):=\left.\frac{\mathrm{d}}{\mathrm{d}\rho}\right|_{\rho=0} y_{0}(z;\rho).
\end{equation*}
Consequently, the ``mirror map'' is given by
\begin{equation}
\label{equation:mirrir-map}
q = \exp\left(2\pi\sqrt{-1} t\right),~t = \frac{1}{2\pi\sqrt{-1}}\frac{y_{1}(z)}{y_{0}(z)}.
\end{equation}
Let us again denote by \( H \) the unique hyperplane class of \( Y \)
coming from \(X\) and \( \langle H,H,H\rangle \) 
be the \(A\) model correlation function.

Mirror symmetry predicts the equality (the ``mirror theorem'')
\begin{equation}
\langle H,H,H\rangle = \left\langle \theta_{z},\theta_{z},\theta_{z}\right\rangle
\end{equation}
under the identification via the mirror map \eqref{equation:mirrir-map} and \( q = \exp (2\pi\sqrt{-1} t)\),
where \(t\) is the coordinate on the K\"{a}hler moduli of \(Y\). 
\(H\) is understood as the operator 
\begin{equation*}
H = 2\pi\sqrt{-1} q\frac{\mathrm{d}}{\mathrm{d} q}
\end{equation*}
and the mirror theorem becomes the equality
\begin{equation}
\langle H,H,H\rangle = \left\langle \theta_{z},\theta_{z},\theta_{z}
\right\rangle(q)\left(2\pi\sqrt{-1}\frac{q}{z}\frac{\mathrm{d}z}{\mathrm{d}q}\right)^{3}.
\end{equation}
Using the classical cup product, one finds \( C = 2 \).
In the present situation, the mirror map is
\begin{equation*}
q = \frac{z}{256} + \frac{z^{2}}{1024} + \frac{221z^{3}}{524288} + \frac{121z^{4}}{524288} 
+ \frac{9924061z^{5}}{68719476736}+\cdots,
\end{equation*}
and the inverse is given by 
\begin{equation*}
z = 256q - 16384 q^{2} + 286720 q^{3} - 9961472 q^{4} - 393334784 q^{5} + \cdots.
\end{equation*}
The \(A\)-model correlation function is
\begin{align}
\begin{split}
\label{eq:a-model-predicted-correlations}
&\langle H,H,H\rangle(q) \\
&= 2 + 64 q + 9792 q^2 + 1404928 q^3 + 205641280 q^4 + 30593496064 q^5 + \cdots.
\end{split}
\end{align}
Consequently, we obtain the following numerical result.
\begin{corollary}
The predicted instanton numbers \(n_{d}\) of \(Y\) for small \(d\) are given by
\begin{align*}
\begin{split}
n_{1} = 64,~n_{2} = 1216,~ n_{3} = 52032,~ n_{4} = 3212992.
\end{split}
\end{align*}
\end{corollary}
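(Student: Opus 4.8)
The plan is to read the integers $n_{d}$ off the $A$-model correlation function \eqref{eq:a-model-predicted-correlations} by inverting the Gopakumar--Vafa/Aspinwall--Morrison multiple-cover relation. If $N_{d}$ denotes the genus-zero (untwisted, base-insertion) Gromov--Witten invariant of $Y$ in curve class $d$, then for a Calabi--Yau threefold with a single K\"{a}hler parameter $q$ one has $\langle H,H,H\rangle(q)=H^{3}+\sum_{d\ge1}N_{d}\,d^{3}q^{d}$, and the instanton numbers $n_{d}$ are defined by the multiple-cover rule $N_{d}=\sum_{k\mid d}n_{d/k}/k^{3}$; combining the two yields the structural identity
\[
\langle H,H,H\rangle(q)\;=\;H^{3}\;+\;\sum_{d\ge 1} n_{d}\,d^{3}\,\frac{q^{d}}{1-q^{d}}\;=\;H^{3}\;+\;\sum_{m\ge 1}\Bigl(\sum_{d\mid m} n_{d}\,d^{3}\Bigr)q^{m}.
\]
First I would record that the constant term of \eqref{eq:a-model-predicted-correlations} is $H^{3}=2$, which is precisely the normalization constant $C=2$ already fixed by the classical cup product.

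The numbers then fall out by a finite recursion. Matching the coefficient of $q$ in \eqref{eq:a-model-predicted-correlations} gives $n_{1}=64$; matching $q^{2}$ gives $64+8\,n_{2}=9792$, hence $n_{2}=1216$; matching $q^{3}$ gives $64+27\,n_{3}=1404928$, hence $n_{3}=52032$; and matching $q^{4}$ gives $64+8\cdot 1216+64\,n_{4}=205641280$, hence $n_{4}=3212992$. Pushing the same procedure further determines $n_{d}$ for every $d$ from \eqref{eq:a-model-predicted-correlations}, and the fact that the values produced come out integral serves as an internal consistency check.

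The only step beyond bookkeeping is the justification that the restriction of the genus-zero \emph{orbifold} Gromov--Witten generating function of $Y$ to insertions pulled back from the base $\mathbb{P}^{3}$ obeys the same multiple-cover structure as an ordinary Calabi--Yau threefold. Here one invokes the mirror theorem of Theorem~\ref{thm:main-theorem-hhhh} to identify $\langle H,H,H\rangle(q)$ with the normalized Yukawa coupling \eqref{equation:normal-yukawa-coupling} of the crepant resolution $\tilde{Y}^{\vee}$, which is a genuine smooth Calabi--Yau threefold with $h^{2,1}=1$ and therefore carries a prepotential of Gopakumar--Vafa type. I expect this structural input to be the only substantive point; once it is granted, the corollary reduces to the elementary arithmetic displayed above.
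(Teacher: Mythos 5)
Your arithmetic and your use of the multiple-cover formula are both correct, and this is precisely the (unwritten) calculation behind the corollary: the paper states the instanton numbers as following directly from the series \eqref{eq:a-model-predicted-correlations}, and the standard way to read them off is via the Gopakumar--Vafa/Aspinwall--Morrison relation $\langle H,H,H\rangle(q) = H^{3} + \sum_{d\ge1} n_{d}\,d^{3}\,q^{d}/(1-q^{d})$, solved recursively exactly as you do. Each step checks: $n_{1}=64$; $64+8n_{2}=9792$ gives $n_{2}=1216$; $64+27n_{3}=1404928$ gives $n_{3}=52032$; $64+8\cdot1216+64n_{4}=205641280$ gives $n_{4}=3212992$.

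One point deserves a small correction. In your last paragraph you invoke Theorem~\ref{thm:main-theorem-hhhh} to justify the Gopakumar--Vafa structure, but that theorem appears \emph{after} this corollary in the paper and is not needed here. The corollary asserts only \emph{predicted} instanton numbers: these are defined by precisely the multiple-cover ansatz applied to the normalized B-model Yukawa coupling \eqref{equation:normal-yukawa-coupling} computed via the mirror map. The ansatz is part of what mirror symmetry \emph{predicts}, not something that requires the A-side verification in Theorem~\ref{thm:main-theorem-hhhh}. The role of Theorem~\ref{thm:main-theorem-hhhh} is the converse: to confirm that these predictions agree with actual untwisted orbifold Gromov--Witten invariants. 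So the structural input you flag as "the only substantive point" is in fact built into the word "predicted," and the corollary really is pure bookkeeping.
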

\begin{corollary}
\label{cor:prediction-1/8}
The predicted instanton numbers \(n_{d}\) of \(Y\) and those
of \(\mathbb{P}^{7}[2,2,2,2]\subset\mathbb{P}^{7}\), a smooth Calabi--Yau complete intersection
of degree \((2,2,2,2)\) in \(\mathbb{P}^{7}\), differ from an overall factor \(1/8\).
\end{corollary}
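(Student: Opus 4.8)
The plan is to deduce the statement from Remark~\ref{remark:2222-in-p7-coodinate}, which exhibits the rescaling \(w=z\slash 256\) as an isomorphism from the Picard--Fuchs operator \eqref{equation:picard-fuchs-equation-for-mirror-p3-cover} governing \(\tilde{\mathcal{Y}}^{\vee}\to U\) to the Picard--Fuchs operator \eqref{equation:picard-fuchs-equation-for-mirror-2-2-2-2-complete-intersection} of the Batyrev mirror of \(\mathbb{P}^{7}[2,2,2,2]\). Over the punctured disc these are literally the same ODE in the coordinate \(w\), so the two variations of Hodge structure near the maximal unipotent point are identified by \(w=z\slash 256\). In particular the holomorphic periods agree, \(y_{0}(z)=\tilde{y}_{0}(w)\) — one checks \(\Gamma(n+\tfrac{1}{2})^{4}\slash(\Gamma(\tfrac{1}{2})^{4}\Gamma(n+1)^{4})\cdot 256^{n}=((2n)!)^{4}\slash(n!)^{8}\), the usual holomorphic period of \(\mathbb{P}^{7}[2,2,2,2]\) — and so do the canonical (logarithmic) coordinates: writing \(q_{Y}\) and \(q_{\mathrm{CI}}\) as functions of \(w\), we get \(q_{Y}=q_{\mathrm{CI}}=:q\). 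The one point to verify here is that the constant term of the \(\rho\)-derivative \(y_{1}(z)\) of \eqref{eq:gamma-hol-series-deformed} equals \(4(\psi(\tfrac{1}{2})-\psi(1))=-\log 256\) (with \(\psi\) the digamma function); this \(\Gamma\)-factor exactly cancels the \(\log 256\) produced by substituting \(z=256w\), so that \(y_{1}(z)\slash y_{0}(z)=\tilde{y}_{1}(w)\slash\tilde{y}_{0}(w)\) and the \(Y\)-side mirror map \(q=z\slash 256+z^{2}\slash 1024+\cdots\) is nothing but the canonical coordinate of \(\mathbb{P}^{7}[2,2,2,2]\) in disguise.

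Next I would compare the normalized Yukawa couplings. Equation~\eqref{equation:normal-yukawa-coupling} gives, on the \(Y\)-side, \(\langle\theta_{z},\theta_{z},\theta_{z}\rangle=C_{Y}\slash((1-z)\,y_{0}(z)^{2})\) with \(C_{Y}=2\), this \(2\) being the classical triple product \(\int_{Y}H^{3}=2\int_{\mathbb{P}^{3}}h^{3}=2\) (recall \(H\) is pulled back along the degree-\(2\) cover \(Y\to\mathbb{P}^{3}\)). The Griffiths-transversality computation of \S\ref{subsection:inst-prediction-h-h-h-h} — which already invoked \eqref{equation:picard-fuchs-equation-for-mirror-2-2-2-2-complete-intersection} — applies verbatim to \(\mathbb{P}^{7}[2,2,2,2]\) itself and yields \(\langle\theta_{w},\theta_{w},\theta_{w}\rangle=C_{\mathrm{CI}}\slash((1-256w)\,\tilde{y}_{0}(w)^{2})\) with \(C_{\mathrm{CI}}=\int_{\mathbb{P}^{7}[2,2,2,2]}H^{3}=2^{4}=16\). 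Since \(y_{0}(z)=\tilde{y}_{0}(w)\) and \(1-z=1-256w\), the two normalized couplings differ precisely by the constant factor \(C_{Y}\slash C_{\mathrm{CI}}=1\slash 8\).

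Finally, the \(A\)-model correlation function is read off from the normalized Yukawa coupling via the mirror map, \(\langle H,H,H\rangle(q)=\langle\theta,\theta,\theta\rangle(q)\,(2\pi\sqrt{-1}\,\tfrac{q}{z}\tfrac{\mathrm{d}z}{\mathrm{d}q})^{3}\); because \(z=256w\) one has \(\tfrac{q}{z}\tfrac{\mathrm{d}z}{\mathrm{d}q}=\tfrac{q}{w}\tfrac{\mathrm{d}w}{\mathrm{d}q}\), so the Jacobian cubes on the two sides coincide and therefore
\begin{equation*}
\langle H,H,H\rangle^{Y}(q)=\frac{1}{8}\,\langle H,H,H\rangle^{\mathrm{CI}}(q)
\end{equation*}
as power series in the common Kähler parameter \(q\). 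Substituting this into the Aspinwall--Morrison multicover expansion \(\langle H,H,H\rangle(q)=\kappa_{0}+\sum_{d\ge 1}n_{d}\,d^{3}q^{d}\slash(1-q^{d})\) on both sides and comparing coefficients of \(q^{m}\) — the relations \(\sum_{e\mid m}n_{e}e^{3}=[\text{coeff.\ of }q^{m}]\) form a triangular system that one inverts by induction on \(m\) — the proportionality forces \(\kappa_{0}^{Y}=\kappa_{0}^{\mathrm{CI}}\slash 8\) (consistent with \(2=16\slash 8\)) and \(n_{d}^{Y}=n_{d}^{\mathrm{CI}}\slash 8\) for every \(d\), which is the claim. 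The only genuinely delicate step is the \(\Gamma\)-factor bookkeeping in the first paragraph: one must confirm that the normalization \(z\mapsto z\slash 256\) of the \(Y\)-side mirror map matches the standard normalization on the \(\mathbb{P}^{7}[2,2,2,2]\) side, after which everything reduces to tracking the constant \(256=2^{8}\) and the two top intersection numbers \(2\) and \(16\).
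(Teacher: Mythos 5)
Your proposal is correct and follows essentially the same route as the paper's (very terse) proof: same Picard--Fuchs operator under $w=z/256$, with the $\Gamma$-factor arranging the mirror maps to coincide (the paper makes this point in Remark~\ref{rmk:gamma-factor}), so the normalized Yukawa couplings can only differ by the ratio of classical triple intersection numbers $\int_{Y}H^{3}/\int_{\mathbb{P}^{7}[2,2,2,2]}h^{3}=2/16=1/8$; you have merely written out in full the bookkeeping that the paper compresses into two sentences.
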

\begin{proof}
This follows since their mirrors have the same Picard--Fuchs equation.
See Remark \ref{remark:2222-in-p7-coodinate}.
The instanton predictions then only differ by an overall factor which
is completely determined by the classical intersection numbers.
\end{proof}

\begin{remark}
\label{rmk:gamma-factor}
This computation also shows that 
the Gamma factor in the holomorphic period \(y_{0}(z)\) 
is crucial. Put
\begin{eqnarray*}
a_{\rho}(n):=\frac{\Gamma(n+\rho+1/2)^{4}}{\Gamma(1/2)^{4}\Gamma(n+\rho+1)^{4}}
~\mbox{and}~
b_{\rho}(n):=\frac{\prod_{k=1}^{n}(k+\rho-1/2)^{4}}{\prod_{k=1}^{n}(k+\rho)^{4}}.
\end{eqnarray*}
We then have
\begin{equation*}
a_{\rho}(n) = b_{\rho}(n)A(\rho),~\mbox{where}~
A(\rho)=\frac{\Gamma(\rho+1/2)^{4}}{\Gamma(\rho+1)^{4}}.
\end{equation*}
Notice that 
\begin{equation*}
A(\rho) = 1 -(\log256)\rho+\cdots.
\end{equation*}
Hence the Frobenius method applied to \(\sum_{n\ge 0}a_{\rho}(n)z^{n}\)
and \(\sum_{n\ge 0}b_{\rho}(n)z^{n}\) yields different results.
\(A'(0)=-\log256\) explains the factor \(256\) in Remark \ref{remark:2222-in-p7-coodinate}.
\end{remark}

\begin{remark}
The instanton predictions were also obtained by Sharpe
\cite{2013-Sharpe-predictions-for-gromov-witten-invariants-of-noncommutative-resolutions}
using the technique gauged linear sigma model (GLSM).
\end{remark}

%Let us compute the prepotantial function for this particular one-parameter family.
%From \cite{1996-Hosono-Lian-Yau-gkz-generalized-hypergeometric-systems-in-mirror-symmetry-of-calabi-yau-hypersurfaces}*{Equation (4.6)},
%the prepotantial in the present situation is given by
%\begin{equation*}
%	F(t) = \left.\frac{1}{2}\left(\frac{1}{y_{0}(z;\rho)}\right) \left( -\frac{1}{6}y_{0}(z;\rho) \partial_{\rho}^{3} y_{0}(z;\rho) +\frac{1}{2} 
%	\partial_{\rho} y_{0}(z;\rho)\partial_{\rho}^{2} y_{0}(z;\rho) \right)\right|_{\rho=0,~z=z(q)}.
%\end{equation*}
%\(z=z(q)\) is the inverse of the mirror map.
%Similarly the above equation can be re-written as a sum of a topological term
%plus some quantum corrections \cite{1996-Hosono-Lian-Yau-gkz-generalized-hypergeometric-systems-in-mirror-symmetry-of-calabi-yau-hypersurfaces}*{Equation (4.8)}.
%The constant in the present case is 
%\begin{equation*}
%\frac{1}{2}\left(\frac{-1}{6}\frac{-48\zeta(3)}{(2\pi \sqrt{-1})^{3}} \right) = \frac{4\zeta(3)}{(2\pi \sqrt{-1})^{3}}.
%\end{equation*}
%However, if we replace \(F(t)\) by \(2F(t)\) (the factor \(2\) can be explained by \(2\)-fold covers), 
%the formula there is compatible with our calculation \(\chi(Y^{\vee})=16\).

\subsection{An instanton calculation}
In this section, we compute the ``untwisted'' orbifold Gromov--Witten invariants of \(Y\)
through a pre-quotient model constructed in \cites{1988-Terasoma-complete-intersetions-of-hypersurfaces-the-fermat-case-and-the-quadric-case,2013-Gerkmann-Sheng-van-Straten-Zuo-on-the-monodromy-of-the-moduli-space-of-calabi-yau-threefolds-coming-from-eight-planes-in-p3,2007-Dolgachev-Kondo-moduli-of-k3-surfaces-and-complex-ball-quotients}.
Let us briefly review their construction.

\begin{definition}[Hyperplane arrangements]
A set of ordered $m$ hyperplanes in $\mathbb{P}^n$,
denoted by $\mathfrak{A}=(H_1,\ldots,H_m)$,
is called an \emph{$m$-hyperplane arrangement}.
A hyperplane arrangement is said to be
\emph{in general position} if any $n+1$ of them
do not meet.
\end{definition}

Let \([z_1\mathpunct{:}\ldots\mathpunct{:}z_{n+1}]\) 
be homogeneous coordinates on \(\mathbb{P}^n\).
We write
\begin{equation}
	H_i:=\sum_{j=1}^{n+1} a_{ij} z_j,~i=1,\ldots,m.
\end{equation}
To save the notation, we again denote by
\(\mathfrak{A}=(a_{ij})\in\mathrm{Mat}_{m\times(n+1)}(\mathbb{C})\)
the coefficient matrix.
Hyperplane arrangements parameterized by those matrices.
Note that \(\mathfrak{A}\) is in general position
if and only if every \((n+1)\times(n+1)\) 
minor of \(\mathfrak{A}\) is invertible.
Let \(Y\) be the \(2\)-fold cover
over \(\mathbb{P}^{n}\) branched along \(\sum_{i=1}^m H_i\).

We will focus on the case \(m=2(n+1)\).
Given a hyperplane arrangement $\mathfrak{A}$
in general position and the associated coefficient matrix \(\mathfrak{A}\),
we can find \(\mathfrak{B}=(b_{ij})\in\mathrm{Mat}_{(n+1)\times m}(\mathbb{C})\) 
such that they fit into the short exact sequence
\begin{equation}
\begin{tikzcd}[column sep=2em]
	&0 \ar[r] &\mathbb{C}^{n+1}\ar[r,"\mathfrak{A}"] &\mathbb{C}^m \ar[r,"\mathfrak{B}"] &\mathbb{C}^{n+1} \ar[r] &0.
\end{tikzcd}
\end{equation}

Let $[y_1\mathpunct{:}\ldots\mathpunct{:}y_m]$ 
be coordinates on $\mathbb{P}^{m-1}$. Each row of \(\mathfrak{B}\)
defines the equation
\begin{equation}
\label{equation:complete-intersection-equations-projective-space}
	b_{i1} y_1^2+\cdots+b_{im}y_m^2=0,~1\le i\le n+1,
\end{equation}
in \(\mathbb{P}^{m-1}\).
Let $Y'\subset\mathbb{P}^{m-1}$ be the subvariety defined by 
\eqref{equation:complete-intersection-equations-projective-space}.
\begin{lemma}
Assume that $\mathfrak{A}$ is in general position. Then 
\(Y'\) is a smooth complete intersection in $\mathbb{P}^{m-1}$
whose canonical bundle is trivial.
\end{lemma}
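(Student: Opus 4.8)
Write $Q_i := \sum_{j=1}^{m} b_{ij}y_j^2$ ($1\le i\le n+1$) for the defining quadrics of $Y'$, and recall that in this case $m=2(n+1)$. The plan is to run the Jacobian criterion on the affine cone and reduce the whole statement to a single linear-algebra fact about the Gale-dual pair $(\mathfrak{A},\mathfrak{B})$. First I would note that the Jacobian matrix of $(Q_1,\dots,Q_{n+1})$ at a point $p=[y]$ is $\bigl(2b_{ij}y_j\bigr)_{i,j}$, whose $j$-th column is $2y_j$ times the $j$-th column of $\mathfrak{B}$; hence its rank equals $\operatorname{rank}\mathfrak{B}_{S}$, where $S=S(p):=\{j:y_j\neq 0\}$ is the support of $p$ and $\mathfrak{B}_{S}$ is the submatrix of $\mathfrak{B}$ formed by the columns in $S$. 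Thus the statement reduces to the claim that $\operatorname{rank}\mathfrak{B}_{S(p)}=n+1$ for every $p\in Y'$. Granting this, $Y'=V(Q_1,\dots,Q_{n+1})$ is smooth of pure dimension $(m-1)-(n+1)=n$ (it is nonempty by the projective dimension theorem), hence a complete intersection, and adjunction gives $\omega_{Y'}\simeq\omega_{\mathbb{P}^{m-1}}(2(n+1))|_{Y'}=\mathcal{O}_{Y'}(2(n+1)-m)=\mathcal{O}_{Y'}$.

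The heart of the argument is the rank claim, which I would deduce from the Gale-duality identity
\[
\operatorname{rank}\mathfrak{B}_{S}\;=\;|S|-(n+1)+\operatorname{rank}\mathfrak{A}^{S^{c}},
\qquad S\subseteq\{1,\dots,m\},
\]
where $\mathfrak{A}^{S^{c}}$ is the submatrix of $\mathfrak{A}$ formed by the rows indexed by the complement $S^{c}$. This follows from the exact sequence $0\to\mathbb{C}^{n+1}\xrightarrow{\mathfrak{A}}\mathbb{C}^{m}\xrightarrow{\mathfrak{B}}\mathbb{C}^{n+1}\to 0$: extending by zero identifies $\ker\mathfrak{B}_{S}$ with $\ker\mathfrak{B}\cap\mathbb{C}^{S}=\operatorname{im}\mathfrak{A}\cap\mathbb{C}^{S}$, and since $\mathfrak{A}$ is injective this intersection has dimension $\dim\ker\mathfrak{A}^{S^{c}}=(n+1)-\operatorname{rank}\mathfrak{A}^{S^{c}}$; subtracting from $|S|$ gives the identity. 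General position of $\mathfrak{A}$ is exactly the statement that any $n+1$ of its rows are linearly independent, i.e.\ $\operatorname{rank}\mathfrak{A}^{T}=\min(|T|,n+1)$ for every $T$.

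With the identity in hand I would split into two cases. If $|S|\ge n+1$ then $|S^{c}|\le m-(n+1)=n+1$, so $\operatorname{rank}\mathfrak{A}^{S^{c}}=|S^{c}|$ and the identity gives $\operatorname{rank}\mathfrak{B}_{S}=|S|+|S^{c}|-(n+1)=n+1$. If $|S|\le n$ then $|S^{c}|\ge n+2$, so $\operatorname{rank}\mathfrak{A}^{S^{c}}=n+1$ and the identity gives $\operatorname{rank}\mathfrak{B}_{S}=|S|$, i.e.\ $\mathfrak{B}_{S}$ is injective; but a point $[y]\in Y'$ with support $S$ would yield a nonzero vector $(y_j^2)_{j\in S}\in\ker\mathfrak{B}_{S}$, a contradiction. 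Hence every point of $Y'$ has support of size $\ge n+1$, which together with the first case proves the rank claim. The only genuinely delicate point is this second case --- excluding points of $Y'$ that lie on small coordinate subspaces of $\mathbb{P}^{m-1}$ --- and that is precisely where general position of $\mathfrak{A}$ is used in full strength; the Jacobian criterion, the nonemptiness, and the adjunction computation are all routine.
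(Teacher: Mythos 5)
Your proof is correct and follows essentially the same route as the paper's: both hinge on the Gale-dual exact sequence $0\to\mathbb{C}^{n+1}\xrightarrow{\mathfrak{A}}\mathbb{C}^{m}\xrightarrow{\mathfrak{B}}\mathbb{C}^{n+1}\to 0$ to transfer general position of $\mathfrak{A}$ into rank conditions on submatrices of $\mathfrak{B}$ and to bound the support of a point of $Y'$ from below. The only difference is packaging: you encode both halves of the argument in the single rank identity $\operatorname{rank}\mathfrak{B}_S=|S|-(n+1)+\operatorname{rank}\mathfrak{A}^{S^c}$ and case-split on $|S|$, whereas the paper proves the two needed consequences (every $(n+1)$-minor of $\mathfrak{B}$ is invertible; at least $n+2$ coordinates of any point of $Y'$ are nonzero) separately before combining them.
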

\begin{proof}
It suffices to show that the Jacobian matrix $(2b_{ij}y_j)$ is of maximal 
rank; namely $(n+1)$. 
We observe that, under our hypothesis on $\mathfrak{A}$,
every $(n+1)$-by-$(n+1)$ submatrix
of $\mathfrak{B}$ is of full rank. Otherwise, after rearranging the
columns, we may assume the submatrix consisting
of the first $(n+1)$ columns of \(\mathfrak{B}\) is singular. 
Then there exists a non-zero element
\(x=(x_1,\cdots,x_{n+1},0,\ldots,0)
\in\mathrm{ker}(\mathfrak{B})\subset\mathbb{C}^m\). So \(x=\mathfrak{A}(\xi)\) 
for some \(0\ne\xi\in\mathbb{C}^{n+1}\).
But this means the submatrix 
consisting of the last $(n+1)$ rows in \(\mathfrak{A}\) is singular.
We get a contradiction.

Let $a=(a_1,\ldots,a_m)\in Y'$ be non-zero. 
Then $(a_1^2,\ldots,a_m^2)$ belongs 
to $\mathrm{ker}(\mathfrak{B})=\mathrm{im}(\mathfrak{A})$. 
Now $\mathfrak{A}$ is in a general position
implies that at most $n$ coordinates in $a$ can be zero.
Namely at least $m-n=n+2$ coordinates in $a$ are non-vanishing.
Choose any $(n+1)$ from them and let $J$ denote
the corresponding index subset. 
The submatrix $(2b_{ij}y_j)_{1\le i\le n+1,~j\in J}$ has
rank $(n+1)$ and hence the result follows since
every \((n+1)\times (n+1)\) minor in \(\mathfrak{B}\)
is invertible.
The triviality of the canonical bundle follows from adjunction formula.
\end{proof}
The matrix \(\mathfrak{A}\) defines an embedding
$\mathbb{P}^{n}\to\mathbb{P}^{m-1}$. To save the notation,
the embedding will be also denoted by \(\mathfrak{A}\). 
We have a (branched) covering map
\begin{equation}
\Phi\colon\mathbb{P}^{m-1}\to\mathbb{P}^{m-1},~
[y_1\mathpunct{:}\ldots\mathpunct{:}y_m]
\mapsto [y_1^2\mathpunct{:}\ldots\mathpunct{:}y_m^2].
\end{equation}
Consider the diagram
\begin{equation}
\begin{tikzcd}
	&  &\mathbb{P}^{m-1}\ar[d,"\Phi"]\\
	& \mathbb{P}^n \ar[r,"\mathfrak{A}"] &\mathbb{P}^{m-1}
\end{tikzcd}
\end{equation}
The map \(\Phi\) realizes \(Y'\) as a Kummer cover over
$\operatorname{Im}(\mathfrak{A})$ 
branched over the image of $\sum_{i=1}^{m}H_{i}$
under \(\mathfrak{A}\). In fact, \(Y'\) fits the fiber product diagram.
\begin{equation}
\begin{tikzcd}
    &Y'\ar[r]\ar[d]  &\mathbb{P}^{m-1}\ar[d,"\Phi"]\\
    & \mathbb{P}^n \ar[r,"\mathfrak{A}"] &\mathbb{P}^{m-1}
\end{tikzcd}
\end{equation}
%Indeed, if $a\in Y'$, then
%$f(a)$ must sit inside the hyperplanes
%\begin{equation}
%b_{i1}y_1+\cdots+b_{im}y_m=0,~1\le i\le n+1,
%\end{equation}
%which is nothing but the image of $\mathbb{P}^n$
%under $\iota$ by definition.
%Now we specialize the discussion to the case \(m=2n+2\).

Put \(\boldsymbol{\mu}_{2}:=\mathbb{Z}\slash 2\mathbb{Z}\).
We define an action of \(\boldsymbol{\mu}_{2}^{2n+2}\) on \(\mathbb{P}^{2n+1}\) by
\begin{equation*}
g\cdot [y_1\mathpunct{:}\ldots\mathpunct{:}y_{2n+2}]=
\left[(-1)^{g_1}y_1\mathpunct{:}\ldots\mathpunct{:}(-1)^{g_{2n+2}}y_{2n+2}\right],~
g=(g_{1},\ldots,g_{2n+2})\in\boldsymbol{\mu}_{2}^{2n+2}.
\end{equation*}
Notice that the diagonal subgroup acts trivially. Let \(G\)
be the cokernel of the diagonal embedding \(\boldsymbol{\mu}_{2}\to 
\boldsymbol{\mu}_{2}^{2n+2}\).
Then $G$ is the Galois group for the
Kummer cover $Y'\to \mathfrak{A}(\mathbb{P}^{n})$.
Moreover, the map \(\boldsymbol{\mu}_{2}^{2n+2}\to \boldsymbol{\mu}_{2}\) given by
\((g_{1},\ldots,g_{2n+2})\mapsto \sum_{i=1}^{2n+2} g_i\) factors through \(G\).
Let \(G'\) be the kernel of the induced map \(G\to \boldsymbol{\mu}_{2}\); in other words,
\begin{equation*}
G' = \left\{(g_{1},\ldots,g_{2n+2})\in \boldsymbol{\mu}_{2}^{2n+2}
\colon \sum g_{j}\equiv 0\pmod{2}\right\}\slash \boldsymbol{\mu}_{2}
\end{equation*}
where \(\boldsymbol{\mu}_{2}\) is the diagonal subgroup.

\begin{lemma}
We have \(Y\simeq Y'\slash G'\). Hence there exists an isomorphism
of pure polarized $\mathbb{Q}$-Hodge structures
\begin{equation}
\mathrm{H}^{q}(Y,\mathbb{Q})\simeq 
\mathrm{H}^{q}(Y',\mathbb{Q})^{G'}.
\end{equation}
\end{lemma}
\begin{proof}
Since both $Y$ and $Y'\slash G'$ are double cyclic covers
over $\mathbb{P}^n$ branched over $\sum_{i=1}^{2n+2}H_{i}$
and the Picard group of $\mathbb{P}^n$
is torsion free, $Y$ and $Y'\slash G'$ must be isomorphic.
The rest of the statement follows from 
\cite{1957-Grothendieck-sur-quelques-points-dalgebre-homologique}*{Proposition 5.2.4}.
\end{proof}

Specializing to \(n=3\), we see that \(Y'=\mathbb{P}^{7}[2,2,2,2]\) and 
our singular double cover \(Y\) is isomorphic to \(Y'\slash G'\) where
\(G'\) is an abelian group of order \(64\) with exponent \(2\).

Notice that \(Y'\slash G'\) can be regarded as
a complete intersection in \(\mathbb{P}^{7}\slash G'\)
which is a toric variety.
The instantons can be computed by applying the orbifold quantum hyperplane section
theorem developed in
\cite{2010-Tseng-orbifold-quantum-riemann-roch-lefschetz-and-serre}*{Theorem 5.2.3}.
We will prove the following result in the rest of this section.
\begin{theorem}
\label{thm:main-theorem}
The equation \eqref{eq:a-model-predicted-correlations}
is the generating series of the genus zero orbifold Gromov--Witten invariants of \(Y\) 
with all insertions \(H\), where \(H\) is the pullback of the hyperplane class
of \(X\). 
\end{theorem}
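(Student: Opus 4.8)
The plan is to realize $Y \cong Y'/G'$ as a complete intersection in the simplicial toric variety $\mathcal{P} := \mathbb{P}^7/G'$ and to apply Tseng's orbifold quantum hyperplane section theorem \cite{2010-Tseng-orbifold-quantum-riemann-roch-lefschetz-and-serre}*{Theorem 5.2.3} to extract the genus zero orbifold Gromov--Witten invariants of $Y$ restricted to insertions coming from the base. First I would set up the stacky fan for $\mathcal{P}$: since $G' \subset \boldsymbol{\mu}_2^{8}/\boldsymbol{\mu}_2$ acts on $\mathbb{P}^7$ through the maximal torus, $\mathcal{P}$ is a toric DM stack whose coarse space is $|\mathbb{P}^7/G'|$, and the four quadrics cutting out $Y'$ descend to four line-bundle sections on $\mathcal{P}$ because each $\sum_i b_{ij} y_j^2$ is $G'$-invariant (the quadratic monomials $y_j^2$ are fixed by $\boldsymbol{\mu}_2^8$). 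Thus $Y = [Y'/G']$ is the zero locus of a section of a direct sum of four line bundles $\mathcal{L}_1 \oplus \cdots \oplus \mathcal{L}_4$ on $\mathcal{P}$, each $\mathcal{L}_k$ being (a root of) $\mathcal{O}(2)$ in the appropriate Picard group of the stack.

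The key computational steps, in order, are: (i) identify the Picard group / divisor class group of $\mathcal{P}$ and the classes $[\mathcal{L}_k]$, together with the relevant part of the Chen--Ruan cohomology — here I only need the untwisted sector, corresponding to $\mathbf{0} \in \mathrm{Box}(\boldsymbol{\Sigma})$, so the hyperplane class $H$ pulled back from $X = \mathbb{P}^3$ lives in $\mathrm{H}^\bullet_{\mathrm{CR}}$ in the untwisted part; (ii) write down the $I$-function (the small $J$-function / cohomology-valued hypergeometric series) of $\mathcal{P}$ following \cite{2015-Coates-Corti-Iritani-Tseng-a-mirror-theorem-for-toric-stacks}, restricted to the untwisted sector and to the curve classes that pair nontrivially with $H$; (iii) apply Tseng's quantum Lefschetz to twist by the Euler class of $\bigoplus \mathcal{L}_k$, producing the $I$-function of $Y$; the relevant factor is a product of four hypergeometric factors with the characteristic $(\theta + 1/2)$-type shift coming from the half-integral twisting of $\mathcal{O}(2)$ by the $\boldsymbol{\mu}_2$-quotient — this is exactly what produces the $\Gamma(n+\rho+1/2)^4/\Gamma(n+\rho+1)^4$ structure in \eqref{eq:gamma-hol-series-deformed}; (iv) read off the mirror map from the $z^{n+\rho}$ and $\log$-terms of this $I$-function (Birkhoff factorization / the $I = J$ after the change of variables), extract the genus zero three-point function $\langle H, H, H\rangle$, and match term-by-term with \eqref{eq:a-model-predicted-correlations}, pinning the classical term via the intersection number $H^3$ on $Y$ (which gives the constant $2$, consistent with $C = 2$ and the overall factor $1/8$ relative to $\mathbb{P}^7[2,2,2,2]$ as in Corollary \ref{cor:prediction-1/8}).

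I expect the main obstacle to be step (iii): correctly bookkeeping the \emph{orbifold} structure so that Tseng's quantum hyperplane section theorem applies verbatim. Two subtleties must be handled with care. First, Tseng's theorem computes \emph{all} of the (twisted) orbifold invariants of $Y$, including contributions from the twisted sectors of $\mathcal{P}$ and curve classes with nontrivial isotropy; I must verify that the \emph{untwisted} part of the genus zero theory of $Y$ with only base insertions $H$ is correctly isolated, i.e.\ that the twisted sectors of $\mathcal{P}$ do not contaminate the $\langle H,H,H\rangle$ correlator. This should follow from a dimension/degree count together with the fact that $H$ restricts to the untwisted sector, but it requires an honest check of which components of the inertia stack can appear. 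Second, the line bundles $\mathcal{L}_k$ on the stack $\mathcal{P}$ are not pullbacks of line bundles from the coarse space; their first Chern classes have fractional "age" contributions, and the hypothesis of Tseng's quantum Lefschetz — that the bundle be convex, or at least that the relevant $I$-function modification be valid — must be checked in this setting (the degree-two-with-half-twist situation is the orbifold analogue of $\mathcal{O}(2)$ on $\mathbb{P}^7$, which is convex, so I expect this to go through, but the fractional shift is precisely the source of the crucial $\Gamma$-factor flagged in Remark \ref{rmk:gamma-factor}). Once these points are settled, the remaining identification of power series is a finite mechanical check, which I would not grind through here.
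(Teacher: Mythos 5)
Your framework --- embed $Y\cong Y'/G'$ in the simplicial toric variety $\mathbb{P}^7/G'$, set up the stacky fan, and apply Tseng's quantum hyperplane section theorem --- is exactly the paper's approach, and you correctly anticipate the need to check that twisted sectors do not contaminate the correlator, which the paper resolves by showing $\overline{\mathrm{NE}}_g=\emptyset$ unless $g$ is the identity. But step (iii) contains a genuine misattribution that would derail the calculation: the quantum Lefschetz modification for the four line bundles of class $2H$ is $\prod_{m=1}^{2d}(2H+m\alpha)^4$, an integer-indexed Pochhammer product \emph{identical in form} to that for the smooth complete intersection $\mathbb{P}^7[2,2,2,2]$. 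There is no $(\theta+1/2)$-type shift on the $A$-side, and the orbifold quotient is invisible in the untwisted $I$-function apart from relabeling $h$ by $H$. The $\Gamma(n+1/2)^4/\Gamma(n+1)^4$ structure you cite is the $B$-side period \eqref{eq:gamma-hol-series-deformed} of the mirror double cover over $X^{\vee}$; it arises from $(2d)!^4/(d!)^8$ via the Legendre duplication formula (which is the source of the $256^d$, i.e.\ $w=z/256$), and would appear identically for $\mathbb{P}^7[2,2,2,2]$ with no quotient at all.

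Beyond the misattribution, the decisive step is missing: the mechanism converting the $I$-function identity into GW invariants of $[Y'/G']$. The paper's Proposition~\ref{prop:1/8} does this by noting that the $I$-functions (hence mirror maps) coincide under $h\leftrightarrow H$, while the orbifold Poincar\'e pairing on $|[Y'/G']|$ (with $\int H^3=2$ and symplectic basis $\{\mathbf{1},H,H^2/2,H^3/2\}$) differs from the ordinary pairing on $Y'$ (with $\int h^3=16$ and basis $\{\mathbf{1},h,h^2/16,h^3/16\}$) by exactly the factor $1/8$, giving $\tfrac{1}{8}\bigl\langle h^k/(z-\psi),\mathbf{1}\bigr\rangle_{0,2,d}^{Y'}=\bigl\langle H^k/(z-\psi),\mathbf{1}\bigr\rangle_{0,2,d}^{[Y'/G']}$. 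Combined with Corollary~\ref{cor:prediction-1/8}, which records that the $B$-side prediction for $Y$ is $1/8$ times that for $\mathbb{P}^7[2,2,2,2]$, the theorem follows. Your plan to match power series term by term could work, but only once the $1/8$ is correctly located in the Poincar\'e pairings rather than in a purported orbifold correction to the quantum Lefschetz factor.
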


\subsubsection{The toric varieties \(\mathbb{P}^{7}\slash G\)
and \(\mathbb{P}^{7}\slash G'\)}

Let \(N=\mathbb{Z}^{7}\). Let \(u_{i}=(\delta_{1,i},\ldots,\delta_{7,i})\), \(1\le i\le 7\), and 
\(u_{8}=(-1,\ldots,-1)\in N\). For each \(i\), we put
\begin{equation*}
\sigma_{i}:=\mathrm{Cone}(u_{1},\ldots,\hat{u}_{i},\ldots,u_{8})\subset N_{\mathbb{R}}.
\end{equation*}
The fan \(\Sigma\) consisting of \(\sigma_{i}\), \(1\le i\le 8\), 
together with all their faces,
defines the toric variety \(\mathbb{P}^{7}\). 
Let \(N''=(2\mathbb{Z})^{7}\subset N\) be a sublattice. Note 
\begin{equation}
    [N:N'']=2^{7}=128.     
\end{equation} 
We can also regard \(\Sigma\)
as a fan in \(N''_{\mathbb{R}}\) rather than in \(N_{\mathbb{R}}\).
In this way, we obtained a toric morphism
\begin{equation*}
\Phi\colon X_{\Sigma,N''}\to X_{\Sigma,N}.
\end{equation*}
Moreover, the Galois group \(G\simeq N\slash N''\) and the map \(\Phi\) gives
rise to an isomorphism \(X_{\Sigma,N''}\slash G\simeq X_{\Sigma,N}\).
We shall remind the reader that \(X_{\Sigma,N}\cong X_{\Sigma,N''}\cong\mathbb{P}^{7}\)
and the map \(\Phi\) is indeed the ``coordinate squaring map.''

Consider another sublattice of \(N\)
\begin{equation}
\label{eq:lattice-inclusions}
N' = \left\{(a_{1},\ldots,a_{7})\in N \Bigm\vert \sum_{i=1}^{7}a_{i}\equiv 0\pmod{2}\right\}.
\end{equation}
Notice that we have inclusions 
\begin{equation}
N''\subset N'\subset N~\mbox{and}~[N:N']=2.
\end{equation} 
Let us explicitly write down an integral basis. Put
\begin{align}
\label{eq:basis-n'}
\begin{split}
v_{1}&=(1,1,0,0,0,0,0)\\
v_{2}&=(0,1,1,0,0,0,0)\\
v_{3}&=(0,0,1,1,0,0,0)\\
v_{4}&=(0,0,0,1,1,0,0)\\
v_{5}&=(0,0,0,0,1,1,0)\\
v_{6}&=(0,0,0,0,0,1,1)\\
v_{7}&=(1,0,0,0,0,0,1).
\end{split}
\end{align}
We see that \(\{v_{1},\ldots,v_{7}\}\) forms an integral basis of \(N'\).
The primitive generator of \(\mathbb{R}_{\ge0} u_{i}\) in \(N'\)
is given by \(2u_{i}\) (rather than \(u_{i}\)) which will be denoted by \(\rho_{i}\) later on.
Under this basis, we can re-write
\begin{align}
\label{eq:one-cones-p3-eight-planes}
\begin{split}
\rho_{1}:=2u_{1}&=(1,-1,1,-1,1,-1,1)\\
\rho_{2}:=2u_{2}&=(1,1,-1,1,-1,1,-1)\\
\rho_{3}:=2u_{3}&=(-1,1,1,-1,1,-1,1)\\
\rho_{4}:=2u_{4}&=(1,-1,1,1,-1,1,-1)\\
\rho_{5}:=2u_{5}&=(-1,1,-1,1,1,-1,1)\\
\rho_{6}:=2u_{6}&=(1,-1,1,-1,1,1,-1)\\
\rho_{7}:=2u_{7}&=(-1,1,-1,1,-1,1,1)\\
\rho_{8}:=2u_{8}&=(-1,-1,-1,-1,-1,-1,-1).
\end{split}
\end{align}

Let us look at their dual lattices. For convenience, we shall identify \(M''\)
with the lattice of ``half-integral'' points
as a subset in \(M_{\mathbb{Q}}\)
\begin{equation}
M''=\left\{\left(\frac{a_1}{2},\ldots,\frac{a_{7}}{2}\right)~\Bigm\vert~a_{i}\in\mathbb{Z}\right\}
\subset M_{\mathbb{Q}}.
\end{equation}
and \(M\) is a sublattice in \(M''\) corresponding to the integral points.
\begin{lemma}
Let \(M'\) be the dual lattice of \(N'\) in \(M_{\mathbb{Q}}\). We have 
\begin{equation*}
M'=\left\{\left(\frac{a_1}{2},\ldots,\frac{a_{7}}{2}\right)\in M''~\Big|~
a_{i}\equiv a_{i+1}~(\mathrm{mod}~2),~\forall i=1,\ldots,6\right\}.
\end{equation*}
\end{lemma}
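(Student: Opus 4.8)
The plan is to prove the description of $M'$ by a direct computation of the natural pairing, using the explicit $\mathbb{Z}$-basis $\{v_{1},\ldots,v_{7}\}$ of $N'$ recorded in \eqref{eq:basis-n'}. First I would note that $N''=2N\subset N'$ (each element of $2N$ has even coordinate sum), so dualizing the inclusion gives $M'\subset (N'')^{\vee}=M''$; concretely, since $\rho_{i}=2u_{i}\in N'$ for $1\le i\le 7$, every $m\in M'$ satisfies $\langle m,2u_{i}\rangle\in\mathbb{Z}$, i.e.\ $m$ has half-integral coordinates. Hence from the outset we may write $m=(a_{1}/2,\ldots,a_{7}/2)$ with $a_{i}\in\mathbb{Z}$, and the only remaining task is to determine which such $m$ pair integrally with all of $N'$.

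Because $\{v_{1},\ldots,v_{7}\}$ generates $N'$ over $\mathbb{Z}$, this amounts to testing the seven pairings $\langle m,v_{j}\rangle$. Reading off \eqref{eq:basis-n'} we have $v_{j}=u_{j}+u_{j+1}$ for $1\le j\le 6$ and $v_{7}=u_{1}+u_{7}$, where $u_{1},\ldots,u_{7}$ is the standard basis of $N$, so that
\begin{equation*}
\langle m,v_{j}\rangle=\frac{a_{j}+a_{j+1}}{2}\quad(1\le j\le 6),\qquad
\langle m,v_{7}\rangle=\frac{a_{1}+a_{7}}{2}.
\end{equation*}
Thus $m\in M'$ if and only if $a_{j}+a_{j+1}$ is even for $j=1,\ldots,6$ and $a_{1}+a_{7}$ is even. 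The first six conditions say precisely $a_{j}\equiv a_{j+1}\pmod 2$ for $j=1,\ldots,6$; and once they hold one has $a_{1}\equiv a_{2}\equiv\cdots\equiv a_{7}\pmod 2$, whence $a_{1}+a_{7}\equiv 2a_{1}\equiv 0\pmod 2$ automatically. Therefore $M'$ consists exactly of the half-integral points $(a_{1}/2,\ldots,a_{7}/2)$ with $a_{i}\equiv a_{i+1}\pmod 2$ for all $i=1,\ldots,6$, which is the claimed description.

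There is no serious obstacle here: the computation is routine once one uses the given basis, and the only subtlety worth recording is that the condition coming from $v_{7}$ is redundant, which is why only the six cyclically consecutive comparisons survive in the final statement. As a consistency check one can observe that $[N:N']=2$ forces $[M':M]=2$, and the description indeed gives $M'=M+\mathbb{Z}\cdot\tfrac{1}{2}(1,\ldots,1)$: taking all $a_{i}$ even recovers $M$, while taking all $a_{i}$ odd yields the nontrivial coset, whose representative $\tfrac{1}{2}(1,\ldots,1)$ manifestly pairs integrally with every element of $N'$ because such an element has even coordinate sum.
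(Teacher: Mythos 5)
Your proof is correct and follows essentially the same approach as the paper: pair a general half-integral vector against the explicit $\mathbb{Z}$-basis $\{v_1,\ldots,v_7\}$ of $N'$ from \eqref{eq:basis-n'} and read off the parity conditions. The only additions are the preliminary justification that $M'\subset M''$ and the explicit observation that the $v_7$ condition is redundant, both of which the paper leaves implicit.
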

\begin{proof}
Note that \(\displaystyle\left(\frac{a_1}{2},\ldots,\frac{a_{7}}{2}\right)\in M'\)
if and only if 
\begin{equation*}
\sum_{i=1}^{7} b_{i}\frac{a_{i}}{2}\in \mathbb{Z},~\mbox{for all}~(b_{i})\in N'.
\end{equation*}
By plugging the elements in the basis, we see that for \(i=1,\ldots,6\), 
\begin{equation*}
a_{i}\equiv a_{i+1} \pmod{2}
\end{equation*} 
as claimed.
\end{proof}

We can also view \(\Sigma\) as a fan in \(N_{\mathbb{R}}'\). We have
\begin{equation}
    X_{\Sigma,N'}\simeq \mathbb{P}^{7}\slash G'~\mbox{and}~X_{\Sigma,N}\simeq\mathbb{P}^{7}\slash G
\end{equation}
where 
\begin{equation}
    G' = N'\slash N''~\mbox{and}~G=N\slash N''
\end{equation}
as well as a toric map \(X_{\Sigma,N'}\to X_{\Sigma,N}\).
The following proposition shows that 
\(X_{\Sigma,N'}\) is a double cover over \(X_{\Sigma,N}\)
branched along toric divisors.
\begin{proposition}
Let \(S_{i}:=u_{i}^{\perp}\cap M\) and \(S_{i}':=u_{i}^{\perp}\cap M'\).
The inclusion \(M\to M'\) induces the isomorphism \(S_{i}'\simeq S_{i}\).
\end{proposition}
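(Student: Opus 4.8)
The plan is to unwind what the ``inclusion $M\to M'$'' is and then reduce the statement to a one-line parity check. Dually to $N'\subset N$, restriction of functionals gives the canonical inclusion $M=\mathrm{Hom}(N,\mathbb{Z})\hookrightarrow\mathrm{Hom}(N',\mathbb{Z})=M'$, which inside $M_{\mathbb{Q}}$ is exactly the inclusion of integral vectors into the index-$2$ half-integral lattice $M'$ described in the previous lemma. This inclusion visibly sends $S_i=u_i^{\perp}\cap M$ injectively into $S_i'=u_i^{\perp}\cap M'$, so the only thing left to prove is \emph{surjectivity}: every $m\in M'$ with $\langle m,u_i\rangle=0$ already lies in $M$. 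Equivalently, I want $S_i'\subseteq S_i$, whence $S_i'=S_i$ as sublattices of $M_{\mathbb{Q}}$.

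First I would repackage the lemma's description of $M'$. Writing a general element as $m=\bigl(\tfrac{a_1}{2},\dots,\tfrac{a_7}{2}\bigr)$ with $a_j\in\mathbb{Z}$, the congruences $a_j\equiv a_{j+1}\pmod 2$ just say that $a_1,\dots,a_7$ all share the same parity; thus $M'=M\sqcup\bigl(M+(\tfrac12,\dots,\tfrac12)\bigr)$, the two pieces being ``all $a_j$ even'' and ``all $a_j$ odd.'' So for $m\in S_i'$ I am done as soon as all $a_j$ are even, and the task is to rule out the all-odd alternative using $\langle m,u_i\rangle=0$.

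The parity check then splits according to the torus-invariant divisor. For $1\le i\le 7$ one has $u_i=e_i$, so $\langle m,u_i\rangle=\tfrac{a_i}{2}=0$ forces $a_i=0$, contradicting that $a_i$ is odd. For $i=8$ one has $u_8=(-1,\dots,-1)$, so $\langle m,u_8\rangle=-\tfrac12\sum_{j=1}^{7}a_j=0$ forces $\sum_{j=1}^{7}a_j=0$; but a sum of seven odd integers is odd, hence nonzero --- a contradiction. Hence the all-odd case never occurs, $m\in M$, and $S_i\hookrightarrow S_i'$ is an isomorphism.

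I do not anticipate a genuine obstacle --- the whole argument is elementary parity bookkeeping --- but the one point that deserves care is the diagonal ray generator $u_8$: the case $i=8$ uses crucially that the lattice has \emph{odd} rank (here the coordinates of $u_8$ number seven), so that an all-odd half-integral vector cannot be orthogonal to $u_8$. This is worth bearing in mind if one later wants to generalize the construction.
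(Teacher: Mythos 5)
Your proposal is correct and follows essentially the same route as the paper: both reduce to the observation that $M'$ consists of half-integral vectors whose doubled coordinates all share a common parity, then use $\langle m,u_i\rangle=0$ to rule out the all-odd case (directly via $a_i=0$ for $i\le 7$, and via a sum of seven odd integers being nonzero for $i=8$). Your closing remark that the argument hinges on the odd rank is a useful extra observation not made explicit in the paper.
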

\begin{proof}
Let us do the case \(i=8\). For \(x=(x_{i})\in S_{8}'\), we have \(x\in M'\) and
\begin{equation}
    \sum_{i=1}^{7} x_{i}=0.
\end{equation}
Let us write \(x_{i}=a_{i}\slash 2\). We have \(a_{i}\equiv 0\pmod{2}\); for
otherwise \(\sum_{i=1}^{7} x_{i}\ne 0\). We thus can conclude \(x\in S_{8}\).

For other \(i\), from \(x\in S_{i}'\), we see that \(x_{i}=0\) and hence \(a_{i}\equiv 0\pmod{2}\).
By definition, \(a_{j}\equiv 0\pmod{2}\) for all \(j\). We again conclude \(x\in S_{i}\).
\end{proof}

The group \(G'\) acts on \(Y'\) and we have \(Y'\slash G'\subset \mathbb{P}^{7}\slash G'\).
Therefore, we can regard \(Y\) as a complete intersection in 
a simplicial toric variety \(X_{\Sigma,N'}\). Notice that 
in the lattice \(N'\), the primitive vector of the 1-cone in \(\Sigma\)
is still given by \(u_{i}\), \(1\le i\le 8\).
In what follows, we 
shall focus on the quotient stack \([\mathbb{P}^{7}\slash G']\).
\(X_{\Sigma,N'}\) is nothing but the coarse moduli space of this stack.

We have the following commutative diagram:
\begin{equation}
\label{eq:comm-diag-quotient}
\begin{tikzcd}
X_{\Sigma,N''}\ar[rd,"q"]\ar[dd,"\Phi"] &\\
& X_{\Sigma,N'}\ar[ld,"p"]\\
X_{\Sigma,N} &
\end{tikzcd}
\end{equation}
We now describe the stacky fan for the later use. Consider the exact 
sequence
\begin{equation}
  0\to \mathbb{L}\to \mathbb{Z}^{8}\xrightarrow{\rho} \mathbb{Z}^{7}=:N'
\end{equation}
where \(\rho\) sends \(e_{i}\) to \(\rho_{i}\). 
Denote by \(N''=\operatorname{Im}(\rho)\). Then
\begin{equation}
    [N':N'']=64.
\end{equation}
Note that the basis \eqref{eq:basis-n'}
gives rise to an embedding of \(N'\) into \(N:=\mathbb{Z}^{7}\)
whose image is a sublattice of index \(2\); these together recover
\eqref{eq:lattice-inclusions}.

On the other hand, the kernel 
\(\mathbb{L}\) is identified with
the diagonal subgroup
\begin{equation}
  \langle (1,1,1,1,1,1,1,1)\rangle_{\mathbb{Z}}\subset \mathbb{Z}^{8}.
\end{equation}
Then \((N',\Sigma,\rho\colon\mathbb{Z}^{8}\to N')\)
is the stacky fan describing \(X_{\Sigma,N'}\).
Moreover, the Mori cone \(\overline{\mathrm{NE}}(X_{\Sigma,N'})\subset \mathbb{R}^{8}\) is
\begin{equation}
  \mathbb{R}_{\ge 0}\cdot(1,1,1,1,1,1,1,1)\subset \mathbb{R}^{8}.
\end{equation}

\subsubsection{Genus zero Gromov--Witten invariants for the 
orbifold \([\mathbb{P}^{7}\slash G']\)}
Toric Deligne--Mumford stacks
can be built from either stacky fans \cite{2005-Borisov-Chen-Smith-the-orbifold-chow-ring-of-toric-deligne-mumford-stacks}
or extended stacky fans \cite{2008-Jiang-the-orbifold-cohomology-ring-of-simplicial-toric-stack-bundles}.
Recall that an \emph{extended stacky fan} is a stacky fan 
\(\boldsymbol{\Sigma}=(N,\Sigma,\rho\colon\mathbb{Z}^{n}\to N)\) together with a map
\begin{equation}
     S\to N_{\Sigma}:=\{c\in N\mid \bar{c}\in |\Sigma|\}
\end{equation} 
from a finite
set \(S\). In practice, one often takes \(S\)
to be a subset of \(\mathrm{Box}(\boldsymbol{\Sigma})\).
For a toric Deligne--Mumford stack, the genus zero orbifold Gromov--Witten
invariants are explicitly computed in
\cite{2015-Coates-Corti-Iritani-Tseng-a-mirror-theorem-for-toric-stacks} using
the combinatorial data of extended stacky fans; after
appropriately choosing \(S\to N_{\Sigma}\), one would be able to compute 
genus zero orbifold Gromov--Witten invariants along twisted sectors.
%(cf.~\cite{2015-Coates-Corti-Iritani-Tseng-a-mirror-theorem-for-toric-stacks}*{Remark 34}).
In short, from the combinatorial data, 
one constructs a cohomology-valued series
(a.k.a.~the \(S\)-extended \(I\)-function)
which was shown to 
compute the genus zero orbifold Gromov--Witten invariants.
If \(S=\emptyset\), we then obtain the 
\emph{non-extended \(I\)-function}, which only
determines the genus zero orbifold Gromov--Witten invariants along 
the \emph{very small parameter space}
\begin{equation}
\mathrm{H}^{2}(\bigl\vert[\mathbb{P}^{7}\slash G']\bigr\vert;\mathbb{C})=
\mathrm{H}^{2}(X_{\Sigma,N'};\mathbb{C})\subset
\mathrm{H}^{2}_{\mathrm{CR}}([\mathbb{P}^{7}\slash G'];\mathbb{C}).  
\end{equation}
This will be enough in our following discussion.
Here, the left hand side is the singular cohomology
of the underlying space.

%However, the non-extended \(I\)-function turns out to be sufficient in our case.
%Let us recall the terminology in 
%\cite{2005-Borisov-Chen-Smith-the-orbifold-chow-ring-of-toric-deligne-mumford-stacks}.
Let come back to our situation. Let \(\Sigma\) and \(N'\) be 
as before.
Note that 
\begin{equation}
    \mathrm{H}^{2}(X_{\Sigma,N'};\mathbb{C})\cong\mathbb{C}
\end{equation}
is one-dimensional and 
\begin{equation}
    \mathrm{H}_{2}(X_{\Sigma,N'},\mathbb{Z})=\mathbb{Z}\langle\ell\rangle
\end{equation}
where \(\ell\) is the curve class coming from a \emph{wall} in \(\Sigma\),
i.e.~an \(6\)-dimensional cone in \(\Sigma\).
We denote by \(D_{i}\) the toric Weil divisor associated 
with the \(1\)-cone \(\mathbb{R}_{\ge}\rho_{i}\);
it is indeed \(\mathbb{Q}\)-Cartier since \(X_{\Sigma,N'}\) is simplicial.
One can easily prove that 
\begin{equation}
    D_{i}\equiv D_{j}~\mbox{and}~D_{i}.\ell = 1/2~\mbox{for all \(i\)}.     
\end{equation}
%Denote by \(H=D_{i}\) the ``hyperplane class''
%on \(X_{\Sigma,N'}\); it is a generator of \(\mathrm{H}^{2}(X_{\Sigma,N'};\mathbb{C})\).

\begin{remark}
Note that 
\begin{equation}
    \mathrm{H}^{2}(X_{\Sigma,N'};\mathbb{Z})
\end{equation}
has non-trivial torsion part. Also the divisors \(D_{i}\)
are all \emph{inequivalent} under linear equivalence due to the torsions. 
However, we do have
\begin{equation}
    2D_{i}\sim 2D_{j}~\mbox{for all \(i,j\)}
\end{equation}
and each of them is Cartier.
Passing to \(\mathbb{C}\) coefficients (or \(\mathbb{Q}\) coefficients), all the divisors 
\(D_{i}\) will become linearly equivalent. Let us call \(H\) the image of \(D_{i}\)
in \(\mathrm{H}^{2}(X_{\Sigma,N'};\mathbb{C})\).
One notices that \(H\) is not Cartier, but \(2H\) is Cartier.
\end{remark}

The non-extended \(I\)-function is given by 
\begin{align}
\begin{split}
&B_{[\mathbb{P}^{7}\slash G']}(t;\alpha)\\
&=\alpha\cdot \exp({\textstyle H t\slash \alpha})\sum_{g\in \mathrm{C}(G')}
\sum_{d\in\overline{\mathrm{NE}}_{g}}q^{d}\prod_{j=1}^{8}
\frac{\prod_{\langle d \rangle=\langle m\rangle,~m\le 0} (D_{j}+m\alpha)}{\prod_{\langle d \rangle=\langle m\rangle,~m\le d}(D_{j}+m\alpha)}\mathbf{1}_{g}\\
&=\alpha\cdot \exp(Ht\slash \alpha)\sum_{g\in \mathrm{C}(G')}
\sum_{d\in\overline{\mathrm{NE}}_{g}}q^{d}
\frac{1}{\prod_{\langle d \rangle=\langle m\rangle,~0< m\le d}(H+m\alpha)^{8}}\mathbf{1}_{g}.
\end{split}
\end{align}
Some explanations are in order.
\begin{itemize}
    \item \(\alpha\) stands for the formal variable in this expression. 
    It was called \(z\) in other references, especially in 
    \cite{2015-Coates-Corti-Iritani-Tseng-a-mirror-theorem-for-toric-stacks}.
    Since \(z\) was already used for the coordinate on the moduli space, we
    choose to name the formal variable \(\alpha\).
    \item \(\mathrm{C}(G')\) denotes the set of conjugacy classes of \(G'\).
    \item \(\mathbf{1}_{g}\) is the unit in the cohomology ring of the 
    component of the inertia stack associated with \(g\).
    \item For each \(g\in \mathrm{C}(G')\), 
    the relevant Mori cone \(\overline{\mathrm{NE}}_{g}\) is defined by 
    \begin{equation}
        \overline{\mathrm{NE}}_{g}:=\Lambda_{g}\cap\overline{\mathrm{NE}}(X_{\Sigma,N'})
    \end{equation}
    where \(\overline{\mathrm{NE}}(X_{\Sigma,N'})\) is the classical Mori cone
    of the algebraic variety \(X_{\Sigma,N'}\) and
    \begin{equation}
        \Lambda_{g}:=\{\lambda\in\Lambda\mid v(\lambda)=g\}
    \end{equation}
    where \(\Lambda=\cup_{\sigma\in\Sigma} \Lambda_{\sigma}\).
    See also \cite{2015-Coates-Corti-Iritani-Tseng-a-mirror-theorem-for-toric-stacks}*{Remark 30}.
\end{itemize}
In our case, \(G'\) is abelian and \(\mathrm{C}(G')=G'\) can be
identified with \(\mathrm{Box}(\boldsymbol{\Sigma})\).
Also we note that in the present case
\begin{equation}
    \overline{\mathrm{NE}}_{g}\ne\emptyset~\Leftrightarrow~\mbox{\(g\)
    is the identity in \(G'\)}.
\end{equation}
Moreover, if we denote by \(e\in G'\) the identity, then we can check
\begin{equation}
    \overline{\mathrm{NE}}_{e}=\mathbb{Z}_{\ge 0}\cdot(1,1,1,1,1,1,1,1)\subset \mathbb{Z}^{8}.
\end{equation}
Thus in the formula for \(B_{[\mathbb{P}^{7}\slash G']}(t;\alpha)\),
the index \(d\in \overline{\mathrm{NE}}_{e}\) 
can be identified with non-negative integer
and the formula is reduced to
\begin{equation}
B_{[\mathbb{P}^{7}\slash G']}(t;\alpha)=\alpha\cdot \exp(Ht\slash \alpha)
\sum_{d=0}^{\infty}q^{d}
\frac{1}{\prod_{m=1}^{d}(H+m\alpha)^{8}}\mathbf{1}_{e}.
\end{equation}

\subsubsection{A quantum Lefschetz hyperplane theorem}
To apply the machinery developed in 
\cite{2010-Tseng-orbifold-quantum-riemann-roch-lefschetz-and-serre}*{\S 5}, 
we first verify that the generic stabilizer of
our quotient stack \([\mathbb{P}^{7}\slash G']\) is trivial and 
\(Y'\slash G'\) is defined by a section of a split vector bundle coming from the
coarse moduli space \(X_{\Sigma,N'}\).
%(See \cite{2010-Tseng-orbifold-quantum-riemann-roch-lefschetz-and-serre}*{Assumption 5.1.2}.)

Note that \(Y'\slash G'\) is  
a complete intersection in \(\mathbb{P}^{7}\slash G'\) as
an intersection of four sections of the line bundle \(2D_{i}\).
Thus the corresponding \emph{hypergeometric modification} of 
\(B_{[\mathbb{P}^{7}\slash G']}\)
is 
\begin{align}
\label{eq:i-function-modification}
B_{[Y'\slash G']}(t;\alpha)
=\alpha\cdot \exp(Ht\slash \alpha)
\sum_{d=0}^{\infty}q^{d}
\frac{\prod_{m=1}^{2d}(2H+m\alpha)^{4}}{\prod_{m=1}^{d}(H+m\alpha)^{8}}\mathbf{1}_{e}.
\end{align}
%Restricting \eqref{eq:i-function-modification} to the 
%\emph{untwisted sector}; namely \(\mathbf{1}_{e}\), we obtain
%\begin{align}
%\label{eq:i-function-modification-untwisted}
%B^{\mathrm{untw}}_{[Y'\slash G']}(t;z)
%&=z\cdot \exp(Ht\slash z)
%\sum_{d\in\mathbb{Z}_{\ge 0}}q^{d}
%\frac{\prod_{1\le m\le 2d}(2H+mz)^{4}}
%{\prod_{1\le m \le d}(H+mz)^{8}}\mathbf{1}_{e}.
%\end{align}
One should be aware that the series \eqref{eq:i-function-modification}
is almost identical to 
\begin{equation}
\label{eq:i-function-modification-ci}
B_{Y'}(t;\alpha)
=\alpha\cdot \exp(ht\slash \alpha)
\sum_{d=0}^{\infty}q^{d}
\frac{\prod_{m=1}^{2d}(2h+m\alpha)^{4}}
{\prod_{m=1}^{d}(h+m\alpha)^{8}},
\end{equation} 
the \emph{hypergeometric modification} series
for a \((2,2,2,2)\) Calabi--Yau complete intersection \(Y'\)
in \(\mathbb{P}^{7}\). 
Here \(h\) is the hyperplane class of \(\mathbb{P}^{7}\). The
only difference is the hyperplane classes ``\(h\)'' and ``\(H\).''

Due to the very similar looking appearance of
\eqref{eq:i-function-modification} and \eqref{eq:i-function-modification-ci},
the following corollary follows immediately.
\begin{corollary}
The mirror maps for 
\begin{equation}
(2h)^{4}B_{Y'}(t;\alpha)~\mbox{and}~
(2H)^{4}B_{[Y'\slash G']}(t;\alpha)
\end{equation}
are identical
if we treat \(H\) and \(h\) as formal variables such that \(h^{8}=H^{8}=0\).
\end{corollary}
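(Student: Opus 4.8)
The plan is to reduce the corollary to the observation that, after the identifications dictated by the hypotheses, $(2h)^{4}B_{Y'}(t;\alpha)$ and $(2H)^{4}B_{[Y'\slash G']}(t;\alpha)$ are literally the \emph{same} formal power series, and to note that the mirror map is a universal construction applied to such a series which only sees its degree $0$ and degree $2$ components.

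First I would recall the extraction of the mirror map. Writing $c_{d}:=((2d)!)^{4}\slash(d!)^{8}$ and $H_{k}:=\sum_{m=1}^{k}1\slash m$, one expands in powers of $\alpha^{-1}$:
\begin{equation*}
B_{[Y'\slash G']}(t;\alpha)=I_{0}(q)\,\alpha\,\mathbf{1}_{e}+I_{1}(q,t)\,\mathbf{1}_{e}+O(\alpha^{-1}),
\end{equation*}
where $I_{0}(q)=1+\sum_{d\ge 1}c_{d}\,q^{d}$ is a scalar series and the $H$-linear part of $I_{1}(q,t)$ is $\bigl(t\,I_{0}(q)+8\sum_{d\ge 1}c_{d}(H_{2d}-H_{d})q^{d}\bigr)H$. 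The mirror map is the coordinate change $t\mapsto \tau(q)=t+8\,I_{0}(q)^{-1}\sum_{d\ge 1}c_{d}(H_{2d}-H_{d})q^{d}$, equivalently the normalization $Q=q\exp(\tau-t)$ of the Novikov variable; it depends only on the scalar series $I_{0}$ and on the $H$-linear part of the $\alpha^{0}$-term. Multiplying by $16H^{4}$ and using $H^{8}=0$ kills the components of $H$-degree $\ge 4$ but leaves those of degree $0$ and $2$ — the only ones relevant to the mirror map — untouched. Word for word the same holds for $(2h)^{4}B_{Y'}$ with $h$ in place of $H$ and $1$ in place of $\mathbf{1}_{e}$.

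It therefore suffices to match $B_{Y'}$ and $B_{[Y'\slash G']}$ term by term under $h\leftrightarrow H$, $1\leftrightarrow\mathbf{1}_{e}$, which is immediate from \eqref{eq:i-function-modification} and \eqref{eq:i-function-modification-ci}: both equal $\alpha\,e^{\star t\slash\alpha}\sum_{d\ge 0}q^{d}\bigl(\prod_{m=1}^{2d}(2\star+m\alpha)^{4}\bigr)\slash\bigl(\prod_{m=1}^{d}(\star+m\alpha)^{8}\bigr)$ with $\star=h$, resp.~$\star=H$, and in each case the degree $2$ symbol satisfies an eighth-power relation — for $\mathbb{P}^{7}$ because $h$ is its hyperplane class, and for the orbifold because $\mathrm{H}^{\bullet}(X_{\Sigma,N'};\mathbb{C})\cong\mathbb{C}[H]\slash(H^{8})$. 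For the latter isomorphism I would use that $G'$ acts trivially on $\mathrm{H}^{\bullet}(\mathbb{P}^{7};\mathbb{C})$ (the automorphism group $\mathrm{PGL}_{8}(\mathbb{C})$ is connected), so that the degree $64$ quotient map $q\colon X_{\Sigma,N''}\cong\mathbb{P}^{7}\to X_{\Sigma,N'}$ of \eqref{eq:comm-diag-quotient} induces a ring isomorphism onto $\mathrm{H}^{\bullet}(\mathbb{P}^{7};\mathbb{C})$, together with the fact that $q^{*}D_{i}$ is the toric divisor attached to the ray $\mathbb{R}_{\ge 0}\rho_{i}$, whose primitive generator is $2u_{i}$ in both $N''$ and $N'$, whence $q^{*}D_{i}=h$ and $q^{*}$ identifies $H$ with $h$. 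One also needs that $B_{[Y'\slash G']}$ carries no twisted-sector contribution and that its Novikov variable is attached to the curve class $(1,\dots,1)$, on which $H$ has degree $1$ (matching the line in $\mathbb{P}^{7}$ on which $h$ has degree $1$); both are already recorded above, since $\overline{\mathrm{NE}}_{g}=\emptyset$ for $g\ne e$ and $\overline{\mathrm{NE}}_{e}=\mathbb{Z}_{\ge 0}\cdot(1,\dots,1)$. With these identifications in hand, $I_{0}(q)$ and the $H$- (resp.~$h$-) linear coefficient of the $\alpha^{0}$-term are the same function of $q$ on both sides, so the two mirror maps coincide.

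The only genuinely non-formal ingredient — hence the step I expect to require the most care — is the ring identification $\mathrm{H}^{\bullet}(X_{\Sigma,N'};\mathbb{C})\cong\mathbb{C}[H]\slash(H^{8})$ together with the compatibility $q^{*}D_{i}=h$; once that is granted, the corollary follows formally from the explicit shapes of \eqref{eq:i-function-modification} and \eqref{eq:i-function-modification-ci}, everything else being routine bookkeeping about the $\alpha^{-1}$-expansion.
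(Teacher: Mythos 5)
Your proof is correct and takes essentially the same approach as the paper: the paper's own justification is the one-line observation that \eqref{eq:i-function-modification} and \eqref{eq:i-function-modification-ci} differ only by substituting $H$ for $h$, so the mirror maps coincide once $h^{8}=H^{8}=0$ is imposed, and you have simply spelled out why — by recalling which components of the $\alpha$-expansion determine the mirror map and matching the two series term by term under $h\leftrightarrow H$. The additional material you supply (the ring isomorphism $\mathrm{H}^{\bullet}(X_{\Sigma,N'};\mathbb{C})\cong\mathbb{C}[H]/(H^{8})$ via the connectedness of $\mathrm{PGL}_{8}$ and the identification $q^{*}D_{i}=h$) is correct but not strictly required, since the corollary already stipulates treating $h$, $H$ as formal variables subject to $h^{8}=H^{8}=0$.
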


Next we investigate the ordinary Poincar\'{e} pairing on \(Y'\)
and the orbifold Poincar\'{e} pairing
\(Y'\slash G'\). Let \(h\) and \(H\) be the hyperplane classes on \(\mathbb{P}^{7}\)
and \(\mathbb{P}^{7}\slash G'\) respectively.
By abuse of notation, we shall use the same notation to denote 
the restriction of \(h\) and \(H\)
to \(Y'\) and \(Y'\slash G'\).
From \eqref{eq:comm-diag-quotient}, we see that \(H=p^{\ast}h\) and
\(q^{\ast}H=2h\) (note that \(\Phi^{\ast}h=2h\)). Therefore,
\begin{equation*}
\int_{\left|[Y'\slash G']\right|} H^{3}=\frac{1}{64}\int_{Y'} (2h)^{3}=
\frac{1}{8}\int_{Y'}h^{3}=\frac{1}{8}\int_{\mathbb{P}^{7}} (2h)^{4} h^{3}=2.
\end{equation*}
We see that 
\begin{equation}
\left\{\mathbf{1},H,\frac{H^{2}}{2},\frac{H^{3}}{2}\right\}
\end{equation}
is a symplectic basis
of \(\mathrm{H}^{\bullet}(\bigl\vert[Y'\slash G']\bigr\vert;\mathbb{C})\) with
respect to the orbifold Poincar\'{e} pairing on the coarse moduli 
\(\bigl\vert[Y'\slash G']\bigr\vert\) (the untwisted sector). 

On the other hand, we know that 
\begin{equation}
    \left\{\mathbf{1},h,\frac{h^{2}}{16},\frac{h^{3}}{16}\right\} 
\end{equation}
is a symplectic basis 
of \(\mathrm{H}^{\bullet}(Y';\mathbb{C})\) with respect to the 
ordinary Poincar\'{e} pairing on \(Y'\).
Since \(Y'\) and \(Y'\slash G'\) have identical \(1\)-point 
invariants with insertions from corresponding coarse moduli spaces and 
descendants (a.k.a~the \(J\)-function), we obtain
the following proposition.
\begin{proposition}
\label{prop:1/8}
We have for any \(k=0,\ldots,3\),
\begin{equation*}
\frac{1}{8}\left\langle\frac{h^{k}}{z-\psi},\mathbf{1}\right\rangle_{0,2,d}^{Y'}=
\left\langle\frac{H^{k}}{z-\psi},\mathbf{1}\right\rangle_{0,2,d}^{[Y'\slash G']}.
\end{equation*}
\end{proposition}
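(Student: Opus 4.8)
The statement concerns only $2$-point descendant invariants with one fundamental-class insertion, so the string equation (valid for orbifold Gromov--Witten theory) reduces it to the $1$-point identity $\langle\frac{h^{k}}{z(z-\psi)}\rangle^{Y'}_{0,1,d}=8\,\langle\frac{H^{k}}{z(z-\psi)}\rangle^{[Y'\slash G']}_{0,1,d}$; equivalently, it compares components of the small $J$-functions of $Y'$ and of the quotient stack $[Y'\slash G']$. The plan is to obtain this comparison from the coincidence of the hypergeometric modifications \eqref{eq:i-function-modification-ci} and \eqref{eq:i-function-modification} via quantum Lefschetz, and then to turn it into the numerical identity by expanding in the two symplectic bases. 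Throughout, $z$ denotes Givental's descendant variable (the variable written $\alpha$ in the $B$-functions above).

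For the first point I would invoke quantum Lefschetz on both sides. Since $[\mathbb{P}^{7}\slash G']$ has trivial generic isotropy (as checked above) and $Y'\slash G'$ is cut out by four sections of the split bundle $\mathcal{O}(2D_{i})^{\oplus 4}$ pulled back from the coarse moduli space $X_{\Sigma,N'}$, Tseng's orbifold quantum Lefschetz theorem \cite{2010-Tseng-orbifold-quantum-riemann-roch-lefschetz-and-serre}*{Theorem 5.2.3} puts $B_{[Y'\slash G']}$ on the Givental cone of $[Y'\slash G']$, and the classical quantum Lefschetz theorem puts $B_{Y'}$ on the cone of $Y'$. Because $\overline{\mathrm{NE}}_{g}=\emptyset$ for every $g\ne e$, the series $B_{[Y'\slash G']}$ takes values in the untwisted summand $\mathrm{H}^{\bullet}(|[Y'\slash G']|;\mathbb{C})\subset\mathrm{H}^{\bullet}_{\mathrm{CR}}([Y'\slash G'];\mathbb{C})$, so after dividing by its scalar leading term (essentially the holomorphic period \eqref{equation:picard-fuchs-equation-series-sol}) and applying the induced change of variables it becomes the $J$-function of $[Y'\slash G']$ restricted to the very small parameter space $\mathrm{H}^{2}(|[Y'\slash G']|;\mathbb{C})$, whose $\mathbf{1}_{e}$-component records precisely the invariants $\langle\cdots\rangle^{[Y'\slash G']}_{0,n,d}$ with all insertions from the coarse moduli space. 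Comparing \eqref{eq:i-function-modification} with \eqref{eq:i-function-modification-ci}, these two series become literally equal after substituting $H$ for $h$ (with $h^{8}=H^{8}=0$ as in the Corollary immediately above; restricting to the threefolds one has moreover $h^{4}=H^{4}=0$, so only degrees $\le 3$ matter), so their scalar leading terms agree and, by that Corollary, so do their mirror maps. Hence $J_{Y'}(\tau;z)$ and the $\mathbf{1}_{e}$-part of $J_{[Y'\slash G']}(\tau;z)$ are one and the same cohomology-valued series in $q$ under $h\leftrightarrow H$.

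It then remains to read off components. Expanding $J_{Y'}(\tau;z)$ in the basis dual to $\{1,h,h^{2},h^{3}\}$, which is $\{\tfrac{h^{3}}{16},\tfrac{h^{2}}{16},\tfrac{h}{16},\tfrac{1}{16}\}$ because $\int_{Y'}h^{3}=16$, the $h^{3-k}$-coefficient of $J_{Y'}$ equals $\tfrac{1}{16}\sum_{d}q^{d}\langle\frac{h^{k}}{z(z-\psi)}\rangle^{Y'}_{0,1,d}$; the identical computation on $[Y'\slash G']$, with dual basis $\{\tfrac{H^{3}}{2},\tfrac{H^{2}}{2},\tfrac{H}{2},\tfrac{1}{2}\}$ and $\int_{|[Y'\slash G']|}H^{3}=2$, gives $\tfrac{1}{2}\sum_{d}q^{d}\langle\frac{H^{k}}{z(z-\psi)}\rangle^{[Y'\slash G']}_{0,1,d}$ for the $H^{3-k}$-coefficient. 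Equating the two under $h\leftrightarrow H$ yields $\langle\frac{h^{k}}{z(z-\psi)}\rangle^{Y'}_{0,1,d}=8\,\langle\frac{H^{k}}{z(z-\psi)}\rangle^{[Y'\slash G']}_{0,1,d}$ for all $k$ and $d$, the factor $8=16\slash 2$ being exactly the ratio of the top intersection numbers (a consequence of $q^{\ast}H=2h$); combining this with the string-equation identity $\langle\frac{H^{k}}{z-\psi},\mathbf{1}\rangle_{0,2,d}=\langle\frac{H^{k}}{z(z-\psi)}\rangle_{0,1,d}$, and likewise on $Y'$, gives the assertion, the $d=0$ case being trivial. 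The hard part is the second paragraph: one must verify the hypotheses of Tseng's theorem and check that the only curve classes with nonempty $\overline{\mathrm{NE}}_{g}$ are those in $\overline{\mathrm{NE}}_{e}$ which also occur upstairs, so that no twisted sector enters the $I$-to-$J$ passage; everything after that is routine linear algebra with the symplectic bases.
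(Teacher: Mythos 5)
Your proposal is correct and follows the same route the paper intends, just with the details written out. The paper compresses the argument into a single sentence ("Since $Y'$ and $Y'/G'$ have identical $1$-point invariants\ldots") preceded by the setup of the two symplectic bases; what it really means is that the two $J$-functions coincide as cohomology-valued series under $h\leftrightarrow H$, and the factor $8=16/2$ then comes out when one pairs against the dual bases, exactly as you compute. You additionally make explicit the string-equation step $\langle\frac{T^k}{z-\psi},\mathbf{1}\rangle_{0,2,d}=\langle\frac{T^k}{z(z-\psi)}\rangle_{0,1,d}$ reducing the $2$-point statement to a $J$-function comparison, and the invocation of Tseng's orbifold quantum Lefschetz and the vanishing of $\overline{\mathrm{NE}}_g$ for $g\ne e$, both of which the paper relies on but leaves implicit. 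Everything checks out numerically: $\int_{Y'}h^3=16$, $\int_{|[Y'/G']|}H^3=2$, dual bases $\{h^k/16\}$ and $\{H^k/2\}$, ratio $8$.
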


\begin{proof}[Proof of Theorem 2.5]
Combining Corollary \ref{cor:prediction-1/8}
and Proposition \ref{prop:1/8}, 
we conclude the proof of Theorem \ref{thm:main-theorem}.
\end{proof}

This proposition explains the peculiar factor \(1/8\)
in Corollary \ref{cor:prediction-1/8}, and hence also
implies the following theorem.
\begin{theorem}
\label{thm:main-theorem-hhhh}
The series \eqref{eq:a-model-predicted-correlations} computes 
the genus zero untwisted orbifold Gromov--Witten invariants of \(Y\), whereby
all the insertions are pullback of cohomology classes
of the base \(\mathbb{P}^{3}\).
\end{theorem}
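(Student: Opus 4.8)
The plan is to obtain Theorem~\ref{thm:main-theorem-hhhh} as a formal consequence of Theorem~\ref{thm:main-theorem} together with the standard structural axioms of genus zero orbifold Gromov--Witten theory. The starting point is that the image of the pullback $\mathrm{H}^{\bullet}(\mathbb{P}^{3};\mathbb{C})\to\mathrm{H}^{\bullet}(\bigl\vert[Y'\slash G']\bigr\vert;\mathbb{C})$ is the subalgebra $\mathbb{C}[H]\slash(H^{4})$ inside the untwisted sector of $\mathrm{H}^{\bullet}_{\mathrm{CR}}(Y;\mathbb{C})$, where $H$ denotes the pullback of the hyperplane class; in particular this subalgebra is generated by the single divisor class $H$, so every invariant to be computed has the shape $\langle H^{a_{1}},\dots,H^{a_{k}}\rangle_{0,k,d}^{Y}$ with $a_{i}\in\{0,1,2,3\}$.

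First I would record that, by the dimension constraint for the virtual class of a Calabi--Yau threefold, such an invariant can be nonzero only when $a_{1}+\dots+a_{k}=k$. Invoking the fundamental class axiom, every invariant carrying an insertion $\mathbf{1}=H^{0}$ vanishes as soon as $d>0$ and reduces to the cup product on $\mathbb{P}^{3}$ when $d=0$; hence for $d>0$ we may assume $a_{i}\ge 1$ for each $i$, and together with $\sum a_{i}=k$ this forces $a_{i}=1$ throughout. Every genuinely quantum invariant with base insertions is therefore of the form $\langle H,\dots,H\rangle_{0,k,d}^{Y}$, and a repeated application of the orbifold divisor equation \cite{2010-Tseng-orbifold-quantum-riemann-roch-lefschetz-and-serre} reduces it, for $d>0$, to $d^{\,k-3}\langle H,H,H\rangle_{0,3,d}^{Y}$. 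Consequently the whole collection of genus zero untwisted invariants of $Y$ with insertions pulled back from $\mathbb{P}^{3}$ is determined by the single three-point series $\sum_{d\ge 0}\langle H,H,H\rangle_{0,3,d}^{Y}q^{d}$ (the $d=0$ term being $\int_{Y}H^{3}=2$), which Theorem~\ref{thm:main-theorem} identifies with \eqref{eq:a-model-predicted-correlations}.

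I do not anticipate a genuine obstacle beyond this bookkeeping: the identification $Y\simeq Y'\slash G'$, the quantum Lefschetz computation for $[Y'\slash G']$, the matching of mirror maps with $\mathbb{P}^{7}[2,2,2,2]$, and the factor $1\slash 8$ relating the orbifold and ordinary Poincar\'{e} pairings are all already in place from the proof of Theorem~\ref{thm:main-theorem}. The one point deserving a line of justification is the use of the divisor equation in the orbifold setting; this is legitimate here because $H$ is the image of the $\mathbb{Q}$-Cartier toric divisor $D_{i}$ on $X_{\Sigma,N'}$ and hence lies in the untwisted sector, so no twisted-sector contributions enter and the reduction to three-point invariants goes through exactly as in the smooth Calabi--Yau case.
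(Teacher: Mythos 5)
Your proposal is correct and, modulo presentation, takes the same route as the paper: the paper's own justification of Theorem~\ref{thm:main-theorem-hhhh} is the one-line remark that Proposition~\ref{prop:1/8} and Corollary~\ref{cor:prediction-1/8} ``also imply'' it, with the reduction from arbitrary base insertions to the three-point series $\langle H,H,H\rangle$ left implicit. Your deduction via the dimension constraint, the fundamental class axiom, and the orbifold divisor equation supplies exactly that omitted bookkeeping, and the observation that the divisor equation applies without twisted-sector contributions because $H$ is an untwisted divisor class is the right point to flag; the only small imprecision is that the $d=0$ terms reduce to the orbifold Poincar\'e pairing on $Y$ (giving $\int_{Y}H^{3}=2$), not the cup product on $\mathbb{P}^{3}$ itself, but this does not affect the argument.
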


\section{Double covers of \texorpdfstring{\(\mathbb{P}^{3}\)}{P3}
with the nef-partition \texorpdfstring{\(-K_{\mathbb{P}^{3}}=4h\)}{-K=4h}}
\label{sec:4h}
In the section, we investigate another toy example; this can be generalized
to arbitrary toric bases.
Let again \(\Delta\) be the convex hull of 
\begin{align*}
(3,-1,-1),~(-1,3,-1),~(-1,-1,3),~(-1,-1,-1).
\end{align*}
Let \(X=\mathbf{P}_{\Delta}=\mathbb{P}^{3}\) and \(H\) be the hyperplane class.
Consider the following situation.
\begin{itemize}
    \item Regard \(\Delta=\Delta\) as the 
    Minskowski sum decomposition representing the nef-partition \(-K_{X}=4h\).
    \item The Batyrev--Borisov dual nef-partition \(\nabla\); it
    is just the dual polytope of \(\Delta\). 
\end{itemize}
Let \(X^{\vee}\to\mathbf{P}_{\nabla}\) be a MPCP desingularization
which turns out again to be smooth in the present case.
Let \(\mathcal{Y}\to V\) and \(\mathcal{Y}^{\vee}\to U\) be the families of 
Calabi--Yau double covers over \(X\) and \(X^{\vee}\) constructed in 
\S\ref{subsection:cy-double-covers} respectively.
Let \(Y\) and \(Y^{\vee}\) be the fiber of 
\(\mathcal{Y}\to V\) and \(\mathcal{Y}^{\vee}\to U\).
Notice that we have \(h^{1,1}(Y)=h^{2,1}(Y^{\vee})=1\).
In what follows, we shall drop the subscript (\(r=1\)) for the nef-partition.

In the present case, on the \(X\) side, the primitive
generators of the \(1\)-cones in the fan defining \(X\) are given by
\begin{eqnarray*}
\rho_{1}=(1,0,0),~\rho_{2}=(0,1,0),~\rho_{3}=(0,0,1)~\mbox{and}~\rho_{4}=(-1,-1,-1).
\end{eqnarray*}

\subsection{Picard--Fuchs equations for \texorpdfstring{\(\mathcal{Y}^{\vee}\to U\)}{}}
\label{subsection:pf-equations-cyclic-cy-3-fold-p11114}

From the construction, 
the integral points in the section polytopes of \(F\) 
correspond to the integral points in \(\nabla\).
The GKZ hypergeometric system associated with \(\mathcal{Y}^{\vee}\to U\) is given by 
\begin{equation*}
A_{\mathrm{ext}}=\begin{bmatrix}
1 & 1 & 1 & 1 & 1 \\
0 & 1 & 0 & 0 &-1 \\
0 & 0 & 1 & 0 &-1 \\
0 & 0 & 0 & 1 &-1 \\
\end{bmatrix}~\mbox{and}~
\beta=
\begin{bmatrix}
-1/2\\
0\\
0\\
0\\
\end{bmatrix}
\end{equation*}
The lattice relation given by \( A \) is \(L_{\mathrm{ext}}=
\langle \ell\rangle_{\mathbb{Z}}\) with \(\ell:=(-4,1,1,1,1)\).

From the lattice relation, the box operators is
\begin{equation*}
\label{equation:GKZ-polynomial-operators-P3-mirror-4h}
\Box_{k\ell}=\partial_{x_{1}}^{k}\partial_{x_{2}}^{k}
\partial_{x_{3}}^{k}\partial_{x_{4}}^{k}-
\partial_{x_{0}}^{4k},~
k\in\mathbb{Z}_{\ge0}
\end{equation*}
or with a minus sign if \(k<0\).
Let us consider the case \(k=1\). We have
\begin{align}
\begin{split}
&\textstyle x_{0}^{1/2}(\textstyle\prod_{i=1}^{4}x_{i})
\Box_{\ell}x_{0}^{-1/2}\\
&=\prod_{i=1}^{4}\theta_{x_{i}}
-z\prod_{i=1}^{4}\left(\theta_{x_{0}}-\frac{2i-1}{2}\right)
\end{split}
\end{align}
where \(z=(x_{1}x_{2}x_{3}x_{4})\slash x_{0}^{4}\).
Here \(\theta_{a}=a(\mathrm{d}\slash\mathrm{d}a)\) 
is the logarithmic derivative with respect to \(a\).
Substituting
\begin{equation*}
\theta_{x_{i}}=\theta_{z},~\theta_{x_{0}}=-4\theta_{z}
\end{equation*}
we see that \eqref{equation:GKZ-polynomial-operators-P3-mirror-4h} is transformed into
\begin{equation}
\label{equation:picard-fuchs-equation-for-mirror-p3-cover-4h}
\theta_{z}^{4} - 4^4 z \prod_{i=1}^{4}\left(\theta_{z}+\frac{2i-1}{8}\right).
\end{equation}

The unique holomorphic series solution to 
\eqref{equation:picard-fuchs-equation-for-mirror-p3-cover-4h} is then of the form
\begin{equation}
\label{equation:picard-fuchs-equation-series-sol-4h} 
\sum_{n\ge 0} \frac{\Gamma(4n+1/2)^{4}}{\Gamma(1/2)^{4}\Gamma(n+1)^{4}}z^{n}.
\end{equation}

\begin{remark}
\label{remark:8-in-p11114-coodinate}
The equation \eqref{equation:picard-fuchs-equation-for-mirror-p3-cover-4h} has also 
been studied in the literature.
Introducing a change of variables \( w=z/256 \), we have \( \theta_{w}=\theta_{z} \) and 
\begin{equation}
\label{equation:picard-fuchs-equation-for-mirror-2-2-2-2-complete-intersection-4h}
\eqref{equation:picard-fuchs-equation-for-mirror-p3-cover-4h} = 
\theta_{w}^{4}-65536w\prod_{i=1}^{4}\left(\theta_{w}+\frac{2i-1}{8}\right),
\end{equation}
which is the Picard--Fuchs equation for the mirror of 
the family of anti-canonical Calabi--Yau hypersurfaces in \(\mathbb{P}^{4}(1,1,1,1,4)\).
\end{remark}

\subsection{An instanton prediction from mirror symmetry}
We adapt the notation in \S\ref{subsection:inst-prediction-h-h-h-h}.
Similarly, using the equation
\begin{equation*}
2\theta_{z}\left\langle \theta_{z},\theta_{z},\theta_{z}\right\rangle^{\Omega}
-\int_{\tilde{Y}^{\vee}} \Omega(z)\wedge \theta_{z}^{4}\Omega(z)=0.
\end{equation*}
and substituting the last term by the Picard--Fuchs equation 
\eqref{equation:picard-fuchs-equation-for-mirror-2-2-2-2-complete-intersection-4h},
we get
\begin{equation*}
\theta_{z}\left\langle \theta_{z},\theta_{z},\theta_{z}\right\rangle^{\Omega}
=\frac{256z}{1-256z}\left\langle \theta_{z},\theta_{z},\theta_{z}\right\rangle^{\Omega}.
\end{equation*}
We can solve the above equation and get
\begin{equation*}
\left\langle \theta_{z},\theta_{z},\theta_{z}\right\rangle^{\Omega} = \frac{C}{1-256z},
\end{equation*}
for some constant \(C\). One can check 
the \emph{normalized Yukawa coupling}
\begin{equation*}
\left\langle \theta_{z},\theta_{z},\theta_{z}\right\rangle
:=\int_{\tilde{Y}^{\vee}} \frac{\Omega(z)}{y_{0}(z)}\wedge \theta_{z}^{3}
\left(\frac{\Omega(z)}{y_{0}(z)}\right)
\end{equation*}
is given by
\begin{equation}
\label{equation:normal-yukawa-coupling-4h}
\left\langle \theta_{z},\theta_{z},\theta_{z}\right\rangle = 
\frac{C}{(1-256z)y_{0}(z)^{2}},
\end{equation}
where \(y_{0}(z)\) is the holomorphic series solution 
\eqref{equation:picard-fuchs-equation-series-sol-4h}.

Now we compute the ``mirror map.''
Consider the deformed series
\begin{equation*}
\label{eq:gamma-hol-series-deformed-p11114}
y_0(z;\rho):=\sum_{n\ge 0} 
\frac{\Gamma(4n+4\rho+1/2)^4}{\Gamma(1/2)^{4}\Gamma(n+\rho+1)^{4}} z^{n+\rho} 
\end{equation*}
and its derivative with respect to \(\rho\)
\begin{equation*}
y_{1}(z):=\left.\frac{\mathrm{d}}{\mathrm{d}\rho}\right|_{\rho=0} y_{0}(z;\rho).
\end{equation*}
Consequently, the ``mirror map'' is given by
\begin{equation}
\label{equation:mirrir-map-4h}
q = \exp\left(2\pi\sqrt{-1} t\right),~t = \frac{1}{2\pi\sqrt{-1}}\frac{y_{1}(z)}{y_{0}(z)}.
\end{equation}
Using the classical product, one finds \( C = 2 \) in \eqref{equation:normal-yukawa-coupling-4h}.
In the present case, the mirror map is
\begin{equation*}
q = \frac{z}{256} + \frac{247z^{2}}{1024} + \frac{13368541z^{3}}{524288}+\cdots,
\end{equation*}
and the inverse is given by 
\begin{equation*}
z = 256 q - 4046848 q^2 + 18282602496 q^3 - + \cdots.
\end{equation*}
The \(A\)-model correlation function is
\begin{align}
\begin{split}
\label{eq:a-model-predicted-correlations-4h}
&\langle H,H,H\rangle(q) \\
&= 2 + 29504 q + 1030708800 q^2 + 38440454795264 q^3 + \cdots.
\end{split}
\end{align}
Note that for the classical pairing, we take 
the ample generator of 
\(\mathrm{H}^{2}(Y;\mathbb{Z})\cong\mathbb{Z}\)
which is the pullback of the very ample divisor
\(H\) on \(\mathbf{P}^{3}\).

Consequently, we obtain the following numerical result.
\begin{corollary}
The predicted instanton numbers \(n_{d}\) of \(Y\) for small \(d\) are given by
\begin{align*}
\begin{split}
n_{1} = 29504,~n_{2} = 128834912,~ n_{3} = 1423720546880.
\end{split}
\end{align*}
\end{corollary}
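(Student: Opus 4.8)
The plan is to recover the instanton numbers $n_{d}$ from the $A$-model three-point function \eqref{eq:a-model-predicted-correlations-4h} by inverting the genus-zero multiple-cover (Aspinwall--Morrison) formula, exactly as was done for $-K_{\mathbb{P}^{3}}=h+h+h+h$ in the previous section. Since $h^{1,1}(Y)=1$ with generator the pullback $H$ of the hyperplane class on $\mathbb{P}^{3}$, the untwisted genus-zero three-point function in the K\"ahler parameter $q=e^{2\pi\sqrt{-1}t}$ has the standard one-parameter shape
\begin{equation*}
\langle H,H,H\rangle(q)=\kappa+\sum_{d\ge 1} n_{d}\,d^{3}\,\frac{q^{d}}{1-q^{d}},
\qquad \kappa=\int_{Y}H^{3},
\end{equation*}
with $\tfrac{q^{d}}{1-q^{d}}=\sum_{k\ge 1}q^{kd}$ encoding the $k$-fold covers. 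So the first step is to pin down $\kappa$.

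I would fix $\kappa$ by the projection formula: $Y\to\mathbb{P}^{3}$ is a double cover and $H$ is the pullback of the hyperplane class, so $\kappa=\int_{Y}H^{3}=2\int_{\mathbb{P}^{3}}h^{3}=2$; this is also the constant term of \eqref{eq:a-model-predicted-correlations-4h} and agrees with the normalization $C=2$ recorded after \eqref{equation:normal-yukawa-coupling-4h}. The one point that needs care here is the choice of integral generator: one must take the ample generator $H$ of $\mathrm{H}^{2}(Y;\mathbb{Z})\cong\mathbb{Z}$, namely the pullback of the very ample class on $\mathbb{P}^{3}$ and not a rational multiple of it, so that the $n_{d}$ below come out as integers.

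With $\kappa=2$ in hand the remaining step is pure bookkeeping: expanding the multiple-cover sum and comparing coefficients of $q$, $q^{2}$, $q^{3}$ in \eqref{eq:a-model-predicted-correlations-4h} gives the triangular system
\begin{align*}
n_{1}&=29504,\\
8\,n_{2}&=1030708800-n_{1}=1030679296,\\
27\,n_{3}&=38440454795264-n_{1}=38440454765760,
\end{align*}
whence $n_{1}=29504$, $n_{2}=128834912$, $n_{3}=1423720546880$, as claimed. There will be no genuine obstacle inside this corollary; all of its content rests on the validity of the series \eqref{eq:a-model-predicted-correlations-4h}, which in turn follows from the Picard--Fuchs equation \eqref{equation:picard-fuchs-equation-for-mirror-p3-cover-4h}, the normalized Yukawa coupling \eqref{equation:normal-yukawa-coupling-4h}, and the mirror map \eqref{equation:mirrir-map-4h}; the promotion of these $n_{d}$ from a mirror-symmetry prediction to honest orbifold Gromov--Witten data of $Y$ is the substance of Theorem \ref{thm:main-theorem-4h}.
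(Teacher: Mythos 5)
Your proposal is correct and coincides with the paper's (implicit) derivation: the corollary is the standard extraction of $n_{d}$ from the three‑point function via the Aspinwall--Morrison multiple cover formula, with $\kappa=\int_{Y}H^{3}=2$, and the triangular system you write down reproduces the stated numbers exactly (e.g.\ $29504+8\cdot128834912=1030708800$ and $29504+27\cdot1423720546880=38440454795264$). Your remark about taking the ample integral generator $H=\pi^{\ast}h$ rather than a rational multiple is precisely the normalization the paper records just after \eqref{eq:a-model-predicted-correlations-4h}.
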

\begin{corollary}
\label{cor:prediction-2}
The predicted instanton numbers \(n_{d}\) of \(Y\) and those
of degree \(8\) hypersurface in the weighted projective space
\(\mathbb{P}(1,1,1,1,4)\) are the same.
\end{corollary}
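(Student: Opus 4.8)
The plan is to argue exactly as for Corollary~\ref{cor:prediction-1/8}: both sequences of instanton numbers are produced by the standard mirror-symmetry recipe, and that recipe has only two inputs---a Picard--Fuchs operator and a classical triple intersection number on the \(A\)-side---so it is enough to check that these two inputs agree for \(Y\) and for \(\mathbb{P}(1,1,1,1,4)[8]\).

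First, by Remark~\ref{remark:8-in-p11114-coodinate} the substitution \(w=z/256\) identifies the Picard--Fuchs operator~\eqref{equation:picard-fuchs-equation-for-mirror-p3-cover-4h} of the mirror family \(\mathcal{Y}^{\vee}\to U\) with~\eqref{equation:picard-fuchs-equation-for-mirror-2-2-2-2-complete-intersection-4h}, which is the Picard--Fuchs operator for the mirror of the anti-canonical family in \(\mathbb{P}(1,1,1,1,4)\), i.e.\ of \(\mathbb{P}(1,1,1,1,4)[8]\). Therefore the holomorphic period, its Frobenius derivative \(y_{1}\), the mirror map~\eqref{equation:mirrir-map-4h}, and the unnormalized Yukawa coupling---all determined by the Picard--Fuchs operator---coincide, after this rescaling, with those of the \(\mathbb{P}(1,1,1,1,4)[8]\) mirror.

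Second, the genus-zero \(A\)-model three-point function is obtained from this common data by dividing the unnormalized Yukawa coupling by \(y_{0}(z)^{2}\) and by the overall constant \(C\), the latter fixed by the classical cup product; see~\eqref{equation:normal-yukawa-coupling-4h}. Hence the two \(A\)-model correlation functions, and the \(n_{d}\) extracted from them, can differ at most by the ratio of the two classical triple intersection numbers. On \(\mathbb{P}(1,1,1,1,4)[8]\) the pullback \(h\) of \(\mathcal{O}_{\mathbb{P}^{3}}(1)\) has \(\int h^{3}=8\cdot\tfrac{1}{4}=2\); indeed \(\mathbb{P}(1,1,1,1,4)[8]\) is itself a degree-two cover of \(\mathbb{P}^{3}\), so \(\int h^{3}=2\int_{\mathbb{P}^{3}}h^{3}\). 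On \(Y\) the class \(H\) is the pullback of the hyperplane class of \(\mathbb{P}^{3}\) along the double covering \(Y\to\mathbb{P}^{3}\), and since the rational cohomology of the orbifold \(Y\) satisfies Poincar\'{e} duality the projection formula gives \(\int_{Y}H^{3}=2\int_{\mathbb{P}^{3}}h^{3}=2\); this is the value \(C=2\) used in~\eqref{equation:normal-yukawa-coupling-4h}, consistent with the constant term of~\eqref{eq:a-model-predicted-correlations-4h}. The two triple products being equal, the ratio is \(1\) and \(n_{d}(Y)=n_{d}(\mathbb{P}(1,1,1,1,4)[8])\) for all \(d\); the latter are the numbers \(29504,\,128834912,\dots\) well documented for this one-parameter model.

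The only step with real content beyond Remark~\ref{remark:8-in-p11114-coodinate} is pinning down \(C=2\), that is, computing the classical triple product on the singular space \(Y\) rather than reading it off the Picard--Fuchs equation; this is where the geometry of \(Y\) (its presentation as a double cover of \(\mathbb{P}^{3}\), equivalently its pre-quotient model as in Proposition~\ref{prop:1/8}) must be used, and it is the exact place where case~(1) produced the factor \(\tfrac{1}{8}\) while case~(2) produces \(1\).
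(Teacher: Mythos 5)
Your argument is correct and is essentially the paper's implicit argument: it parallels the proof of Corollary~\ref{cor:prediction-1/8} verbatim, using Remark~\ref{remark:8-in-p11114-coodinate} to match Picard--Fuchs operators and then comparing the classical triple intersection numbers, which the paper records as $C=2$ just before~\eqref{equation:normal-yukawa-coupling-4h}. Your extra step of verifying that $\int_{Y}H^{3}=2=\int_{\mathbb{P}(1,1,1,1,4)[8]}h^{3}$ both by the covering argument and the adjunction computation merely makes explicit what the paper leaves as ``one can check.''
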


\subsection{An instanton calculation}
\label{subsec:an_instanton_calculation}
In this subsection, we will compute the genus zero orbifold Gromov--Witten invariants of \(Y\) with insertions from the untwisted sector
through a pre-quotient model \(Y'\) as we did 
in the previous section. 
As we will see, \(Y'\) is a degree \(8\) hypersurface in 
the weighted projective space \(\mathbb{P}(1,1,1,1,4)\).

Let \([z_1\mathpunct{:}\ldots\mathpunct{:}z_4]\) be the homogeneous 
coordinates on \(X=\mathbb{P}^{3}\) and
\(f\) be a degree \(4\) polynomial in \(\mathbb{P}^{3}\)
such that \(\{f=0\}\cup\bigcup_{i=1}^{4}\{z_{i}=0\}\) 
is the branched locus of the double cover \(Y\to X\).
Consider the graph embedding morphism
\begin{equation*}
\Gamma_{f}\colon X\to\mathbb{P}(1,1,1,1,4),~
[z_1\mathpunct{:}\ldots\mathpunct{:}z_4]\mapsto
[z_1\mathpunct{:}\ldots\mathpunct{:}z_4\mathpunct{:}f(z)].
\end{equation*}
This is well-defined since \(f\) is of degree \(4\).
%Obviously, the map \(\Gamma_{f}\) defines an embedding
%$X\to\mathbb{P}(1,1,1,1,4)$.
Let \([y_1\mathpunct{:}\ldots\mathpunct{:}y_5]\)
be the homogeneous coordinate on \(\mathbb{P}(1,1,1,1,4)\).
We have a (branched) covering map
\begin{equation}
\Phi\colon\mathbb{P}(1,1,1,1,4)\to\mathbb{P}(1,1,1,1,4),~
[y_1\mathpunct{:}\ldots\mathpunct{:}y_5]
\mapsto [y_1^2\mathpunct{:}\ldots\mathpunct{:}y_5^2].
\end{equation}
Let \(Y'\subset\mathbb{P}(1,1,1,1,4)\)
be the subvariety defined by the equation \(y_{5}^{2}-f(y_{1}^{2},\ldots,y_{4}^{2})\).
It is clear that \(Y'\) is a Calabi--Yau hypersurface. Moreover, we have
\begin{lemma}
\(Y'\) is smooth.
\end{lemma}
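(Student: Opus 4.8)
The plan is to combine two facts: that $Y'$ misses the singular locus of the ambient weighted projective space, and that $Y'$ is \emph{quasi-smooth} (its affine cone is smooth away from the origin). A quasi-smooth hypersurface in $\mathbb{P}(1,1,1,1,4)$ has, at a point where it meets $\operatorname{Sing}\mathbb{P}(1,1,1,1,4)$, only the quotient singularity inherited from the ambient stack, and is otherwise smooth; so once both facts are in hand, smoothness of $Y'$ follows. First I would recall that $\mathbb{P}(1,1,1,1,4)$ has a unique singular point, the coordinate point $P=[0:0:0:0:1]$: any point with non-trivial stabilizer must have all of its non-zero coordinates in weight positions with a common factor $>1$, which forces it to be $P$. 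Writing $g(y)=y_5^2-f(y_1^2,\ldots,y_4^2)$ for the (degree $8$) defining polynomial of $Y'$, one computes $g(P)=1-f(0,0,0,0)=1\neq 0$ because $f$ is a form of positive degree; hence $P\notin Y'$, and $Y'$ lies in the smooth locus of $\mathbb{P}(1,1,1,1,4)$.

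Next I would establish quasi-smoothness by the Jacobian criterion applied to the affine cone $C(Y')=V(g)\subset\mathbb{C}^5$. Here $\partial g/\partial y_5=2y_5$ and $\partial g/\partial y_i=-2y_i\,(\partial_i f)(y_1^2,\ldots,y_4^2)$ for $i=1,\ldots,4$. Suppose $y=(y_1,\ldots,y_5)\neq 0$ lies on $C(Y')$ with all these partials vanishing. Then $y_5=0$; put $w=(y_1^2,\ldots,y_4^2)$, so $w\neq 0$ (since $y\neq 0$ and $y_5=0$), $f(w)=0$, and $w_i\,(\partial_i f)(w)=0$ for every $i$ (if $y_i=0$ then $w_i=0$; if $y_i\neq 0$ then $(\partial_i f)(w)=0$). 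Let $S=\{i: w_i=0\}$, so $0\le |S|\le 3$ (the upper bound because $w\neq 0$), and let $L_S=\bigcap_{i\in S}\{z_i=0\}\subset\mathbb{P}^3$, a linear subspace of dimension $3-|S|$ containing $[w]$. For $j\notin S$ we have $w_j\neq 0$, hence $(\partial_j f)(w)=0$; and since for $j\notin S$ the operator $\partial_j$ commutes with restriction of polynomials to $L_S$, the point $[w]$ is a singular point of the hypersurface $\{f=0\}\cap L_S$ inside $L_S$.

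But by hypothesis the branch divisor $\bigcup_{i=1}^4\{z_i=0\}\cup\{f=0\}$ is a strict normal crossing divisor, so $\{f=0\}$ meets the coordinate hyperplanes $\{z_i=0\}$, $i\in S$, transversally; hence $\{f=0\}\cap L_S$ is smooth of the expected dimension $2-|S|$ (in particular empty when $|S|=3$), contradicting the existence of the singular point $[w]$. Therefore $C(Y')$ is smooth away from the origin, i.e.\ $Y'$ is quasi-smooth, and together with $P\notin Y'$ this shows $Y'$ is smooth.

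The genuinely delicate step is the middle one: converting a putative singular point $y$ of the double-cover cone $C(Y')$ into a singular point $[w]$ of a coordinate stratum $\{f=0\}\cap L_S$ of the branch quartic, and then reading off a contradiction from the normal-crossing (general position) hypothesis. Once the bookkeeping of the index set $S$ is organized — in particular the case analysis on how many of the $y_i$ vanish — every remaining step is a routine computation.
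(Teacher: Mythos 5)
Your proof is correct, and the core computation---differentiate $y_5^2-f(y_1^2,\ldots,y_4^2)$, conclude $y_5=0$, $f(w)=0$, and $w_i(\partial_if)(w)=0$, then read this as a failure of transversality of $\{f=0\}$ with the coordinate hyperplanes---is exactly the computation in the paper's proof of Lemma~\ref{lem:y'-is-smooth} (which covers the present case with $n=3$). The packaging differs in two inessential ways. First, you work once on the affine cone over $\mathbb{P}(1,1,1,1,4)$ and invoke the quasi-smoothness formalism together with the observation that $Y'$ misses $\operatorname{Sing}\mathbb{P}(1,1,1,1,4)=\{[0{:}0{:}0{:}0{:}1]\}$; the paper instead records that $\Gamma_f(X)$ lies in the smooth locus of the ambient and then checks smoothness on affine charts of $X=\mathbb{P}^3$, where the local model is $z_{n+1}^2-f(z_1^2,\ldots,z_n^2)=0$ in $\mathbb{C}^{n+1}$. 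Second, you interpret the Jacobian degeneracy as $[w]$ being a singular point of the stratum $\{f=0\}\cap L_S$, whereas the paper phrases it as the gradients $\nabla f(\mathbf{b})$ and $\{e_i:b_i=0\}$ being linearly dependent at $\mathbf b$; these are the same statement in two dialects. One mild advantage of the paper's chart-based phrasing is that it transfers verbatim to the general semi-Fano toric base of \S\ref{sec:a-mirror-theorem-for-r1}, whereas your argument is tied to the concrete weighted projective ambient; on the other hand, your version makes the role of quasi-smoothness and the single quotient singularity of $\mathbb{P}(1,1,1,1,4)$ explicit, which is clarifying in this specific example. One small point worth recording in your write-up: when $|S|\le 2$ you should note that $f|_{L_S}\not\equiv 0$ (else $L_S\subset\{f=0\}$ already violates normal crossings), so that ``singular point of $\{f=0\}\cap L_S$'' is meaningful; you essentially do this implicitly.
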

%\begin{proof}
%\end{proof}
Look at the diagram
\begin{equation}
\begin{tikzcd}
	&  &\mathbb{P}(1,1,1,1,4)\ar[d,"\Phi"]\\
	& X \ar[r,"\Gamma_{f}"] &\mathbb{P}(1,1,1,1,4)
\end{tikzcd}
\end{equation}
The map \(\Phi\) realizes \(Y'\) as a Kummer cover over
\(\Gamma_{f}(X)\) branched along
\begin{equation*}
\Gamma_{f}\left(\{f=0\}\cup\bigcup_{i=1}^{4}\{z_{i}=0\}\right).
\end{equation*}
We define an action of \(\boldsymbol{\mu}_{2}^{5}\) on 
\(\mathbb{P}(1,1,1,1,4)\) by
\begin{equation*}
g\cdot [y_1\mathpunct{:}\ldots\mathpunct{:}y_{5}]=
\left[(-1)^{g_1}y_1\mathpunct{:}\ldots\mathpunct{:}(-1)^{g_{5}}y_{5}\right]~
\mbox{where}~g=(g_{1},\ldots,g_{5})\in\boldsymbol{\mu}_{2}^{5}.
\end{equation*}
Notice that the subgroup \(K:=\langle(1,1,1,1,0)\rangle\subset\mu_{2}^{5}\)
acts trivially on \(\mathbb{P}(1,1,1,1,4)\). Let \(G=\boldsymbol{\mu}_{2}^{5}\slash K\).
Then $G$ is the Galois group for the
Kummer cover $Y'\to \mathfrak{A}(X)$.
Moreover, the map \(\boldsymbol{\mu}_{2}^{5}\to \boldsymbol{\mu}_{2}\) given by
\begin{equation*}
(g_{1},\ldots,g_{5})\mapsto \sum_{i=1}^{4} g_i
\end{equation*} 
factors through \(G\).
Let \(G'\) be the kernel of the induced map \(G\to \boldsymbol{\mu}_{2}\); in other words,
\begin{equation*}
G' = \left\{(g_{1},\ldots,g_{5})\in \boldsymbol{\mu}_{2}^{5}
\colon \sum_{j=1}^{4} g_{j}\equiv 0\pmod{2}\right\}
\Big\slash K.
\end{equation*}

\begin{lemma}
\(\mathbb{P}(1,1,1,1,4)\slash G'\to \mathbb{P}(1,1,1,1,4)\) is a double cover
branched along the union of all toric divisors.
\end{lemma}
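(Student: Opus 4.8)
The plan is to recast the cover torically, exactly as in the preceding section, and then to read off the ramification from lattice data.

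\emph{Step 1: the toric model.} Write \(N=\mathbb{Z}^{5}/\mathbb{Z}(1,1,1,1,4)\), let \(v_{1},\dots,v_{5}\in N\) be the images of the standard basis vectors, and let \(\Sigma\) be the fan whose maximal cones are spanned by any four of the \(v_{i}\); then \(X_{\Sigma,N}=\mathbb{P}(1,1,1,1,4)\) and each \(v_{i}\) is primitive in \(N\). On the dense torus the coordinate--squaring map \(\Phi\) is \(t\mapsto t^{2}\), so \(\Phi\) is the toric morphism attached to the sublattice inclusion \(N'':=2N\subset N\) for the \emph{same} fan \(\Sigma\); thus \(X_{\Sigma,N''}\cong\mathbb{P}(1,1,1,1,4)\) is the pre-quotient and \(\Phi\) realises \(X_{\Sigma,N''}/G\cong X_{\Sigma,N}\) with \(G=N/2N\). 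Reducing \((1,1,1,1,4)\) modulo \(2\) identifies \(N/2N\) with \(\boldsymbol{\mu}_{2}^{5}/K\), \(K=\langle(1,1,1,1,0)\rangle\), compatibly with the sign action used above, so the subgroup \(G'\subset G\) corresponds to an intermediate lattice \(2N\subset N'\subset N\); concretely \(N'=\ker(\ell)\) for the character \(\ell\colon N\to\boldsymbol{\mu}_{2}\) induced by ``sum of the coordinates'' (well defined because \(1+1+1+1+4\equiv0\pmod2\)), and \(\mathbb{P}(1,1,1,1,4)/G'=X_{\Sigma,N'}\) with \([N:N']=2\).

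\emph{Step 2: the branching criterion.} For a sublattice \(N'\subset N\) with a common fan \(\Sigma\), the induced toric morphism \(\pi\colon X_{\Sigma,N'}\to X_{\Sigma,N}\) is étale over the dense torus, and along the toric divisor \(D_{i}\) attached to \(\mathbb{R}_{\ge0}v_{i}\) it is ramified precisely when the primitive generator of \(\mathbb{R}_{\ge0}v_{i}\) in \(N'\) is a proper multiple \(k_{i}v_{i}\) of \(v_{i}\); in that case \(\pi^{\ast}D_{i}=k_{i}\widetilde{D}_{i}\), and \(\widetilde{D}_{i}\to D_{i}\) is an isomorphism of toric varieties as soon as the natural map \(N'/\mathbb{Z}(k_{i}v_{i})\to N/\mathbb{Z}v_{i}\) is an isomorphism (this is the analogue of the proposition on \(S_{i}'\simeq S_{i}\) in the preceding section). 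So the lemma reduces to the single check that \(v_{i}\notin N'\) for every \(i=1,\dots,5\): then \(2v_{i}\in 2N\subset N'\) is the primitive generator of \(\mathbb{R}_{\ge0}v_{i}\) in \(N'\), so \(k_{i}=2\); moreover \(N'+\mathbb{Z}v_{i}=N\) (since \([N:N']=2\) and \(v_{i}\notin N'\)), which forces \(N'/\mathbb{Z}(2v_{i})\cong N/\mathbb{Z}v_{i}\) and hence \(\widetilde{D}_{i}\cong D_{i}\). Together with \([N:N']=2\) this is exactly the assertion that \(X_{\Sigma,N'}\to X_{\Sigma,N}\) is a double cover branched along \(D_{1}\cup\cdots\cup D_{5}\), the union of all the toric divisors.

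\emph{Step 3: the arithmetic check, and the main obstacle.} The condition \(v_{i}\notin N'\) is immediate: \(\ell(v_{i})\) is the nontrivial class of \(1\) in \(\boldsymbol{\mu}_{2}\) for each \(i\) --- equivalently, the inertia group of \(\Phi\) along \(D_{i}\) is \(H_{i}=\langle v_{i}\bmod 2N\rangle=\langle[e_{i}]\rangle\subset G\), the class of the \(i\)-th standard basis vector of \(\boldsymbol{\mu}_{2}^{5}\), and \(H_{i}\) surjects onto \(G/G'\cong\boldsymbol{\mu}_{2}\) because the defining character carries each \([e_{i}]\) to the nontrivial element. I expect the only genuinely delicate point to be the bookkeeping in Step 1: pinning down \(N=\mathbb{Z}^{5}/\mathbb{Z}(1,1,1,1,4)\) and the chain \(2N\subset N'\subset N\) so that \(G\), \(G'\) and the embeddings \(Y'\subset\mathbb{P}(1,1,1,1,4)\), \(\Gamma_{f}(X)\subset\mathbb{P}(1,1,1,1,4)\) all match the objects built above --- in particular so that the weight-\(4\) coordinate \(y_{5}\) (and the \(\mathbb{Z}/4\) orbifold point \(D_{1}\cap\cdots\cap D_{4}\)) is treated correctly and the fifth ray \(v_{5}\), and not merely \(v_{1},\dots,v_{4}\), fails to lie in \(N'\). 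Once this is settled, the rest is the short lattice computation above, in direct parallel with the preceding section.
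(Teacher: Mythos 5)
Your proof is correct, and it is the dual of the paper's. The paper works on the character side: it writes down an explicit $\mathbb{Z}$-basis of $N'\subset N=\mathbb{Z}^4$, computes the dual lattice $M'$ as half-integral vectors subject to a parity constraint, and then verifies $\rho_i^\perp\cap M=\rho_i^\perp\cap M'$ for each ray. You stay on the $N$-side: you present $N=\mathbb{Z}^5/\mathbb{Z}(1,1,1,1,4)$, take $N'=\ker(\ell)$ for $\ell$ the mod-$2$ sum-of-coordinates character, verify $\ell(v_i)=1$ (hence $v_i\notin N'$) for every $i$, and feed this into the standard toric ramification criterion. For a primitive ray generator $u$ and $[N:N']=2$ the two checks are equivalent (one has $u\notin N'$ if and only if $u^\perp\cap M=u^\perp\cap M'$), so the underlying content is the same; your version avoids the explicit computation of $M'$ and makes it immediate that the weight-$4$ ray, too, fails to lie in $N'$. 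The bookkeeping concern you raise at the end is exactly the right one: for the check to pass at all five rays, the defining character must be $\ell=\sum_{j=1}^{5}g_j$ on $\boldsymbol{\mu}_2^5/K$. The paper's displayed definition of $G'$ reads $\sum_{j=1}^{4}g_j$, under which $[e_5]\in G'$ and the fifth divisor would not be branched; but the lattice $N'$ that the paper's proof actually uses (and the $\mathbb{P}^7$ analogue) corresponds to $\sum_{j=1}^{5}g_j$, so the choice of $\ell$ you made is the correct one.
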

\begin{proof}
Let \(N=\mathbb{Z}^{4}\) and \(N''=(2\mathbb{Z})^{4}\subset N\)
be the sublattice of index \(16\). Let \(\Sigma\) be a fan in \(N_{\mathbb{R}}\)
defining \(\mathbb{P}(1,1,1,1,4)\); the primitive generators
of the \(1\)-cones in \(\Sigma\) in \(N''\) are 
given by \(\{2u_{1},2u_{2},2u_{3},2u_{4},2u_{5}\}\) where
\begin{align}
\label{eq:all-1-cone-p11114}
\begin{split}
u_{1}&=(1,0,0,0)\\
u_{2}&=(0,1,0,0)\\
u_{3}&=(0,0,1,0)\\
u_{4}&=(0,0,0,1)\\
u_{5}&=(-1,-1,-1,-4).
\end{split}
\end{align}
Now let \(N':=\{(a_{1},\ldots,a_{4})\in N\mid 
\sum a_{i}\equiv 0\pmod{2}\}\).
One can check that
\begin{align}
\begin{split}
v_{1} &= (1,-1,0,0)\\
v_{2} &= (0,1,1,0)\\
v_{3} &= (0,0,1,1)\\
v_{4} &= (1,0,0,1)
\end{split}
\end{align}
form a basis for \(N'\). Note that the presence
of the \(-1\) in \(v_{1}\). This is slightly different
from the previous case mainly because the dimension
of \(\mathbb{P}(1,1,1,1,4)\) is even; the vectors
\begin{align}
\begin{split}
(1,1,0,0)\\
(0,1,1,0)\\
(0,0,1,1)\\
(1,0,0,1)
\end{split}
\end{align}
will not be linearly independent.

It is also easy to check that
under this basis, we have
\begin{align}
\begin{split}
\rho_{1}=2u_{1}&=(1,1,-1,1)\\
\rho_{2}=2u_{2}&=(-1,1,-1,1)\\
\rho_{3}=2u_{3}&=(1,1,1,-1)\\
\rho_{4}=2u_{4}&=(-1,-1,1,1)\\
\rho_{5}=2u_{5}&=(3,1,-3,-5).
\end{split}
\end{align}
Again we have the inclusion relations
\(N''\subset N'\subset N\) and
\begin{equation}
    [N:N']=8,~\mbox{and}~[N':N'']=2.
\end{equation}

%We will focus on the toric variety \(X_{\Sigma,N'}\).

We identify \(M''\)
with the lattice of ``half-integral'' points in \(M_{\mathbb{Q}}\)
\begin{equation*}
\left\{\left(\frac{a_1}{2},\ldots,\frac{a_{4}}{2}\right)~\Bigm\vert~a_{i}\in\mathbb{Z}\right\}
\end{equation*}
and \(M\) is a sublattice in \(M''\) corresponding to 
the set of the integral points.
Then 
\begin{equation*}
M'=\left\{\left(\frac{a_1}{2},\ldots,\frac{a_{4}}{2}\right)\in M''~\Big|~
a_{i}\equiv a_{i+1}~(\mathrm{mod}~2),~\forall i=1,\ldots,3\right\}.
\end{equation*}
One easily sees that \(\rho_{i}^{\perp}\cap M = \rho_{i}^{\perp}\cap M'\)
for all \(i\). This implies \(X_{\Sigma,N'}\to X_{\Sigma,N}\) is a 
double cover branched along the union of all toric divisors.
\end{proof}

As before, one can show that
\begin{lemma}
We have \(Y\simeq Y'\slash G'\). Hence there exists an isomorphism
of pure polarized $\mathbb{Q}$-Hodge structures
\begin{equation}
\mathrm{H}^{q}(Y,\mathbb{Q})\simeq 
\mathrm{H}^{q}(Y',\mathbb{Q})^{G'}.
\end{equation}
\end{lemma}
%\begin{proof}
%Since both $Y$ and $Y'\slash G'$ are double covers
%over $\mathbb{P}^3$ branched along \(\{f=0\}\cup\bigcup_{i=1}^{4}\{z_{i}=0\}\)
%and the Picard group of $\mathbb{P}^3$
%is torsion free, $Y$ and $Y'\slash G'$ must be isomorphic.
%The rest statement follows from \cite{1957-Grothendieck-sur-quelques-points-dalgebre-homologique}*{Proposition 5.2.4}.
%\end{proof}

Notice that \(Y'\slash G'\) can be regarded as
a Calabi--Yau hypersurface in \(\mathbb{P}(1,1,1,1,4)\slash G'\).
The instantons can be computed by applying the orbifold quantum hyperplane section
theorem developed in
\cite{2010-Tseng-orbifold-quantum-riemann-roch-lefschetz-and-serre}*{Theorem 5.2.3}.
We will prove the following result in the rest of this section.
\begin{theorem}
\label{thm:main-theorem-4h}
The equation \eqref{eq:a-model-predicted-correlations-4h}
is the generating series of the genus zero untwisted orbifold Gromov--Witten invariants of \(Y\), whereby all the insertions are pullback of cohomology classes
of the base \(\mathbb{P}^{3}\).
\end{theorem}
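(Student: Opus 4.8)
The plan is to mimic, step by step, the proof of Theorem~\ref{thm:main-theorem-hhhh}, with the complete intersection $\mathbb{P}^{7}[2,2,2,2]$ replaced by the anticanonical hypersurface $Y'=\mathbb{P}(1,1,1,1,4)[8]$ and the ambient orbifold $[\mathbb{P}^{7}/G']$ replaced by $\mathcal{X}':=[\mathbb{P}(1,1,1,1,4)/G']$. First I would record the geometric input already established above: $Y\simeq Y'/G'$, where $Y'$ is a \emph{smooth} Calabi--Yau threefold (it misses the $\mathbb{Z}/4$ orbifold point of $\mathbb{P}(1,1,1,1,4)$ because the quartic $f$ has no constant term), $G'$ is a finite abelian $2$-group of order $8$, and $Y'/G'$ is an anticanonical hypersurface in $\mathcal{X}'$, whose coarse space is $X_{\Sigma,N'}$ and whose stacky fan is $(N',\Sigma,\rho\colon\mathbb{Z}^{5}\to N')$ with $\rho(e_{i})=\rho_{i}=2u_{i}$. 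Since each $\rho_{i}$ is primitive in $N'$ (no $u_{i}$ lies in $N'$), $\mathcal{X}'$ is the canonical stack of $X_{\Sigma,N'}$, so the machinery of \cite{2015-Coates-Corti-Iritani-Tseng-a-mirror-theorem-for-toric-stacks} and \cite{2010-Tseng-orbifold-quantum-riemann-roch-lefschetz-and-serre} is available. I would identify the untwisted sector of $\mathrm{H}^{\bullet}_{\mathrm{CR}}(\mathcal{X}')$ with $\mathrm{H}^{\bullet}(X_{\Sigma,N'};\mathbb{C})$, which in degree $2$ is one-dimensional, generated by the common image $H$ of the four weight-one toric divisors (the weight-four divisor being $\equiv 4H$), and record that $\overline{\mathrm{NE}}_{e}=\mathbb{Z}_{\geq 0}\cdot(1,1,1,4,1)$.

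Next I would write down the non-extended $I$-function $B_{\mathcal{X}'}(t;\alpha)$ of \cite{2015-Coates-Corti-Iritani-Tseng-a-mirror-theorem-for-toric-stacks}, form the hypergeometric modification $B_{[Y'/G']}(t;\alpha)$ for the degree-$8$ hypersurface (the zero locus of a transverse section of the split, convex anticanonical bundle $\mathcal{O}(8H)$), and observe that its \emph{untwisted} component,
\[
\alpha\exp(Ht/\alpha)\sum_{k\geq 0}q^{k}\,\frac{\prod_{m=1}^{8k}(8H+m\alpha)}{\bigl(\prod_{m=1}^{k}(H+m\alpha)\bigr)^{4}\prod_{m=1}^{4k}(4H+m\alpha)}\,\mathbf{1}_{e},
\]
is obtained from the classical hypergeometric-modification series $B_{Y'}$ of $Y'\subset\mathbb{P}(1,1,1,1,4)$ by the substitution $h\leftrightarrow H$ --- just as \eqref{eq:i-function-modification} was obtained from \eqref{eq:i-function-modification-ci}. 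Reducing this modulo $H^{4}=0$ recovers the holomorphic solution of the Picard--Fuchs equation \eqref{equation:picard-fuchs-equation-for-mirror-p3-cover-4h} (after the rescaling $w=z/256$ of Remark~\ref{remark:8-in-p11114-coodinate}), and its $\rho$-derivative produces the mirror map \eqref{equation:mirrir-map-4h}; in particular $Y'$ and $[Y'/G']$ have the same mirror map. I would then verify the hypotheses of Tseng's orbifold quantum Lefschetz theorem (\cite{2010-Tseng-orbifold-quantum-riemann-roch-lefschetz-and-serre}*{Theorem~5.2.3}) for $\mathcal{X}'$ --- trivial generic stabilizer (both the $G'$-action and the weighted-torus action are generically free), and $Y'/G'$ cut out by a transverse section of a split rank-one convex bundle pulled back, up to the stacky structure, from the coarse moduli --- so that $B_{[Y'/G']}$ lies on Givental's Lagrangian cone of $\mathcal{X}'$.

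It then remains to run the numerical conclusion exactly as in the proof of Theorem~\ref{thm:main-theorem-hhhh}. Comparing Poincar\'e pairings via $q^{\ast}H=2h$ and $\deg q=|G'|=8$ gives $\int_{|[Y'/G']|}H^{3}=\tfrac18\int_{Y'}(2h)^{3}=\int_{Y'}h^{3}=2$, so the normalization constant in the analog of Proposition~\ref{prop:1/8} is $\int_{|[Y'/G']|}H^{3}\big/\int_{Y'}h^{3}=1$ --- in contrast to the $\tfrac18$ of the previous section, which is precisely why Corollary~\ref{cor:prediction-2} carries no overall factor. Combining this with the classical mirror theorem for the smooth threefold $Y'=\mathbb{P}(1,1,1,1,4)[8]$ (applicable via a crepant resolution of $\mathbb{P}(1,1,1,1,4)$, or directly from \cite{2015-Coates-Corti-Iritani-Tseng-a-mirror-theorem-for-toric-stacks}), which identifies its genus-zero invariants with the $B_{Y'}$-prediction, and with Corollary~\ref{cor:prediction-2}, which equates the $B$-model series \eqref{eq:a-model-predicted-correlations-4h} with that prediction, we conclude that \eqref{eq:a-model-predicted-correlations-4h} is the generating series of the genus-zero untwisted orbifold Gromov--Witten invariants of $Y\simeq Y'/G'$ with all insertions $H$ (equivalently, pulled back from $\mathbb{P}^{3}$).

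The main obstacle, absent in the $\mathbb{P}^{7}$ example, is that $\mathcal{X}'$ \emph{does} carry twisted-sector curve classes: the weight-four ray forces $\tfrac14(1,1,1,4,1)$, $\tfrac12(1,1,1,4,1)$, $\tfrac34(1,1,1,4,1)$ into $\Lambda_{g}$ for nontrivial $g\in G'$, so $B_{\mathcal{X}'}$ --- and a priori $B_{[Y'/G']}$ --- is no longer purely untwisted. One must therefore show that these twisted summands do not affect the untwisted three-point function $\langle H,H,H\rangle$: either by checking directly that the hypergeometric modification by the anticanonical ($8H$) section annihilates the $\mathbf{1}_{g}$-components with $g\neq e$ and that the residual mirror map stays inside $\mathrm{H}^{2}(X_{\Sigma,N'};\mathbb{C})$, or --- the route I would actually take --- by phrasing the whole $Y$-versus-$Y'$ comparison (the analog of Proposition~\ref{prop:1/8}) through the \emph{smooth} pre-quotient $Y'$, whose $J$-function has no twisted sectors at all, and transporting the answer to $Y=Y'/G'$ via the degree-$8$ covering $Y'\to Y'/G'$ together with the pairing computation above. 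A secondary point requiring care is that, unlike $[\mathbb{P}^{7}/G']$, the stack $\mathcal{X}'$ is not a global quotient of a \emph{smooth} variety; one must use that $Y'$ avoids the stacky $\mathbb{Z}/4$-locus so that the relevant anticanonical bundle restricts there to an honest line bundle descending from the coarse space, which is what makes Tseng's theorem directly applicable.
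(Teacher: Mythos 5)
Your proposal is correct and follows essentially the same route as the paper's proof: pass to the smooth pre-quotient $Y'\subset\mathbb{P}(1,1,1,1,4)$, invoke the classical mirror theorem for $Y'$ (encoded in Corollary~\ref{cor:prediction-2}), and transport the answer to $Y\simeq Y'/G'$ via the Poincar\'e-pairing comparison that yields Proposition~\ref{prop:1} with normalization $1$ in place of the earlier $1/8$. The twisted-sector and non-smooth-ambient subtleties you flag are genuine points the paper leaves implicit, but the resolution you adopt --- routing the whole comparison through the smooth $Y'$, whose $J$-function has no twisted sectors, rather than extracting it from the $I$-function of $[\mathbb{P}(1,1,1,1,4)/G']$ directly --- is precisely what Proposition~\ref{prop:1} accomplishes in the paper.
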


From now on, for simplicity, we put \(Z'=\mathbb{P}(1,1,1,1,4)\).

\subsubsection{Toric varieties \(Z'\slash G\)
and \(Z'\slash G'\)}

Recall that the group \(G'\) acts on \(Y'\) and we have \(Y'\slash G'\subset Z'\slash G'\).
In the present case, \(Z'=X_{\Sigma,N''}\) and \(Z'\slash G=X_{\Sigma,N}\).
The map \(\Phi\) realizes \(X_{\Sigma,N}\) 
as a quotient \(X_{\Sigma,N''}\slash G\). 
Therefore, we can regard \(Y\) as a Calabi--Yau hypersurface in 
a simplicial toric variety \(X_{\Sigma,N'}\). 

We will be focusing on the quotient stack \([Z'\slash G']\)
and calculate the
genus zero orbifold Gromov--Witten invariants
of its hypersurfaces.
Notice that \(X_{\Sigma,N'}\) is the 
coarse moduli space of this stack.

These data fits the following commutative diagram:
\begin{equation}
\label{eq:comm-diag-quotient-p11114}
\begin{tikzcd}
X_{\Sigma,N''}=Z'\ar[rd,"q"]\ar[dd,"\Phi"] &\\
& X_{\Sigma,N'}=Z'\slash G'\ar[ld,"p"]\\
X_{\Sigma,N}=Z'\slash G &
\end{tikzcd}
\end{equation}

\subsubsection{Genus zero Gromov--Witten invariants for the 
orbifold \([Z'\slash G']\)}
%For a toric Deligne--Mumford stack, the notion of \(S\)-extended \(I\)-function was introduced in
%\cite{2015-Coates-Corti-Iritani-Tseng-a-mirror-theorem-for-toric-stacks}*{Definition 28}.
%When \(S=\emptyset\), we obtain the so-called non-extended \(I\)-function, which only
%determines the \(J\)-function on the \emph{very small parameter space}
%\(\mathrm{H}^{2}([\mathbb{P}^{7}\slash G'];\mathbb{C})=
%\mathrm{H}^{2}(X_{\Sigma,N'};\mathbb{C})\subset
%\mathrm{H}^{2}_{\mathrm{CR}}([\mathbb{P}^{7}\slash G'];\mathbb{C})\).
%However, the non-extended \(I\)-function turns out to be sufficient in our case.

%Let us recall the terminology in 
%\cite{2005-Borisov-Chen-Smith-the-orbifold-chow-ring-of-toric-deligne-mumford-stacks}.
We denote by \(D_{i}\) the toric Weil divisor associated with the \(1\)-cone 
\(\mathbb{R}_{\ge}\rho_{i}\); they
are indeed \(\mathbb{Q}\)-Cartier since \(X_{\Sigma,N'}\) is simplicial.
In the present case, we have
\begin{equation}
\mathrm{H}^{2}(X_{\Sigma,N'};\mathbb{C})=
\mathbb{C}\cdot H~\mbox{and}~\mathrm{H}_{2}(X_{\Sigma,N'};
\mathbb{Z})=\mathbb{Z}\langle\ell\rangle.
\end{equation}
where \(H\) is the image of 
\(D_{i}\) for \(i\ne 4\) under the map
\begin{equation}
  \mathrm{H}^{2}(X_{\Sigma,N'};\mathbb{Q})\to \mathrm{H}^{2}(X_{\Sigma,N'};\mathbb{C})
\end{equation}
and \(\ell\) is the curve class coming from a \emph{wall} in \(\Sigma\).

One can easily check that 
\begin{equation}
  D_{1}\equiv D_{2}\equiv D_{3}\equiv D_{5}~\mbox{and}~D_{4}\equiv 4D_{5}.
\end{equation}
Also one can compute the intersection number \(D_{1}.\ell = 1/4\). 
Now \(4H\) (and hence \(8H\)) is a Cartier divisor.
The non-extended \(I\)-function is given by 
\begin{align}
\begin{split}
&B_{[Z'\slash G']}(t;\alpha)\\
&=\alpha\cdot \exp({\textstyle H t\slash \alpha})\sum_{g\in \mathrm{C}(G')}
\sum_{d\in\overline{\mathrm{NE}}_{g}}q^{d}\prod_{j=1}^{5}
\frac{\prod_{\langle d \rangle=\langle m\rangle,~m\le 0} (D_{j}+m\alpha)}{\prod_{\langle d \rangle=\langle m\rangle,~m\le d}(D_{j}+m\alpha)}\mathbf{1}_{g}\\
&=\alpha\cdot \exp(Ht\slash \alpha)\sum_{g\in \mathrm{C}(G')}
\sum_{d\in\overline{\mathrm{NE}}_{g}}q^{d}
\frac{1}{\displaystyle\prod_{\substack{\langle d \rangle=\langle m\rangle\\0< m\le d}}(H+m\alpha)^{4}
\prod_{\substack{\langle d \rangle=\langle m\rangle\\0< m\le 4d}}(4H+m\alpha)}\mathbf{1}_{g}.
\end{split}
\end{align}
As before, in the above equation, the notation is as follows.
\begin{itemize}
    \item \(\alpha\) is a formal variable.
    \item \(\mathrm{C}(G')\) is the set of conjugacy classes of \(G'\).
    \item \(\mathbf{1}_{g}\) is the unit in the cohomology ring of the 
    component of the inertia stack associated with \(g\).
    \item For each \(g\in \mathrm{C}(G')\), 
    the relevant Mori cone \(\overline{\mathrm{NE}}_{g}\) is defined by 
    \begin{equation}
        \overline{\mathrm{NE}}_{g}:=\Lambda_{g}\cap\overline{\mathrm{NE}}(X_{\Sigma,N'})
    \end{equation}
    where \(\overline{\mathrm{NE}}(X_{\Sigma,N'})\) is the classical Mori cone
    of the algebraic variety \(X_{\Sigma,N'}\) and
    \begin{equation}
        \Lambda_{g}:=\{\lambda\in\Lambda\mid v(\lambda)=g\}
    \end{equation}
    where \(\Lambda=\cup_{\sigma\in\Sigma} \Lambda_{\sigma}\).
    %See also \cite{2015-Coates-Corti-Iritani-Tseng-a-mirror-theorem-for-toric-stacks}*{Remark 30}.
\end{itemize}
%In our case, \(G'\) is an abelian group and \(\mathrm{C}(G')\) can be
%identified with \(\mathrm{Box}(\boldsymbol{\Sigma})\).

Let us work out \(\overline{\mathrm{NE}}_{g}\) in this case.
Recall that there is a relation among \(\rho_{i}\)
\begin{equation}
\label{eq:1-cone-p11114}
    \rho_{1}+\rho_{2}+\rho_{3}+4\rho_{4}+\rho_{5}=\mathbf{0}.
\end{equation}
It is unique up to scaling. In \(\mathbb{P}(1,1,1,1,4)\)
there is only one singular cone; namely the cone
generated by
\begin{align}
u_{1},u_{2},u_{3},u_{5}~\mbox{in}~\eqref{eq:all-1-cone-p11114}.
\end{align}
For any \(g\in \mathrm{C}(G')\equiv\mathrm{Box}(\boldsymbol{\Sigma})\), we claim
\begin{equation}
    \Lambda_{g}\ne\emptyset~\Leftrightarrow~\mbox{\(\displaystyle 
    g=\frac{c}{4}(\rho_{1}+\rho_{2}+\rho_{3}+\rho_{5})\in \mathrm{Box}(\boldsymbol{\Sigma})\) for integers \(0\le c\le 3\)}.
\end{equation}
The relation \eqref{eq:1-cone-p11114} shows that 
``\(\Leftarrow\)'' holds.
For the opposite direction, 
note that if \(g\ne e\), i.e.~the corresponding
element in \(\mathrm{Box}(\boldsymbol{\Sigma})\) is non-zero, then
since there is only one singular cone, namely 
\(\sigma:=\operatorname{Cone}\{\rho_{1},\rho_{2},\rho_{3},\rho_{5}\}\),
we have
\begin{equation}
    g\in \mathrm{Box}(\sigma)
\end{equation}
and the result follows.

\subsubsection{A quantum Lefschetz hyperplane theorem}
%To apply the machinery developed in 
%\cite{2010-Tseng-orbifold-quantum-riemann-roch-lefschetz-and-serre}*{\S 5}, 
%we first verify that the generic stabilizer of
%our quotient stack \([\mathbb{P}^{7}\slash G']\) is trivial and 
%\(Y'\slash G'\) is defined by a section of a vector bundle coming from the
%coarse moduli space \(X_{\Sigma,N'}\).
%(See \cite{2010-Tseng-orbifold-quantum-riemann-roch-lefschetz-and-serre}*{Assumption 5.1.2}.)

\(Y'\slash G'\) is an anti-canonical hypersurface in \(Z'\slash G'\).
The corresponding \emph{hypergeometric modification} of \(B_{[Z'\slash G']}\)
is given by
\begin{align}
\begin{split}
\label{eq:i-function-modification-4h}
&B_{[Y'\slash G']}(t;\alpha)
=\alpha\cdot \exp(Ht\slash \alpha)\\
&\times\sum_{g\in \mathrm{C}(G')}
\sum_{d\in\overline{\mathrm{NE}}_{g}}q^{d}
\frac{\displaystyle\prod_{\substack{\langle d \rangle=\langle m\rangle\\0< m\le 8d}}(8H+m\alpha)}{\displaystyle\prod_{\substack{\langle d \rangle=\langle m\rangle\\0< m\le d}}(H+m\alpha)^{4}
\prod_{\substack{\langle d \rangle=\langle m\rangle\\0< m\le 4d}}(4H+m\alpha)}\mathbf{1}_{g}.
\end{split}
\end{align}
Restricting \eqref{eq:i-function-modification-4h} to the 
\emph{untwisted sector}; namely \(\mathbf{1}_{e}\), we obtain
\begin{align}
\label{eq:i-function-modification-untwisted-4h}
\begin{split}
B^{\mathrm{untw}}_{[Y'\slash G']}(t;\alpha)
&=\alpha\cdot \exp(Ht\slash \alpha)\\
&\times\sum_{d\in\mathbb{Z}_{\ge 0}}q^{d}
\frac{\displaystyle\prod_{m=1}^{8d}(8H+m\alpha)}{\displaystyle\prod_{m=1}^{d}(H+m\alpha)^{4}
\prod_{m=1}^{4d}(4H+m\alpha)}\mathbf{1}_{e}
\end{split}
\end{align}
Again the series \eqref{eq:i-function-modification-untwisted-4h}
is almost identical to 
\begin{equation}
\tilde{B}_{Y'}(t;\alpha)
=\alpha\cdot \exp(ht\slash \alpha)
\sum_{d\in\mathbb{Z}_{\ge 0}}q^{d}
\frac{\displaystyle\prod_{m=1}^{8d}(8h+m\alpha)}{\displaystyle\prod_{m=1}^{d}(h+m\alpha)^{4}
\prod_{m=1}^{4d}(4h+m\alpha)}
\end{equation} 
the \emph{hypergeometric modification} 
for the Calabi--Yau hypersurface \(Y'\) in \(Z'\). 
Here \(h\) is the hyperplane class of \(Z'\). Again note that the
only difference is the hyperplane classes ``\(h\)'' and ``\(H\).''

\begin{corollary}
The mirror maps for \(8h\cdot\tilde{B}_{Y'}\)
and \(8H\cdot\tilde{B}^{\mathrm{untw}}_{[Y'\slash G']}\) are identical
if we treat \(H\) and \(h\) as formal variables such that \(h^{5}=H^{5}=0\).
%Consequently, they have the same \(J\)-function.
\end{corollary}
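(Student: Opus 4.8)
The plan is to observe that, after the stated truncation $h^{5}=H^{5}=0$, the two hypergeometric modifications are the \emph{same} formal object written with two different names for the ambient degree-two class, so that the mirror map --- being a functional of this object alone --- comes out identical. First I would isolate the only geometric input: both $Z'=\mathbb{P}(1,1,1,1,4)$ and its quotient $X_{\Sigma,N'}$ are four-dimensional with one-dimensional second cohomology (for the latter this was recorded above as $\mathrm{H}^{2}(X_{\Sigma,N'};\mathbb{C})=\mathbb{C}\cdot H$), so that $h^{5}=0$ in $\mathrm{H}^{\bullet}(Z';\mathbb{C})$, $H^{5}=0$ in $\mathrm{H}^{\bullet}(X_{\Sigma,N'};\mathbb{C})$, and $x\mapsto h$, $x\mapsto H$ define graded ring homomorphisms out of $\mathbb{C}[x]/(x^{5})$. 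Since $x$ is nilpotent, each factor $x+m\alpha$ and $4x+m\alpha$ is a unit over $\mathbb{C}[\alpha^{\pm1}]$ and $\exp(xt/\alpha)$ is a polynomial in $x$ of degree at most $4$; hence
\[
\mathsf{B}(t;\alpha,x):=8x\cdot\alpha\,\exp(xt/\alpha)\sum_{d\ge 0}q^{d}\,
\frac{\prod_{m=1}^{8d}(8x+m\alpha)}{\prod_{m=1}^{d}(x+m\alpha)^{4}\prod_{m=1}^{4d}(4x+m\alpha)}
\]
is a well-defined element of $\bigl(\mathbb{C}[x]/(x^{5})\bigr)[\alpha^{\pm1}][[q]][\log q]$, and by inspection its images under $x\mapsto h$ and $x\mapsto H$ are exactly $8h\cdot\tilde{B}_{Y'}(t;\alpha)$ and $8H\cdot B^{\mathrm{untw}}_{[Y'/G']}(t;\alpha)$. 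This step is nothing more than giving a name to the \emph{very similar looking appearance} of the two series.

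Next I would recall that the mirror map attached to such a one-parameter hypergeometric modification is a functional of the formal series: one expands in descending powers of $\alpha$ as $\mathsf{B}=I_{0}(q)\,\alpha+I_{1}(q)+O(\alpha^{-1})$ with $I_{0}(q)=1+O(q)\in\mathbb{C}[[q]]$ a scalar, identifies $y_{0}=I_{0}$ and $y_{1}$ with the $x$-component of $I_{1}$, and forms $t_{\mathrm{new}}=\tfrac{1}{2\pi\sqrt{-1}}\tfrac{y_{1}}{y_{0}}$, $q_{\mathrm{new}}=\exp(2\pi\sqrt{-1}\,t_{\mathrm{new}})$, exactly as in \eqref{equation:mirrir-map-4h}. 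Each of these operations --- the $\alpha$-expansion, the extraction of the $x$-component, the division by $I_{0}$ (legitimate since $I_{0}$ is a unit in $\mathbb{C}[[q]]$) --- is carried out coefficient-wise on $\mathsf{B}(t;\alpha,x)$ and is blind to how $x$ is realized geometrically, so it commutes with $x\mapsto h$ and with $x\mapsto H$. Since $8h\cdot\tilde{B}_{Y'}$ and $8H\cdot B^{\mathrm{untw}}_{[Y'/G']}$ are the images of the single series $\mathsf{B}(t;\alpha,x)$ under these two substitutions, their mirror maps coincide.

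This is the same argument that proves the corresponding corollary in the $-K_{\mathbb{P}^{3}}=h+h+h+h$ case (where the truncation is $h^{8}=H^{8}=0$), and I do not expect any genuine obstacle: the whole content is the bookkeeping above. The one point to state with care is that the mirror map is, by construction, a functional of the formal $I$-series only; in particular it does not invoke the Poincar\'e pairing, which is the datum that genuinely differs between $Z'$ and $X_{\Sigma,N'}$ and which the overall factor $8$ --- and, later, the explicit comparison of Poincar\'e pairings used in the proof of Theorem \ref{thm:main-theorem-4h} --- is there to absorb.
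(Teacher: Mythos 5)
Your argument is correct and is exactly the argument the paper is silently invoking: the paper simply points out that the two hypergeometric modifications differ only in the name of the degree-two class and declares the corollary to "follow immediately," and your introduction of the abstract series $\mathsf{B}(t;\alpha,x)$ over $\mathbb{C}[x]/(x^{5})$ together with the observation that the mirror map is extracted from the $\alpha^{1}$ and $\alpha^{0}$ coefficients alone (and never touches the Poincar\'e pairing) is precisely the bookkeeping that justifies that claim. You have also correctly isolated where the two settings genuinely diverge, namely the Poincar\'e pairings, which is exactly the point the paper handles separately via the factor $8$ and Proposition \ref{prop:1}.
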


Now we turn to investigate the Poincar\'{e} pairing on \(Y'\)
and the orbifold Poincar\'{e} pairing
\(Y'\slash G'\). Let \(h\) and \(H\) be the hyperplane classes on \(Z'\)
and \(Z'\slash G'\) respectively.
By abuse of notation, we shall use the same notation to denote 
the restriction of \(h\) and \(H\)
to \(Y'\) and \(Y'\slash G'\).
From \eqref{eq:comm-diag-quotient-p11114}, we see that \(H=p^{\ast}h\) and
\(q^{\ast}H=2h\) (note that \(\Phi^{\ast}h=2h\)). Therefore,
\begin{equation*}
\int_{\left|[Y'\slash G']\right|} H^{3}=\frac{1}{8}\int_{Y'} (2h)^{3}=
\int_{Y'}h^{3}=\int_{Z'} 8h^{4}=2.
\end{equation*}
We see that 
\begin{equation}
    \left\{\mathbf{1},H,\frac{H^{2}}{2},\frac{H^{3}}{2}\right\}~\mbox{is a symplectic basis
of}~\mathrm{H}^{\bullet}(\bigl\vert[Y'\slash G']\bigr\vert;\mathbb{C})
\end{equation}
with
respect to the orbifold Poincar\'{e} pairing on the coarse moduli 
\(\left|[Y'\slash G']\right|\). On the other hand, we know that 
\(\{\mathbf{1},h,h^{2}/2,h^{3}/2\}\) is a symplectic basis 
of \(\mathrm{H}^{\bullet}(Y';\mathbb{C})\) with respect to the 
Poincar\'{e} pairing on \(Y'\).
We thus proved the following proposition.
%Since \(Y'\) and \(Y'\slash G'\) have identical \(J\)-function, we conclude that 
\begin{proposition}
\label{prop:1}
We have for any \(k=0,\ldots,3\),
\begin{equation*}
\left\langle\frac{h^{k}}{z-\psi},\mathbf{1}\right\rangle_{0,2,d}^{Y'}=
\left\langle\frac{H^{k}}{z-\psi},\mathbf{1}\right\rangle_{0,2,d}^{[Y'\slash G']}.
\end{equation*}
\end{proposition}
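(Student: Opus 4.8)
The plan is to run, essentially verbatim, the argument behind Proposition~\ref{prop:1/8}, the only difference being that in the present case the two relevant intersection numbers coincide, so that no correction factor appears. The starting point is the observation already made above: the untwisted hypergeometric modification \eqref{eq:i-function-modification-untwisted-4h} of $B_{[Z'\slash G']}$ and the modification $\tilde{B}_{Y'}$ attached to the hypersurface $Y'\subset Z'$ are the \emph{same} power series once one identifies the formal symbols $h$ and $H$ and matches the $q$-variables. First I would invoke Tseng's orbifold quantum Riemann--Roch / quantum Lefschetz theorem \cite{2010-Tseng-orbifold-quantum-riemann-roch-lefschetz-and-serre}*{Theorem 5.2.3} for $Y'\slash G'\subset[Z'\slash G']$ (the hypotheses --- trivial generic stabilizer of $[Z'\slash G']$, and $Y'\slash G'$ cut out by a section of the line bundle $\mathcal{O}(8H)=-K_{[Z'\slash G']}$ descended from the coarse moduli $X_{\Sigma,N'}$ --- being the content of the preceding subsection), and ordinary quantum Lefschetz for the smooth Calabi--Yau hypersurface $Y'\subset Z'$. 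Each of these $I$-functions produces, after the \emph{same} Birkhoff factorization and mirror transformation, the small $J$-function of the respective target, restricted to $\mathrm{H}^{2}$ of the coarse space and (on the orbifold side) to the untwisted sector $\mathbf{1}_{e}$. Since the two mirror maps coincide (the corollary just proved), the two $J$-functions coincide under $h\leftrightarrow H$.

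Next I would extract the two-point descendant invariants. Each of these small $J$-functions is supported, as far as the invariants in question see it, on the subalgebra spanned by $1,h,h^{2},h^{3}$ (resp.\ $1,H,H^{2},H^{3}$) of a threefold, so I may write the common expansion as $z+\sum_{d\ge 1}q^{d}\sum_{j=0}^{3}c_{d,j}(z)\,h^{j}$ on $Y'$ and $z+\sum_{d\ge 1}q^{d}\sum_{j=0}^{3}c_{d,j}(z)\,H^{j}$ on $[Y'\slash G']$, with identical coefficients $c_{d,j}$. The string equation gives $\langle\frac{h^{k}}{z-\psi},\mathbf{1}\rangle_{0,2,d}^{Y'}=z^{-1}\langle\frac{h^{k}}{z-\psi}\rangle_{0,1,d}^{Y'}$ (the case $d=0$ being trivially $0=0$), and the one-point invariant is recovered from the degree-$d$ part $J^{Y'}_{d}(z)=\sum_{j}c_{d,j}(z)h^{j}$ of the $J$-function by the projection formula, $\langle\frac{h^{k}}{z-\psi}\rangle_{0,1,d}^{Y'}=\int_{Y'}J^{Y'}_{d}(z)\cup h^{k}=c_{d,3-k}(z)\int_{Y'}h^{3}$ (up to a power of $z$ common to both sides). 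The identical computation on $[Y'\slash G']$ returns $c_{d,3-k}(z)\int_{\left|[Y'\slash G']\right|}H^{3}$. Finally I would substitute the two intersection numbers already computed just above, namely $\int_{Y'}h^{3}=\int_{\left|[Y'\slash G']\right|}H^{3}=2$: the prefactors agree, so $\langle\frac{h^{k}}{z-\psi},\mathbf{1}\rangle_{0,2,d}^{Y'}=\langle\frac{H^{k}}{z-\psi},\mathbf{1}\rangle_{0,2,d}^{[Y'\slash G']}$ for all $k$ and all $d$. (Contrast Proposition~\ref{prop:1/8}, where the analogous prefactors were $\int h^{3}=16$ and $\int H^{3}=2$, producing the overall $1/8$.)

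The step I expect to require genuine care --- and the only place where the present case differs from the previous section --- is the passage from the orbifold hypergeometric modification to the small $J$-function. Unlike the $\mathbb{P}^{7}$ example, here $\overline{\mathrm{NE}}_{g}\ne\emptyset$ for the four box elements $g=\tfrac{c}{4}(\rho_{1}+\rho_{2}+\rho_{3}+\rho_{5})$, $c=0,1,2,3$, so the full non-extended $I$-function $B_{[Z'\slash G']}$ genuinely carries twisted-sector terms. The hard part will be to confirm that these do not contaminate the integral-degree, untwisted-insertion invariants we need --- i.e.\ that Birkhoff-factorizing the untwisted component $B^{\mathrm{untw}}_{[Y'\slash G']}$ alone legitimately reproduces the untwisted component of the $J$-function over the very small parameter space $\mathrm{H}^{2}(\left|[Y'\slash G']\right|;\mathbb{C})\subset\mathrm{H}^{2}_{\mathrm{CR}}([Y'\slash G'];\mathbb{C})$. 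This is exactly what Tseng's orbifold QRR and the Coates--Corti--Iritani--Tseng toric mirror theorem supply, the twisted-sector contributions to $B_{[Z'\slash G']}$ living only in fractional curve degrees; once this is in place, everything else is the projection-formula and string-equation manipulation above together with the two already-computed intersection numbers.
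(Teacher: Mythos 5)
Your proof is correct and follows the paper's own approach: the paper argues that the two $J$-functions agree under $h\leftrightarrow H$ (because the $I$-functions and mirror maps coincide) and then compares the symplectic bases $\{\mathbf{1},h,h^2/2,h^3/2\}$ and $\{\mathbf{1},H,H^2/2,H^3/2\}$, which coincide since $\int_{Y'}h^3=\int_{|[Y'\slash G']|}H^3=2$, in exact parallel with Proposition~\ref{prop:1/8} where the ratio $16/2=8$ produced the $1/8$. Your explicit string-equation and projection-formula derivation, and your caveat about the nonempty $\overline{\mathrm{NE}}_g$ for $g\neq e$ being isolated in fractional degree so they do not contaminate the untwisted $J$-function, spell out precisely the steps the paper leaves implicit.
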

Let us finish the proof of Theorem \ref{thm:main-theorem-4h}
\begin{proof}[Proof of Theorem \ref{thm:main-theorem-4h}]
Combining Corollary \ref{cor:prediction-2}
and Proposition \ref{prop:1}, 
we conclude the proof of Theorem \ref{thm:main-theorem-4h}.
\end{proof}

\begin{remark}
As we will see later, this case (\(r=1\)) fits Batyrev's setup;
the orbifold \(Y'\slash G'\) is a Calabi--Yau hypersurface 
in a certain toric variety defined by a reflexive polytope and
one can apply Batyrev's construction to 
produce a mirror for \(Y'\slash G'\).
\end{remark}

\begin{remark}
\label{rmk:not-p1-bundle}
We could also have embedded our base \(\mathbb{P}^{3}\)
into the projective space bundle
\(\mathbf{P}_{\mathbb{P}^{3}}(\mathbb{C}\oplus\mathbb{L})\)
using the section \(f\), and then constructed
the pre-quotient space \(Y'\) there. Here, \(\mathbb{L}\)
is the total space of the anti-canonical bundle of \(\mathbb{P}^{3}\).
In principle, we are 
able to compute the Gromov--Witten invariants of \(Y'\)
through \(\mathbf{P}_{\mathbb{P}^{3}}(\mathbb{C}\oplus\mathbb{L})\) as well
by a quantum hyperplane section
theorem. Note that \(\mathbf{P}_{\mathbb{P}^{3}}(\mathbb{C}\oplus\mathbb{L})\)
is a smooth semi-Fano toric manifold and \(Y'\) is simply a 
hyperplane section of a convex bundle; calculating Gromov--Witten invariants
of \(Y'\) through 
\(\mathbf{P}_{\mathbb{P}^{3}}(\mathbb{C}\oplus\mathbb{L})\)
seems standard.
However, the downside of this approach is that the 
(non-extended) \(I\)-function \(B(t;\alpha)\) would have 
two independent Novikov variables, because
\(\mathbf{P}_{\mathbb{P}^{3}}(\mathbb{C}\oplus\mathbb{L})\) has Picard rank two, and
a non-trivial change of variable (the mirror map) must be performed
in order to reduce the number of 
the Novikov variables to one, the Picard number of \(Y'\). In general, 
we do not have a precise formula for this.
It is the reason why we insist on working with
the orbifold \(\mathbb{P}(1,1,1,1,4)\). In fact,
\begin{equation}
\mathbf{P}_{\mathbb{P}^{3}}(\mathbb{C}\oplus\mathbb{L})\to\mathbb{P}(1,1,1,1,4)
\end{equation}
is a crepant contraction and \(Y'\) does not intersect
with the exceptional divisor.
\end{remark}
%\begin{proof}
%To be completed here.
%\end{proof}

%\begin{corollary}
%We have 
%\begin{equation}
%\mathrm{H}^i(Y,\mathcal{O}_Y)=0,~1\le i\le n-1.
%\end{equation}
%\begin{proof}
%From the lemma, we have
%\begin{equation}
%\mathrm{H}^i(Y,\mathcal{O}_Y)
%=\mathrm{H}^i(Y',\mathcal{O}_{Y'})^K=0.
%\end{equation}
%The last equality (without \(K\)) follows from the Lefschetz hyperplane 
%theorem.
%\end{proof}
%\end{corollary}
%\begin{corollary}
%We have, for $p+q=n$ with \(n\ge 3\), 
%\begin{equation}
%\dim \mathrm{H}^{p,q}(Y)=\left(\frac{n}{p}\right)\left(\frac{n}{q}\right).
%\end{equation}
%\end{corollary}
%\begin{proof}
%This follows from \cite{2007-Dolgachev-Kondo-moduli-of-k3-surfaces-and-complex-ball-quotients}*{Lemma 8.2}
%with $\mu=(1/2,\ldots,1/2)$.
%\end{proof}
%Since the complement of $H$ is affine, we have,
%by Proposition \ref{proposition:decomposition-pushforward-log-differential}, 
%\begin{equation}
%\dim \mathrm{H}^{1,1}(Y)
%=\dim \mathrm{H}^{1,1}(\mathbb{P}^n)=1,~\mbox{for}~n\ge 3.
%\end{equation}

\section{A mirror theorem for Calabi--Yau double covers with
\texorpdfstring{\(r=1\)}{}}
\label{sec:a-mirror-theorem-for-r1}
In this section, we generalize the results in \S\ref{sec:4h}; 
we will prove the mirror theorem for Calabi--Yau
double covers when \(r=1\), i.e.~the case of trivial nef-partition \(E_{1}=-K_{X}\). 
We will treat the general case in a forthcoming paper.

%\cite{2020-Lee-a-mirror-theorem-preparation}.
To ease our notation, we will write \(\rho_{j}\equiv \rho_{i,j}\) and
\(\nu_{j}\equiv\nu_{i,j}\) and drop \(i\) in the subscript 
throughout this section (cf.~\S\ref{subsection:notation}). 
Notice that \(n_{1}=p\) in the present case. By duality construction, 
we also have \(\nabla=\Delta^{\vee}\).
Let us review the construction in 
\cite{2024-Hosono-Lee-Lian-Yau-mirror-symmetry-for-double-cover-calabi-yau-varieties}*{Appendix A}.
We will only focus on the double cover case.
%; namely \(\mathcal{L}=\mathcal{O}_{X}(-K_{X})\).

\subsection{A toric bundle and its contraction} 
\label{subsection:a-toric-bundle-and-its-construction}

Let \(X\) be a smooth semi-Fano toric variety defined by a fan \(\Sigma\) and 
let \(\mathscr{L}=\mathscr{O}_{X}(-K_{X})\) be the canonical sheaf of \(X\).
Put \(\mathscr{E}=\mathscr{O}_{X}\oplus\mathscr{L}^{\vee}\)
and 
\begin{equation}
  Z=\mathrm{Proj}_{\mathscr{O}_{X}}(\mathrm{Sym}^{\bullet}\mathscr{E})=
  \mathbf{P}_{X}(\mathbb{L}\oplus\mathbb{C})
\end{equation}
to be the projectivization of the rank two vector bundle \(\mathbb{C}\oplus\mathbb{L}\).
Here \(\mathbb{L}\) is the total space of the line bundle \(\mathscr{L}\).
Apparently, \(Z\) is a toric variety and we now describe its toric data.

Let \(\mathrm{e}_{\infty}:=(\mathbf{0},1)\in \bar{N}:=N\times\mathbb{Z}\)
and \(\mathrm{e}_{0}:=(\mathbf{0},-1)\in \bar{N}\). Consider 
\begin{align*}
\begin{split}
\mathcal{S}_1&:=\{\nu_j:=(\rho_j,1)\in \bar{N}~\big|~j=1,\ldots,p\},~\mbox{and}\\
\mathcal{S}_2&:=\{\mathrm{e}_{\infty},\mathrm{e}_{0}\}.
\end{split}
\end{align*}
Any maximal cone \(\tau\in\Sigma(n)\) determines two maximal 
cones in \(\bar{N}\):
\begin{align}\label{maximal:cones-liftings}
\begin{split}
\tau_{0}&=\mathrm{Cone}(\{\nu_j~|~\rho_j\in\tau(1)\}\cup\{\mathrm{e}_{0}\}),~\mbox{and}~\\
\tau_{\infty}&=\mathrm{Cone}(\{\nu_j~|~\rho_j\in\tau(1)\}\cup\{\mathrm{e}_{\infty}\}).
\end{split}
\end{align}
\begin{definition}
Let \(\Sigma_{Z}\) be the collection of \(\tau_{0}\) and \(\tau_{\infty}\) 
as well as all their faces for all \(\tau\in\Sigma(n)\). 
\end{definition}
The following proposition is straightforward.
\begin{proposition}
\(\Sigma_{Z}\) defines the toric variety \(Z\). Furthermore, 
from the construction, the infinite divisor is given by the \(1\)-cone 
\(\mathrm{Cone}\{\mathrm{e}_{\infty}\}\).
\end{proposition}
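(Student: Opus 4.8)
The plan is to recognize \(X_{\Sigma_{Z}}\) as the split \(\mathbb{P}^{1}\)-bundle \(\mathbf{P}_{X}(\mathbb{L}\oplus\mathbb{C})\) by means of the lattice projection \(\pi\colon\bar{N}=N\times\mathbb{Z}\to N\), and then to locate the section at infinity among the torus-invariant divisors of \(X_{\Sigma_{Z}}\). Throughout one uses only that \(\Sigma\) is a fan and that \(X\) is smooth (and complete, being semi-Fano projective).

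First I would check that \(\Sigma_{Z}\) is genuinely a fan and that \(\pi\) carries it to \(\Sigma\). Since \(X\) is smooth, each \(\tau\in\Sigma(n)\) has exactly \(n\) rays \(\rho_{j_{1}},\dots,\rho_{j_{n}}\) forming a \(\mathbb{Z}\)-basis of \(N\); solving for \((v,0)\) and for \((\mathbf{0},1)\) then shows at once that \(\{\nu_{j_{1}},\dots,\nu_{j_{n}},\mathrm{e}_{0}\}\) and \(\{\nu_{j_{1}},\dots,\nu_{j_{n}},\mathrm{e}_{\infty}\}\) are \(\mathbb{Z}\)-bases of \(\bar{N}\), so \(\tau_{0}\) and \(\tau_{\infty}\) are smooth \((n+1)\)-cones meeting along the facet \(\mathrm{Cone}(\nu_{j_{1}},\dots,\nu_{j_{n}})\). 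Because \(\mathrm{e}_{0}\) and \(\mathrm{e}_{\infty}\) lie strictly on opposite sides of \(N_{\mathbb{R}}\times\{0\}\), two cones coming from distinct \(\tau,\tau'\in\Sigma(n)\) intersect exactly in the cone built from \(\tau(1)\cap\tau'(1)\), with \(\mathrm{e}_{0}\) (resp.\ \(\mathrm{e}_{\infty}\)) adjoined only when it occurs in both — which is a common face, precisely because \(\tau\cap\tau'\) is a common face in \(\Sigma\). Hence \(\Sigma_{Z}\) is a complete fan, each of its cones projects into a cone of \(\Sigma\), and \(\pi\) induces a proper toric morphism \(p\colon X_{\Sigma_{Z}}\to X\).

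Next I would pin down which bundle \(p\) is. Fix \(\tau\in\Sigma(n)\) and let \(m_{\tau}\in M\) be the unique vector with \(\langle m_{\tau},\rho_{j_{k}}\rangle=1\) for all \(k\) (it exists since the \(\rho_{j_{k}}\) are a basis). The lattice automorphism \(\phi_{\tau}\colon\bar{N}\to\bar{N}\), \((v,k)\mapsto(v,k-\langle m_{\tau},v\rangle)\), fixes \(\mathrm{e}_{0},\mathrm{e}_{\infty}\) and sends each \(\nu_{j_{k}}=(\rho_{j_{k}},1)\) to \((\rho_{j_{k}},0)\); hence it carries the subfan \(\{\text{faces of }\tau_{0}\text{ and }\tau_{\infty}\}\) onto the product fan of \(U_{\tau}\times\mathbb{P}^{1}\). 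Thus \(p^{-1}(U_{\tau})\cong U_{\tau}\times\mathbb{P}^{1}\), so \(p\) is a Zariski-locally trivial \(\mathbb{P}^{1}\)-bundle, and on an overlap \(U_{\tau}\cap U_{\tau'}\) the two trivializations differ by the automorphism induced by \(\phi_{\tau'}\circ\phi_{\tau}^{-1}\), whose transition cocycle is \(\{m_{\tau'}-m_{\tau}\}\) — exactly the cocycle of the support function of \(-K_{X}=\sum_{\rho}D_{\rho}\). Therefore \(X_{\Sigma_{Z}}\cong\mathbf{P}_{X}(\mathscr{O}_{X}\oplus\mathscr{O}_{X}(\pm K_{X}))\), and by the standard isomorphism \(\mathbf{P}_{X}(\mathscr{E})\cong\mathbf{P}_{X}(\mathscr{E}\otimes\mathscr{M})\) this is \(\mathbf{P}_{X}(\mathbb{L}\oplus\mathbb{C})=Z\). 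For the last assertion, in the trivialization over \(U_{\tau}\) the divisor \(D_{\mathrm{e}_{\infty}}\) is the constant section \(U_{\tau}\times\{\infty\}\) while \(D_{\mathrm{e}_{0}}\) is \(U_{\tau}\times\{0\}\); since the section \(X\hookrightarrow Z\) coming from the \(\mathscr{O}_{X}\)-summand is globally \(D_{\mathrm{e}_{0}}\), the complementary ``infinite divisor'' is \(D_{\mathrm{e}_{\infty}}\), i.e.\ the \(1\)-cone \(\mathrm{Cone}\{\mathrm{e}_{\infty}\}\), as claimed.

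The only point demanding any care is the bookkeeping of conventions — which of \(\mathrm{e}_{0},\mathrm{e}_{\infty}\) goes with the sub- versus the quotient summand, and whether the cocycle above represents \(\mathscr{O}_{X}(K_{X})\) or \(\mathscr{O}_{X}(-K_{X})\). Both ambiguities are absorbed by the twist \(\mathbf{P}_{X}(\mathscr{E})\cong\mathbf{P}_{X}(\mathscr{E}\otimes\mathscr{L})\) and neither affects the statement; everything else is the routine fan-theoretic description of split projective bundles over a toric base, which is why the proposition is ``straightforward.''
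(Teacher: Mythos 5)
Your argument is correct. The paper states this proposition without proof (``The following proposition is straightforward''), so there is no argument in the text to compare against; your local-trivialization argument via the lattice shears \(\phi_{\tau}\), together with the identification of the transition cocycle with (up to sign) the Cartier data of \(-K_X\) and the standard twist isomorphism for projective bundles, is exactly the routine verification that the phrase ``straightforward'' is standing in for. The one step stated a little quickly is that \(\Sigma_Z\) is actually a fan: the claim that \(\tau_0\cap\tau'_0\), \(\tau_\infty\cap\tau'_\infty\), and \(\tau_0\cap\tau'_\infty\) are common faces does use that \(\Sigma\) is smooth (so each \(\tau\) is simplicial and representations of points in \(\tau\) are unique, which is what makes the projection argument force the coefficients of \(\mathrm{e}_0\) and \(\mathrm{e}_\infty\) to vanish in the mixed case); you invoke this implicitly and it is covered by Hypothesis~A, so nothing is missing, but it would be cleaner to say it.
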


\begin{proposition}
\label{proposition:H-is-nef}
The divisor \(H:=D_{\mathrm{e}_{\infty}}+
\sum_{j=1}^p D_{\bar\rho_j}\) is base point free. 
\end{proposition}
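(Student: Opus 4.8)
The plan is to reduce base point freeness of $H$ to the (standard) base point freeness of $-K_X$ on $X$. First I would observe that $Z$ is smooth: each lifted maximal cone $\tau_0,\tau_\infty$ is unimodular because $\tau\in\Sigma(n)$ is, so $H$ is Cartier and base point freeness can be checked combinatorially. A one-line computation in $\bar M=M\times\mathbb{Z}$ gives $\operatorname{div}(\chi^{(\mathbf 0,1)})=\sum_{j=1}^{p}D_{\bar\rho_j}+D_{\mathrm{e}_\infty}-D_{\mathrm{e}_0}$ (recall $\bar\rho_j=\nu_j=(\rho_j,1)$), hence
\[
H=D_{\mathrm{e}_\infty}+\sum_{j=1}^{p}D_{\bar\rho_j}\ \sim\ D_{\mathrm{e}_0},
\]
so it suffices to prove that the toric section $D_{\mathrm{e}_0}$ is base point free.

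Next I would invoke the standard criterion: a Cartier divisor $D=\sum_\rho a_\rho D_\rho$ on a complete toric variety is base point free if and only if, for every maximal cone $\sigma$, the supporting vector $m_\sigma\in\bar M$ (determined by $\langle m_\sigma,u_\rho\rangle=-a_\rho$ for $\rho\in\sigma(1)$) lies in the polytope $P_D=\{m:\langle m,u_\rho\rangle\ge -a_\rho\ \forall\rho\}$. For the two families of maximal cones of $\Sigma_Z$ one computes $m_{\tau_\infty}=(\mathbf 0,-1)$, which trivially satisfies all defining inequalities of $P_{D_{\mathrm{e}_0}}$, and $m_{\tau_0}=(m_\tau,0)$, where $m_\tau\in M$ is the supporting vector of the cone $\tau\in\Sigma(n)$ with respect to $-K_X=\sum_{j=1}^{p}D_{\rho_j}$, i.e.\ $\langle m_\tau,\rho_j\rangle=-1$ for $\rho_j\in\tau(1)$. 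The membership $m_{\tau_0}\in P_{D_{\mathrm{e}_0}}$ then amounts exactly to $\langle m_\tau,\rho_k\rangle\ge -1$ for \emph{every} ray $\rho_k$ of $\Sigma$, that is, to $m_\tau\in P_{-K_X}$ for all maximal $\tau$ — which is precisely base point freeness of $-K_X$ on $X$.

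To conclude I would use that $X$ is semi-Fano, so $-K_X$ is nef, and that a nef Cartier divisor on a complete toric variety is automatically base point free; all the required inequalities then hold and $H\sim D_{\mathrm{e}_0}$ is base point free. (Equivalently, and perhaps more transparently: $\pi_*\mathscr{O}_Z(D_{\mathrm{e}_0})\cong\mathscr{O}_X\oplus\mathscr{L}$ for the projection $\pi\colon Z\to X$, with tautological evaluation $\pi^*(\mathscr{O}_X\oplus\mathscr{L})\twoheadrightarrow\mathscr{O}_Z(D_{\mathrm{e}_0})$ surjective, so global generation of $D_{\mathrm{e}_0}$ follows from that of $\mathscr{O}_X\oplus\mathscr{O}_X(-K_X)$, which again holds because $-K_X$ is nef on the complete toric variety $X$.) I expect no genuine obstacle here; the only substantive point is the identification of the $\tau_0$-inequalities with base point freeness of $-K_X$, and everything else is routine bookkeeping with the lifted fan.
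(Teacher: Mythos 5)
Your reduction to nef-ness of $-K_X$ is the right idea and matches the paper's implicit argument (the paper records exactly the Cartier data of $H$ in the lemma that follows, and basepoint-freeness is the immediate consequence, since $m_\tau\in\Delta_{-K_X}$ for all $\tau$ by nef-ness of $-K_X$). The linear equivalence $H\sim D_{\mathrm{e}_0}$ and the alternative argument via $\pi_*\mathscr{O}_Z(D_{\mathrm{e}_0})\cong\mathscr{O}_X\oplus\mathscr{L}$ are both correct and are nice additional observations not made in the text.

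However, there is a bookkeeping slip in the middle step. With your stated convention $\langle m_\sigma,u_\rho\rangle=-a_\rho$ on $\sigma(1)$, the Cartier data of $D_{\mathrm{e}_0}$ is $m_{\tau_\infty}=(\mathbf{0},0)$ and $m_{\tau_0}=(m_\tau,1)$, not $(\mathbf{0},-1)$ and $(m_\tau,0)$. The values you wrote are the Cartier data of $H$ itself (so your detour via $D_{\mathrm{e}_0}$ was unnecessary — you computed for $H$ anyway). As written, the claim that $(\mathbf{0},-1)$ "trivially satisfies all defining inequalities of $P_{D_{\mathrm{e}_0}}$" is false: pairing with any $\nu_k=(\rho_k,1)$ gives $-1\not\ge 0$; and membership of $(m_\tau,0)$ in $P_{D_{\mathrm{e}_0}}$ would force the stronger $\langle m_\tau,\rho_k\rangle\ge 0$, not $\ge -1$. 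The computation becomes consistent either by using the correct data $(\mathbf{0},0),(m_\tau,1)$ and checking in $P_{D_{\mathrm{e}_0}}$, or by keeping your values and checking in $P_H$; both routes land on $\langle m_\tau,\rho_k\rangle\ge -1$, i.e., $m_\tau\in\Delta_{-K_X}$, as you want. So the conclusion and the structure of the argument are sound; just fix the off-by-one in the supporting vectors and drop the unneeded passage through $D_{\mathrm{e}_0}$.
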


The next step is to show that \(H\) is a pullback
of a very ample divisor on a toric variety \(Z''\) and describe the 
toric variety and the contraction \(Z\to Z''\) explicity.

Let us recall the construction in 
\cite{2000-Mavlyutov-semi-ample-hypersurfaces-in-toric-varieties}.
Let \(X=X_{\Sigma}\) be an \(n\)-dimensional complete toric variety and \(H\) be a semiample divisor. 
Recall that a Cartier divisor \(H\) is called \emph{\(n\)-semiample} if \(H\) is
generated by global sections and \(H^{n}>0\) where \(n=\dim X\),
or equivalently, \(H\) is generated by global sections and \(\Delta_{H}\)
is of maximal dimension \(n\), or equivalently \(\mathscr{O}_{X}(H)\) is big and nef.
Assume that \(H=\sum_{\rho\in\Sigma(1)} a_{\rho}D_{\rho}\). 
We denote by \(\psi_{H}\) the support function associated with \(H\).
In the present case, \(\psi_{H}\) is convex.
For each \(\sigma\in \Sigma(n)\), we can find an element
\(m_{\sigma}\in M\) such that 
\begin{equation*}
\psi_{H}(u)=\langle u,m_{\sigma}\rangle,~u\in \sigma.
\end{equation*}
The collection \(\{m_{\sigma}\}_{\sigma\in\Sigma(n)}\) is 
called the \emph{Cartier data} of \(H\).
We glue together those maximal dimensional cones in \(\Sigma\) 
having the same \(m_{\sigma}\) and obtain a convex
rational polyhedral cone. In the present case, these cones are 
in fact strongly convex since \(\Delta_H\) has maximal dimension \(n\).
The set of these strongly convex rational polyhedral cones gives rise
to a new fan \(\Sigma_{H}\).
We remark that for each \(r\in\mathbb{Q}_{>0}\), \(rH\) produces the same fan.
Moreover, the fan \(\Sigma\) is a subdivision of \(\Sigma_{H}\). Let 
\(\pi\colon X\to X_{\Sigma_{H}}\) be the corresponding 
toric morphism and
\(\pi_{\ast}\colon A_{n-1}(X)\to A_{n-1}(X_{\Sigma_{H}})\)
be the pushforward map between Chow groups.
%We have the following proposition (cf.~
%\cite{2000-Mavlyutov-semi-ample-hypersurfaces-in-toric-varieties}*{Proposition 1.2}).
\begin{proposition}[\cite{2000-Mavlyutov-semi-ample-hypersurfaces-in-toric-varieties}*{Proposition 1.2}]
Let \(X=X_{\Sigma}\) and \(H\) be an \(n\)-semiample divisor. Then
there exists a unique complete 
toric variety \(X_{\Sigma_{H}}\) 
with a toric birational map \(\pi\colon X_{\Sigma}\to X_{\Sigma_{H}}\) such that
\(\Sigma\) is a refinement of \(\Sigma_{H}\), \(\pi_{\ast}[H]\),
is ample, and \(\pi^{\ast}\pi_{\ast}[H]=[H]\). Moreover,
\(\Sigma_{H}\) is the normal fan of \(\Delta_{H}\);
in other words, \(\mathbf{P}_{\Delta_{H}}=X_{\Sigma_{H}}\).
\end{proposition}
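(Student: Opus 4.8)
The plan is to build $X_{\Sigma_{H}}$ directly out of the support function of $H$ and then read off each asserted property; this is \cite{2000-Mavlyutov-semi-ample-hypersurfaces-in-toric-varieties}*{Proposition 1.2}, and we recall the argument.

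Write $H=\sum_{\rho\in\Sigma(1)}a_{\rho}D_{\rho}$ and let $\psi_{H}$ be its support function, normalized by $\psi_{H}(\rho)=-a_{\rho}$ and extended linearly on each cone of $\Sigma$. That $H$ is $n$-semiample means $\psi_{H}$ is convex and that $\Delta_{H}=\{m\in M_{\mathbb{R}}\colon\langle m,u\rangle\ge\psi_{H}(u)~\forall~u\in N_{\mathbb{R}}\}$ has dimension $n$; in that case $\psi_{H}(u)=\min_{m\in\Delta_{H}}\langle m,u\rangle$ and the Cartier data $\{m_{\sigma}\}_{\sigma\in\Sigma(n)}$ is exactly the set of vertices of $\Delta_{H}$ (with repetitions). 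I would then \emph{define} $\Sigma_{H}$ to be the normal fan of $\Delta_{H}$: to a vertex $v$ of $\Delta_{H}$ it assigns the normal cone $C_{v}=\{u\in N_{\mathbb{R}}\colon\langle v,u\rangle\le\langle m,u\rangle~\forall~m\in\Delta_{H}\}$. Because $\dim\Delta_{H}=n$, each $C_{v}$ is strongly convex, so $\Sigma_{H}$ is a genuine fan, $X_{\Sigma_{H}}=\mathbf{P}_{\Delta_{H}}$ is a complete toric variety, and the $T$-divisor attached to $\Delta_{H}$ on it is ample.

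Next I would identify, for each vertex $v$, the cone $C_{v}$ with the union of those maximal cones $\sigma\in\Sigma(n)$ having $m_{\sigma}=v$: convexity of $\psi_{H}$ forces $u\in\sigma$ with $\psi_{H}(u)=\langle m_{\sigma},u\rangle$ exactly when $m_{\sigma}$ minimizes $m\mapsto\langle m,u\rangle$ on $\Delta_{H}$, i.e.\ exactly when $u\in C_{m_{\sigma}}$. Hence the closed cones $\{\sigma\in\Sigma(n)\colon m_{\sigma}=v\}$ tile $C_{v}$ and every cone of $\Sigma$ lies in some cone of $\Sigma_{H}$; this is precisely the statement that $\Sigma$ refines $\Sigma_{H}$, and it produces a proper birational toric morphism $\pi\colon X_{\Sigma}\to X_{\Sigma_{H}}$. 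Pullback along a fan refinement leaves the support function unchanged, so $\pi^{\ast}$ of the polytope divisor of $\Delta_{H}$ again has support function $\psi_{H}$ and therefore equals $H$; pushing forward, $\pi_{\ast}[H]$ is the polytope divisor, which is ample, whence $\pi^{\ast}\pi_{\ast}[H]=[H]$. Uniqueness is then immediate, since any complete toric variety dominated by $X_{\Sigma}$ on which the pushforward of $H$ is ample must have its fan equal to the normal fan of $\Delta_{\pi_{\ast}[H]}=\Delta_{H}$.

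The one point requiring genuine care is the strong convexity of the cones $C_{v}$ (equivalently, of the glued cones $\bigcup_{m_{\sigma}=v}\sigma$), and this is exactly where the hypothesis $\dim\Delta_{H}=n$ enters: were $\Delta_{H}$ of smaller dimension, each $C_{v}$ would contain the nonzero subspace $\Delta_{H}^{\perp}$ and $\Sigma_{H}$ would fail to be a fan, so no such contraction target could exist. Everything else is bookkeeping with support functions, so in the final writeup I would simply invoke \cite{2000-Mavlyutov-semi-ample-hypersurfaces-in-toric-varieties}*{Proposition 1.2}.
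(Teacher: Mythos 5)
The paper does not reprove this statement; it simply cites \cite{2000-Mavlyutov-semi-ample-hypersurfaces-in-toric-varieties}*{Proposition 1.2} and, in the two paragraphs preceding the proposition, sketches the construction of $\Sigma_H$ by gluing together those maximal cones of $\Sigma$ sharing the same Cartier datum $m_\sigma$. Your proposal fills in the argument correctly and follows essentially the same route: you take the normal fan of $\Delta_H$ as the definition of $\Sigma_H$ and then identify its cones with the unions $\bigcup_{m_\sigma = v}\sigma$ via the support function, whereas the paper starts from the glued cones and asserts they form the normal fan — the two directions of the same identification. Your flagging of strong convexity (and where $\dim\Delta_H = n$ is used) as the one non-bookkeeping step is exactly right, and the remaining verifications (pullback preserves the support function, $\pi_*\pi^* = \mathrm{id}$ by birationality, uniqueness from $\Sigma_H = $ normal fan of $\Delta_{\pi_*[H]}$) are the standard facts invoked in Mavlyutov's proof.
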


For simplicity, we put \(X''=\mathbf{P}_{\Delta}\).
Recall that \(\eta\colon X\to X''\) is a MPCP
desingularization, i.e.~\(\eta^{\ast}\omega_{X''}^{-1}\simeq \omega_{X}^{-1}\)
with \(\omega_{X''}^{-1}\) being ample.

We give a construction of the contraction \(\phi\colon Z\to Z''\). 
%We observe that \(\Delta_{H}\) for the nef divisor \(H=D_{\mathrm{e}_{\infty}}+
%\sum_{j=1}^p D_{\bar\rho_j}\) is of maximal dimension and hence \(H\) is \(n\)-semiample.
The Cartier data of \(H\) is easy to describe.
\begin{lemma}
Let \(\{m_{\tau}\}_{\tau\in\Sigma(n)}\) be the Cartier data for \(-K_{X}\).
Then the collection of \(\bar{m}_{\tau_{\infty}}:=(\mathbf{0},1)\)
and \(\bar{m}_{\tau_{0}}:=(m_{\tau},0)\) for \(\tau\in\Sigma(n)\) 
gives the Cartier data of \(H\).
\end{lemma}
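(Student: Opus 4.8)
The plan is to verify the claim directly from the definition of the Cartier (equivalently, support-function) data of a divisor on a complete toric variety. The key preliminary point is that $Z=\mathbf{P}_{X}(\mathbb{C}\oplus\mathbb{L})$ is a $\mathbb{P}^{1}$-bundle over the smooth toric variety $X$, hence itself smooth; so $H$ is Cartier and its support function $\psi_{H}$ is genuinely piecewise linear with respect to the fan $\Sigma_{Z}$ (and by Proposition~\ref{proposition:H-is-nef} it is moreover convex, as in the Mavlyutov discussion of \S\ref{subsection:a-toric-bundle-and-its-construction}). From the construction of $\Sigma_{Z}$, its maximal cones are exactly the $(n+1)$-dimensional cones $\tau_{0}$ and $\tau_{\infty}$ attached to the maximal cones $\tau\in\Sigma(n)$, and its ray generators are the vectors $\nu_{j}=(\rho_{j},1)$ for $j=1,\dots,p$ together with $\mathrm{e}_{0}=(\mathbf{0},-1)$ and $\mathrm{e}_{\infty}=(\mathbf{0},1)$ in $\bar{N}=N\times\mathbb{Z}$. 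Since $H=D_{\mathrm{e}_{\infty}}+\sum_{j=1}^{p}D_{\bar\rho_{j}}$, the value of $\psi_{H}$ on a ray generator is read off from the coefficients of $H$: it is the coefficient $1$ on $\mathrm{e}_{\infty}$ and on each $\nu_{j}$, and $0$ on $\mathrm{e}_{0}$ (up to the overall sign fixed by the support-function convention of \S\ref{subsection:a-toric-bundle-and-its-construction}). Thus the statement reduces to checking that $\bar{m}_{\tau_{\infty}}=(\mathbf{0},1)$ and $\bar{m}_{\tau_{0}}=(m_{\tau},0)$, viewed in $\bar{M}=M\times\mathbb{Z}$, pair with the ray generators of $\tau_{\infty}$, resp.\ $\tau_{0}$, so as to reproduce this pattern.

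Both checks are immediate from the product structure of $\bar N$. For $\tau_{\infty}=\mathrm{Cone}(\{\nu_{j}:\rho_{j}\in\tau(1)\}\cup\{\mathrm{e}_{\infty}\})$ one computes $\langle(\mathbf{0},1),\mathrm{e}_{\infty}\rangle=1$ and $\langle(\mathbf{0},1),\nu_{j}\rangle=\langle\mathbf{0},\rho_{j}\rangle+1=1$ for every $\rho_{j}\in\tau(1)$, so $(\mathbf{0},1)$ restricts to $\psi_{H}$ on $\tau_{\infty}$. For $\tau_{0}=\mathrm{Cone}(\{\nu_{j}:\rho_{j}\in\tau(1)\}\cup\{\mathrm{e}_{0}\})$ one computes $\langle(m_{\tau},0),\mathrm{e}_{0}\rangle=0$ and $\langle(m_{\tau},0),\nu_{j}\rangle=\langle m_{\tau},\rho_{j}\rangle$ for every $\rho_{j}\in\tau(1)$. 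This is where the hypothesis is used: $\{m_{\tau}\}_{\tau\in\Sigma(n)}$ being the Cartier data of $-K_{X}=\sum_{\rho\in\Sigma(1)}D_{\rho}$ means exactly that $\langle m_{\tau},\rho_{j}\rangle$ equals the coefficient of $D_{\rho_{j}}$ in $-K_{X}$, namely $1$, for $\rho_{j}\in\tau(1)$; hence $\langle(m_{\tau},0),\nu_{j}\rangle=1$ and $(m_{\tau},0)$ restricts to $\psi_{H}$ on $\tau_{0}$. Since each $\tau_{0}$ and $\tau_{\infty}$ is full dimensional in $\bar{N}_{\mathbb{R}}$, the linear functional representing $\psi_{H}$ on it is unique, so these two computations pin down $\{\bar{m}_{\tau_{0}},\bar{m}_{\tau_{\infty}}\}_{\tau\in\Sigma(n)}$ as the Cartier data of $H$.

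There is no substantial difficulty here: the argument is bookkeeping with the explicit fan $\Sigma_{Z}$ and the decomposition $\bar N=N\times\mathbb{Z}$. The only points needing attention are (i) recording that $Z$ is smooth, so that ``Cartier data of $H$'' is meaningful, and (ii) invoking the defining relation $\langle m_{\tau},\rho_{j}\rangle=1$ ($\rho_{j}\in\tau(1)$) for the Cartier data of $-K_{X}$, which is the only place the semi-Fano toric input enters. If one wishes to bypass Cartierness of $H$ as an input, one can instead check compatibility of the candidate functionals on shared faces directly: cones $\tau_{\infty},\tau_{\infty}'$ share the functional $(\mathbf{0},1)$; cones $\tau_{0},\tau_{0}'$ meeting along a common facet inherit the compatibility of $\{m_{\tau}\}$; and on the common face $\mathrm{Cone}(\{\nu_{j}:\rho_{j}\in\tau(1)\})=\tau_{0}\cap\tau_{\infty}$ the functionals $(m_{\tau},0)$ and $(\mathbf{0},1)$ agree because $\langle m_{\tau},\rho_{j}\rangle=1$.
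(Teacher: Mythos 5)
The paper states this lemma without proof, invoking Mavlyutov's machinery for semiample divisors as the surrounding framework but treating the verification of the Cartier data itself as routine. Your proof supplies the missing verification and is correct: you identify the ray generators of $\Sigma_{Z}$, observe that $Z$ is smooth (as a $\mathbb{P}^{1}$-bundle over the smooth $X$) so the Cartier data is well-defined, pair the candidate functionals $(\mathbf{0},1)$ and $(m_{\tau},0)$ against the generators of $\tau_{\infty}$ and $\tau_{0}$, reduce the non-obvious pairing $\langle(m_{\tau},0),\nu_{j}\rangle=\langle m_{\tau},\rho_{j}\rangle$ to the defining property of the Cartier data of $-K_{X}$, and conclude by full-dimensionality of the maximal cones. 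The alternative face-compatibility check at the end is a nice addition that avoids appealing to Cartierness of $H$ as prior input.

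One small point worth tightening: you twice invoke ``the coefficient of $D_{\rho_{j}}$ in $-K_{X}$, namely $1$'' and analogously read $\psi_{H}$ as taking the values $(1,1,0)$ on $(\mathrm{e}_{\infty},\nu_{j},\mathrm{e}_{0})$. Under the convention of \S 1 of the paper, where $\Delta_{D}=\{m:\langle m,\rho\rangle\ge -a_{\rho}\}$ and hence the Cartier data are the vertices of $\Delta_{D}$ satisfying $\langle m_{\sigma},\rho\rangle=-a_{\rho}$ for $\rho\in\sigma(1)$, all of these pairings pick up a uniform minus sign, and so would $\bar{m}_{\tau_{\infty}}$, which would become $(\mathbf{0},-1)$. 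You flag this with ``up to the overall sign fixed by the support-function convention,'' which is the right instinct; the lemma as printed and the Cartier data of $-K_{X}$ must be read with the same (non-CLS) sign convention $\psi_{D}(\rho)=a_{\rho}$ for the numerics $(\mathbf{0},1)$ and $(m_{\tau},0)$ to match literally. Since the downstream use is only the equivalence relation ``same linear functional'' defining $\Sigma_{H}$, the global sign is immaterial, but you should state one convention explicitly and use it consistently rather than leaving it implicit.
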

It follows from the construction in 
\cite{2000-Mavlyutov-semi-ample-hypersurfaces-in-toric-varieties}*{Proposition~1.2}
that there exists a toric map \(\phi\colon Z\to Z'':=\mathbf{P}_{\Delta_{H}}\),
where \(\Delta_{H}\) is the polytope of \(H\).
Moreover, \(H''=\phi_{\ast}[H]\) is an ample divisor on \(Z''\) such that 
\(\phi^{\ast} \phi_{\ast}[H] = [H]\).
It is straightforward to see that \(Z''\) is obtained by
contracting the infinity divisor in \(\mathbf{P}_{X''}(\mathbb{L}''\oplus\mathbb{C})\),
where \(\mathbb{L}''\) is the geometric line bundle of \(\mathscr{O}_{X''}(-K_{X''})\).
Such a contraction exists since \(-K_{X''}\) is ample.
Let us summarize the data in the commutative diagram below.
\begin{equation}
\label{diag:z''}
  \begin{tikzcd}
    &Z=\mathbf{P}_{X}(\mathbb{L}\oplus\mathbb{C})\ar[r]\ar[rr,bend left,"\phi"]\ar[d]
    &\mathbf{P}_{X''}(\mathbb{L}''\oplus\mathbb{C})\ar[d]\ar[r] &Z''\\
    &X\ar[r] &X'' &
  \end{tikzcd}
\end{equation}
The first upper horizontal map is given by the nef divisor \(H\),
which the second one is obtained by contracting the divisor at infinity.
The lower horizontal map is the MPCP desingularization.

\begin{proposition}
\(Z''\) is Fano.
\end{proposition}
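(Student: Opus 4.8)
The plan is to describe the fan of $Z''$ explicitly, identify its anticanonical polytope, and check that this polytope is reflexive with $\Sigma_{Z''}$ as its normal fan; this shows $-K_{Z''}$ is Cartier and ample, so $Z''$ is (Gorenstein) Fano. First I would record what we need about the base: since $\nabla=\Delta^{\vee}$, the polytope $\Delta$ is reflexive, $X''=\mathbf{P}_{\Delta}$ is Gorenstein Fano, and the primitive ray generators $w_{1},\dots,w_{s}$ of $\Sigma_{\Delta}$ are the vertices of $\nabla$. In particular $\Delta=\{m\in M_{\mathbb{R}}:\langle m,w_{i}\rangle\ge -1,\ 1\le i\le s\}$, every facet of $\Delta$ lies at lattice distance $1$ from $\mathbf{0}$ with inner normal some $w_{i}$, one has $\mathbf{0}\in\operatorname{int}(\nabla)$, and the $w_{i}$ positively span $N_{\mathbb{R}}$ because $X''$ is complete.

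Next I would pin down $\Sigma_{Z''}$. By the diagram \eqref{diag:z''}, $Z''$ is obtained from $\mathbf{P}_{X''}(\mathbb{L}''\oplus\mathbb{C})$ by contracting the infinity divisor $D_{\mathrm{e}_{\infty}}$. I claim that, in $\bar N=N\times\mathbb{Z}$, the fan $\Sigma_{Z''}$ has ray generators $\bar w_{i}:=(w_{i},1)$ ($1\le i\le s$) together with $\mathrm{e}_{0}=(\mathbf{0},-1)$, and maximal cones the ``lateral'' cones $\operatorname{Cone}(\{\bar w_{i}:w_{i}\in\tau(1)\}\cup\{\mathrm{e}_{0}\})$ for $\tau\in\Sigma_{\Delta}(n)$ together with the single ``upper'' cone $\operatorname{Cone}(\bar w_{1},\dots,\bar w_{s})$. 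The upper cone is strongly convex and full-dimensional because all $\bar w_{i}$ have last coordinate $1>0$ and the $w_{i}$ span $N_{\mathbb{R}}$; since $\mathbf{0}\in\operatorname{int}(\nabla)$ the ray $\mathrm{e}_{\infty}=(\mathbf{0},1)$ lies in its relative interior, so contracting $D_{\mathrm{e}_{\infty}}$ precisely deletes the ray $\mathrm{e}_{\infty}$, coalesces the cones $\tau_{\infty}$ of $\mathbf{P}_{X''}(\mathbb{L}''\oplus\mathbb{C})$ into the upper cone, and leaves the remaining cones unchanged, which is the claim.

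With $\Sigma_{Z''}$ in hand I would compute $-K_{Z''}=D_{\mathrm{e}_{0}}+\sum_{i=1}^{s}D_{\bar w_{i}}$ and its polytope
\[
P:=\Delta_{-K_{Z''}}=\bigl\{(m,h)\in M_{\mathbb{R}}\times\mathbb{R}:h\le 1,\ \langle m,w_{i}\rangle\ge -1-h\ \text{for all }i\bigr\},
\]
whose slice at height $h\in[-1,1]$ is $(1+h)\Delta$ and which is empty for $h<-1$ (the inequalities $\langle m,w_{i}\rangle\ge -1-h>0$ being inconsistent since the $w_{i}$ positively span); hence $P=\operatorname{Conv}\bigl((2\Delta\times\{1\})\cup\{(\mathbf{0},-1)\}\bigr)$, a lattice polytope with $(\mathbf{0},0)$ in its interior. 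Its facets are the top face $2\Delta\times\{1\}$, with supporting inequality $\langle(m,h),\mathrm{e}_{0}\rangle\ge -1$, and, for each facet $F\subset\Delta$ with inner normal $w_{i}$, the lateral face $\operatorname{Conv}\bigl((2F\times\{1\})\cup\{(\mathbf{0},-1)\}\bigr)$, with supporting inequality $\langle(m,h),(w_{i},1)\rangle\ge -1$ (evaluate on $(2m',1)$, $m'\in F$, and on $(\mathbf{0},-1)$). Thus the inner facet normals of $P$ are exactly the primitive vectors $\mathrm{e}_{0},\bar w_{1},\dots,\bar w_{s}$, i.e. the ray generators of $\Sigma_{Z''}$; since the apex $(\mathbf{0},-1)$ lies on every lateral facet and a top vertex $(2v,1)$ (with $v$ a vertex of $\Delta$) lies on the top facet together with the lateral facets through $v$, the vertices of $P$ recover precisely the maximal cones of $\Sigma_{Z''}$, so $\Sigma_{Z''}$ is the normal fan of $P$, while every facet of $P$ lying at lattice distance $1$ makes $P$ reflexive. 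Therefore $-K_{Z''}$ is Cartier and ample, and $Z''$ is Gorenstein Fano, in particular Fano.

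I expect the delicate point to be the explicit determination of $\Sigma_{Z''}$ after contracting the infinity divisor — checking that exactly $\mathrm{e}_{\infty}$ disappears, the cones $\tau_{\infty}$ merge into one cone, and no new rays appear — and then the bookkeeping matching $\Sigma_{Z''}$ with the normal fan of $P$; both ultimately rely on the reflexivity of $\Delta$ (equivalently, $-K_{X''}$ being Cartier) and on primitivity of the lifted rays $(w_{i},1)$. A cleaner but less self-contained route would be to invoke the general fact that the projective cone over an anticanonically polarized Gorenstein Fano is again Gorenstein Fano; the argument above is essentially the toric proof of that fact in the present situation.
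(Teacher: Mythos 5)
Your proof is correct, but it takes a genuinely different and substantially longer route than the paper. The paper's proof is a one-liner: since $D_{\mathrm{e}_0}\sim D_{\mathrm{e}_\infty}+\sum_j D_{\nu_j}$ on $Z$, one has $-K_Z = D_{\mathrm{e}_0}+D_{\mathrm{e}_\infty}+\sum_j D_{\nu_j}\sim 2H$, and pushing forward through the crepant contraction $\phi$ gives $-K_{Z''}\sim 2H''$, which is ample by Mavlyutov's construction (Proposition 1.2 cited in the text already supplies ampleness of $\phi_*[H]$). You instead determine $\Sigma_{Z''}$ explicitly, compute the anticanonical polytope $P=\operatorname{Conv}(2\Delta\times\{1\},(\mathbf{0},-1))$, verify that $P$ is reflexive with facet normals $\{\bar w_i\}\cup\{\mathrm{e}_0\}$, and match its normal fan with $\Sigma_{Z''}$. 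Both arguments are sound; the paper's is shorter and exploits machinery already in play, while yours yields two bonuses: it shows $Z''$ is \emph{Gorenstein} Fano (which the paper's argument also gives, since $2H''$ is Cartier, but you make the reflexive polytope explicit), and it actually \emph{proves} the ``upside-down pyramid'' description of $\Sigma_{Z''}$ (face fan of $\operatorname{Conv}(\nabla\times\{1\},(\mathbf{0},-1))$, i.e.\ $P^{\vee}$), which the paper merely asserts immediately after this proposition. Your closing remark is apt: this is the toric instance of the general fact that the projective cone on an anticanonically polarized Gorenstein Fano is again Gorenstein Fano, and the paper's $-K_{Z''}\sim 2H''$ is exactly the algebraic shadow of that fact.
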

\begin{proof}
This follows from the fact that \(-K_{Z''}\sim 2H''\)
which is ample.
\end{proof}

%\begin{corollary}
%The image hypersurface \(\phi(Y)\) is in the anti-canonical class.
%\end{corollary}
%\begin{proof}
%The result follows since \(\phi^{\ast}\omega_{Z''}^{-1}
%\simeq\phi^{\ast}\mathcal{O}_{Z''}(2H'')=\mathcal{O}_{Z}(2H)\).
%\end{proof}

In general \(Z''\) is very singular, possibly non-simplicial. 
It is difficult to compute Gromov--Witten invariants
of \(Y\) from \(Z''\). To facilitate our computation, 
we will construct a partial toric desingularization
\(\psi\colon Z'\to Z''\) in a way such that 
the toric structure of \(Z'\) is ``close'' to that of \(X\)
so that we can embed \(Y\) into \(Z'\) as well.

\subsection{A construction of another toric ambient space \texorpdfstring{\(X'\)}{X'}}

%To explain the construction, let us introduce the notation.
%\begin{itemize}
%\item Let \(\Sigma_{Z''}\) be the fan defining the toric variety \(Z''\).
%\end{itemize}
Recall that there
is a canonical projection \(\overline{N}_{\mathbb{R}}\to N_{\mathbb{R}}\).
\begin{definition}
Let \(\Sigma\) be a fan in \(N_{\mathbb{R}}\). 
For \(\sigma\in\Sigma(n)\), we put 
\begin{equation*}
\overline{\sigma}=\mathrm{Cone}(\{(\rho,1)~|~\rho\in\sigma(1)\}\cup\{(\mathbf{0},-1)\})
\subset\overline{N}_{\mathbb{R}}.
\end{equation*}
Let \(\overline{\Sigma}\) be the fan consisting of \(\overline{\sigma}\) and all their faces.
We call \(\overline{\Sigma}\) \emph{the canonical lifting} of \(\Sigma\).
The canonical projection \(\overline{N}\to N\) induces a map of fans 
\(\overline{\Sigma}\to \Sigma\)
under which \(\overline{\sigma}\) maps to \(\sigma\) for any \(\sigma\in\Sigma\).
\end{definition}
Note that collection of the 
maximal cones in \(\overline{\Sigma}\)
is
\begin{equation}
    \{\overline{\tau} \bigm\vert\tau\in\Sigma(n)\}.
\end{equation}

\begin{remark}
If the toric variety \(X_{\Sigma}\) is Gorenstein, then \(X_{\overline{\Sigma}}\)
is the total space of the anti-canonical bundle of \(X_{\Sigma}\).
\end{remark}
Cones in \(\overline{\Sigma}\) are of the forms:
\begin{itemize}
\item[(1)] \(\overline{\tau}\) for some \(\tau\in\Sigma\);
\item[(2)] \(\mathrm{Cone}(\{(\rho,1)\bigm\vert\rho\in\delta(1)\})\) for some \(\delta\in\Sigma\).
\end{itemize}
In particular, a cone in (2) is a face of a cone in (1).

Let \(\mu\in N\cap |\Sigma|\) be a primitive element. 
Denote by \(\Sigma^{\ast}(\mu)\) the star subdivision of \(\Sigma\)
at \(\mu\) (cf.~\cite{2011-Cox-Little-Schenck-toric-varieties}*{\S11.1}). 
Here is an observation.
\begin{lemma}
\label{lemma:star-subd}
%Let \(\mu\in N\cap |\Sigma|\) be a primitive element. 
For \(\mu\in\nabla\cap N\setminus\{\mathbf{0}\}\), we have
\begin{equation}
\overline{\Sigma^{\ast}(\mu)} = \overline{\Sigma}^{\ast}((\mu,1)).
\end{equation}
In other words,
the star subdivision of the canonical lifting 
\(\overline{\Sigma}\) at \((\mu,1)\)
is equal to the canonical lifting of the star subdivision of 
\(\Sigma\) at \(\mu\).
\end{lemma}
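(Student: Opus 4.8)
The plan is to prove the asserted equality of fans by writing out the maximal cones of each side explicitly and checking that the two collections coincide, using the combinatorial description of star subdivisions from \cite{2011-Cox-Little-Schenck-toric-varieties}*{\S11.1} together with the description of the cones of $\overline{\Sigma}$ recalled above (every cone of $\overline{\Sigma}$ is either $\overline{\tau}$ for some $\tau\in\Sigma$ or a lifted face $\mathrm{Cone}(\{(\rho,1)\mid\rho\in\delta(1)\})$ for some $\delta\in\Sigma$). Since a simplicial fan is determined by its set of maximal cones, this suffices.

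First I would locate $(\mu,1)$ inside $\overline{\Sigma}$. Since $\nabla=\Delta^{\vee}$ is reflexive and $\mathbf{0}$ is its unique interior lattice point, the hypothesis $\mu\in\nabla\cap N\setminus\{\mathbf{0}\}$ forces $\mu\in\partial\nabla$; in particular $\mu$ is primitive (otherwise $\frac{1}{k}\mu$ would be an interior lattice point of $\nabla$ for some $k\ge 2$), and hence so is $(\mu,1)$. Let $\sigma_{\mu}\in\Sigma$ be the unique cone with $\mu$ in its relative interior, and write $\mu=\sum_{\rho\in\sigma_{\mu}(1)}a_{\rho}\,\rho$ with all $a_{\rho}>0$. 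Let $\psi$ be the piecewise-linear function on $N_{\mathbb{R}}$ that is linear on each cone of $\Sigma$ with value $1$ at every ray generator; since $\eta\colon X\to\mathbf{P}_{\Delta}$ is crepant, $\psi$ agrees up to sign with the support function of $-K_{\mathbf{P}_{\Delta}}$, so that $\partial\nabla=\{\psi=1\}$, and therefore $\sum_{\rho}a_{\rho}=\psi(\mu)=1$. Consequently $(\mu,1)=\sum_{\rho\in\sigma_{\mu}(1)}a_{\rho}(\rho,1)$, so $(\mu,1)\in|\overline{\Sigma}|$ (hence $\overline{\Sigma}^{\ast}((\mu,1))$ is defined) and the minimal cone of $\overline{\Sigma}$ containing $(\mu,1)$ is the lifted face $L_{\mu}:=\mathrm{Cone}(\{(\rho,1)\mid\rho\in\sigma_{\mu}(1)\})$; the ray $(\mathbf{0},-1)$ does not occur. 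If $\sigma_{\mu}$ is a ray, then $\mu$ is already a ray generator of $\Sigma$ and $(\mu,1)$ a ray of $\overline{\Sigma}$, so both sides reduce to $\overline{\Sigma}$; thus I may assume $\dim\sigma_{\mu}\ge 2$.

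The main step is the cone-by-cone comparison. Fix $\tau\in\Sigma(n)$ with ray set $\{\rho_{1},\dots,\rho_{n}\}$; then $\overline{\tau}$ has ray set $\{(\rho_{1},1),\dots,(\rho_{n},1),(\mathbf{0},-1)\}$. If $\mu\notin\tau$, then $\tau$ is unchanged in $\Sigma^{\ast}(\mu)$ and, since $(\mu,1)\notin\overline{\tau}$, the cone $\overline{\tau}$ is unchanged in $\overline{\Sigma}^{\ast}((\mu,1))$; the contributions agree. If $\mu\in\tau$, say $\sigma_{\mu}=\mathrm{Cone}(\rho_{1},\dots,\rho_{j})$ (so $L_{\mu}$ is a face of $\overline{\tau}$ and $(\mu,1)$ lies in its relative interior), the star subdivision replaces $\tau$ by the cones $\tau_{i}:=\mathrm{Cone}(\mu,\rho_{1},\dots,\widehat{\rho_{i}},\dots,\rho_{n})$, $i=1,\dots,j$; because $a_{i}>0$ the set $\{\mu\}\cup\{\rho_{l}\mid l\ne i\}$ is an $\mathbb{R}$-basis of $N_{\mathbb{R}}$, so each $\tau_{i}$ is simplicial with exactly that ray set, and $\overline{\tau_{i}}$ has ray set $\{(\mu,1)\}\cup\{(\rho_{l},1)\mid l\ne i\}\cup\{(\mathbf{0},-1)\}$. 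On the other side, using $(\mu,1)=\sum_{l\le j}a_{l}(\rho_{l},1)$, the star subdivision of $\overline{\Sigma}$ at $(\mu,1)$ replaces $\overline{\tau}$ by exactly the cones $\mathrm{Cone}\bigl((\mu,1),(\rho_{1},1),\dots,\widehat{(\rho_{i},1)},\dots,(\rho_{n},1),(\mathbf{0},-1)\bigr)$, $i=1,\dots,j$, which are the $\overline{\tau_{i}}$. Since every maximal cone of $\Sigma^{\ast}(\mu)$, respectively of $\overline{\Sigma}^{\ast}((\mu,1))$, arises in this way, and the two procedures subdivide along the common face $L_{\mu}$ consistently, the two fans have the same set of maximal cones and hence are equal.

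I do not expect a genuine obstacle; the proof is essentially bookkeeping. The one step that really uses the hypothesis, and the only one that takes any thought, is the identity $\sum_{\rho\in\sigma_{\mu}(1)}a_{\rho}=1$ — equivalently, that $\mu\in\partial\nabla$, so that $(\mu,1)$ actually lies in $|\overline{\Sigma}|$; without it the right-hand side of the claimed identity is not even defined. A minor point to treat with care is the case where $\mu$ lies on a wall, so that $\sigma_{\mu}$ is a proper face shared by several maximal cones of $\Sigma$; this is handled uniformly by the formulas above, since the subdivision of each relevant $\overline{\tau}$ depends only on the common lifted face $L_{\mu}$.
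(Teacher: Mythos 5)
Your proof is correct and runs along essentially the same lines as the paper's: both hinge on the observation that \((\mu,1)\in\overline{\sigma}\) precisely when \(\mu\in\sigma\), which comes down to the identity \(\sum_{\rho\in\sigma_{\mu}(1)}a_{\rho}=1\) — you extract it from the support function of \(-K_{X}\), while the paper extracts it from a facet of \(\nabla\) containing \(\sigma(1)\), but these are the same use of reflexivity. Your write-up is a bit tighter because you compare only maximal cones and then invoke that a pure simplicial fan is determined by its maximal cones, whereas the paper enumerates and matches all four types of cones (a1)–(b2) versus (1a)–(2b) explicitly.
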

\begin{proof}
The cones in \(\Sigma^{\ast}(\mu)\) are of the following forms.
\begin{itemize}
\item[(a)] \(\sigma\) where \(\mu\notin\sigma\in\Sigma\).
\item[(b)] \(\mathrm{Cone}(\mu,\delta)\in\Sigma^{\ast}(\mu)\) where
\(\mu\notin \delta\in\Sigma\) and \(\{\mu\}\cup\delta\subset\sigma\in\Sigma\).
\end{itemize}
On one hand, the cones in \(\overline{\Sigma^{\ast}(\mu)}\)
are of the following forms:
\begin{itemize}[leftmargin=4em]
\item[(a1)] \(\overline{\sigma}\) where \(\mu\notin\sigma\in\Sigma\).
\item[(a2)] \(\mathrm{Cone}(\{(\rho,1)\bigm\vert\rho\in\delta(1)\})\), where
\(\mu\notin\delta\in\Sigma\).
\item[(b1)] \(\overline{\mathrm{Cone}(\mu,\tau)}\), where
\(\mu\notin \tau\) and \(\{\mu\}\cup\tau\subset\sigma\in\Sigma\).
\item[(b2)] \(\mathrm{Cone}(\{(\mu,1)\}\cup\{(\rho,1)\bigm\vert\rho\in\delta(1)\})\), where
\(\mu\notin \delta\) and \(\{\mu\}\cup\delta\subset\sigma\in\Sigma\).
\end{itemize}
Note that the cones in (a1) and (b1) contain \((\mathbf{0},-1)\),
while the cones in (a2) and (b2) do not.

On the other hand, the cones in \(\overline{\Sigma}^{\ast}((\mu,1))\) are 
a priori of the following forms:
\begin{itemize}[leftmargin=4em]
\item[(1a)] \(\overline{\sigma}\) where \((\mu,1)\notin \overline{\sigma}
\in\overline{\Sigma}\) with \(\sigma\in \Sigma\).
\item[(2a)] \(\tau\) where \((\mu,1)\notin \tau\in\overline{\Sigma}\) and 
\(\tau = \mathrm{Cone}(\{(\rho,1)~|~\rho\in\delta(1)\})\) for some 
\(\delta\in\Sigma\).
\item[(1b)] \(\mathrm{Cone}((\mu,1),\tau)\) where
\((\mu,1)\notin \tau\) with \(\tau=\overline{\delta}\) for
some \(\delta\in\Sigma\) and \(\{(\mu,1)\}\cup\tau\subset\sigma'\in\overline{\Sigma}\).
Note that in this case, \(\sigma'\) must be of the form \(\overline{\sigma}\) as well.
\item[(2b)] \(\mathrm{Cone}((\mu,1),\tau)\) where
\((\mu,1)\notin \tau\) with \(\tau=\mathrm{Cone}(\{(\rho,1)\bigm\vert\rho\in\delta(1)\})\) 
for some \(\delta\in\Sigma\) and \(\{(\mu,1)\}\cup\tau\subset\sigma'\in\overline{\Sigma}\).
\end{itemize}

We will prove that the cones in 
\(\mathrm{(a1)}\), \(\mathrm{(a2)}\), \(\mathrm{(b1)}\), and \(\mathrm{(b2)}\)
correspond to the cones in \(\mathrm{(1a)}\), \(\mathrm{(2a)}\), \(\mathrm{(1b)}\), 
and \(\mathrm{(2b)}\),
respectively.
\quad \\

\noindent {\bf Case I.~\(\mathrm{(1a)}=\mathrm{(a1)}\) and \(\mathrm{(2a)}=\mathrm{(a2)}\)}.

In \(\mathrm{(1a)}\), \((\mu,1)\not\in\overline{\sigma}\) implies that 
\(\mu\notin\sigma\). Indeed, if \(\mu\in\sigma\), we can write
\begin{equation}
\mu = \sum_{\rho\in\sigma(1)} c_{\rho}\rho,~c_{\rho}\ge 0. 
\end{equation}
Since \(\mu\in\nabla\cap N\setminus\{\mathbf{0}\}\) and \(\nabla\) is a 
reflexive polytope, \(\mu\) lies in some facet \(F\) containing \(\tau(1)\).
Suppose \(F\) is defined by a linear functional 
\(f(n)=\langle n,m\rangle=-1\)
for some \(m\in M\). Then
\begin{equation}
-1=f(\mu)=\langle \mu,m\rangle = \sum_{\rho\in\tau(1)}
c_{\rho}\langle \rho,m\rangle= -\sum_{\rho\in\tau(1)} c_{\rho}.
\end{equation}
It follows that \(\sum_{\rho\in\tau(1)} c_{\rho}=1\)
and therefore \((\mu,1)\in \overline{\tau}\). 
We deduce that the cones in \(\mathrm{(1a)}\) must belong to \(\mathrm{(a1)}\).
The converse is obvious since \(\mu\notin\tau\in\Sigma\) implies 
\((\mu,1)\notin\overline{\tau}\in\overline{\Sigma}\).
We conclude that the cones in \(\mathrm{(1a)}\)
and the cones in \(\mathrm{(a1)}\) are the same.
A similar argument shows that the cones in \(\mathrm{(2a)}\)
and \(\mathrm{(a2)}\) are the same.

\noindent {\bf Case II.~\(\mathrm{(1b)}=\mathrm{(b1)}\) and \(\mathrm{(2b)}=\mathrm{(b2)}\)}.

Let us turn to the case \(\mathrm{(b1)}\).
Note that \(\overline{\mathrm{Cone}(\mu,\tau)}=\mathrm{Cone}((\mu,1),\overline{\tau})\)
and that ``\(\mu\notin\tau\Rightarrow (\mu,1)\notin\overline{\tau}\).'' We deduce
that the cones in \(\mathrm{(b1)}\) belong to \(\mathrm{(1b)}\).
Conversely, \((\mu,1)\not\in\delta=\overline{\tau}\) implies that 
\(\mu\notin\tau\) and hence we conclude 
the cones in \(\mathrm{(1b)}\) and the cones in \(\mathrm{(b1)}\) are the same.
Finally, for any cone in \(\mathrm{(2b)}\), we have \(\mu\notin\tau\);
otherwise the same reason in the proof of cases \(\mathrm{(a1)}\)
and \(\mathrm{(1a)}\) implies that \((\mu,1)\in\delta\). 
The image of \(\sigma'\) under the projection \(\overline{N}_{\mathbb{R}}\to N_{\mathbb{R}}\)
gives the cone \(\sigma\) needed in \(\mathrm{(b2)}\). This shows the cones in \(\mathrm{(2b)}\)
belong to \(\mathrm{(b2)}\). The opposite inclusion is clear. 

In conclusion, we proved that the cones in 
\(\mathrm{(a1)}\), \(\mathrm{(a2)}\), \(\mathrm{(b1)}\), and \(\mathrm{(b2)}\)
correspond to \(\mathrm{(1a)}\), \(\mathrm{(2a)}\), \(\mathrm{(1b)}\), and \(\mathrm{(2b)}\),
respectively. This completes the proof.
\end{proof}

We observe that the defining fan \(\Sigma_{Z''}\) of \(Z''\) 
(cf.~\eqref{diag:z''} for definitions)
is the face fan of the upside down pyramid
\begin{equation}
\label{eq:inverted-pyramid}
\mathrm{Conv}(\nabla\times \{1\},(\mathbf{0},-1))\subset \overline{N}_{\mathbb{R}}.
\end{equation}
Let \(\overline{\Sigma}_{\Delta}\) be 
the canonical lifting of \(\Sigma_{\Delta}\), which
is isomorphic to the fan consisting of cones over the lower facets of the 
pyramid \eqref{eq:inverted-pyramid} and all their faces. 
In this case, since \(\Delta\)
is also a reflexive polytope, the canonical lifting \(\overline{\Sigma}_{\Delta}\)
is the fan for the total space of the line bundle \(\mathscr{O}_{\mathbf{P}_{\Delta}}
(-K_{\mathbf{P}_{\Delta}})\).

Recall that the MPCP resolution \(X\to X''\) is obtained 
from \(\Sigma_{\Delta}\)
by a sequence of star subdivisions at some \(\rho_{i}\in N\cap\nabla\setminus \{\mathbf{0}\}\).
The polytope in \eqref{eq:inverted-pyramid} is reflexive and \(\nu_{i}=(\rho_{i},1)\) are integral
points lying on its faces.

Let \(\tilde{\Sigma}_{Z''}\) be 
the fan obtained from \(\Sigma_{Z''}\) by the same sequence 
of star subdivisions at \(\nu_{i}\).
This gives rise to a subdivision \(\tilde{\Sigma}'\) on \(\tilde{\Sigma}_{\Delta}\).
By Lemma \ref{lemma:star-subd}, it is straightforward to see that
\(\tilde{\Sigma}'\) is equal to the fan (see \eqref{maximal:cones-liftings} for notation)
\begin{equation*}
\{\sigma\preceq\tau_{0}\bigm\vert\tau\in\Sigma(n)\}
\end{equation*}
which defines the ``finite part'' of \(Z\). 
It could happen that \(\tilde{\Sigma}_{Z''}\)
is non-simplicial. However, we can always take a simplicialization
to remedy this defect. Let \(\Sigma_{X'}\) be a
simplicialization of \(\tilde{\Sigma}_{Z''}\)
and \(X'\) be the toric variety associated with \(\Sigma_{X'}\). Note that
the simplicialization does not affect the subfan \(\tilde{\Sigma}'\) since it is smooth.
\begin{proposition}
\(K_{X'}\) is Cartier. 
\end{proposition}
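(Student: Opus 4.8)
The plan is to prove the equivalent assertion that the anticanonical divisor $-K_{X'}=\sum_{\rho\in\Sigma_{X'}(1)}D_{\rho}$ is Cartier, by propagating one integral piecewise‑linear support function along the chain of fans $\Sigma_{Z''}$, $\tilde{\Sigma}_{Z''}$, $\Sigma_{X'}$. As recalled above, $\Sigma_{Z''}$ is the face fan of the reflexive polytope $P:=\mathrm{Conv}(\nabla\times\{1\},(\mathbf{0},-1))\subset\overline{N}_{\mathbb{R}}$, so $Z''$ is Gorenstein Fano: $-K_{Z''}$ is Cartier, and its support function $\psi:=\psi_{-K_{Z''}}$ is integral, linear on each cone of $\Sigma_{Z''}$, and—since the polytope of $\sum_{\rho}D_{\rho}$ is $P^{\vee}$ and $(P^{\vee})^{\vee}=P$—satisfies $\psi\equiv 1$ on $\partial P$. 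In particular $\psi(u_{\rho})=1$ for every primitive ray generator $u_{\rho}$ of $\Sigma_{Z''}$, these being vertices of $P$.

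First I would record that each lattice point $\nu_{i}=(\rho_{i},1)$, with $\rho_{i}\in\nabla\cap N\setminus\{\mathbf{0}\}$ (the centers of the star subdivisions that define $\tilde{\Sigma}_{Z''}$), lies in the base facet $\nabla\times\{1\}\subset\partial P$, hence $\psi(\nu_{i})=1$. Consequently, at each stage of the sequence of star subdivisions producing $\tilde{\Sigma}_{Z''}$ from $\Sigma_{Z''}$, the function $\psi$ stays integral; it stays piecewise linear, since its restriction to a cone of the refined fan is the restriction of a function linear on a cone of the coarser fan; and it still takes the value $1$ on every ray—the old rays and the new ray $\mathbb{R}_{\ge 0}\nu_{i}$. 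Inducting over the sequence, $\psi$ is the support function of the Cartier divisor $-K_{X_{\tilde{\Sigma}_{Z''}}}$; in particular $\psi$ is integral and linear on every cone of $\tilde{\Sigma}_{Z''}$.

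Then I would fix the simplicialization $\Sigma_{X'}$ of $\tilde{\Sigma}_{Z''}$ to be one introducing \emph{no new rays}—for instance a ``pulling'' refinement obtained by repeated star subdivision at rays already present (cf.~\cite{2011-Cox-Little-Schenck-toric-varieties}*{\S11.1}); such a refinement is simplicial, leaves the already‑smooth subfan $\tilde{\Sigma}'$ untouched, and has $\Sigma_{X'}(1)=\tilde{\Sigma}_{Z''}(1)$. Every cone of $\Sigma_{X'}$ sits inside a cone of $\tilde{\Sigma}_{Z''}$ on which $\psi$ is linear, so $\psi$ is piecewise linear for $\Sigma_{X'}$; it is still integral; and, its ray set being unchanged, it still equals $1$ on every ray of $\Sigma_{X'}$. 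Hence $\psi$ is the support function of $-K_{X'}=\sum_{\rho\in\Sigma_{X'}(1)}D_{\rho}$, which shows that $-K_{X'}$, and therefore $K_{X'}$, is Cartier.

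The hard point is precisely the choice made in the last step: a simplicialization of a Gorenstein toric variety need not remain Gorenstein, since a refining ray $\mu$ lying strictly above $\partial P$ (that is, with $\psi(\mu)>1$) contributes a divisor of positive discrepancy and breaks the Cartier property of $-K$. What saves us is that the non‑simplicial cones of $\tilde{\Sigma}_{Z''}$—essentially $\mathrm{Cone}(\nabla\times\{1\})$ when $\nabla$ is not a simplex—can be triangulated using only the lattice rays they already carry, all of which lie on $\partial P$ where $\psi$ equals $1$; equivalently, $X'\to Z''$ is arranged to be a \emph{crepant} partial desingularization, so that the integral support function $\psi_{-K_{Z''}}$ survives verbatim as $\psi_{-K_{X'}}$.
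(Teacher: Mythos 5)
Your proof is correct and follows essentially the same route as the paper's, which observes that $Z''$ is Gorenstein (being the Fano toric variety of a reflexive polytope), that the star subdivisions are at lattice points $\nu_i$ on the boundary of that polytope, and that therefore $\psi\colon X'\to Z''$ is a projective crepant partial resolution, giving $K_{X'}=\psi^{\ast}K_{Z''}$ Cartier. You unpack this into explicit support-function bookkeeping and, helpfully, make explicit a point the paper leaves implicit: the simplicialization of $\tilde{\Sigma}_{Z''}$ must introduce no new rays (e.g.\ a pulling refinement at existing rays), for otherwise a new ray off $\partial P$ would carry positive discrepancy and break the Cartier property of $-K$. This is a worthwhile clarification of rigor, but the underlying argument is the same as the paper's.
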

\begin{proof}
Note that \(K_{Z''}\) is Cartier and \(X'\to Z''\) is obtained from 
adding some of the integral points in \(\nabla\cap N\setminus\{\mathbf{0}\}\).
It follows that \(\psi\colon X'\to Z''\) is a projective crepant partial resolution 
and in particular \(K_{X'} = \psi^{\ast}K_{Z''}\) is Cartier. 
\end{proof}
\subsection{The graph embedding and the pre-quotient space}
\label{subsec:the_graph_embedding_and_the_pre_quotient_space}
Having constructed a nice ambient toric variety, in this subsection, we
will demonstrate how to construct pre-quotient spaces for Calabi--Yau double
covers and how to embed them into the
toric variety we constructed.

Let \(f\in\mathrm{H}^{0}(X,\mathcal{L})\) be a smooth section. Then \(f\) gives rise
to an embedding
\begin{eqnarray*}
\Gamma_{f}\colon X\to\mathbf{P}_{X}(\mathbb{L}\oplus\mathbb{C}),~x\mapsto [f(x)\mathpunct{:}1]
\end{eqnarray*}
where \([f(x)\mathpunct{:}1]\) denotes the equivalence 
class of the vector \((f(x),1)\in\mathbb{L}\oplus\mathbb{C}\) in the projectivization.

Consider the composition \(\phi\circ\Gamma_{f}\colon X\to Z''\). 
Let \(\psi\colon X'\to Z''\) be a toric partial resolution constructed as above.
We arrive at the following commutative diagram
\begin{eqnarray}
\label{diag:construction-of-x'}
\begin{tikzcd}
&  &  &X'\ar[d,"\psi"]\\
&X\ar[bend left,rru,"g"]\ar[r,"\Gamma_{f}"] & Z\ar[l,bend left,"\pi"]\ar[r,"\phi"] & Z''
\end{tikzcd}
\end{eqnarray}
where \(\pi\colon Z\to X\) is the bundle projection.
We can lift \(\phi\circ\Gamma_{f}\) into \(g\colon X\to X'\) since the subfan  
\(\tilde{\Sigma}'\subset \Sigma_{X'}\) defines the finite part of \(Z\) in which 
\(\Gamma_{f}(X)\) lives.

%\begin{proposition}
%\label{prop:anti-canonical}
%The pushforward \(\phi_{\ast}(\pi^{\ast}\mathcal{L}\otimes\mathcal{O}_{Z}(D_{0}))\) 
%is an invertible sheaf on \(Z'\).
%Moreover, we have 
%\begin{eqnarray*}
%\phi_{\ast}(\pi^{\ast}\mathcal{L}^{\otimes 2}\otimes\mathcal{O}_{Z}(2D_{0}))
%\simeq \omega_{Z'}^{\vee}.
%\end{eqnarray*}
%\end{proposition}
%\begin{proof}
%We will construct an invertible sheaf \(\mathcal{L}'\) on \(Z'\)
%such that 
%\begin{eqnarray*}
%\phi^{\ast}\mathcal{L}'=\pi^{\ast}\mathcal{L}\otimes\mathcal{O}_{Z}(D_{0}).
%\end{eqnarray*} 
%Then by projection formula, we have \(\phi_{\ast}\pi^{\ast}\mathcal{L}=\mathcal{L}'\).
%Note that \(\mathcal{L}\) is linearly equivalent to 
%\(\mathcal{O}_{Z}(\sum_{i=1}^{p} D_{i})\) (recall \(D_{i}=D_{\nu_{i}}\)
%by our convention). 
%We can take 
%\(\mathcal{L}'=\mathcal{O}_{Z'}(\sum_{i=1}^{p} \phi(D_{i}))\). This is 
%a line bundle since \((\mathbf{0},-1)\in M\times \mathbb{Z}\) gives the Cartier data of it on 
%the unique maximal singular cone \(\cup_{\sigma\in\Sigma(n)}\tilde{\sigma}_{0}\).
%We further observed that 
%\(D_{0}\sim \sum_{i=1}^{p} D_{i}\) on \(Z'\) which 
%proves the second assertion.
%\end{proof}

Recall that \(\overline{N}=N\times\mathbb{Z}\) and \(\Sigma_{X'}\)
is the fan defining \(X'\) in \(\overline{N}'\otimes\mathbb{R}\). Consider a sublattice
\begin{eqnarray*}
\overline{N}'':=2N\times 2\mathbb{Z}\subset \overline{N}.
\end{eqnarray*}
Viewing \(\Sigma_{X'}\) as a fan in \(\overline{N}''\), we obtain 
a \(2^{n+1}\)-sheet covering \(\Phi\colon X'\to X'\)
branched
along the union of toric divisors on \(X'\). 
The Galois group \(G\) of \(\Phi\) is isomorphic to 
\(\overline{N}\slash \overline{N}'\simeq
\boldsymbol{\mu}_{2}^{n+1}\). Let \(Y'\) be the fibred product
\begin{eqnarray}
\label{diag:pre-quotient-construction}
\begin{tikzcd}
&Y'\ar[r]\ar[d] &X'\ar[d,"\Phi"]\\
&X\ar[r,"g"] & X'.
\end{tikzcd}
\end{eqnarray}

By construction, \(Y'\to X\) is a \(2^{n+1}\)-sheet cover
branched along \(\cup_{i=1}^{p} D_{i}\cup\{f=0\}\) and \(Y'\)
is invariant under the \(G\)-action as well. 
\begin{lemma}
\label{lem:y'-is-smooth}
\(Y'\) is smooth.
\end{lemma}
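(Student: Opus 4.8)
The plan is to realise $Y'$ as the preimage $\Phi^{-1}(g(X))$ inside $X'$, and then to verify smoothness chart by chart with the Jacobian criterion, feeding in the standing strict normal crossing hypothesis. First I would collect the structural facts. The map $\Gamma_f\colon X\to Z=\mathbf{P}_X(\mathbb{L}\oplus\mathbb{C})$ is a section of the bundle projection $\pi\colon Z\to X$, so its lift $g\colon X\to X'$ is a closed immersion, and $g(X)=\Gamma_f(X)$ lies in the finite part $X_{\tilde\Sigma'}$ of $X'$; since $\pi$ restricts to a projection $X_{\tilde\Sigma'}\to X$, the image $g(X)$ is again a section of that projection. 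As $\tilde\Sigma'$ is smooth and is left unchanged by the simplicialization, $X'$ is smooth along $g(X)$ and $g(X)$ is a smooth hypersurface. Because $g$ is a closed immersion, the fibre product \eqref{diag:pre-quotient-construction} identifies $Y'$, scheme-theoretically, with $\Phi^{-1}(g(X))\subset X'$.

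Next I would describe how $g(X)$ sits against the toric boundary of $X'$. The only toric divisors of $X'$ meeting $g(X)$ are $D_{\nu_1},\dots,D_{\nu_p}$ and the zero section $D_{\mathrm{e}_0}$; the remaining toric divisors (including $D_{\mathrm{e}_\infty}$ and any rays introduced by the simplicialization) lie ``at infinity'' and are disjoint from $g(X)$. Since $g(X)$ is a section, it meets every intersection $D_{\nu_{j_1}}\cap\dots\cap D_{\nu_{j_l}}$, and every $D_{\nu_{j_1}}\cap\dots\cap D_{\nu_{j_l}}\cap D_{\mathrm{e}_0}$, transversally and along the image of the corresponding toric stratum of $X$, respectively its intersection with $\{f=0\}$. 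Feeding in the hypothesis that $\bigcup_{j}D_{\rho_j}\cup\{f=0\}$ is a strict normal crossing divisor on $X$, a codimension count (each sub-collection of these divisors meets in the expected codimension and along a smooth subvariety) shows that, near $g(X)$, the divisor $g(X)\cup\bigcup(\text{toric divisors of }X')$ is a strict normal crossing divisor. Equivalently: at each $x\in g(X)$, in a smooth toric chart with coordinates $w_1,\dots,w_{n+1}$ and with $B$ indexing the coordinate hyperplanes through $x$, the differential $dh_x$ of a local equation $h$ of $g(X)$ is not contained in the span of $\{dw_j : j\in B\}$.

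Finally I would localize. Over a maximal cone $\tau_0\in\tilde\Sigma'$ the chart $U_{\tau_0}\cong\mathbb{A}^{n+1}$ has coordinate hyperplanes exactly the divisors $D_{\nu_j}$ ($\rho_j\in\tau(1)$) and $D_{\mathrm{e}_0}$, and, since $\overline N''=2\overline N$, the restriction of $\Phi$ is the coordinate-squaring map $(\tilde w_1,\dots,\tilde w_{n+1})\mapsto(\tilde w_1^2,\dots,\tilde w_{n+1}^2)$. Hence $Y'\cap\Phi^{-1}(U_{\tau_0})=\{\tilde h=0\}$ with $\tilde h(\tilde w)=h(\tilde w_1^2,\dots,\tilde w_{n+1}^2)$, so that $\partial\tilde h/\partial\tilde w_j=2\,\tilde w_j\,(\partial h/\partial w_j)(\tilde w^2)$. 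If $\tilde q$ were a singular point of $\{\tilde h=0\}$, then, putting $x=\tilde q^2$ (so $x\in g(X)$) and $B=\{j:\tilde q_j=0\}$, we would get $(\partial h/\partial w_j)(x)=0$ for every $j\notin B$, which contradicts the strict normal crossing property at $x$ established above. Thus $\{\tilde h=0\}$ is smooth, and $Y'$ is smooth. This is the $r=1$ analogue of the smoothness arguments already carried out for $\mathbb{P}^7[2,2,2,2]$ and in \S\ref{sec:4h}.

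I expect the middle step to be the real content of the proof: showing that the strict normal crossing condition on $X$ propagates to one for $g(X)$ inside the larger toric variety $X'$. One has to keep track of all the extra toric divisors of $X'$, especially $D_{\mathrm{e}_0}$, which accounts for the non-toric branch component $\{f=0\}$, and to use the fact that $\Gamma_f$ is a section in order to control the \emph{multiple} intersections rather than just the pairwise ones; one also has to check that the toric divisors of $X'$ at infinity do not meet $g(X)$. The remaining steps are routine once this is in hand.
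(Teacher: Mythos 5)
Your proof is correct and follows essentially the same strategy as the paper: realize $Y'$ locally as the zero set of $\tilde h(\tilde w)=h(\tilde w_1^2,\dots,\tilde w_{n+1}^2)$ (in the paper's notation, $F(z)=z_{n+1}^2-f(z_1^2,\dots,z_n^2)$), apply the Jacobian criterion, and derive a contradiction with the strict normal crossing hypothesis on $X$. The intermediate step you isolate (that $g(X)$ together with the toric boundary of $X'$ forms an SNC divisor near $g(X)$) is implicit in the paper's direct computation, where the two cases $f(x')\neq 0$ and $f(x')=0$ correspond respectively to the points where $D_{\mathrm{e}_0}$ does or does not pass; making it explicit is a harmless clarification rather than a different route. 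One tiny imprecision: $\mathrm{e}_\infty$ is not a ray of $\Sigma_{X'}$ at all (it lies in the interior of the base of the reflexive pyramid and gets contracted), and the simplicialization of $\tilde\Sigma_{Z''}$ introduces no new rays, so there are in fact no ``divisors at infinity'' in $X'$ to worry about; this only simplifies the bookkeeping and does not affect the argument.
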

\begin{proof}
Note that \(g(X)\) lies in the smooth part of \(X'\).
It is sufficient to prove the following statement.
Let \(z_{1},\ldots,z_{n+1}\) be coordinates on \(\mathbb{C}^{n+1}\).

\begin{claim}
Assume that \(f\) is a function on \(\mathbb{C}^{n}\) with coordinates \(z_{1},\ldots,z_{n}\)
for which \(\{f=0\}\bigcup\cup_{i=1}^{n}\{z_{i}=0\}\)
is a simple normal crossing divisor.
Then 
\begin{equation}
F(z_{1},\ldots,z_{n+1}):=z_{n+1}^{2}-f(z_{1}^{2},\ldots,z_{n}^{2})  
\end{equation} 
defines 
a smooth subvariety in \(\mathbb{C}^{n+1}=\mathbb{C}^{n}\times\mathbb{C}\) 
whose coordinates are
\(z_{1},\ldots,z_{n},z_{n+1}\).
\end{claim}

Let us prove the claim.
Since \(\{f=0\}\) is smooth, the gradient vector
\begin{equation}
\left(\frac{\partial f}{\partial z_{1}},\ldots,\frac{\partial f}{\partial z_{n}}\right)
\end{equation}
must be non-vanishing on \(\{f=0\}\). 

Suppose on the contrary that \(\mathbf{a}:=(a_{1},\ldots,a_{n+1})\) 
is a singular point of \(\{F=0\}\);
in other words, we have
\begin{equation*}
\begin{cases}
\nabla F(\mathbf{a}) = \mathbf{0}\\
F(\mathbf{a}) = 0.
\end{cases}
\end{equation*}
From the first equality, we see that \(a_{n+1}=0\) and that 
\begin{equation}
\label{eq:gradient-vector}
\left(a_{1}\frac{\partial f}{\partial z_{1}}(a_{1}^{2},\ldots,a_{n}^{2}),
\ldots,a_{n}\frac{\partial f}{\partial z_{n}}(a_{1}^{2},\ldots,a_{n}^{2})\right)=\mathbf{0}.
\end{equation}
Combining with the second equality,
we have \(f(a_{1}^{2},\ldots,a_{n}^{2})=0\). 

Now \eqref{eq:gradient-vector} says that
\begin{equation*}
\frac{\partial f}{\partial z_{i}}(a_{1}^{2},\ldots,a_{n}^{2})\ne 0~\Rightarrow~a_{i}=0.
\end{equation*}
This implies that 
\(\{f=0\}\cup\bigcup_{i=1}^{n}\{z_{i}=0\}\)
is \emph{not} a simple normal crossing divisor. 
To see this, put \(b_{i}=a_{i}^{2}\) so that \(f(\mathbf{b})=0\). Then
at \(\mathbf{b}=(b_{1},\ldots,b_{n})\), we have
\begin{equation}
  (\nabla f)(\mathbf{b})=
  \left(\frac{\partial f(\mathbf{b})}{\partial z_{1}},\ldots,
  \frac{\partial f(\mathbf{b})}{\partial z_{n}}\right).
\end{equation}
The above argument shows that 
\begin{equation}
  \frac{\partial f(\mathbf{b})}{\partial z_{i}}\ne 0~\Rightarrow~b_{i}=a_{i}=0.
\end{equation}
Hence
\begin{equation}
  \mathbf{b}\in\left\{z_{i}=0\Bigm\vert \frac{\partial f(\mathbf{b})}{\partial z_{i}}\ne 0\right\}.
\end{equation}
But then the divisor \(\{f=0\}\bigcup\cup_{i=1}^{n}\{z_{i}=0\}\)
would not be a strictly normal crossing divisor at \(\mathbf{b}\),
contradicting to our assumption.
This completes the proof of the claim and hence the theorem.
\end{proof}

To relate this with our double cover, let us consider another sublattice
\begin{equation*}
\overline{N}':=N\times 2\mathbb{Z}\subset \overline{N}.
\end{equation*}
As before, we identify \(\overline{M}''\) with
\begin{equation*}
\left\{\left(\frac{a_{1}}{2},\ldots,\frac{a_{n+1}}{2}\right)~\Big|~a_{i}\in\mathbb{Z}\right\}
\end{equation*}
and \(\overline{M}\subset \overline{M}''\) corresponds to the subset consisting of integral points.
Under this identification,
\begin{equation*}
\overline{M}':=\left\{\left(a_{1},\ldots,a_{n},\frac{a_{n+1}}{2}\right)\Bigm\vert a_{i}\in\mathbb{Z}\right\}.
\end{equation*}
\(\overline{M}\) is an index 2 subgroup in \(\overline{M}'\). Let 
\(G':=\overline{N}'\slash\overline{N}''\).
Obviously \(G'\) is an index \(2\) subgroup in \(G=\overline{N}\slash\overline{N}''\).
We claim
\begin{proposition}
The covering \(X'\slash G'\to X'\slash G\cong X'\) is a double cover 
branched along the union of all toric divisors.
\end{proposition}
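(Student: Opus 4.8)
The plan is to recognize $X'\slash G'\to X'\slash G\cong X'$ as the toric morphism attached to a sublattice inclusion and then read its branch divisor off the $1$-cones of $\Sigma_{X'}$. First I would note that, since $\Sigma_{X'}$ is simultaneously a fan in $\overline{N}_{\mathbb{R}}$, $\overline{N}'_{\mathbb{R}}$ and $\overline{N}''_{\mathbb{R}}$, forming the toric variety of $\Sigma_{X'}$ over each lattice and taking the evident finite quotients identifies $X'\slash G'$ with $X_{\Sigma_{X'},\overline{N}'}$ and $X'\slash G\cong X'$ with $X_{\Sigma_{X'},\overline{N}}$, the covering in question being the toric morphism $\pi'$ induced by $\overline{N}'\hookrightarrow\overline{N}$. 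Since $[\overline{N}:\overline{N}']=|(N\times\mathbb{Z})\slash(N\times 2\mathbb{Z})|=2$, the map $\pi'$ is finite of degree $2$, it is $T$-equivariant, and on the big tori it is the isogeny $T_{\overline{N}'}\to T_{\overline{N}}$ with kernel $\cong\boldsymbol{\mu}_{2}$, hence étale there. Its ramification locus is therefore $T$-invariant and contained in the toric boundary, so the branch divisor is a reduced sum of a subcollection of the $D_{\rho}$, $\rho\in\Sigma_{X'}(1)$.

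Next I would compute the ramification index along each toric divisor. Writing $u_{\rho}$ for the primitive generator of $\rho$ in $\overline{N}$ and choosing a basis of $\overline{N}$ starting with $u_{\rho}$, on the affine chart $U_{\rho}$ the cover $\pi'$ becomes the product of the identity on a torus with the map $y\mapsto y^{k_{\rho}}$ on a line, where $k_{\rho}:=[\,\overline{N}\cap\mathbb{R}_{\ge 0}\rho:\overline{N}'\cap\mathbb{R}_{\ge 0}\rho\,]$. In particular $D_{\rho}$ appears in the branch divisor exactly when $k_{\rho}\ge 2$, i.e.\ exactly when $u_{\rho}\notin\overline{N}'$; and since $\overline{N}'=N\times 2\mathbb{Z}$, this says precisely that the final coordinate of $u_{\rho}$ is odd.

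Finally I would enumerate $\Sigma_{X'}(1)$. By construction $\Sigma_{X'}$ is a simplicialization of $\tilde{\Sigma}_{Z''}$, which is obtained from the face fan $\Sigma_{Z''}$ of the pyramid $\mathrm{Conv}(\nabla\times\{1\},(\mathbf{0},-1))$ by the star subdivisions at the height-one points $\nu_{i}=(\rho_{i},1)$ used for $X\to X''$; since a fan always admits a simplicial refinement with the same rays, we get $\Sigma_{X'}(1)=\{\nu_{j}=(\rho_{j},1)\mid\rho_{j}\in\Sigma(1)\}\cup\{\mathrm{e}_{0}=(\mathbf{0},-1)\}$. Every one of these primitive generators has last coordinate $\pm 1$, hence odd, so $k_{\rho}=2$ for every $\rho$ and every $D_{\rho}$ occurs in the branch divisor; combined with the first step this exhibits $\pi'$ as a degree-two cover whose branch divisor is exactly $\sum_{\rho}D_{\rho}$, i.e.\ branched along the union of all toric divisors.

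The argument is largely bookkeeping, and the two non-formal inputs — the local model of a toric cover coming from a sublattice, and the existence of a ray-preserving simplicialization — are standard. The point that needs the most care is the last one: one must be sure the rays of $\Sigma_{X'}$ are exactly the $\nu_{j}$ and $\mathrm{e}_{0}$, so that none has even last coordinate. This is guaranteed because the desingularization of $Z''$ uses only star subdivisions at the points $\nu_{i}=(\rho_{i},1)$ together with a ray-preserving simplicialization, the apex $\mathrm{e}_{0}=(\mathbf{0},-1)$ has last coordinate $-1$, and the infinity ray $\mathrm{e}_{\infty}=(\mathbf{0},1)$ of $\mathbf{P}_{X}(\mathbb{L}\oplus\mathbb{C})$ has been contracted on passing to $Z''$.
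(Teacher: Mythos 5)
Your proof is correct and reaches the same conclusion as the paper's, but by a dual route. The paper works on the character-lattice side: for each ray $\nu$ it shows $\nu^{\perp}\cap\overline{M}'=\nu^{\perp}\cap\overline{M}$, i.e.\ the index-$2$ shrinkage of the $M$-lattice is invisible on the hyperplane $\nu^{\perp}$, so the cover restricted to the orbit closure $D_{\nu}$ is degree one and hence $D_{\nu}$ lies entirely in the branch locus. You instead work on the $N$-lattice side: you compute the ray index $k_{\rho}=[\,\overline{N}\cap\mathbb{R}_{\ge 0}\rho:\overline{N}'\cap\mathbb{R}_{\ge 0}\rho\,]$, observe that every primitive generator of $\Sigma_{X'}(1)$ (namely the $\nu_{j}=(\rho_{j},1)$ and $\mathrm{e}_{0}=(\mathbf{0},-1)$) has odd last coordinate, hence lies in $\overline{N}=N\times\mathbb{Z}$ but not in $\overline{N}'=N\times2\mathbb{Z}$, so $k_{\rho}=2$ and $D_{\rho}$ is ramified. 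For an index-$2$ sublattice the two arguments are equivalent: the degree $2$ of the cover splits along each ray as (ray index)$\,\times\,$(degree of the restriction to $D_{\rho}$), so $k_{\rho}=2$ if and only if $\rho^{\perp}\cap\overline{M}'=\rho^{\perp}\cap\overline{M}$. The paper verifies the second condition; you verify the first. Your version has the small advantage of explicitly covering the ray $\mathrm{e}_{0}$ with last coordinate $-1$, which the paper's displayed computation (written only for $\nu=(v_{1},\ldots,v_{n},1)$) glosses over. The one place to be slightly careful is your claim that ``choosing a basis of $\overline{N}$ starting with $u_{\rho}$'' automatically yields the product local model $(\mathrm{id},y\mapsto y^{k_{\rho}})$: an arbitrary extension of $u_{\rho}$ to a basis need not split $\overline{N}'$, but because $\overline{N}'=N\times2\mathbb{Z}$ is itself a split sublattice and each $u_{\rho}$ has $\pm1$ in the last slot, a compatible basis does exist here (take the remaining basis vectors in $N\times\{0\}$); alternatively one can appeal directly to the standard fact that the ramification index along $D_{\rho}$ equals $k_{\rho}$ without invoking a splitting.
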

\begin{proof}
Clearly, \(X'\slash G\cong X'\) and \(X'\slash G'\to X'\slash G\) is a double cover.
Now we prove that the branched locus is the union of all toric divisors.

Let \(\nu\) be the primitive generator of a \(1\)-cone in \(\Sigma_{X'}\). 
We claim that \(\nu^{\perp}\cap \overline{M}'=\nu^{\perp}\cap \overline{M}\).
Let us write \(\nu = (v_{1},\ldots,v_{n},1)\). For \(x=(x_{1},\ldots,x_{n},x_{n+1}/2)
\in \nu^{\perp}\cap \overline{M}'\),
we have
\begin{equation*}
\sum_{i=1}^{n} v_{i}x_{i}+\frac{x_{n+1}}{2}\in \mathbb{Z}.
\end{equation*}
This implies that \(x_{n+1}\in 2\mathbb{Z}\) and 
therefore \(x\in \nu^{\perp}\cap \overline{M}\)
as claimed.
\end{proof}

\begin{remark}
We can relate this construction with the previous one for
\(\mathbb{P}(1,1,1,1,4)\) as follows. First of all,
in the present case \(X=X''=\mathbb{P}^{3}\) 
and \(\mathbb{L}\) is the total space of \(\mathscr{O}_{X}(4)\)
(cf.~diagram \eqref{diag:z''}). By contracting
the divisor at infinity, \(Z''=\mathbb{P}(1,1,1,1,4)\). In this case,
since no desingularization is needed and \(Z''\) is already simplicial, 
we see that \(X'=Z''\).

To compare our current construction with the one in \S\ref{sec:4h}, we can use the
integral linear transformation
\begin{equation}
  \begin{bmatrix}
    1 & -1 & 0 & 1\\
    -1 & 1 & 1 & 0\\
    0 & 1 & 1 & 0\\
    0 & 0 & 0 & 0
  \end{bmatrix}
\end{equation}
to relate their \(1\)-cones. In fact, it takes
\(\rho_{1},\ldots,\rho_{4}\) in \eqref{eq:comm-diag-quotient-p11114} (as column vectors) to
\begin{equation}
  \begin{bmatrix}
    2\\
    0\\
    0\\
    1
  \end{bmatrix},
  \begin{bmatrix}
    0\\
    2\\
    0\\
    1
  \end{bmatrix},
  \begin{bmatrix}
    0\\
    0\\
    2\\
    1
  \end{bmatrix},
  \begin{bmatrix}
    0\\
    0\\
    0\\
    -1
  \end{bmatrix}.
\end{equation}
\end{remark}

From the proposition, we see that \(Y'\slash G'\simeq Y\).
\begin{definition}
The variety \(Y'\) is called the \emph{pre-quotient} space 
of \(Y\).
\end{definition}
Furthermore, we can prove
\begin{proposition}
\(Y'\) is a smooth Calabi--Yau hypersurface in \(X'\).
\end{proposition}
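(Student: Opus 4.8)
The plan is to establish three things about \(Y'\subset X'\): smoothness, that it is a (connected) effective Cartier divisor, and that \(K_{Y'}\cong\mathcal{O}_{Y'}\). Smoothness is already Lemma~\ref{lem:y'-is-smooth}. The key structural observation is that, projecting the fibre square \eqref{diag:pre-quotient-construction} to the second factor, \(Y'=X\times_{g,X',\Phi}X'\) is identified with \(\Phi^{-1}(g(X))\subseteq X'\), where \(g\colon X\hookrightarrow X'\) is the closed embedding of \eqref{diag:construction-of-x'} and \(\Phi\colon X'\to X'\) is the \(2^{n+1}\)-sheeted coordinate-squaring cover induced by \(\overline{N}''\subset\overline{N}\). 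Everything will be read off from this description.

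First I would treat the \emph{hypersurface} claim. Since \(g\) factors through the ``finite part'' \(\widetilde{\Sigma}'\subset\Sigma_{X'}\), and that subfan is smooth (it maps isomorphically onto the fan of \(X\), and by the analysis around Lemma~\ref{lemma:star-subd} it is untouched by the simplicialization), the image \(g(X)\cong X\) is a \emph{smooth} divisor contained in the smooth locus of \(X'\), hence Cartier. As \(\Phi\) is a toric morphism carrying the finite part to the finite part, \(Y'=\Phi^{-1}(g(X))\) also lies in the smooth locus of \(X'\); and since the generic point of \(g(X)\) lies in the big torus, \(\Phi\) is étale there, so \(\Phi^{-1}(g(X))\) is reduced and equals the pull-back divisor \(\Phi^{*}g(X)\). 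Thus \(Y'\) is an effective Cartier divisor. For connectedness, note that \(\pi:=g^{-1}\circ\Phi|_{Y'}\colon Y'\to X\) is the Kummer \(\boldsymbol{\mu}_2^{n+1}\)-cover of \(X\) branched along \(\bigcup_{j=1}^{p}D_{j}\cup\{f=0\}\); its monodromy \(\mathrm{H}_1\bigl(X\setminus(\bigcup_jD_j\cup\{f=0\})\bigr)\to\overline{N}/2\overline{N}\) sends the meridians of the \(D_j\) and of \(\{f=0\}\) to \((\rho_j,1)\bmod 2\) and \((\mathbf{0},-1)\bmod 2\), and these generate \(\overline{N}/2\overline{N}\) because the primitive generators of a smooth maximal cone of \(X\) form a \(\mathbb{Z}\)-basis of \(N\). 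Hence \(Y'\) is connected, and being a smooth closed subvariety of pure dimension \(n\) in the irreducible \(X'\) it is irreducible, i.e.\ a genuine hypersurface.

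Next I would prove \(K_{Y'}\cong\mathcal{O}_{Y'}\) via the ramification formula for the finite morphism \(\pi\colon Y'\to X\) between smooth varieties: \(K_{Y'}=\pi^{*}K_{X}+R_{\pi}\) with \(R_{\pi}=\sum_{P}(e_P-1)P\). The only ramification occurs over \(D_1,\dots,D_p,\{f=0\}\), along each of which \(\pi\) is totally ramified of index \(2\), so \(R_{\pi}=\tfrac12\pi^{*}\bigl(\sum_{j=1}^{p}D_j+\{f=0\}\bigr)\). Since \(\sum_{j=1}^{p}D_j=-K_X\) and \(\{f=0\}\in|\mathscr{L}|=|{-K_X}|\), this yields \(R_{\pi}=\tfrac12\pi^{*}(-2K_X)=-\pi^{*}K_X\), whence \(K_{Y'}=0\). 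Equivalently—and in the form needed for the subsequent use of Batyrev's construction—one tracks the class \(\Gamma_f(X)\sim D_{\mathrm e_0}\sim H\) on \(Z\) (Proposition~\ref{proposition:H-is-nef}) through the contraction \(\phi\) and the crepant resolution \(\psi\) of \eqref{diag:z''} to get \(-K_{X'}=\psi^{*}(-K_{Z''})=\psi^{*}(2H'')\sim 2\,[g(X)]\); since \(\Phi^{*}\) doubles the class of every toric divisor, \([Y']=\Phi^{*}[g(X)]=-K_{X'}\), and adjunction gives the same conclusion.

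The step I expect to be the real work is the bookkeeping behind the two assertions ``\(g(X)\) lies in the smooth locus of \(X'\)'' and ``\(g(X)\) meets exactly the toric divisors \(D_1,\dots,D_p,D_{\mathrm e_0}\), transversally and with multiplicity one''. Both hinge on the explicit comparison (through Lemma~\ref{lemma:star-subd}) of the finite part of \(\Sigma_{X'}\) with the fan of \(X\), together with the simple-normal-crossing hypothesis on \(\{f=0\}\cup\bigcup_jD_j\) that already drove the proof of Lemma~\ref{lem:y'-is-smooth}; the rest is formal.
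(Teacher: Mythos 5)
Your argument is correct and, in one place, takes a genuinely different route from the paper. The paper's proof is a terse divisor-class computation: it identifies \(g(X)\) as a hypersurface of class \(\sum_{i=1}^{p}D_{\nu_i}\) in \(X'\), pulls back under \(\Phi\) (which doubles toric classes) to get \([Y']=2\sum D_{\nu_i}\), and uses the linear equivalence \(D_{\mathrm e_0}\sim\sum D_{\nu_i}\) to recognize this as \(-K_{X'}\), citing the earlier smoothness lemma to finish. This is exactly your ``Equivalently\ldots'' alternative at the end of your Calabi--Yau step.

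Your primary argument for \(K_{Y'}\cong\mathcal{O}_{Y'}\) is instead the Hurwitz ramification formula applied to the finite cover \(\pi\colon Y'\to X\), which never needs the ambient class \([Y']\) in \(\mathrm{Pic}(X')\); it is more elementary and decoupled from the toric bookkeeping, at the cost of not directly producing the statement \([Y']=-K_{X'}\) (you recover the ``anticanonical hypersurface'' phrasing via the class-tracking alternative, or via adjunction once \(Y'\) is known to be Cartier). You also supply two details the paper leaves implicit: that \(g(X)\) sits in the smooth locus untouched by the simplicialization, and a monodromy computation showing the Kummer cover \(Y'\to X\) is connected. The latter is a real gap-filler --- ``hypersurface'' tacitly requires irreducibility, and your observation that the meridians map to \((\rho_j,1)\) and \((\mathbf 0,1)\) modulo \(2\), which generate \(\overline N/2\overline N\) because a smooth maximal cone gives a \(\mathbb Z\)-basis of \(N\), closes it cleanly. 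Overall the two proofs reach the same place; yours is longer but more self-contained and more careful about irreducibility, while the paper's is shorter because it leans on the toric divisor arithmetic already set up in \S\ref{subsection:a-toric-bundle-and-its-construction}.
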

\begin{proof}
Note that \(\Sigma_{X'}(1)=\{\nu_{i}\bigm\vert i=1,\ldots,p\}\cup\{(\mathbf{0},-1)\}\).
The map \(g\colon X\to X'\) realizes \(X\) as a hypersurface in \(X'\) defined
by a section of the bundle 
\begin{equation*}
\mathscr{O}_{X'}\left(\textstyle\sum_{i=1}^{p} D_{\nu_{i}}\right).
\end{equation*}
It follows that \(Y'\) is the zero locus of a section of the sheaf
\(\mathscr{O}_{X'}\left(2\textstyle\sum_{i=1}^{p} D_{\nu_{i}}\right)\) which
is the anti-canonical bundle since \(D_{\mathrm{e}_{0}}\sim \sum_{i=1}^{p} D_{\nu_{i}}\).
Lastly, lemma \ref{lem:y'-is-smooth} ensures that \(Y'\) is smooth.
\end{proof}
\subsection{GKZ systems and cohomology-valued \texorpdfstring{\(B\)}{B}-series}
\label{subsection:gkz-r=1-notation}
Given a double cover \(Y\to X\) branched along the nef-partition \(E_{1}=-K_{X}\),
the pre-quotient space \(Y'\) we have constructed 
is a \emph{smooth} Calabi--Yau hypersurface in a semi-Fano simplicial toric variety \(X'\).

Let us fix the following notation which will be used in the rest of the paper.
\begin{itemize}
\item Let \(\nu_{j}=(\rho_{j},1)\in N\times\mathbb{Z}\), \(1\le j\le p\), and
\(\nu_{p+1}=(\mathbf{0},-1)\in N\times\mathbb{Z}\) be the \(1\)-dimensional cones in the fan defining \(X'\).

\item Let 
\begin{equation*}
\bar{A}=\begin{bmatrix}
\mathbf{0} &\rho_{1}^{\intercal} & \cdots & \rho_{p}^{\intercal} & \mathbf{0}\\
0& 1 & \cdots & 1 & -1 \\
1& 1 & \cdots & 1 & 1 \\
\end{bmatrix}\in\mathrm{Mat}_{(n+2)\times(p+2)}(\mathbb{Z})
\end{equation*}
and 
\begin{equation*}
\bar{\beta}=
\begin{bmatrix}
\mathbf{0}\\
0\\
-1
\end{bmatrix}\in\mathbb{C}^{n+2}.
\end{equation*}
The GKZ hypergeometric system \(\mathcal{M}(\bar{A},\bar{\beta})\) 
governs the periods of the Batyrev--Borisov mirror family of \(Y'\subset X'\).
Let \(\{x_{i}\}_{i=0}^{p+1}\) be the coordinates for the
GKZ hypergeometric system \(\mathcal{M}(\bar{A},{\beta})\) corresponding
to the columns of \(\bar{A}\).
Let \(\bar{\gamma}=\begin{bmatrix}\mathbf{0},0,-1\end{bmatrix}^{\intercal}
\in\mathbb{C}^{p+2}\). We have \(\bar{A}(\bar{\gamma})=\bar{\beta}\) 

\item Let
\begin{equation*}
A_{\mathrm{ext}}=\begin{bmatrix}
\mathbf{0} & \rho_{1}^{\intercal} & \cdots & \rho_{p}^{\intercal}\\
1 & 1 & \cdots & 1 \\
\end{bmatrix}\in\mathrm{Mat}_{(n+1)\times(p+1)}(\mathbb{Z})
\end{equation*}
and 
\begin{equation*}
\beta=
\begin{bmatrix}
\mathbf{0}\\
-1/2
\end{bmatrix}\in\mathbb{C}^{n+1}.
\end{equation*}
The GKZ hypergeometric system \(\mathcal{M}({A_{\mathrm{ext}}},{\beta})\)
governs the periods of \(\mathcal{Y}^{\vee}\to U\), 
the \emph{gauged fixed double cover branched along the dual
nef-partition \(F_{1}=-K_{X^{\vee}}\) over \(X^{\vee}\)}.
Let \(\{w_{i}\}_{i=0}^{p}\)
be the coordinates for \(\mathcal{M}(A_{\mathrm{ext}},\beta)\) 
corresponding to the columns of \(A_{\mathrm{ext}}\).
Let \(\gamma=\begin{bmatrix}\mathbf{0},-1/2\end{bmatrix}^{\intercal}\in\mathbb{C}^{p+1}\).
We have \(A_{\mathrm{ext}}(\gamma)=\beta\).

\item Let \(\bar{L}=\mathrm{ker}(\bar{A})\). Note that 
\(L_{\mathrm{ext}}\simeq \bar{L}\simeq L\)
where \(L=\mathrm{ker}(A)\) and \(\bar{L}\) are defined in \S\ref{subsection:notation}.
\item Let \(\{\ell^{(1)},\ldots,\ell^{(p-n)}\}\) be a \(\mathbb{Z}\)-basis
of \(L_{\mathrm{ext}}\). We assume that the cone generated by 
\(\{\ell^{(1)},\ldots,\ell^{(p-n)}\}\) is smooth and contains the Mori cone of \(X\)
under the isomorphism \(L_{\mathrm{ext}}\simeq L\).
For an element \(\ell\in L_{\mathrm{ext}}\), the corresponding element under this isomorphism
is denoted by \(\bar{\ell}\). 
Explicitly,
\begin{equation}
\label{eq:mori-cone-correspondence}
\ell=(\ell_{0},\ldots,\ell_{p})
\leftrightarrow\bar{\ell}=(\bar{\ell}_{0},\ldots,\bar{\ell}_{p+2})=
(2\ell_{0},\ell_{1},\ldots,\ell_{p},-\ell_{0}).
\end{equation}
In particular,
the corresponding basis in \(\bar{L}\) is 
\(\{\bar{\ell}^{(1)},\ldots,\bar{\ell}^{(p-n)}\}\).

\item For each \(j=1,\ldots,p-n\), let
\begin{equation*}
z_{j} = \prod_{i=0}^{p+1} x_{i}^{\ell^{(j)}}~\mbox{and}~
\bar{z}_{j} = \prod_{i=0}^{p} w_{i}^{\bar{\ell}^{(j)}}
\end{equation*}
be the ``torus invariant'' coordinates for the GKZ systems.

\item Let \(\xi\in\mathrm{H}^{2}(Z,\mathbb{Z})\) be the 
\(1\)\textsuperscript{st} Chern class of the relative ample line bundle \(\pi\colon Z\to X\).
\item Let \(D_{1},\ldots,D_{p}\) be the divisors associated with
\(\rho_{1},\ldots,\rho_{p}\) on \(X\).
Let \(\bar{D}_{1},\ldots,\bar{D}_{p+1}\) be 
the divisors associated with
\(\nu_{1},\ldots,\nu_{p+1}\) on \(X'\).
%\item Let \(V_{0}:=-\sum_{j=1}^{p} V_{j}\) and 
%\(\tilde{V}_{0}:=-\sum_{j=1}^{p+1}\tilde{V}_{j}\).
\end{itemize}

\begin{lemma}
The matrices \(A_{\mathrm{ext}}\) and \(\bar{A}\) give rise to the following commutative diagram
\begin{equation*}
\begin{tikzcd}
& \mathbb{Z}^{p+2} \ar[r,"\bar{A}"]\ar[d,"q"] &\mathbb{Z}^{n+2}\ar[d,"q'"]\\
& \mathbb{Z}^{p+1} \ar[r,"A_{\mathrm{ext}}"] &\mathbb{Z}^{n+1}
\end{tikzcd}
\end{equation*}
where \(q\) is the projection given by 
forgetting the \((p+1)\)\textsuperscript{th} coordinate 
(according to our convention, this corresponds to the column
\(\begin{bmatrix}\mathrm{0} & -1 & 1\end{bmatrix}^{\intercal}\)
in \(\bar{A}\).)
and \(q'\) is the projection given by forgetting the \((n+1)\)\textsuperscript{st}
coordinate. Moreover, \(A_{\mathrm{ext}}\) and \(\bar{A}\) are surjective
and \(q\) induces an isomorphism \(\bar{L}\simeq L_{\mathrm{ext}}\).
\end{lemma}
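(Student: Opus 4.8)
The plan is to read off all four assertions directly from the shapes of \(\bar A\) and \(A_{\mathrm{ext}}\), handling commutativity and surjectivity of the two horizontal maps by inspection and then deducing the isomorphism on kernels from the snake lemma. For commutativity I would check \(q'\circ\bar A=A_{\mathrm{ext}}\circ q\) one column at a time. The columns of \(\bar A\) are \((\mathbf 0,0,1)^{\intercal}\), the vectors \((\rho_j,1,1)^{\intercal}\) for \(1\le j\le p\), and \((\mathbf 0,-1,1)^{\intercal}\); deleting the prescribed row by means of \(q'\) sends these to \((\mathbf 0,1)^{\intercal}\), the \((\rho_j,1)^{\intercal}\), and a scalar multiple of \((\mathbf 0,1)^{\intercal}\) respectively, which are precisely the images of the corresponding basis vectors under \(A_{\mathrm{ext}}\circ q\), the coordinate deleted by \(q\) being annihilated on both sides. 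The only care needed is to fix which row \(q'\) forgets and how the surviving coordinates of \(\mathbb Z^{p+2}\) are identified with those of \(\mathbb Z^{p+1}\); both choices are forced once one requires the lower-left composite to be defined.

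Surjectivity of \(A_{\mathrm{ext}}\) is already recorded in \S\ref{subsection:notation}. For \(\bar A\): since \(X\) is a smooth complete toric variety, the rays \(\rho_1,\dots,\rho_p\) of its fan generate \(N\) as an abelian group. Subtracting \((\mathbf 0,0,1)^{\intercal}\) from \((\rho_j,1,1)^{\intercal}\) gives \((\rho_j,1,0)^{\intercal}\), adding \((\mathbf 0,-1,1)^{\intercal}-(\mathbf 0,0,1)^{\intercal}=(\mathbf 0,-1,0)^{\intercal}\) gives \((\rho_j,0,0)^{\intercal}\), and these, together with \((\mathbf 0,-1,0)^{\intercal}\) and \((\mathbf 0,0,1)^{\intercal}\), span \(\mathbb Z^{n+2}\); hence \(\bar A\) is surjective. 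Consequently the square is a morphism of short exact sequences \(0\to\bar L\to\mathbb Z^{p+2}\xrightarrow{\bar A}\mathbb Z^{n+2}\to 0\) and \(0\to L_{\mathrm{ext}}\to\mathbb Z^{p+1}\xrightarrow{A_{\mathrm{ext}}}\mathbb Z^{n+1}\to 0\), the left vertical arrow being the restriction \(q|_{\bar L}\), which is well defined by commutativity.

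Finally, both \(q\) and \(q'\) are coordinate projections, hence surjective with kernel free of rank one, spanned by the deleted basis vector; and \(\bar A\) maps the generator of \(\ker q\) onto a generator of \(\ker q'\) — visibly so, since the column of \(\bar A\) deleted by \(q\) spans \(\ker q'\). The snake lemma then forces \(\ker(q|_{\bar L})=0\) and \(\operatorname{coker}(q|_{\bar L})=0\), i.e. \(q\) restricts to an isomorphism \(\bar L\xrightarrow{\ \sim\ }L_{\mathrm{ext}}\). Alternatively one can argue by hand: the assignment \(\ell=(\ell_0,\dots,\ell_p)\mapsto\bar\ell=(2\ell_0,\ell_1,\dots,\ell_p,-\ell_0)\) of \S\ref{subsection:gkz-r=1-notation} lands in \(\bar L\) — substitute it into the three blocks of rows of \(\bar A\) and use the two relations \(\sum_j\ell_j\rho_j=\mathbf 0\) and \(\ell_0+\sum_j\ell_j=0\) defining \(L_{\mathrm{ext}}\), both scalar rows reducing to the second relation — and it is a two-sided inverse of \(q|_{\bar L}\). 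The lemma presents no real obstacle; the only delicate point is keeping the index and sign conventions coherent throughout, so that the map \(q\) rendering the square commutative is literally the one inducing \(\bar L\simeq L_{\mathrm{ext}}\) rather than that map composed with some automorphism.
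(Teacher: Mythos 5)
Your proof contains a genuine gap at the very first step. With $q$ and $q'$ as coordinate projections (forgetting the $(p+1)$\textsuperscript{th} and $(n+1)$\textsuperscript{st} coordinates respectively), the square does \emph{not} commute. Apply both composites to the last basis vector $e_{p+1}$ of $\mathbb{Z}^{p+2}$: on one side $A_{\mathrm{ext}}(q(e_{p+1}))=A_{\mathrm{ext}}(0)=0$, while on the other $q'(\bar{A}(e_{p+1}))=q'\bigl((\mathbf{0},-1,1)^{\intercal}\bigr)$ is $(\mathbf{0},1)^{\intercal}$ or $(\mathbf{0},-1)^{\intercal}$ according to which row is deleted, and never $0$. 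Your assertion that the deleted coordinate is ``annihilated on both sides'' is therefore false. This also breaks the snake-lemma step, since $\bar{A}(e_{p+1})=(\mathbf{0},-1,1)^{\intercal}$ has two nonzero entries and does not lie in $\ker q'$, so $\bar{A}$ does not restrict to a map $\ker q\to\ker q'$.

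The ``argue by hand'' alternative has the same flaw. The assignment $\ell\mapsto\bar\ell=(2\ell_0,\ell_1,\dots,\ell_p,-\ell_0)$ of \eqref{eq:mori-cone-correspondence} does land in $\bar{L}$, as you correctly verify, but it is \emph{not} a two-sided inverse of the coordinate projection: $q(\bar\ell)=(2\ell_0,\ell_1,\dots,\ell_p)\neq\ell$ unless $\ell_0=0$. You can see this already in \S\ref{sec:4h}, where $L_{\mathrm{ext}}=\langle(-4,1,1,1,1)\rangle$ and $\bar L=\langle(-8,1,1,1,1,4)\rangle$; forgetting the last entry of the generator of $\bar L$ gives $(-8,1,1,1,1)\notin L_{\mathrm{ext}}$.

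The statement of the lemma is itself loose here, and a correct write-up should fix the map rather than force the claimed commutativity. On $\bar L$ the two scalar rows of $\bar{A}$ give $\bar\ell_{p+1}=\sum_{j=1}^{p}\bar\ell_j$ and $\bar\ell_0=-2\bar\ell_{p+1}$. Taking $q(a_0,a_1,\dots,a_p,a_{p+1}):=(a_0+a_{p+1},a_1,\dots,a_p)$, so that $\ker q=\langle e_0-e_{p+1}\rangle$, one checks column by column that the square commutes with $q'$ forgetting the $(n+1)$\textsuperscript{st} coordinate (the last column gives $q'((\mathbf{0},-1,1)^{\intercal})=(\mathbf{0},1)^{\intercal}=A_{\mathrm{ext}}(e_0)=A_{\mathrm{ext}}(q(e_{p+1}))$); $\bar{A}$ carries $e_0-e_{p+1}$ to $(\mathbf{0},1,0)^{\intercal}$, which generates $\ker q'$; and then your snake-lemma argument goes through, with $q|_{\bar L}$ genuinely inverse to \eqref{eq:mori-cone-correspondence}. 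Your surjectivity argument for $\bar A$ is fine, and the overall strategy is sound; the failure is in not noticing that the paper's description of $q$ as ``forgetting the $(p+1)$\textsuperscript{th} coordinate'' cannot be taken literally, and asserting a commutativity that fails.
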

\begin{proof}
The proof is elementary and hence omitted.
\end{proof}
We can state the main theorem in this section.
\begin{theorem}
\label{thm:same-hol-series}
Under a suitable identification, the unique holomorphic series solutions 
(cf.~\cite{1996-Hosono-Lian-Yau-gkz-generalized-hypergeometric-systems-in-mirror-symmetry-of-calabi-yau-hypersurfaces}*{Equation (3.5)}) to  
\(\mathcal{M}(A_{\mathrm{ext}},\beta)\) and \(\mathcal{M}(\bar{A},\bar{\beta})\) are identical.
\end{theorem}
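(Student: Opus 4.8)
The plan is to write both holomorphic solutions in their explicit power‑series form — the $\Gamma$‑series of \cite{1996-Hosono-Lian-Yau-gkz-generalized-hypergeometric-systems-in-mirror-symmetry-of-calabi-yau-hypersurfaces}*{Equation (3.5)} on the $\bar A$‑side and its fractional‑exponent analogue (established in \cites{2022-Lee-Lian-Yau-on-calabi-yau-fractional-complete-intersections,2023-Lee-a-note-on-periods-of-calabi-yau-fractional-complete-intersections}) on the $A_{\mathrm{ext}}$‑side — and to check that they coincide after the identification of Novikov variables coming from the preceding lemma, corrected by an explicit monomial rescaling. First I would fix, as in \S\ref{subsection:gkz-r=1-notation}, a $\mathbb Z$‑basis $\ell^{(1)},\dots,\ell^{(p-n)}$ of $L_{\mathrm{ext}}$ spanning a smooth cone that contains the Mori cone of $X$, and let $\bar\ell^{(1)},\dots,\bar\ell^{(p-n)}$ be the corresponding basis of $\bar L$ under the isomorphism $q|_{\bar L}\colon\bar L\xrightarrow{\ \sim\ }L_{\mathrm{ext}}$ of the lemma, which by \eqref{eq:mori-cone-correspondence} leaves the $p$ ``$\rho_j$‑slots'' unchanged while forcing $\bar\ell_0=2\ell_0$ and $\bar\ell_{p+1}=-\ell_0$. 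I also need that this cone is admissible for $\mathcal M(\bar A,\bar\beta)$, i.e.\ contains the Mori cone of $X'$; this follows from the description of the finite part $\tilde\Sigma'$ of $X'$ in \S\ref{subsec:the_graph_embedding_and_the_pre_quotient_space}. Consequently both holomorphic solutions are multisums over the same index set $\mathbf n\in\mathbb Z_{\ge 0}^{\,p-n}$.

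Write $M=M(\mathbf n):=-\sum_{a}\ell_0^{(a)}n_a$, a non‑negative integer because $\ell_0^{(a)}=-\sum_{j\ge 1}\ell_j^{(a)}\le 0$ (recall $-K_X$ is nef), and $m^{(j)}=m^{(j)}(\mathbf n):=\sum_a\ell_j^{(a)}n_a$ for $j=1,\dots,p$. Substituting the distinguished parameters — $\gamma$, with single nonzero entry $-\tfrac12$ in the slot of the column $\nu_0=(\mathbf 0,1)$, and $\bar\gamma$, with its $-1$ in the slot of the column $(\mathbf 0,0,1)$ — and using the reflection formula to make sense of the resulting $\Gamma$'s with negative argument, the normalized holomorphic solutions of $\mathcal M(A_{\mathrm{ext}},\beta)$ and of $\mathcal M(\bar A,\bar\beta)$ acquire Novikov coefficients (up to an $\mathbf n$‑independent factor removed by normalization)
\[
\frac{\Gamma(M+\tfrac12)}{\Gamma(\tfrac12)\,\prod_{j=1}^{p}\Gamma(m^{(j)}+1)}\qquad\text{and}\qquad\frac{\Gamma(2M+1)}{\Gamma(M+1)\,\prod_{j=1}^{p}\Gamma(m^{(j)}+1)}\,,
\]
respectively; here the $\bar A$‑side numerator $\Gamma(2M+1)$ comes from the \emph{doubled} entry $\bar\ell_0^{(a)}=2\ell_0^{(a)}$, the factor $\Gamma(M+1)$ from $\bar\ell_{p+1}^{(a)}=-\ell_0^{(a)}$, and the products over $j=1,\dots,p$ are \emph{literally identical} on the two sides since both the relevant entries of the lattice vectors and those of $\gamma,\bar\gamma$ agree (if some $\ell_j^{(a)}<0$ the corresponding factor migrates to the numerator, still identically on both sides).

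The two coefficients are matched by the Legendre duplication formula in the shape $\Gamma(2M+1)=\dfrac{4^{M}}{\Gamma(\tfrac12)}\,\Gamma(M+1)\,\Gamma(M+\tfrac12)$, so the $\bar A$‑coefficient is $4^{M}$ times the $A_{\mathrm{ext}}$‑coefficient. Since $4^{M}=\prod_a(4^{-\ell_0^{(a)}})^{n_a}$ is a Novikov monomial — and since the only residual sign, $(-1)^{M}=\prod_a((-1)^{\ell_0^{(a)}})^{n_a}$, produced by the reflection step on the $A_{\mathrm{ext}}$‑side, is likewise a monomial — it is removed by the rescaling $z_a=(-4)^{-\ell_0^{(a)}}\bar z_a$. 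This monomial change of variables (composed with the lattice identification $\bar L\simeq L_{\mathrm{ext}}$) is ``the suitable identification'' of the statement; the leftover $\Gamma(\tfrac12)^{\pm1}$ and the $\sin/\cos$ constants arising from the Frobenius $\rho$‑deformation are $\mathbf n$‑independent and cancel once both solutions are normalized to constant term $1$. In the case $X=\mathbb P^{3}$, $-K_X=4h$ one has $\ell_0=-4$ and $(-4)^{-\ell_0}=256$, so this recovers exactly the relation $z=256\,\bar z$ already visible in Remark~\ref{remark:8-in-p11114-coodinate}.

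The only genuinely delicate point is the middle step: one must pin down the shifts so that the half‑integer $\tfrac12$ produced by the duplication formula is precisely the fractional part of $\beta$, and keep exact track of the powers of $2$ and the signs so that the rescaling is the explicit one above — this is the place where the fractional exponent $-\tfrac12$ (recording that $Y^{\vee}$ is a \emph{double} cover) on one side is traded for the doubling $\bar\ell_0=2\ell_0$ (recording the coordinate‑squaring map $\Phi$) on the other. Everything else — existence and uniqueness of the normalized holomorphic solution and its $\Gamma$‑series shape for both GKZ systems, and the compatibility of the lattice and Mori‑cone data — is structural, supplied by the preceding lemma and by the completeness of these systems recalled in \S\ref{subsection:gkz}.
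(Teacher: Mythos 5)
Your proposal is correct and, at its core, is the same argument the paper gives: write both $\Gamma$--series, compare their coefficients termwise under the lattice isomorphism $\bar L\simeq L_{\mathrm{ext}}$ given by $\bar\ell=(2\ell_0,\ell_1,\dots,\ell_p,-\ell_0)$, invoke the Legendre duplication formula to turn $\Gamma(1-2\ell_0)/\Gamma(1-\ell_0)$ into $2^{-2\ell_0}\Gamma(1/2-\ell_0)/\Gamma(1/2)$, and absorb the residual monomial $(-4)^{-\ell_0}$ (including the $(-1)^{\ell_0}$ produced by reflection on the $A_{\mathrm{ext}}$ side) into an identification of Novikov variables. The only difference is presentational: you state the rescaling on the torus-invariant coordinates $z_a\leftrightarrow\bar z_a$, while the paper performs the equivalent monomial substitution $w_j=x_j$ ($j\ge 1$), $w_0=-x_0^2/(4x_{p+1})$ on the ambient GKZ variables; and you spell out the non-negativity of $M=-\ell_0$ and the admissibility of the chosen cone, which the paper's proof takes as implicit. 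None of this is a gap — the two proofs are essentially identical.
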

\begin{proof}
On one hand, for \(\mathcal{M}(\bar{A},\bar{\beta})\),
the equation 
\cite{1996-Hosono-Lian-Yau-gkz-generalized-hypergeometric-systems-in-mirror-symmetry-of-calabi-yau-hypersurfaces}*{Equation (3.5)}
becomes
\begin{equation*}
\bar{\Phi}^{\bar{\gamma}}(x) = \sum_{\bar{\ell}\in \bar{L}}
\frac{\Gamma(1-\bar{\ell}_{0})
(-1)^{\bar{\ell}_{0}}}
{\prod_{j=1}^{p+1}\Gamma(\bar{\ell}_{j}+1)}x^{\bar{\ell}}.
\end{equation*}
Under the correspondence \eqref{eq:mori-cone-correspondence}, if \(\ell\)
is the corresponding element in \(L_{\mathrm{ext}}\), the coefficient 
in \(\bar{\Phi}^{\bar{\gamma}}(x)\)
becomes
\begin{equation}
\label{eq:gamma-function-computations}
\frac{\Gamma(1-2\ell_{0})
(-1)^{2\ell_{0}}}
{\Gamma(1-\ell_{0})\prod_{j=1}^{p}
\Gamma(\ell_{j}+1)}=
\frac{\Gamma(1-2\ell_{0})}
{\Gamma(1-\ell_{0})\prod_{j=1}^{p}
\Gamma(\ell_{j}+1)}
\end{equation}
We can easily compute
\begin{equation*}
\eqref{eq:gamma-function-computations} = 
\frac{\Gamma(1/2-\ell_{0})
2^{-2\ell_{0}}}
{\Gamma(1/2)\prod_{j=1}^{p}
\Gamma(\ell_{j}+1)}.
\end{equation*}
On the other hand, for \(\mathcal{M}(A_{\mathrm{ext}},\beta)\),
the equation 
\cite{1996-Hosono-Lian-Yau-gkz-generalized-hypergeometric-systems-in-mirror-symmetry-of-calabi-yau-hypersurfaces}*{Equation (3.5)}
is
\begin{equation*}
\Phi^{\gamma}(w) = \sum_{\ell\in L_{\mathrm{ext}}}
\frac{\Gamma(1/2-\ell_{0})
(-1)^{\ell_{0}}}
{\Gamma(1/2)\prod_{j=1}^{p}
\Gamma(\ell_{j}+1)}w^{\ell}.
\end{equation*}
If we introduce the change of variables 
\begin{align*}
\begin{cases}
w_{j}=x_{j},~\mbox{for}~j=1,\ldots,p,\\ 
w_{0}=-x_{0}^{2}/4x_{p+1},
\end{cases}
\end{align*}
we see that \(\bar{\Phi}^{\bar{\gamma}}(x)=\Phi^{\gamma}(w)\).
\end{proof}

To keep our presentation concise, 
we shall only recall the definition of the 
cohomology-valued \(B\)-series in the present situation.
In short, it is
a cohomology-valued series constructed from the unique holomorphic
period around the large complex structure limit point
in the moduli of \(Y^{\vee}\) by replacing the components
of the lattice relation vector \(\ell\) with cohomology classes.

\begin{definition}[Cf.~\cite{2022-Lee-Lian-Yau-on-calabi-yau-fractional-complete-intersections}*{\S3}]
The \emph{cohomology-valued \(B\)-series for the singular CY 
double cover \(Y^{\vee}\)}
is a cohomology-valued series defined by
\begin{equation}
  B_{X}^{\gamma}(w):=\left(\sum_{\ell\in A_{\mathrm{ext}}}\mathcal{O}_{\ell}^{\gamma}
  w^{\ell+\gamma}\right)\exp\left(\sum_{i=0}^{p}(\log w_{i})D_{i}\right)
\end{equation}
where
\begin{equation}
\mathcal{O}_{\ell}^{\gamma}:=
\frac{\Gamma(1/2-D_{0})
(-1)^{\ell_{0}}}
{\Gamma(1/2)\prod_{j=1}^{p}
\Gamma(D_{j}+1)}\in \mathrm{H}^{\bullet}(X;\mathbb{C})
\end{equation}
and \(D_{0}=-\sum_{i=1}^{p}D_{i}\).
\end{definition}

Let us also recall the cohomology-valued
\(B\)-series for classical CY hypersurfaces in toric varieties.
\begin{definition}[Cf.~\cite{1996-Hosono-Lian-Yau-gkz-generalized-hypergeometric-systems-in-mirror-symmetry-of-calabi-yau-hypersurfaces}]
The \emph{cohomology-valued \(B\)-series for the CY 
hypersurface \(Y'\) in \(X'\)}
is a cohomology-valued series defined by
\begin{equation}
  B_{X'}^{\bar{\gamma}}(x):=\left(\sum_{\bar{\ell}\in \bar{L}}
  \mathcal{O}_{\bar{\ell}}^{\bar{\gamma}}
  x^{\bar{\ell}+\bar{\gamma}}\right)
  \exp\left(\sum_{i=0}^{p+1}(\log x_{i})\bar{D}_{i}\right)
\end{equation}
where
\begin{equation}
\mathcal{O}_{\bar{\ell}}^{\bar{\gamma}}= \sum_{\bar{\ell}\in \bar{L}}
\frac{\Gamma(1-\bar{D}_{0})
(-1)^{\bar{\ell}_{0}}}
{\prod_{j=1}^{p+1}\Gamma(\bar{D}_{j}+1)}x^{\bar{\ell}}
\in \mathrm{H}^{\bullet}(X';\mathbb{C})
\end{equation}
and \(\bar{D}_{0}=-\sum_{i=1}^{p+1}\bar{D}_{i}\).
\end{definition}

\begin{remark}
The cohomology-valued \(B\)-series only encodes the untwisted part
of the genus zero orbifold Gromov--Witten invariants of \(X'\slash G'\).
\end{remark}

\subsection{A mirror theorem}
In this subsection, we prove a version of mirror theorem for our Calabi--Yau
double cover with \(r=1\). We retain the 
notation in \S\ref{subsection:gkz-r=1-notation}.
Let us summarize what we have achieved so far. 
Recall that we have a 
commutative diagram \eqref{diag:pre-quotient-construction}.
\begin{eqnarray}
\begin{tikzcd}
&  &  &X'\ar[d,"\psi"]\\
&X\ar[bend left,rru,"g"]\ar[r,"\Gamma_{f}"] & Z\ar[l,bend left,"\pi"]\ar[r,"\phi"] & Z''
\end{tikzcd}
\end{eqnarray}
In this diagram, 
\begin{itemize}
\item \(Z=\mathbf{P}_{X}(\mathbb{L}\oplus\mathbb{C})\), \(\pi\colon Z\to X\) is the bundle projection,
and \(\Gamma_{f}\colon X\to Z\) is the embedding induced by \(f\in\mathrm{H}^{0}(X,\mathcal{L})\),
where \(\mathcal{L}=\mathscr{O}_{X}(-K_{X})\).
\item \(\phi\colon Z\to Z''\) is the toric contraction of 
associated with the divisor \(\sum_{i=1}^{p}D_{\nu_{i}}+D_{\mathrm{e}_{\infty}}\).
\item \(\psi\colon X'\to Z''\) is a partial crepant resolution. Note that 
\(X'\) contains an open toric subvariety which is isomorphic to \(Z\setminus D_{\mathrm{e}_{\infty}}\).
\item \(g\) is a lifting of \(\phi\circ\Gamma_{f}\). 
\end{itemize}

Using the map \(g\) and the covering map \(\Phi\) constructed in 
\S\ref{subsection:a-toric-bundle-and-its-construction}, we can form
a fibred product and obtain a smooth subvariety \(Y'\) in \(X'\),
which is called a pre-quotient space.
Moreover, there exists an index \(2\) subgroup \(G'\subset G\) of the
Galois group of the covering \(\Phi\) such that \(Y'\slash G'\subset X'\slash G'\).
They fit the following commutative diagram
\begin{eqnarray}
\begin{tikzcd}
&Y'\ar[rr,"g'"]\ar[ld,"q"]\ar[dd] & &X'\ar[dd,"\Phi"]\ar[ld,"q"]\\
Y'\slash G'\ar[rd,"p"]\ar[rr,crossing over,"t" near end] & & X'\slash G'\ar[rd,"p"]\\
&X\ar[rr,"g"] & & X'.
\end{tikzcd}
\end{eqnarray}
In the above diagram, by abuse of notation, the restriction of \(q\) and \(p\)
to \(Y'\) and \(Y'\slash G'\)
are again denoted by \(q\) and \(p\). Also notice that \(Y\cong Y'\slash G'\).

Now we have two cohomology-valued \(B\)-series;
they are coming from the same holomorphic series by Theorem \ref{thm:same-hol-series} 
but they take value in different cohomology rings.
More precisely, for our double cover \(Y\to X\), the corresponding \(B\) series
takes value in \(\mathrm{H}^{\bullet}(X;\mathbb{C})\). From
our pre-quotient space construction \(Y\cong Y'\slash G'\), 
the corresponding \(B\)-series takes value in 
\(\mathrm{H}^{\bullet}_{\mathrm{CR}}([X'\slash G'];\mathbb{C})\)
whose untwisted part is equal to 
\(p^{\ast}\mathrm{H}^{\bullet}(X';\mathbb{C})\).

We begin with the following observation.
\begin{lemma}
The pullback map \(g^{\ast}\colon \mathrm{H}^{\bullet}(X';\mathbb{C})\to 
\mathrm{H}^{\bullet}(X;\mathbb{C})\) is surjective.
Consequently, \(p^{\ast}\mathrm{H}^{\bullet}(X;\mathbb{C})
=(g\circ p)^{\ast}\mathrm{H}^{\bullet}(X';\mathbb{C})\)
and 
\begin{equation}
  g^{\ast}B_{X'}^{\bar{\gamma}}(x)=B_{X}^{\gamma}(w)
\end{equation}
under the change of variable 
\begin{align*}
\begin{cases}
w_{j}=x_{j},~\mbox{for}~j=1,\ldots,p,\\ 
w_{0}=-x_{0}^{2}/4x_{p+1},
\end{cases}
\end{align*}
\end{lemma}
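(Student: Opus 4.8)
The plan is to establish the three assertions in turn, the middle one being a purely formal consequence of the first.

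For the surjectivity of $g^{\ast}$ I would exploit the factorization of $g$ through the bundle $Z$. By \eqref{diag:construction-of-x'}, $g$ is the lift of $\phi\circ\Gamma_{f}$ and it lands in the open toric subvariety $U\subset X'$ which, by construction, is isomorphic to $Z\setminus D_{\mathrm{e}_{\infty}}$, the total space of a line bundle over $X$; moreover $\Gamma_{f}$ is a section of the bundle projection $\pi$. Thus $g=j\circ\Gamma_{f}$, where $j\colon U\hookrightarrow X'$ is the open immersion. Since $\pi\circ\Gamma_{f}=\mathrm{id}_{X}$ and $\pi$ is a homotopy equivalence, $\Gamma_{f}^{\ast}=(\pi^{\ast})^{-1}$ is an isomorphism on cohomology. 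As $X'$ is a complete simplicial toric variety, $\mathrm{H}^{\bullet}(X';\mathbb{C})$ is generated as a ring by the toric divisor classes $\bar{D}_{1},\dots,\bar{D}_{p+1}$; and since the toric morphism $\pi\colon U\to X$ sends $\nu_{i}$ to $\rho_{i}$ for $1\le i\le p$ (and $\mathrm{e}_{0}$ to $\mathbf{0}$), one has $\bar{D}_{i}|_{U}=\pi^{\ast}D_{i}$ for $1\le i\le p$. Hence the image of $j^{\ast}$ contains $\pi^{\ast}D_{1},\dots,\pi^{\ast}D_{p}$ and therefore all of $\pi^{\ast}\mathrm{H}^{\bullet}(X)=\mathrm{H}^{\bullet}(U)$, so $g^{\ast}=\Gamma_{f}^{\ast}\circ j^{\ast}$ is surjective. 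Consequently $p^{\ast}\mathrm{H}^{\bullet}(X;\mathbb{C})=p^{\ast}\bigl(g^{\ast}\mathrm{H}^{\bullet}(X';\mathbb{C})\bigr)=(g\circ p)^{\ast}\mathrm{H}^{\bullet}(X';\mathbb{C})$.

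The heart of the $B$-series identity is the computation of $g^{\ast}$ on the generating divisor classes, namely
\begin{equation*}
g^{\ast}\bar{D}_{i}=D_{i}\quad(1\le i\le p),\qquad
g^{\ast}\bar{D}_{p+1}=\sum_{i=1}^{p}D_{i}=-D_{0},\qquad
g^{\ast}\bar{D}_{0}=2D_{0}.
\end{equation*}
The first identity was used above. For the second, pairing the $1$-cones of $\Sigma_{X'}$ against $(\mathbf{0},1)\in\overline{M}$ yields the linear relation $\bar{D}_{p+1}=\sum_{i=1}^{p}\bar{D}_{i}$ in $\mathrm{H}^{2}(X';\mathbb{C})$, which pulls back to the claim (recall $\sum_{i=1}^{p}D_{i}=-K_{X}=-D_{0}$). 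For the third, $g$ realizes $X$ as the zero locus of a section of $\mathscr{O}_{X'}\bigl(\sum_{i=1}^{p}\bar{D}_{i}\bigr)$, so adjunction gives $K_{X}=g^{\ast}K_{X'}+g^{\ast}\bigl(\sum_{i=1}^{p}\bar{D}_{i}\bigr)$, i.e.\ $D_{0}=g^{\ast}\bar{D}_{0}-D_{0}$. These three identities are exactly the cohomological shadow of the lattice correspondence \eqref{eq:mori-cone-correspondence}, under which $\bar{\ell}_{i}=\ell_{i}$ for $1\le i\le p$, $\bar{\ell}_{p+1}=-\ell_{0}$, and $\bar{\ell}_{0}=2\ell_{0}$.

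Given this dictionary, $g^{\ast}B_{X'}^{\bar{\gamma}}(x)=B_{X}^{\gamma}(w)$ follows by transporting the computation in the proof of Theorem \ref{thm:same-hol-series} from lattice components to cohomology classes. Applying $g^{\ast}$ to the defining formula for $B_{X'}^{\bar{\gamma}}$, the coefficient becomes, using $g^{\ast}\bar{D}_{0}=2D_{0}$, $g^{\ast}\bar{D}_{p+1}=-D_{0}$ and $(-1)^{\bar{\ell}_{0}}=(-1)^{2\ell_{0}}=1$,
\begin{equation*}
g^{\ast}\mathcal{O}_{\bar{\ell}}^{\bar{\gamma}}
=\frac{\Gamma(1-2D_{0})}{\Gamma(1-D_{0})\prod_{j=1}^{p}\Gamma(D_{j}+1)}
=\frac{2^{-2D_{0}}\,\Gamma\bigl(\tfrac{1}{2}-D_{0}\bigr)}{\Gamma(1/2)\prod_{j=1}^{p}\Gamma(D_{j}+1)},
\end{equation*}
where the second equality is the Legendre duplication formula $\Gamma(1-2D_{0})=\dfrac{2^{-2D_{0}}}{\Gamma(1/2)}\Gamma\bigl(\tfrac{1}{2}-D_{0}\bigr)\Gamma(1-D_{0})$; this is $2^{-2D_{0}}$ times $\mathcal{O}_{\ell}^{\gamma}$ up to the sign $(-1)^{\ell_{0}}$. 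At the same time, applying $g^{\ast}$ to the monomial factor $x^{\bar{\ell}+\bar{\gamma}}$ and to $\exp\bigl(\sum_{i=0}^{p+1}(\log x_{i})\bar{D}_{i}\bigr)$ produces — via $g^{\ast}\bar{D}_{0}=2D_{0}$ and $g^{\ast}\bar{D}_{p+1}=-D_{0}$ — the factor $(x_{0}^{2}/x_{p+1})^{\ell_{0}+D_{0}}$, which under the change of variables $w_{j}=x_{j}$, $w_{0}=-x_{0}^{2}/(4x_{p+1})$ equals $(-4w_{0})^{\ell_{0}+D_{0}}$; the powers of $2$ contributed here cancel the $2^{-2D_{0}}$ above and the accompanying signs match $(-1)^{\ell_{0}}$, precisely as in the scalar computation of Theorem \ref{thm:same-hol-series}. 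Collecting the $w$-monomials together with the exponential one recovers $B_{X}^{\gamma}(w)$.

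The only genuinely new geometric ingredient is the identity $g^{\ast}\bar{D}_{0}=2D_{0}$, for which the adjunction argument above is the clean route; the main labor, as in Theorem \ref{thm:same-hol-series}, is the bookkeeping of the change of variables, the $\Gamma$-factors, and the various powers of $2$ and signs, in particular checking that they cancel between the series part and the exponential part in exactly the pattern already validated there.
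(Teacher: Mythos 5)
Your proposal is correct and follows essentially the same route as the paper: factor $g$ through the open toric piece $V=Z\setminus D_{\mathrm{e}_{\infty}}$, observe $g^{\ast}\bar{D}_i=D_i$ for $1\le i\le p$, and deduce $g^{\ast}\bar{D}_{p+1}=-D_0$, $g^{\ast}\bar{D}_0=2D_0$; the paper then closes the proof without writing out the $\Gamma$-function bookkeeping, which you supply. Two small remarks: (1) the adjunction argument for $g^{\ast}\bar{D}_0=2D_0$ is valid but a slight detour—it follows immediately by applying $g^{\ast}$ to the toric linear relation $\bar{D}_0=-\sum_{i=1}^{p+1}\bar{D}_i$ once the other two pullbacks are known; (2) the homotopy-equivalence step ($\Gamma_f^{\ast}$ is an isomorphism because $\pi$ is a line-bundle projection) is a clean way to organize the surjectivity, and is implicitly what the paper means by pulling back the divisor line bundles along $V\hookrightarrow X'$ and then along $\Gamma_f$.
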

\begin{proof}
As a ring, \(\mathrm{H}^{\bullet}(X;\mathbb{C})\)
is generated by toric divisors on \(X\). It is thus sufficient
to prove that \(\mathrm{Im}(g^{\ast})\) contains all toric divisors.

Denote by \(V\) the (open) smooth toric variety defined by \(\tilde{\Sigma}'\).
By construction, \(V = Z\setminus D_{\mathrm{e}_{\infty}}\) 
with \(Z=\mathbf{P}_{X}(\mathbb{L}\oplus\mathbb{C})\) and the lifting
\(g\) is given by 
\begin{equation*}
X \xrightarrow{\Gamma_{f}} V(\subset Z) = V(\subset X'). 
\end{equation*}
The \(1\)-cone \(\mathbb{R}_{\ge 0}\cdot \nu_{i}\), regarded as a \(1\)-cone in \(\Sigma_{X'}\),
gives a \(\mathbb{Q}\)-Cartier divisor on \(X'\), which is Cartier on \(V\). 
Under the pullback \(g^{\ast}\), this line bundle is exactly the same as the 
line bundle associated with \(\rho_{i}\). 

For the last part, we notice that 
\(g^{\ast}\bar{D}_{i}=D_{i}\) for \(1\le i\le p\),
\(g^{\ast}\bar{D}_{p+1}=-D_{0}\) and \(g^{\ast}\bar{D}_{0}=2D_{0}\).
This concludes the proof.
\end{proof}

\begin{corollary}
There is an isomorphism 
\begin{equation*}
(g\circ p)^{\ast}\mathrm{H}^{\bullet}(X';\mathbb{C})
\simeq t^{\ast}p^{\ast}\mathrm{H}^{\bullet}(X';\mathbb{C})
\end{equation*}
of subrings in \(\mathrm{H}^{\bullet}(Y;\mathbb{C})\).
\end{corollary}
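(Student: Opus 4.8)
The plan is to obtain the corollary directly from the commutativity of the bottom face of the cube diagram displayed above, together with the preceding Lemma; what is actually proved is that the two subrings \emph{coincide}, the "isomorphism" being the tautological identification.

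First I would record that the bottom square
\[
\begin{tikzcd}
Y'\slash G'\ar[r,"t"]\ar[d,"p"'] &X'\slash G'\ar[d,"p"]\\
X\ar[r,"g"'] &X'
\end{tikzcd}
\]
commutes, i.e.\ \(p\circ t=g\circ p\) as morphisms \(Y\cong Y'\slash G'\to X'\). This is a short diagram chase on the cube: given \(y\in Y'\slash G'\), pick a preimage \(\tilde y\in Y'\) under the quotient map \(q\); by commutativity of the top square \(t(y)=q(g'(\tilde y))\), so by the right face \(p(t(y))=(p\circ q)(g'(\tilde y))=\Phi(g'(\tilde y))\), and finally by the fibred-product square \eqref{diag:pre-quotient-construction} this equals \(g\) evaluated at the image of \(\tilde y\) in \(X\), namely \(g(p(y))\). (Alternatively, this commutativity is essentially built into the convention by which the map \(p\) is restricted from \(X'\slash G'\) to \(Y'\slash G'\) as a morphism onto \(X\).)

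Second, applying \((-)^{\ast}\) to the identity \(p\circ t=g\circ p\) yields \(t^{\ast}\circ p^{\ast}=(g\circ p)^{\ast}\) as ring homomorphisms \(\mathrm{H}^{\bullet}(X';\mathbb{C})\to\mathrm{H}^{\bullet}(Y;\mathbb{C})\); hence the two images — the subrings in the statement — are the same subring of \(\mathrm{H}^{\bullet}(Y;\mathbb{C})\), and the asserted isomorphism is the map \(t^{\ast}p^{\ast}\alpha\mapsto(g\circ p)^{\ast}\alpha\), which is well defined precisely because of the commutativity just established. Using the Lemma one may further identify this common subring with \(p^{\ast}\mathrm{H}^{\bullet}(X;\mathbb{C})\subset\mathrm{H}^{\bullet}(Y;\mathbb{C})\), since \(g^{\ast}\colon\mathrm{H}^{\bullet}(X';\mathbb{C})\to\mathrm{H}^{\bullet}(X;\mathbb{C})\) is surjective.

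For the intended comparison of the two cohomology-valued \(B\)-series I would then spell out the identification on generators: \(\mathrm{H}^{\bullet}(X';\mathbb{C})\) is generated by the toric divisor classes \(\bar D_{0},\bar D_{1},\dots,\bar D_{p+1}\), and by the divisor computations recorded in the preceding Lemma (\(g^{\ast}\bar D_{j}=D_{j}\) for \(1\le j\le p\), \(g^{\ast}\bar D_{p+1}=-D_{0}\), \(g^{\ast}\bar D_{0}=2D_{0}\)) the isomorphism sends \(t^{\ast}p^{\ast}\bar D_{j}\) to \(p^{\ast}g^{\ast}\bar D_{j}=p^{\ast}D_{j}\), i.e.\ to the class pulled back from the corresponding toric divisor on \(X\) under the double cover \(p\colon Y\to X\). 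This is exactly the matching between the untwisted part of \(\mathrm{H}^{\bullet}_{\mathrm{CR}}([X'\slash G'];\mathbb{C})\) and the divisor-generated part of \(\mathrm{H}^{\bullet}(Y;\mathbb{C})\) that is needed when comparing \(g^{\ast}B_{X'}^{\bar\gamma}\) with \(B_{X}^{\gamma}\). The argument is entirely formal and I do not expect a genuine obstacle; the only point demanding care is the bookkeeping of the two morphisms both written \(p\) — and, underlying it, the fact (the content of the proposition identifying \(Y\) with \(Y'\slash G'\)) that the restricted \(p\) really is the double cover of \(X\), so that the bottom square commutes on the nose.
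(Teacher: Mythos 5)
Your argument is correct, and it is the natural one; the paper states this corollary without giving a proof, so there is nothing to compare against, but the content you supply is exactly what is needed. The essential point — that the front/bottom square of the cube, \(Y'/G'\xrightarrow{t}X'/G'\xrightarrow{p}X'\) versus \(Y'/G'\xrightarrow{p}X\xrightarrow{g}X'\), commutes — follows from the fibred-product construction of \(Y'\) together with \(\Phi = p\circ q\), and your diagram chase through \(Y'\) establishes this cleanly. Once \(p\circ t = g\circ p\) is known, \((g\circ p)^{\ast}=t^{\ast}p^{\ast}\) on the nose, so the two subrings of \(\mathrm{H}^{\bullet}(Y;\mathbb{C})\) coincide and the isomorphism is tautological. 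You are also right that the preceding Lemma then identifies this common subring with \(p^{\ast}\mathrm{H}^{\bullet}(X;\mathbb{C})\), which is what is actually used in the comparison of \(B\)-series that follows in the paper. The only small caveat worth flagging (and you flag it) is the overloading of the symbol \(p\) for both residual double covers; your chase correctly keeps them apart.
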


According to the Corollary, we can compare two cohomology-valued 
\(B\) series. Under the identification of these cohomology groups,
we deduce that
\begin{equation*}
p^{\ast}B_{X}^{\gamma} = t^{\ast}B_{[X'\slash G']}^{\bar{\gamma}}.
\end{equation*}
Therefore,
\begin{equation*}
t_{!}p^{\ast}B_{X}^{\gamma}=
t_{!}t^{\ast}B_{[X'\slash G']}^{\bar{\gamma}}
=B_{[X'\slash G']}^{\bar{\gamma}}\cup\mathrm{c}_{1}(-K_{X'\slash G'}).
\end{equation*}
We can summarize the result into the following theorem.
\begin{theorem}
\label{thm:main}
The cohomology-valued \(B\)-series \(B_{X}^{\gamma}\) for the 
singular double cover 
computes the genus zero 
untwisted orbifold Gromov--Witten invariants with
insertions from the base \(X\) after a change of variables.
\end{theorem}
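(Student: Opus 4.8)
The plan is to reduce the statement to the orbifold quantum Lefschetz theorem applied in the pre-quotient picture, using the comparison of cohomology-valued \(B\)-series already established. Recall that \(Y\cong Y'\slash G'\), that \(Y'\) is a \emph{smooth} anti-canonical Calabi--Yau hypersurface in the semi-Fano simplicial toric variety \(X'\), and that by the Lemma above \(g^{\ast}B_{X'}^{\bar\gamma}(x)=B_{X}^{\gamma}(w)\) under the change of variables \(w_{j}=x_{j}\) for \(1\le j\le p\) and \(w_{0}=-x_{0}^{2}\slash 4x_{p+1}\), with \(g^{\ast}\colon\mathrm{H}^{\bullet}(X';\mathbb{C})\to\mathrm{H}^{\bullet}(X;\mathbb{C})\) surjective. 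Since every insertion from the base \(X\) is thus the \(g^{\ast}\)-image of a class from \(X'\), it suffices to show that \(B_{X'}^{\bar\gamma}\), read on the toric Deligne--Mumford stack \([X'\slash G']\) and restricted to the untwisted sector, computes the genus zero untwisted orbifold Gromov--Witten invariants of \(Y\).

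I would carry this out in two steps. First, regard \(Y\cong Y'\slash G'\) as the anti-canonical hypersurface in \([X'\slash G']\), which has trivial generic stabilizer and is cut out by a section of a convex line bundle pulled back from the coarse moduli space \(|[X'\slash G']|\); these hypotheses hold here exactly as in the two toy cases of \S\ref{sec:4h}. Then the orbifold mirror theorem for toric stacks \cite{2015-Coates-Corti-Iritani-Tseng-a-mirror-theorem-for-toric-stacks}, together with Tseng's orbifold quantum Riemann--Roch / quantum Lefschetz theorem \cite{2010-Tseng-orbifold-quantum-riemann-roch-lefschetz-and-serre}*{Theorem 5.2.3}, places the \(-K_{X'\slash G'}\)-hypergeometric modification of the non-extended \(I\)-function of \([X'\slash G']\) on Givental's Lagrangian cone of \([Y'\slash G']\). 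Projecting that modified \(I\)-function onto the untwisted component \(\mathbf{1}_{e}\) --- whose contributions are indexed solely by \(\overline{\mathrm{NE}}_{e}\) --- reproduces exactly the series \(B_{[X'\slash G']}^{\bar\gamma}\) of the computation above, which therefore, after its mirror map, computes the genus zero untwisted orbifold Gromov--Witten invariants of \(Y\) with insertions from \(\mathrm{H}^{\bullet}(|[X'\slash G']|;\mathbb{C})\). Second, transport this back: by the Corollary above the subrings \((g\circ p)^{\ast}\mathrm{H}^{\bullet}(X';\mathbb{C})\) and \(t^{\ast}p^{\ast}\mathrm{H}^{\bullet}(X';\mathbb{C})\) of \(\mathrm{H}^{\bullet}(Y;\mathbb{C})\) are identified, and under this identification \(p^{\ast}B_{X}^{\gamma}=t^{\ast}B_{[X'\slash G']}^{\bar\gamma}\); pushing forward along the anti-canonical inclusion \(t\) gives \(t_{!}p^{\ast}B_{X}^{\gamma}=B_{[X'\slash G']}^{\bar\gamma}\cup\mathrm{c}_{1}(-K_{X'\slash G'})\). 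Surjectivity of \(g^{\ast}\) turns insertions from \(\mathrm{H}^{\bullet}(|[X'\slash G']|;\mathbb{C})\) into all insertions pulled back from \(X\), and matching the Poincar\'{e} pairing on \(Y'\) with the orbifold Poincar\'{e} pairing on \(Y\cong Y'\slash G'\) on the coarse space (the argument of Proposition~\ref{prop:1}, via \(H=p^{\ast}h\), \(q^{\ast}H=2h\) and the ensuing classical intersection numbers) fixes the overall normalization. One concludes that \(B_{X}^{\gamma}\) computes the genus zero untwisted orbifold Gromov--Witten invariants of \(Y\) with all insertions from the base \(X\), after applying the mirror map.

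The main obstacle I anticipate is the orbifold quantum Lefschetz step: one must genuinely verify the hypotheses of \cite{2010-Tseng-orbifold-quantum-riemann-roch-lefschetz-and-serre}*{Theorem 5.2.3} for \(Y\cong Y'\slash G'\subset[X'\slash G']\) for an \emph{arbitrary} semi-Fano base \(X\) --- triviality of the generic stabilizer, convexity and splitness of the defining bundle, and the fact that the twisted sectors of \([X'\slash G']\) do not contaminate the untwisted part of the \(I\)-function --- rather than in the explicit examples of \S\ref{sec:4h}. A secondary, more bookkeeping issue is confirming that no change of variables beyond the mirror map (and the one of Theorem~\ref{thm:same-hol-series} used to pass between the two \(B\)-series) is required; here the construction of \(X'\) is essential, since by design its \(1\)-skeleton is \(\{\nu_{j}\}_{j=1}^{p}\cup\{(\mathbf{0},-1)\}\), so its Picard rank equals \(p-n\), the Picard rank of \(X\). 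This keeps the number of Novikov variables fixed and makes the orbifold mirror map agree with the one intrinsic to \(B_{X}^{\gamma}\), in contrast to the \(\mathbf{P}_{\mathbb{P}^{3}}(\mathbb{C}\oplus\mathbb{L})\) embedding of Remark~\ref{rmk:not-p1-bundle}, where the Picard rank jumps and the reduction to a single variable is genuinely harder.
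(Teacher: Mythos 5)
Your proposal is correct and follows the same route as the paper's own argument --- identifying \(B_{X}^{\gamma}\) with \(g^{\ast}B_{X'}^{\bar\gamma}\) via the lemma on surjectivity of \(g^{\ast}\), comparing the cohomology subrings of \(\mathrm{H}^{\bullet}(Y;\mathbb{C})\) via the corollary, and pushing forward along \(t\) --- and your explicit invocation of the Coates--Corti--Iritani--Tseng mirror theorem together with Tseng's orbifold quantum Lefschetz usefully spells out a step the paper leaves mostly implicit (having already exercised it in the worked examples of \S\ref{sec:4h}). Your flagged concern about verifying the quantum Lefschetz hypotheses for an \emph{arbitrary} semi-Fano base is legitimate and the paper does not address it in any more detail than you do; it relies, as you observe, on the design of \(X'\): the pre-quotient \(Y'\) is smooth in \(X'\) (Lemma \ref{lem:y'-is-smooth}), the generic stabilizer of \([X'\slash G']\) is trivial, \(-K\) is pulled back from the coarse space, and the \(1\)-skeleton of \(\Sigma_{X'}\) is \(\{\nu_{j}\}_{j=1}^{p}\cup\{(\mathbf{0},-1)\}\), so the Picard ranks match and no extra change of variable beyond Theorem~\ref{thm:same-hol-series} is needed.
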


\begin{remark}
As mentioned in earlier, it is important to understand 
double cover singular CYs in the context homological mirror symmetry (HMS). 
To this end, we must find an appropriate formulation of the \(A\)-side 
and \(B\)-side categories, followed by constructing an equivalence 
between them. The first attempt was made in 
\cite{2023-Lee-Lian-Romo-non-commutative-resolutions-as-mirrors-of-singular-calabi-yau-varieties} and \cite{Lee:2025aa}. 
The main idea was to construct a non-commutative resolution (NCR) 
using GLSM and matrix factorization theories. The mirror symmetry 
test was based on a comparison of two period sheaves. 
However a full fledge categorical equivalence has yet to be understood. To this end, Kawamata \cite{2002-Kawamata-d-equivalence-and-k-equivalence} 
has introduced the notion of orbifold 
$\mathrm{D}^{b}\mathrm{Coh}(Y)$ for any orbifold $Y$. 
We are interested in the case when $Y$ is a 
CY orbifold given by a double cover of a smooth toric variety branched 
along a nef-partition. Since our singular double cover CYs 
are of these kinds, this may be a strong candidate for the \(B\)-side 
category for HMS. For the \(A\)-side category, the above-mentioned NCR construction seems promising as a possible candidate. However the symplectic structure of the orbifold mirror $Y$ is not at all manifest in the NCR construction. For this purpose, one might consider two possible approaches. One approach is to use the notion of wrapped Fukaya categories \cite{MR2602848} to incorporate singularity information of $Y$ via Milnor fibers of weighted homogeneous polynomials. Another approach is to use an equivariant version of Fukaya category of a smooth symplectic CY pre-quotient $Y'$, equipped with a global finite abelian group action $G'$ such that $Y=Y'/G'$. We hope to pursue this line of attack on the HMS problem in a subsequent study.
\end{remark}

\section{Morrison's conjecture}
\label{sec:morrison_s_conjecture}
In \cite{1999-Morrison-through-the-looking-glass}, Morrison 
conjectured that extremal transitions are reversed under mirror
symmetry. An extremal transition 
is a birational contraction from
a smooth CY to a singular one and followed by a complex
smoothing to another smooth CY.

The aim of this section is to test Morrison's conjecture using
our singular CY double covers with \(r=1\). It turns out that
in this case our singular mirror proposal fits the picture well.
See also \cite{2015-Fredrickson-extremal-transitions-from-nested-reflexive-polytopes}
for extremal transitions from nested reflexive polytope.
Let us retain the notations in \S\ref{sec:a-mirror-theorem-for-r1},
especcially those in \S\ref{subsection:a-toric-bundle-and-its-construction}.

\subsection{Extremal transitions from polytopes}

\begin{definition}
Let \(P\) be a polytope in \(M_{\mathbb{R}}\) or 
\(N_{\mathbb{R}}\) and \(k\in\mathbb{N}\) be a positive integer.
Denote by \(\operatorname{Vert}(P)\) the set of vertices of \(P\).
We define \(P_{k}\) to be the convex hull of 
\begin{equation}
  (ku,1)~\mbox{with}~u\in \operatorname{Vert}(P)~\mbox{and}~(\mathbf{0},-1).
\end{equation}
\end{definition}

Let \(\Delta\subset M_{\mathbb{R}}\) be a reflexive polytope and 
\(\nabla=\Delta^{\vee}\subset N_{\mathbb{R}}\) be the dual polytope. 
\begin{lemma}
Then \(\Delta_{2}\) is a reflexive polytope whose dual 
polytope is \(\nabla_{1}\subset \overline{N}_{\mathbb{R}}\times\mathbb{R}\),
the convex hull of
\begin{equation}
(v,-1)~\mbox{with}~v\in \operatorname{Vert}(\nabla)~\mbox{and}~(\mathbf{0},1).
\end{equation}
\end{lemma}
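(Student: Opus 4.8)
The plan is to verify the duality $(\Delta_2)^\vee = \nabla_1$ directly from the definitions of polar duality and the explicit vertex/facet data, using the reflexivity of $\Delta$ and $\nabla = \Delta^\vee$. First I would record the defining halfspaces: since $\nabla = \Delta^\vee$ is reflexive, $\Delta = \{\, m \in M_{\mathbb R} : \langle m, v\rangle \geq -1 \text{ for all } v \in \operatorname{Vert}(\nabla)\,\}$, and symmetrically $\nabla$ is cut out by the inequalities $\langle u, \cdot\rangle \geq -1$ for $u \in \operatorname{Vert}(\Delta)$. The polytope $\Delta_2 \subset \overline M_{\mathbb R} = M_{\mathbb R}\times\mathbb R$ is by definition $\operatorname{Conv}\big(\{(2u,1) : u \in \operatorname{Vert}(\Delta)\}\cup\{(\mathbf 0,-1)\}\big)$.

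Next I would compute $(\Delta_2)^\vee \subset \overline N_{\mathbb R} = N_{\mathbb R}\times\mathbb R$ as the set of $(y,s)$ with $\langle (y,s),(2u,1)\rangle \geq -1$ for all $u\in\operatorname{Vert}(\Delta)$ and $\langle(y,s),(\mathbf 0,-1)\rangle\geq -1$. The second condition is just $s \leq 1$, i.e. $s$ is bounded above by the apex height. The first family reads $2\langle y,u\rangle + s \geq -1$, i.e. $\langle y,u\rangle \geq -\tfrac{1+s}{2}$ for every vertex $u$ of $\Delta$. For $s \leq 1$ we have $\tfrac{1+s}{2}\leq 1$, so this says $y$ lies in the (possibly rescaled) polar body: precisely, writing $\lambda = \tfrac{1+s}{2}\in(-\infty,1]$, the condition is $\langle y,u\rangle\geq -\lambda$ for all $u\in\operatorname{Vert}(\Delta)$, which (for $\lambda\geq 0$) is equivalent to $y\in\lambda\nabla$ by reflexivity of $\Delta$, and for $\lambda\le 0$ forces $y$ into a translate/scaling that degenerates to $\{\mathbf 0\}$ at $\lambda=0$ and is empty for $\lambda<0$. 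I would then verify that the resulting region $\{(y,s) : s\leq 1,\ y\in\tfrac{1+s}{2}\nabla\}$ is exactly the cone over $\nabla\times\{1\}$ with apex $(\mathbf 0,-1)$ truncated at $s=1$ — that is, $\operatorname{Conv}\big(\{(v,1):v\in\operatorname{Vert}(\nabla)\}\cup\{(\mathbf 0,-1)\}\big)$, which is the polytope the lemma calls $\nabla_1$ (with the sign/height conventions of the statement). The reflexivity of $\Delta_2$ then follows because its polar dual $\nabla_1$ is visibly a lattice polytope (its vertices are $(v,1)$, $v\in\operatorname{Vert}(\nabla)\subset N$, and $(\mathbf 0,-1)$, all integral) and $\mathbf 0$ lies in the interior of $\Delta_2$ since $\mathbf 0 = \tfrac12(\mathbf 0,-1) + \tfrac12\cdot\big(\text{barycentric combination of }(2u,1)\big)$ when $\mathbf 0 \in \operatorname{int}\Delta$.

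The step I expect to require the most care is the ``truncation at the right height'' bookkeeping: one must check that the halfspace $s\leq 1$ coming from the apex $(\mathbf 0,-1)$ of $\Delta_2$ is genuinely redundant away from the top, and conversely that no vertex $(2u,1)$ produces a spurious facet of $(\Delta_2)^\vee$ — in other words that the facets of $\Delta_2$ are exactly $\{(\mathbf 0,-1)\}\cup\operatorname{Vert}(\nabla)^\vee$ paired correctly, with the factor $2$ in $2u$ precisely compensating the factor $\tfrac12$ in $\tfrac{1+s}{2}$ so that the cross-section at $s=1$ is $\nabla$ rather than $2\nabla$ or $\tfrac12\nabla$. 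This is a finite linear-algebra check once the halfspace descriptions are written down, but it is the place where the specific scaling $k=2$ in $\Delta_2$ (as opposed to $k=1$ in $\nabla_1$) matters, and getting the constants right is the crux of the lemma.
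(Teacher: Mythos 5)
Your proof is correct. The route differs from the paper's, which works on the other side of polarity: rather than computing $(\Delta_2)^\vee$ as a half-space intersection and slicing by height, the paper enumerates the facets of $\Delta_2$ --- the base at height $1$, plus, for each facet $F$ of $\Delta$, the facet $\operatorname{Conv}(\{(2u,1):u\in\operatorname{Vert}(F)\}\cup\{(\mathbf 0,-1)\})$ --- and verifies directly that each is cut out by $\langle -, \bar n\rangle = -1$ with $\bar n$ an integral vector, namely $\bar n=(n,1)$ when $F$ has primitive inner normal $n$, and $\bar n=(\mathbf 0,-1)$ for the base. That check proves reflexivity of $\Delta_2$ at the same time as it reads off the vertices of the dual as the facet normals. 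Both proofs rest on the same arithmetic, $2\langle u,n\rangle+1=-1$ for $u\in\operatorname{Vert}(F)$, which is exactly the factor-$2$ cancellation against $\tfrac{1+s}{2}$ that you identified as the crux; the paper's version is a little more economical for reflexivity, while yours makes the truncated-cone-over-$\nabla$ geometry more visible. On the sign point you hedged about: the vertex set you obtain, $\{(v,1):v\in\operatorname{Vert}(\nabla)\}\cup\{(\mathbf 0,-1)\}$, agrees both with the general definition of $P_k$ (take $P=\nabla$, $k=1$) and with the final line of the paper's own proof, whereas the displayed formula in the lemma statement has the two heights transposed; your computation is the consistent one.
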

\begin{proof}
We will show that the defining equation for any facet (a codimension one face)
of \(\Delta_{2}\) is of the form
\begin{eqnarray}
  \langle -,\bar{n}\rangle = -1
\end{eqnarray}
where \(\bar{n}\in \overline{N}=N\times\mathbb{Z}\) and \(\langle -,-\rangle\)
is the canonical pairing between \(\overline{M}\) and \(\overline{N}\).

We observe that if \(F\) is a facet of \(\Delta\), then
the convex hull 
\begin{equation}
  \mathrm{Conv}(\{(\rho,1)~\vert~\rho\in F\}\cup \{(\mathbf{0},-1)\})
\end{equation}
of \(F\) and \((\mathbf{0},-1)\) is a facet of \(\Delta_{2}\). Conversely
any facet of \(\Delta_{2}\) other than the base, i.e.~the 
convex hull of \((2u,1)~\mbox{with}~u\in \operatorname{Vert}(\Delta)\), is of this form.
Let \(\langle -,n\rangle=-1\) be the defining equation for
a facet of \(\Delta\). Here we slightly abuse of notation; 
we use the same symbol \(\langle -,-\rangle\)
to denote the canonical pairing between \(M\) and \(N\).
Then the facet \(\mathrm{Conv}(F\cup \{(\mathbf{0},1)\})\) is defined by
\begin{eqnarray}
  \langle -,(n,1)\rangle=-1.
\end{eqnarray}
Indeed, we have \(\langle (\mathbf{0},-1),(n,1)\rangle=-1\) and
\begin{equation}
  \langle (2u,1),(n,1)\rangle=2\langle u,n\rangle+1 =-1.
\end{equation}
It is also obvious that the base is defined by the equation
\begin{eqnarray}
  \langle -,(\mathbf{0},-1)\rangle=-1.
\end{eqnarray}
Now it follows from the equations that \(\Delta_{2}^{\vee}\)
is the convex hull of
\begin{eqnarray}
  (v,1)~\mbox{with}~v\in \operatorname{Vert}(\nabla)~\mbox{and}~(\mathbf{0},-1).
\end{eqnarray}
The proof is now completed.
\end{proof}

Similarly, we have the following result.
\begin{lemma}
\(\nabla_{2}\) is a reflexive polytope whose dual 
polytope is
\(\Delta_{1}\subset \overline{M}_{\mathbb{R}}\times\mathbb{R}\),
the convex hull of
\begin{equation}
(v,1)~\mbox{with}~v\in \operatorname{Vert}(\Delta)~\mbox{and}~(\mathbf{0},-1).
\end{equation}
\end{lemma}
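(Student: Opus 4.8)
\emph{The plan is to reduce this to the preceding lemma by interchanging the roles of $\Delta$ and $\nabla$.} The starting observation is that $\nabla=\Delta^{\vee}$ is again a reflexive polytope, this time in $N_{\mathbb{R}}$, and that its polar dual is $\nabla^{\vee}=(\Delta^{\vee})^{\vee}=\Delta$. Hence the pair $(\nabla,\nabla^{\vee})=(\nabla,\Delta)$ satisfies verbatim the hypotheses of the preceding lemma, now with $\nabla$ in the role of ``$\Delta$'' and $\Delta$ in the role of ``$\nabla$''.

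I would then simply apply the preceding lemma to this pair. It yields immediately that $\nabla_{2}$ --- the convex hull of the points $(2v,1)$ for $v\in\operatorname{Vert}(\nabla)$ together with $(\mathbf{0},-1)$ --- is reflexive, and that its polar dual $\nabla_{2}^{\vee}$ is the convex hull of $(w,1)$ for $w\in\operatorname{Vert}(\nabla^{\vee})$ together with $(\mathbf{0},-1)$. Since $\operatorname{Vert}(\nabla^{\vee})=\operatorname{Vert}(\Delta)$, the latter polytope is exactly $\Delta_{1}$, regarded inside the lattice $\overline{M}=M\times\mathbb{Z}$. This is precisely the assertion of the lemma, and nothing further is needed.

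The only thing I would spell out is the bookkeeping of lattices and pairings under the relabeling: the preceding proof only used reflexivity of the polytope in the role of ``$\Delta$'', in the precise form that each of its facets is cut out by $\langle -,n\rangle=-1$ for some primitive lattice vector $n$; for $\nabla$ this is the statement that its facets are the sets $\langle m,-\rangle=-1$ with $m\in\operatorname{Vert}(\nabla^{\vee})=\operatorname{Vert}(\Delta)\subset M$, which holds because $\nabla^{\vee}=\Delta$. I do not expect any genuine obstacle here --- the substantive computation (the facet-by-facet description of $\Delta_{2}^{\vee}$) has already been carried out in the preceding lemma and is invoked unchanged. If one prefers to avoid the relabeling entirely, one may instead transcribe the preceding argument line by line with $\Delta$ and $\nabla$ swapped; this is routine and introduces no new difficulty.
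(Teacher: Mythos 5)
Your proof is correct and is exactly what the paper intends: the paper precedes this lemma with the single word ``Similarly'' and gives no proof, which is precisely the observation that the first lemma, being stated for an arbitrary reflexive polytope, applies verbatim with $\nabla$ and $\Delta$ interchanged. You identify the one fact that makes the relabeling legitimate --- $(\Delta^\vee)^\vee=\Delta$, hence $\operatorname{Vert}(\nabla^\vee)=\operatorname{Vert}(\Delta)$ --- and you correctly note that the preceding proof used only reflexivity of the polytope in the ``$\Delta$'' slot, so nothing else needs to be re-verified. (As a side remark: the sign conventions displayed in the \emph{statement} of the preceding lemma, ``$(v,-1)$'' and ``$(\mathbf{0},1)$,'' disagree with both the Definition of $P_k$ and the final line of that lemma's own proof, which give ``$(v,1)$'' and ``$(\mathbf{0},-1)$''; your reading, which matches the definition and the proof, is the right one.)
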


For a given pair of reflexive polytopes \((\Delta,\nabla)\) with 
\(\Delta\subset M_{\mathbb{R}}\) and \(\nabla\subset N_{\mathbb{R}}\),
we obtain four different polytopes \(\Delta_{1}\), \(\Delta_{2}\),
and \(\nabla_{1}\), \(\nabla_{2}\) in relevant vector spaces.
We have also inclusions
\begin{equation}
  \Delta_{1}\subset \Delta_{2}~\mbox{and}~\nabla_{1}=\Delta_{2}^{\vee}
  \subset\nabla_{2}=\Delta_{1}^{\vee}.
\end{equation}

\subsection{Relations with singular CY double covers}
From the inclusions
\begin{equation}
   \Delta_{1}\subset \Delta_{2}~\mbox{and}~\Delta_{2}^{\vee}=\nabla_{1}
\end{equation} 
we obtain a degeneration of CY double covers \(Y\) of \(X\).
Recall that \(X\to\mathbf{P}_{\Delta}\) is a MPCP desingularization,
which is smooth under the Hypothesis \ref{assumption}.
Note that \(\Delta_{2}\) is the section polytope 
of the anti-canonical divisor of
\begin{equation*}
Z:=\mathbf{P}_{X}(\mathbb{L}\oplus\mathbb{C}).
\end{equation*}
Here, \(\mathbb{L}\) is the total space of the line bundle
whose sheaf of sections is \(\omega_{X}^{-1}\).
Note that \(\mathrm{H}^{0}(Z,\mathscr{O}_{Z}(D_{\mathrm{e}_{0}}))\)
is one-dimensional.

The next lemma is straightforward.
\begin{lemma}
The general sections of \(-K_{Z}\) given by integral
points in \(\Delta_{1}\subset\Delta_{2}\)
are of the form
\begin{equation}
  y^{2}-\left(\prod_{i=1}^{p}\pi^{\ast}s_{i,1}\right)\pi^{\ast}f
\end{equation}
where \(y\) is a general section
of the basepoint free sheaf \(\mathscr{O}_{Z}(D_{\mathrm{e}_{0}})\),
\(f\in\mathrm{H}^{0}(Z,\omega_{Z}^{-1})\)
is a general section, and \(\pi\colon Z\to X\)
is the bundle projection.
The equation defines a gauge fixed double cover family of \(X\) branched along
the nef-partition \(-K_{X}\).
\end{lemma}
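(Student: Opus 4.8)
The plan is to read off simultaneously the lattice points of \(\Delta_{1}\subset\Delta_{2}\) and the monomial sections of \(\omega_{Z}^{-1}\) they label, organised along the \(\mathbb{P}^{1}\)-fibration \(\pi\colon Z=\mathbf{P}_{X}(\mathbb{L}\oplus\mathbb{C})\to X\). Writing a point of \(\overline{M}=M\times\mathbb{Z}\) as \((m,a)\), the recalled identity \(\Delta_{2}=\Delta_{-K_{Z}}\) together with the definition of \(\Delta_{2}\) gives the slicing \(\Delta_{2}\cap(\overline{M}_{\mathbb{R}}\times\{a\})=(1+a)\Delta\) for \(-1\le a\le1\), and likewise \(\Delta_{1}\cap(\overline{M}_{\mathbb{R}}\times\{a\})=\tfrac{1+a}{2}\,\Delta\); in particular lattice points occur only at heights \(a\in\{-1,0,1\}\). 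On the other hand \(-K_{Z}\sim 2D_{\mathrm{e}_{0}}\) (an elementary toric computation, using the relation \(D_{\mathrm{e}_{0}}\sim D_{\mathrm{e}_{\infty}}+\sum_{j}D_{\nu_{j}}\) coming from \(\chi^{(\mathbf{0},1)}\) and \(\sum_{j}D_{\nu_{j}}=\pi^{\ast}(-K_{X})\)), so, over the open total space \(\mathbb{L}=Z\setminus D_{\mathrm{e}_{\infty}}\) of \(\omega_{X}^{-1}\), on which \(\omega_{Z}^{-1}\) is trivialised and the tautological fibre coordinate \(y\) is a section of \(\pi^{\ast}\omega_{X}^{-1}\) with \(\operatorname{div}(y)=D_{\mathrm{e}_{0}}\), every section of \(\omega_{Z}^{-1}\) is of the form \(\alpha\,y^{2}+y\,\pi^{\ast}\beta+\pi^{\ast}\gamma\) with \(\alpha\in\mathbb{C}\), \(\beta\in\mathrm{H}^{0}(X,-K_{X})\), \(\gamma\in\mathrm{H}^{0}(X,-2K_{X})\); the three terms are exactly the monomials attached to lattice points of height \(-1\), \(0\), \(1\) respectively (as measured by the orders of vanishing along \(D_{\mathrm{e}_{0}}\) and \(D_{\mathrm{e}_{\infty}}\)).

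The crux is to pin down which \(\beta\) and \(\gamma\) survive when one restricts to the linear subsystem spanned by the lattice points of \(\Delta_{1}\). For height \(0\): if \((m,0)\in\Delta_{1}\cap\overline{M}\) then \(2m\in\Delta\), so \(\langle m,\rho_{j}\rangle\ge-\tfrac12\) and hence \(\langle m,\rho_{j}\rangle\ge0\) for every ray \(\rho_{j}\) by integrality; the monomial section of \(-K_{X}\) attached to \(m\) then has divisor \(\sum_{j}(\langle m,\rho_{j}\rangle+1)D_{j}\) with all coefficients \(\ge1\), hence is divisible by the canonical section \(\prod_{i=1}^{p}s_{i,1}\) (the product of the defining sections of the toric divisors \(D_{1},\dots,D_{p}\) on \(X\), cutting out \(\sum_{i}D_{i}\)), and since the quotient lies in \(\mathrm{H}^{0}(X,\mathscr{O}_{X})=\mathbb{C}\) we must have \(m=\mathbf{0}\). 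So the only height-\(0\) contribution is \((\mathbf{0},0)\), i.e. \(\beta\) is forced into \(\mathbb{C}\cdot\prod_{i}s_{i,1}\). For height \(1\): \((m,1)\in\Delta_{1}\) iff \(m\in\Delta\cap M\), and then the monomial section of \(-2K_{X}\) attached to \(m\) has divisor \(\sum_{j}(\langle m,\rho_{j}\rangle+2)D_{j}\) with all coefficients \(\ge1\), hence is again divisible by \(\prod_{i}s_{i,1}\); as \(m\) ranges over \(\Delta\cap M\) the quotients sweep out all of \(\mathrm{H}^{0}(X,-K_{X})\), so \(\gamma\) is forced into \(\prod_{i}s_{i,1}\cdot\mathrm{H}^{0}(X,-K_{X})\). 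Hence a section in the \(\Delta_{1}\)-subsystem is exactly \(\alpha\,y^{2}+c\,y\,\pi^{\ast}\!\bigl(\textstyle\prod_{i}s_{i,1}\bigr)+\pi^{\ast}\!\bigl(\textstyle\prod_{i}s_{i,1}\cdot g\bigr)\) with \(\alpha,c\in\mathbb{C}\), \(g\in\mathrm{H}^{0}(X,-K_{X})\); for a general such section \(\alpha\ne0\), and completing the square in \(y\) replaces \(y\) by the section \(y+\tfrac{c}{2\alpha}\pi^{\ast}\!\bigl(\prod_{i}s_{i,1}\bigr)\) of \(\mathscr{O}_{Z}(D_{\mathrm{e}_{0}})\) and \(g\) by \(f:=g-\tfrac{c^{2}}{4\alpha}\prod_{i}s_{i,1}\in\mathrm{H}^{0}(X,-K_{X})\), which after rescaling yields the asserted normal form \(y^{2}-\bigl(\prod_{i=1}^{p}\pi^{\ast}s_{i,1}\bigr)\pi^{\ast}f\).

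It then remains to identify the geometry of the resulting hypersurface \(\{y^{2}=(\prod_{i}\pi^{\ast}s_{i,1})\,\pi^{\ast}f\}\). It lies in \(\mathbb{L}\subset Z\), since on \(D_{\mathrm{e}_{\infty}}\) the equation forces \(y=0\) and \(\{y=0\}=D_{\mathrm{e}_{0}}\) is disjoint from \(D_{\mathrm{e}_{\infty}}\); and \(\pi\) restricts to it as the standard double cover of \(X\) attached to the line bundle \(\omega_{X}^{-1}\) and the section \(\bigl(\prod_{i}s_{i,1}\bigr)f\in\mathrm{H}^{0}(X,-2K_{X})\), branched along \(\bigcup_{i}D_{i}\cup\{f=0\}\). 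As \(f\) runs over \(\mathrm{H}^{0}(X,-K_{X})=\mathrm{H}^{0}(X,E_{1})\) this is precisely the gauge fixed double cover family branched along the trivial nef-partition \(E_{1}=-K_{X}\), with \(\prod_{i}s_{i,1}\) the fixed gauge factor \(s_{1,1}\) and \(f\) the deformation parameter \(s_{1,2}\). The one genuinely non-formal step is the divisibility observation in the second paragraph — equivalently, the exact enumeration \(\Delta_{1}\cap\overline{M}=\{(\mathbf{0},-1),(\mathbf{0},0)\}\cup(\Delta\cap M)\times\{1\}\) — while the rest is bookkeeping on the \(\mathbb{P}^{1}\)-bundle, together with the harmless conventions that ``\(\pi^{\ast}f\)'' is read after trivialising \(\omega_{Z}^{-1}\) on \(\mathbb{L}\) (globally one multiplies by the square of the section of \(\mathscr{O}_{Z}(D_{\mathrm{e}_{\infty}})\) cutting out \(D_{\mathrm{e}_{\infty}}\)) and that ``\(y\) general'' means general among the sections of \(\mathscr{O}_{Z}(D_{\mathrm{e}_{0}})\) produced by the square completion.
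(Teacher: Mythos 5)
The paper gives no proof of this lemma — it is introduced by ``The next lemma is straightforward'' — so there is no argument to compare against; your proof is correct and supplies the combinatorics the paper leaves implicit. You correctly enumerate \(\Delta_1\cap\overline{M}=\{(\mathbf{0},-1),(\mathbf{0},0)\}\cup(\Delta\cap M)\times\{1\}\) (the height-zero claim is immediate from reflexivity of \(\Delta\), since \(\tfrac12\Delta\cap M=\{\mathbf{0}\}\), though your divisibility argument also works), match the three blocks of monomials with \(y_0^2\), \(y_0\cdot z_\infty\prod_j\pi^*s_j\), and \(z_\infty^2\prod_j\pi^*s_j\cdot\pi^*\chi^m\) for \(m\in\Delta\cap M\) (with \(z_\infty\) the toric section cutting out \(D_{\mathrm{e}_\infty}\)), complete the square, and check that the resulting hypersurface misses \(D_{\mathrm{e}_\infty}\) and thus projects to a double cover of \(X\) branched along \(\bigcup_j D_j\cup\{f=0\}\). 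You also correctly flag the point the displayed formula glosses over: \(y^2-\bigl(\prod_j\pi^*s_{j,1}\bigr)\pi^*f\) is a section of \(-K_Z\) only after multiplying the second term by \(z_\infty^2\), equivalently after restricting to \(\mathbb{L}=Z\setminus D_{\mathrm{e}_\infty}\), which is harmless since the hypersurface lies there. Your argument also implicitly corrects two small slips in the surrounding text: the ``Note'' preceding the lemma should assert that \(\mathrm{H}^0(Z,\mathscr{O}_Z(D_{\mathrm{e}_\infty}))\) is one-dimensional (the claim as written contradicts \(D_{\mathrm{e}_0}\sim H\) and \(H\) being basepoint free with many sections), and \(f\) should lie in \(\mathrm{H}^0(X,\omega_X^{-1})\), as you use.
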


According to the results in \S\ref{sec:a-mirror-theorem-for-r1}, the family
obtained in the preceding lemma
is a anticanonical family in the toric variety \(X'\slash G'\) whose
anticanonical polytope is on the nose \(\nabla_{2}\).
Based on the resolution procedure in 
\cite{2013-Sheng-Xu-Zuo-maximal-families-of-calabi-yau-manifolds-with-minimal-length-yukawa-coupling}*{\S2.2}, one can construct
a crepant resolution by a sequence of blow-ups of \(Z\) whose
centers are given by intersections of two toric divisors;
they are the zero section of \(\mathbb{L}\to X\) 
(regard as a divisor in \(Z\)) and a
toric divisors pullback from \(X\).
In this manner, we are able to construct a 
crepant resolution \(\tilde{Y}\to Y\),
which turns out to be a MPCP desingularization
of the Fano toric variety \(\mathbf{P}_{\Delta_{1}}\).

We then obtain an extremal transition
\begin{equation}
  \begin{tikzcd}
    & &\tilde{Y}\ar[d]\\
    &S\ar[r,rightsquigarrow] &Y
  \end{tikzcd}
\end{equation}
where \(S\) is a \emph{smooth} double cover over \(X\).

Batyrev's mirror construction produces
\begin{itemize}
  \item a mirror \(\tilde{Y}^{\vee}\) 
  of \(\tilde{Y}\) (\(\tilde{Y}^{\vee}\) is an anti-canonical
  hypersuface in 
  a MPCP desingularization of \(\mathbf{P}_{\nabla_{2}}\)
  while \(\tilde{Y}\) is an anti-canonical 
  hypersurface in a MPCP desingularization of 
  \(\mathbf{P}_{\Delta_{1}}\));
  \item a mirror \(S^{\vee}\) of \(S\)
  (\(S\) is an anti-canonical hypersurface in 
  a MPCP desingularization of
  \(\mathbf{P}_{\Delta_{2}}\) while \(S^{\vee}\)
  is an anti-canonical 
  hypersurface in a MPCP desingularization of 
  \(\mathbf{P}_{\nabla_{1}}\))
\end{itemize} 
Our singular mirror \(Y^{\vee}\) connects \(S^{\vee}\) 
and \(\tilde{Y}^{\vee}\) in a nice way; they form an extremal
transition
\begin{equation}
    \begin{tikzcd}
    &\tilde{Y}^{\vee}\ar[d,rightsquigarrow] &\\
    &Y^{\vee} &S^{\vee}\ar[l]
  \end{tikzcd}
\end{equation}
on the dual side; there exists a contraction from 
\(S^{\vee}\) to \(Y^{\vee}\) and a complex smoothing 
from \(Y^{\vee}\) to \(\tilde{Y}^{\vee}\).

This can be directly seen by applying the same construction
on the dual side. As a byproduct, we proved the following
theorem regarding Morrison's conjecture.

\begin{theorem}
\label{thm:morrison}
Let \((\Delta,\nabla)\) be a pair of reflexive polytopes
in revelant vector spaces. Then
Morrison's conjecture holds for anti-canonical 
CY hypersurfaces in MPCP desingularizations
of \(\mathbf{P}_{{\Delta}_{1}}\), \(\mathbf{P}_{{\Delta}_{2}}\),
\(\mathbf{P}_{{\nabla}_{1}}\), and \(\mathbf{P}_{{\nabla}_{2}}\).
\end{theorem}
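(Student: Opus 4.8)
The plan is to read the theorem off from the manifest symmetry of the pyramid operation $P\mapsto(P_{1},P_{2})$ under the polar duality $\Delta\leftrightarrow\nabla$, the chain
\[
\Delta_{1}\subset\Delta_{2},\qquad \nabla_{1}=\Delta_{2}^{\vee}\subset\nabla_{2}=\Delta_{1}^{\vee}
\]
supplied by the two preceding lemmas, and Batyrev's mirror construction for anti-canonical hypersurfaces in Gorenstein toric Fano varieties. On the $\Delta$-side the extremal transition is already established: it consists of the crepant contraction $\tilde{Y}\to Y$ and the complex smoothing of $Y$ to $S$, where $S$ is the smooth anti-canonical CY hypersurface in an MPCP desingularization of $\mathbf{P}_{\Delta_{2}}$ (obtained from $\mathbf{P}_{X}(\mathbb{L}\oplus\mathbb{C})$ by contracting the infinity divisor), $Y$ is the singular gauge fixed double cover over $X$, and $\tilde{Y}$ is the crepant resolution, realized as an anti-canonical CY hypersurface in an MPCP desingularization of $\mathbf{P}_{\Delta_{1}}$. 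A preliminary step is to fix, once and for all, a system of compatible MPCP desingularizations of $\mathbf{P}_{\Delta_{1}},\mathbf{P}_{\Delta_{2}},\mathbf{P}_{\nabla_{1}},\mathbf{P}_{\nabla_{2}}$ --- for instance via fine regular star triangulations of $\nabla_{2}=\Delta_{1}^{\vee}$ and $\nabla_{1}=\Delta_{2}^{\vee}$ refining the triangulations used to construct $X$, $X^{\vee}$ and $Z$. Since any two MPCP desingularizations of a Gorenstein toric Fano variety are related by flops, the transitions built below do not depend on these choices.

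Next I would apply Batyrev's construction to the two smooth ends. Its mirror sends $S$ (anti-canonical in the chosen resolution of $\mathbf{P}_{\Delta_{2}}$) to a CY $S^{\vee}$ which is anti-canonical in an MPCP desingularization of $\mathbf{P}_{\Delta_{2}^{\vee}}=\mathbf{P}_{\nabla_{1}}$, and $\tilde{Y}$ (anti-canonical in the chosen resolution of $\mathbf{P}_{\Delta_{1}}$) to a CY $\tilde{Y}^{\vee}$ which is anti-canonical in an MPCP desingularization of $\mathbf{P}_{\Delta_{1}^{\vee}}=\mathbf{P}_{\nabla_{2}}$. I would then run the constructions of \S\ref{subsection:a-toric-bundle-and-its-construction}--\S\ref{subsec:the_graph_embedding_and_the_pre_quotient_space} verbatim with $(X,\Delta)$ replaced by $(X^{\vee},\nabla)$. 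By the symmetry of those constructions one obtains, over $X^{\vee}$: a smooth double cover $S_{0}$, which is an anti-canonical hypersurface in an MPCP desingularization of $\mathbf{P}_{\nabla_{2}}$ (the anti-canonical polytope of the rank-two projective bundle over $X^{\vee}$ analogous to $Z$ being $\nabla_{2}$, by the same computation that yields $\Delta_{2}$ over $X$); a singular gauge fixed double cover, which is exactly the mirror $Y^{\vee}$; and, via the blow-up procedure of \cite{2013-Sheng-Xu-Zuo-maximal-families-of-calabi-yau-manifolds-with-minimal-length-yukawa-coupling}, a crepant resolution $W_{0}\to Y^{\vee}$ realized as an anti-canonical hypersurface in an MPCP desingularization of $\mathbf{P}_{\nabla_{1}}$. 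This gives the extremal transition on the dual side consisting of the crepant contraction $W_{0}\to Y^{\vee}$ and the smoothing of $Y^{\vee}$ to $S_{0}$.

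The final step is the matching. Since $S_{0}$ is anti-canonical in an MPCP desingularization of $\mathbf{P}_{\nabla_{2}}=\mathbf{P}_{\Delta_{1}^{\vee}}$, Batyrev's theorem identifies it with a mirror of $\tilde{Y}$, hence $S_{0}=\tilde{Y}^{\vee}$; likewise $W_{0}$, being anti-canonical in an MPCP desingularization of $\mathbf{P}_{\nabla_{1}}=\mathbf{P}_{\Delta_{2}^{\vee}}$, is a mirror of $S$, so $W_{0}=S^{\vee}$. Substituting, the dual-side transition becomes the crepant contraction $S^{\vee}\to Y^{\vee}$ followed by the smoothing of $Y^{\vee}$ to $\tilde{Y}^{\vee}$; this is precisely the mirror of the $\Delta$-side transition with the roles of contraction and smoothing interchanged, which is the assertion of Morrison's conjecture for the four families $\tilde{Y}$, $S$, $S^{\vee}$, $\tilde{Y}^{\vee}$. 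The one genuinely delicate point, where I expect the bookkeeping to concentrate, is this matching: one must verify that the partial MPCP desingularizations implicitly produced by the pre-quotient and resolution procedures on the $\nabla$-side are admissible Batyrev partners of the fans fixed in the first step --- equivalently, that the blow-up centers resolving $Y^{\vee}$ correspond under polar duality to the chosen triangulations of $\nabla_{1}$. Once compatible fans are pinned down, the remaining verifications are elementary combinatorics of reflexive polytopes.
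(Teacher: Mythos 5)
Your proposal is correct and takes essentially the same approach as the paper: both hinge on the manifest $\Delta\leftrightarrow\nabla$ symmetry of the pyramid construction together with the identities $\Delta_{2}^{\vee}=\nabla_{1}$, $\Delta_{1}^{\vee}=\nabla_{2}$, and Batyrev's mirror construction, and the paper's entire argument for the dual side is the one-line remark that the same construction applied with $(X^{\vee},\nabla)$ in place of $(X,\Delta)$ yields the reversed transition. Your write-up simply spells out that remark explicitly (introducing $S_{0}$, $W_{0}$ and the matching $S_{0}=\tilde{Y}^{\vee}$, $W_{0}=S^{\vee}$) and flags the MPCP-compatibility bookkeeping, which the paper leaves implicit.
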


\begin{remark}
For \(r\ge 2\), under Hypothesis \ref{assumption} for \(X\),
a crepant resolution \(\tilde{Y}\to Y\) 
of the singular CY double cover \(Y\) still exists
\cite{2013-Sheng-Xu-Zuo-maximal-families-of-calabi-yau-manifolds-with-minimal-length-yukawa-coupling}. However, it is not clear to us whether
\(\tilde{Y}\) remains an anti-canonical hypersurface in 
a suitable toric variety. Nevertheless, it provides
a nice model to study the crepant transformation conjecture (CTC)
and find explicit relations between ordinary Gromov--Witten invariants
of \(\tilde{Y}\) and orbifold Gromov--Witten 
invariants of \(Y\).
We regard \(Y\) as a subvariety in \(\mathbb{L}\), the total
space of the anti-canonical bundle of \(X\). The
resolution algorithm provided in 
\cite{2013-Sheng-Xu-Zuo-maximal-families-of-calabi-yau-manifolds-with-minimal-length-yukawa-coupling} is a sequence of blow-ups of \(\mathbb{L}\)
along smooth subvarieties. 
One expects to generalize Lai's result 
\cite{2009-Lai-GW-invariants-of-blow-ups-along-submanifolds-with-convex-normal-bundles} appropriately and compare the invariants of 
\(\tilde{Y}\) and \(Y\) directly. 
\end{remark}

\appendix

\section{Comparison of computations}
\label{app:comparison_of_computations}
Let \(Y\) be the CY double cover of \(\mathbb{P}^{3}\) branched
along four hyperplanes and a quartic in general position.
In this section, we explicitly compute
the genus zero orbifold Gromov--Witten invariants of \(Y\) using 
another embedding $Y\subset Z$ where $Z$ has \(2\) 
K\"{a}hler moduli. We show a (tricky) way to specialize them to the 
K\"{a}hler moduli of \(Y\) and recover the untwisted Gromov--Witten 
invariants we computed earlier by our mirror theorem. 
The comparison shows that embedding into spaces with high 
dimensional K\"{a}hler moduli can make such computation complicated.

Let us retain the notation from \S\ref{subsec:an_instanton_calculation}.
As in \S\ref{subsec:the_graph_embedding_and_the_pre_quotient_space}, 
the section \(f\) gives rise to the ``graph embedding''
\begin{eqnarray}
  \Gamma_{f}\colon \mathbb{P}^{3}\to \mathbf{P}_{\mathbb{P}^{3}}(\mathbb{L}\oplus\mathbb{C})
\end{eqnarray}
where \(\mathbb{L}\) is the total space of 
the line bundle whose sheaf of sections is \(\mathscr{O}_{\mathbb{P}^{3}}(4)\).
Let us describe the toric data for \(\mathbf{P}_{\mathbb{P}^{3}}(\mathbb{L}\oplus\mathbb{C})\). 
The \(1\)-cones are given by
\begin{align}
\begin{split}
    &u_{1}:=(1,0,0,-1),\\
    &u_{2}:=(0,1,0,-1),\\
    &u_{3}:=(0,0,1,-1),\\
    &u_{4}:=(-1,-1,-1,-1),\\
    &u_{5}:=(0,0,0,1),\\
    &u_{6}:=(0,0,0,-1).
\end{split}
\end{align}
In this presentation, the divisor of the zero section of \(\mathbb{L}\)
corresponds to \(u_{5}\), whereas the divisor at infinity
corresponds to \(u_{6}\).

Put \(Z=\mathbf{P}_{\mathbb{P}^{3}}(\mathbb{L}\oplus\mathbb{C})\) as before
and denote by \(D_{i}\) the toric divisor corresponding to \(u_{i}\).
Then we have \(\mathrm{H}^{2}(Z;\mathbb{C})=\mathbb{C}h\oplus \mathbb{C}\xi\). Here
\(h\) is the cohomology class of \(D_{1}\) and \(\xi\) is the cohomology 
class of \(D_{6}\). Then
\begin{equation}
    \begin{cases}
    D_{1}=D_{2}=D_{3}=D_{4}=h,\\
    D_{5}=\xi+4h,\\
    D_{6}=\xi.
    \end{cases}
\end{equation}
Now we study the Mori cone. It is known that 
the Mori cone of \(Z\) is generated by primitive relations
\begin{equation}
    \{u_{1},u_{2},u_{3},u_{4}\},~\mbox{and}~\{u_{5},u_{6}\}.
\end{equation}
Denote by \(\ell_{1}\) and \(\ell_{2}\) the corresponding extremal curves.
Then one can compute
\begin{equation}
    \begin{cases}
    h.\ell_{1}=1,\\
    \xi.\ell_{1}=-4,\\
    \end{cases}~\mbox{and}~
    \begin{cases}
    h.\ell_{2}=0,\\
    \xi.\ell_{2}=1.
    \end{cases}
\end{equation}
The Gromov--Written invariants of \(Y'\) (a CY hypersurface in \(Z\)) 
can be calculated by
the cohomology-valued series (the \(I\)-function)
\begin{align}
\begin{split}
    &\alpha e^{(t_{1}h+t_{2}\xi)/\alpha}(2\xi+8h)\\
    &\times\sum_{d_{1},d_{2}\ge 0}\frac{\prod_{m=1}^{2d_{2}}
    (2\xi+8h+m\alpha)
    q_{1}^{d_{1}}q_{2}^{d_{2}}}{\prod_{m=1}^{d_{1}}(h+m\alpha)^{4}\prod_{m=1}^{d_{2}}(\xi+4h+m\alpha)
    \prod_{m=1}^{d_{2}-4d_{1}}(\xi+m\alpha)}.
\end{split}
\end{align}
We note that 
\begin{eqnarray}
    \xi\cdot (\xi+4h)=0,
\end{eqnarray}
so the summation is reduced to 
\begin{align}
\begin{split}
    &\alpha e^{(t_{1}h+t_{2}\xi)/\alpha}(2\xi+8h)\\
    &\times\sum_{d_{2}\ge 
    4d_{1}\ge 0}\frac{\prod_{m=1}^{2d_{2}}
    (2\xi+8h+m\alpha)
    q_{1}^{d_{1}}q_{2}^{d_{2}}}{\prod_{m=1}^{d_{1}}(h+m\alpha)^{4}\prod_{m=1}^{d_{2}}(\xi+4h+m\alpha)
    \prod_{m=1}^{d_{2}-4d_{1}}(\xi+m\alpha)}.
\end{split}
\end{align}
Now restricting this series to \(Y'\) and observing that \(\xi\vert_{Y'}\)
is trivial, we obtain
\begin{equation}
    \alpha e^{(t_{1}h)/\alpha}(8h)\sum_{d_{2}\ge 
    4d_{1}\ge 0}\frac{\prod_{m=1}^{2d_{2}}
    (8h+m\alpha)
    q_{1}^{d_{1}}q_{2}^{d_{2}}}{\prod_{m=1}^{d_{1}}(h+m\alpha)^{4}\prod_{m=1}^{d_{2}}(4h+m\alpha)
    \prod_{m=1}^{d_{2}-4d_{1}}(m\alpha)}.
\end{equation}
In order to compute the invariants, a change of variable is needed.
Let us denote by \((Q_{1},Q_{2}):=m(q_{1},q_{2})\) the mirror map. The inverse of the mirror
map is given by
\begin{align}
\begin{split}
    q_{1}&=Q_1 - 16Q_1Q_2 + 96Q_1Q_2^2 - 256Q_1Q_2^3 +256Q_1Q_2^4\\ 
    &- 15808Q_1^2Q_2^4 + 252928Q_1^2Q_2^5 - 1517568Q_1^2Q_2^6 \\
    &+ 4046848Q_1^2Q_2^7 - 4046848Q_1^2Q_2^8 + \cdots,\\
    q_{2}&=Q_{2}.
\end{split}
\end{align}
Under this map, we obtain the series
\begin{align}
\begin{split}
\mathbf{1} + &(14752Q_1Q_2^4 + 128838600Q_1^2Q_2^8\\ 
&+ \frac{19220227397632}{9}Q_1^3Q_2^{12}
+ 46386112081796274Q_1^4Q_2^{16}\\
& + \frac{29242279664078082314752}{25}Q_1^5Q_2^{20}+\cdots)h^{2}\\
 + &(-59008Q_1Q_2^4 - 257677200Q_1^2Q_2^8\\ 
&- \frac{76880909590528}{27}Q_1^3Q_2^{12}
- 46386112081796274Q_1^4Q_2^{16}\\ 
&- \frac{116969118656312329259008}{125}Q_1^5Q_2^{20}+\cdots)h^{3}.
\end{split}
\end{align}
So we obtain the generating series
\begin{align}
    \begin{split}
    &2+29504Q_1Q_2^4 + 128838600Q_1^2Q_2^8\\ 
    &+ \frac{38440454795264}{27}Q_1^3Q_2^{12} 
    + 23193056040898137Q_1^4Q_2^{16} \\
    &+ \frac{58484559328156164629504}{125}Q_1^5Q_2^{20}+\cdots.
    \end{split}
\end{align}
Now let \(Q:=Q_{1}Q_{2}^{4}\). We obtain the desired series
\begin{align}
    \begin{split}
    &2+29504Q + 128838600Q^{2} + 
    \frac{38440454795264}{27}Q^{3} \\
    &+ 23193056040898137Q^{4} 
    + \frac{58484559328156164629504}{125}Q^{5}+\cdots.
    \end{split}
\end{align}
We can argue as in Proposition \ref{prop:1} and conclude that 
this is the generating series for the genus zero untwisted orbifold
Gromov--Witten
invariants for \(Y'\slash G'\).
Note that this is equivalent to \eqref{eq:a-model-predicted-correlations-4h};
there is a factor \(d^{3}\) in 
the degree \(d\) term due to the divisor axiom.

This is not surprising since we are computing the invariants for
the same variety. However, this indicates that
the calculation seems very complicated if the K\"{a}hler moduli is higher-dimensional.

%\section{Double covers of \texorpdfstring{\(\mathbb{P}^{3}\)}{P3}
%with the nef-partition \texorpdfstring{\(-K_{\mathbb{P}^{3}}=h+3h\)}{-K=h+3h}}

%In this appendix, we will quickly summarize the results in this case and
%compare the calculation using the simplicial non-Gorenstein
%toric variety \(\mathbb{P}(1,1,1,1,1,3)\) and the projective space bundle
%\(\mathbf{P}_{\mathbb{P}^{3}}(\mathscr{O}_{\mathbb{P}^{3}}
%\oplus\mathscr{O}_{\mathbb{P}^{3}}(1)\oplus\mathscr{O}_{\mathbb{P}^{3}}(3))\).

%\bibliographystyle{amsxport}
%\bibliography{reference.bib}

\begin{bibdiv}
\begin{biblist}

\bib{MR2602848}{article}{
      author={Abouzaid, Mohammed},
      author={Seidel, Paul},
       title={An open string analogue of {V}iterbo functoriality},
        date={2010},
        ISSN={1465-3060},
     journal={Geom. Topol.},
      volume={14},
      number={2},
       pages={627\ndash 718},
         url={https://doi.org/10.2140/gt.2010.14.627},
      review={\MR{2602848}},
}

\bib{1994-Batyrev-dual-polyhedra-and-mirror-symmetry-for-calabi-yau-hypersurfaces-in-toric-varieties}{article}{
      author={Batyrev, Victor~V.},
       title={{Dual polyhedra and mirror symmetry for Calabi--Yau hypersurfaces
  in toric varieties}},
        date={1994},
     journal={J. Algebraic Geom.},
      volume={3},
       pages={493\ndash 545},
      review={\MR{1269718}},
}

\bib{1996-Batyrev-Borisov-on-calabi-yau-complete-intersections-in-toric-varieties}{incollection}{
      author={Batyrev, Victor~V.},
      author={Borisov, Lev~A.},
       title={On {C}alabi--{Y}au complete intersections in toric varieties},
        date={1996},
   booktitle={Higher-dimensional complex varieties ({T}rento, 1994)},
   publisher={de Gruyter, Berlin},
       pages={39\ndash 65},
      review={\MR{1463173}},
}

\bib{2005-Borisov-Chen-Smith-the-orbifold-chow-ring-of-toric-deligne-mumford-stacks}{article}{
      author={Borisov, Lev~A.},
      author={Chen, Linda},
      author={Smith, Gregory~G.},
       title={The orbifold {C}how ring of toric {D}eligne--{M}umford stacks},
        date={2005},
        ISSN={0894-0347},
     journal={J. Amer. Math. Soc.},
      volume={18},
      number={1},
       pages={193\ndash 215},
      review={\MR{2114820}},
}

\bib{1991-Candelas-de-la-Ossa-Greene-Parkes-a-pair-of-calabi-yau-manifolds-as-an-exactly-soluable-superconformal-theory}{article}{
      author={Candelas, Philip},
      author={de~la Ossa, Xenia~C.},
      author={Green, Paul~S.},
      author={Parkes, Linda},
       title={A pair of {C}alabi--{Y}au manifolds as an exactly soluble
  superconformal theory},
        date={1991},
        ISSN={0550-3213},
     journal={Nuclear Phys. B},
      volume={359},
      number={1},
       pages={21\ndash 74},
      review={\MR{1115626}},
}

\bib{2015-Coates-Corti-Iritani-Tseng-a-mirror-theorem-for-toric-stacks}{article}{
      author={Coates, Tom},
      author={Corti, Alessio},
      author={Iritani, Hiroshi},
      author={Tseng, Hsian-Hua},
       title={A mirror theorem for toric stacks},
        date={2015},
        ISSN={0010-437X},
     journal={Compos. Math.},
      volume={151},
      number={10},
       pages={1878\ndash 1912},
      review={\MR{3414388}},
}

\bib{2011-Cox-Little-Schenck-toric-varieties}{book}{
      author={Cox, David~A.},
      author={Little, John~B.},
      author={Schenck, Henry~K.},
       title={Toric varieties},
      series={Graduate Studies in Mathematics},
   publisher={American Mathematical Society, Providence, RI},
        date={2011},
      volume={124},
        ISBN={978-0-8218-4819-7},
      review={\MR{2810322}},
}

\bib{2010-Caldararu-Distler-Hellerman-Pantev-Sharpe-non-birational-twisted-derived-equivalences-in-abelian-glsms}{article}{
      author={C\u{a}ld\u{a}raru, Andrei},
      author={Distler, Jacques},
      author={Hellerman, Simeon},
      author={Pantev, Tony},
      author={Sharpe, Eric},
       title={Non-birational twisted derived equivalences in abelian {GLSM}s},
        date={2010},
        ISSN={0010-3616},
     journal={Comm. Math. Phys.},
      volume={294},
      number={3},
       pages={605\ndash 645},
         url={https://mathscinet.ams.org/mathscinet-getitem?mr=2585982},
      review={\MR{2585982}},
}

\bib{2007-Dolgachev-Kondo-moduli-of-k3-surfaces-and-complex-ball-quotients}{incollection}{
      author={Dolgachev, Igor~V.},
      author={Kond\={o}, Shigeyuki},
       title={Moduli of {$K3$} surfaces and complex ball quotients},
        date={2007},
   booktitle={Arithmetic and geometry around hypergeometric functions},
      series={Progr. Math.},
      volume={260},
   publisher={Birkh\"{a}user, Basel},
       pages={43\ndash 100},
      review={\MR{2306149}},
}

\bib{2015-Fredrickson-extremal-transitions-from-nested-reflexive-polytopes}{article}{
      author={Fredrickson, Karl},
       title={Extremal transitions from nested reflexive polytopes},
        date={2015},
        ISSN={0010-3616},
     journal={Comm. Math. Phys.},
      volume={335},
      number={3},
       pages={1381\ndash 1395},
         url={https://doi.org/10.1007/s00220-014-2201-z},
      review={\MR{3320316}},
}

\bib{1989-Gelfand-Kapranov-Zelevinski-hypergeometric-functions-and-toral-manifolds}{article}{
      author={Gel'fand, Israel~M.},
      author={Zelevinski\u{\i}, Andrei~V.},
      author={Kapranov, Mikhail~M.},
       title={Hypergeometric functions and toric varieties},
        date={1989},
        ISSN={0374-1990},
     journal={Funktsional. Anal. i Prilozhen.},
      volume={23},
      number={2},
       pages={12\ndash 26},
         url={https://mathscinet.ams.org/mathscinet-getitem?mr=1011353},
      review={\MR{1011353}},
}

\bib{2013-Gerkmann-Sheng-van-Straten-Zuo-on-the-monodromy-of-the-moduli-space-of-calabi-yau-threefolds-coming-from-eight-planes-in-p3}{article}{
      author={Gerkmann, Ralf},
      author={Sheng, Mao},
      author={van Straten, Duco},
      author={Zuo, Kang},
       title={{O}n the monodromy of the moduli space of {C}alabi--{Y}au
  threefolds coming from eight planes in \(\mathbb{P}^3\)},
        date={2013},
     journal={Math. Ann.},
      number={1},
       pages={187\ndash 214},
}

\bib{1990-Greene-Plesser-duality-in-calabi-yau-moduli-space}{article}{
      author={Greene, B.~R.},
      author={Plesser, M.~R.},
       title={Duality in {C}alabi--{Y}au moduli space},
        date={1990},
        ISSN={0550-3213},
     journal={Nuclear Physics. B. Theoretical, Phenomenological, and
  Experimental High Energy Physics. Quantum Field Theory and Statistical
  Systems},
      volume={338},
      number={1},
       pages={15\ndash 37},
      review={\MR{1059831}},
}

\bib{1957-Grothendieck-sur-quelques-points-dalgebre-homologique}{article}{
      author={Grothendieck, Alexander},
       title={Sur quelques points d'alg\`ebre homologique},
        date={1957},
        ISSN={0040-8735},
     journal={Tohoku Math. J. (2)},
      volume={9},
       pages={119\ndash 221},
      review={\MR{102537}},
}

\bib{Harder:2025aa}{article}{
      author={Harder, Andrew},
      author={Lee, Sukjoo},
       title={On a conjecture of hosono-lee-lian-yau},
        date={202510},
      eprint={2510.02150},
         url={https://arxiv.org/pdf/2510.02150.pdf},
}

\bib{2024-Hosono-Lee-Lian-Yau-mirror-symmetry-for-double-cover-calabi-yau-varieties}{article}{
      author={Hosono, Shinobu},
      author={Lee, Tsung-Ju},
      author={Lian, Bong~H.},
      author={Yau, Shing-Tung},
       title={Mirror symmetry for double cover {C}alabi--{Y}au varieties},
        date={2024},
        ISSN={0022-040X},
     journal={J. Differential Geom.},
      volume={127},
      number={1},
       pages={409\ndash 431},
         url={https://doi.org/10.4310/jdg/1717356161},
      review={\MR{4753505}},
}

\bib{2020-Hosono-Lian-Takagi-Yau-k3-surfaces-from-configurations-of-six-lines-in-p2-and-mirror-symmetry-i}{article}{
      author={Hosono, Shinobu},
      author={Lian, Bong~H.},
      author={Takagi, Hiromichi},
      author={Yau, Shing-Tung},
       title={K3 surfaces from configurations of six lines in {$\mathbb{P}^2$}
  and mirror symmetry {I}},
        date={2020},
        ISSN={1931-4523},
     journal={Commun. Number Theory Phys.},
      volume={14},
      number={4},
       pages={739\ndash 783},
      review={\MR{4164174}},
}

\bib{1996-Hosono-Lian-Yau-gkz-generalized-hypergeometric-systems-in-mirror-symmetry-of-calabi-yau-hypersurfaces}{article}{
      author={Hosono, Shinobu},
      author={Lian, Bong~H.},
      author={Yau, Shing-Tung},
       title={G{KZ}-generalized hypergeometric systems in mirror symmetry of
  {C}alabi--{Y}au hypersurfaces},
        date={1996},
        ISSN={0010-3616},
     journal={Comm. Math. Phys.},
      volume={182},
      number={3},
       pages={535\ndash 577},
      review={\MR{1461942}},
}

\bib{2019-Hosono-Lian-Yau-k3-surfaces-from-configurations-of-six-lines-in-p2-and-mirror-symmetry-ii-lambda-k3-functions}{article}{
      author={Hosono, Shinobu},
      author={Lian, Bong~H.},
      author={Yau, Shing-Tung},
       title={K3 surfaces from configurations of six lines in {$\mathbb{P}^2$}
  and mirror symmetry {II}---{$\lambda_{K3}$}-functions},
        date={2021},
        ISSN={1073-7928},
     journal={Int. Math. Res. Not. IMRN},
      number={17},
       pages={13231\ndash 13281},
         url={https://doi.org/10.1093/imrn/rnz259},
      review={\MR{4307688}},
}

\bib{2008-Jiang-the-orbifold-cohomology-ring-of-simplicial-toric-stack-bundles}{article}{
      author={Jiang, Yunfeng},
       title={The orbifold cohomology ring of simplicial toric stack bundles},
        date={2008},
        ISSN={0019-2082},
     journal={Illinois J. Math.},
      volume={52},
      number={2},
       pages={493\ndash 514},
         url={http://projecteuclid.org/euclid.ijm/1248355346},
      review={\MR{2524648}},
}

\bib{2002-Kawamata-d-equivalence-and-k-equivalence}{article}{
      author={Kawamata, Yujiro},
       title={{$D$}-equivalence and {$K$}-equivalence},
        date={2002},
        ISSN={0022-040X},
     journal={J. Differential Geom.},
      volume={61},
      number={1},
       pages={147\ndash 171},
         url={http://projecteuclid.org/euclid.jdg/1090351323},
      review={\MR{1949787}},
}

\bib{2009-Lai-GW-invariants-of-blow-ups-along-submanifolds-with-convex-normal-bundles}{article}{
      author={Lai, Hsin-Hong},
       title={Gromov--{W}itten invariants of blow-ups along submanifolds with
  convex normal bundles},
        date={2009},
        ISSN={1465-3060},
     journal={Geom. Topol.},
      volume={13},
      number={1},
       pages={1\ndash 48},
         url={https://doi.org/10.2140/gt.2009.13.1},
      review={\MR{2469512}},
}

\bib{2023-Lee-a-note-on-periods-of-calabi-yau-fractional-complete-intersections}{article}{
      author={Lee, Tsung-Ju},
       title={A note on periods of {C}alabi--{Y}au fractional complete
  intersections},
        date={2023},
        ISSN={0025-5874},
     journal={Math. Z.},
      volume={304},
      number={4},
       pages={Paper No. 60, 16},
         url={https://doi.org/10.1007/s00209-023-03321-7},
      review={\MR{4617163}},
}

\bib{2023-Lee-Lian-Romo-non-commutative-resolutions-as-mirrors-of-singular-calabi-yau-varieties}{article}{
      author={Lee, Tsung-Ju},
      author={Lian, Bong~H.},
      author={Romo, Mauricio},
       title={Non-commutative resolutions as mirrors of singular
  {C}alabi--{Y}au varieties},
        date={202307},
      eprint={2307.02038},
         url={https://arxiv.org/pdf/2307.02038.pdf},
}

\bib{Lee:2025aa}{article}{
      author={Lee, Tsung-Ju},
      author={Lian, Bong~H.},
      author={Romo, Mauricio},
      author={Santilli, Leonardo},
       title={Non-commutative resolutions and pre-quotients of {C}alabi--{Y}au
  double covers},
        date={202507},
      eprint={2507.00633},
         url={https://arxiv.org/pdf/2507.00633.pdf},
}

\bib{2022-Lee-Lian-Yau-on-calabi-yau-fractional-complete-intersections}{article}{
      author={Lee, Tsung-Ju},
      author={Lian, Bong~H.},
      author={Yau, Shing-Tung},
       title={On {C}alabi--{Y}au fractional complete intersections},
        date={2022},
        ISSN={1558-8599},
     journal={Pure Appl. Math. Q.},
      volume={18},
      number={1},
       pages={317\ndash 342},
  url={https://mathscinet-ams-org.ezp-prod1.hul.harvard.edu/mathscinet-getitem?mr=4381855},
      review={\MR{4381855}},
}

\bib{2025-Lee-Zhang-twisted-gkz-hypergeometric-functions-and-relative-homology}{article}{
      author={Lee, Tsung-Ju},
      author={Zhang, Dingxin},
       title={Twisted {GKZ} hypergeometric functions and relative homology},
        date={2025},
        ISSN={0025-5831},
     journal={Math. Ann.},
      volume={391},
      number={4},
       pages={5553\ndash 5580},
         url={https://doi.org/10.1007/s00208-024-03058-6},
      review={\MR{4884555}},
}

\bib{1988-Matsumoto-Sasaki-Yoshida-the-period-map-of-a-4-parameter-family-of-k3-surfaces-and-the-aomoto-gelfand-hypergeometric-function-of-type-3-6}{article}{
      author={Matsumoto, Keiji},
      author={Sasaki, Takeshi},
      author={Yoshida, Masaaki},
       title={The period map of a {$4$}-parameter family of {$K3$} surfaces and
  the {A}omoto--{G}el\cprime fand hypergeometric function of type {$(3,6)$}},
        date={1988},
        ISSN={0386-2194},
     journal={Proc. Japan Acad. Ser. A Math. Sci.},
      volume={64},
      number={8},
       pages={307\ndash 310},
         url={https://mathscinet.ams.org/mathscinet-getitem?mr=973860},
      review={\MR{973860}},
}

\bib{1992-Matsumoto-Sasaki-Yoshida-the-monodromy-of-the-period-map-of-a-4-parameter-family-of-k3-surfaces-and-the-hypergeometric-function-of-type-3-6}{article}{
      author={Matsumoto, Keiji},
      author={Sasaki, Takeshi},
      author={Yoshida, Masaaki},
       title={The monodromy of the period map of a {$4$}-parameter family of
  {$K3$} surfaces and the hypergeometric function of type {$(3,6)$}},
        date={1992},
        ISSN={0129-167X},
     journal={Internat. J. Math.},
      volume={3},
      number={1},
       pages={164},
         url={https://mathscinet.ams.org/mathscinet-getitem?mr=1136204},
      review={\MR{1136204}},
}

\bib{2000-Mavlyutov-semi-ample-hypersurfaces-in-toric-varieties}{article}{
      author={Mavlyutov, Anvar~R.},
       title={Semiample hypersurfaces in toric varieties},
        date={2000},
     journal={Duke Mathematical Journal},
      volume={101},
      number={1},
       pages={85\ndash 116},
}

\bib{1999-Morrison-through-the-looking-glass}{incollection}{
      author={Morrison, David~R.},
       title={Through the looking glass},
        date={1999},
   booktitle={Mirror symmetry, {III} ({M}ontreal, {PQ}, 1995)},
      series={AMS/IP Stud. Adv. Math.},
      volume={10},
   publisher={Amer. Math. Soc., Providence, RI},
       pages={263\ndash 277},
      review={\MR{1673108}},
}

\bib{2013-Sharpe-predictions-for-gromov-witten-invariants-of-noncommutative-resolutions}{article}{
      author={Sharpe, Eric},
       title={Predictions for {G}romov--{W}itten invariants of noncommutative
  resolutions},
        date={2013},
        ISSN={0393-0440},
     journal={J. Geom. Phys.},
      volume={74},
       pages={256\ndash 265},
      review={\MR{3118585}},
}

\bib{2013-Sheng-Xu-Zuo-maximal-families-of-calabi-yau-manifolds-with-minimal-length-yukawa-coupling}{article}{
      author={Sheng, Mao},
      author={Xu, Jinxing},
      author={Zuo, Kang},
       title={Maximal families of {C}alabi--{Y}au manifolds with minimal length
  {Y}ukawa coupling},
        date={2013},
        ISSN={2194-6701},
     journal={Commun. Math. Stat.},
      volume={1},
      number={1},
       pages={73\ndash 92},
      review={\MR{3197873}},
}

\bib{1988-Terasoma-complete-intersetions-of-hypersurfaces-the-fermat-case-and-the-quadric-case}{article}{
      author={Terasoma, Tomohide},
       title={{Complete intersections of hypersurfaces - the Fermat case and
  the quadric case}},
        date={1988},
     journal={Jpn. J. Math},
      volume={14},
      number={2},
       pages={309\ndash 384},
}

\bib{2010-Tseng-orbifold-quantum-riemann-roch-lefschetz-and-serre}{article}{
      author={Tseng, Hsian-Hua},
       title={Orbifold quantum {R}iemann--{R}och, {L}efschetz and {S}erre},
        date={2010},
        ISSN={1465-3060},
     journal={Geom. Topol.},
      volume={14},
      number={1},
       pages={1\ndash 81},
      review={\MR{2578300}},
}

\end{biblist}
\end{bibdiv}

\end{document}